\definecolor{dbl}{rgb}{0.1,0.0,0.97}
\numberwithin{equation}{section} 
\definecolor{gb}{rgb}{0.80, 0.80, 0.80}
\newcolumntype{C}{>{\Centering}X} % centered version of 'X' column type
\pgfplotsset{compat=newest,compat/show suggested version=false}
\newcommand{\appref}[1]{\hyperref[#1]{Appendix~\ref*{#1}}}
\newcommand{\twostar}[3]{%
\begin{tikzpicture}[scale=0.35, baseline=-0.5ex]
  \node[circle, fill, inner sep=1.2pt,
        label=above:{\scriptsize $#1$}] (a) at (0,0.9) {};
  \node[circle, fill, inner sep=1.2pt,
        label=below:{\scriptsize $#2$}] (b) at (-0.8,-0.4) {};
  \node[circle, fill, inner sep=1.2pt,
        label=below:{\scriptsize $#3$}] (c) at (0.8,-0.4) {};
  \draw (a)--(b);
  \draw (a)--(c);
\end{tikzpicture}
}
\newcommand{\trianglegraph}[3]{%
\begin{tikzpicture}[scale=0.35, baseline=-0.5ex]
  \node[circle, fill, inner sep=1.2pt,
        label=above:{\scriptsize $#1$}] (a) at (0,0.9) {};
  \node[circle, fill, inner sep=1.2pt,
        label=below:{\scriptsize $#2$}] (b) at (-0.8,-0.4) {};
  \node[circle, fill, inner sep=1.2pt,
        label=below:{\scriptsize $#3$}] (c) at (0.8,-0.4) {};
  \draw (a)--(b)--(c)--(a);
\end{tikzpicture}
}
\theoremstyle{plain}
\newtheorem{theorem}{Theorem}[section]
\newtheorem*{theorem*}{Theorem}
\newtheorem{lemma}[theorem]{Lemma}
\newtheorem{cor}[theorem]{Corollary}
\newtheorem{prop}[theorem]{Proposition}
\theoremstyle{definition}
\theoremstyle{remark}
\newtheorem{rem}{{\bf Remark}}
\let\oldthebibliography\thebibliography
\renewcommand{\thebibliography}[1]{%
  \oldthebibliography{#1}%
  \setlength{\itemsep}{0pt}%
}
\newcommand{\pr}{\mathbf{P}}
\newcommand{\E}{\mathbf{E}}
\newcommand{\Var}{\mathbf{Var}}
\newcommand{\cov}{\mathbf{cov}}
\newcommand{\primet}{{\mathsmaller T}}
\newcommand{\kl}{\mathbb{K}}
\newcommand{\rl}{\mathbb{R}}
\newcommand{\Cb}{\mathbf{C}}
\newcommand{\Ub}{\mathbf{U}}
\newcommand{\Xb}{\mathbf{X}}
\newcommand{\Wb}{\mathbf{W}}
\newcommand{\Yb}{\mathbf{Y}}
\newcommand{\nsr}{{[N]}_R}
\newcommand{\PiB}{\boldsymbol{\Pi}}
\newcommand{\Dc}{\mathcal{D}}
\newcommand{\Gc}{\mathcal{G}}
\newcommand{\Mc}{\mathcal{M}}
\newcommand{\jbb}{\mathbf{j}}
\newcommand{\kbb}{\mathbf{k}}
\newcommand{\sbb}{\mathbf{s}}
\newcommand{\rb}{\mathbf{r}}
\newcommand{\ub}{\mathbf{u}}
\newcommand{\vb}{\mathbf{v}}
\newcommand{\xb}{\mathbf{x}}
\newcommand{\yb}{\mathbf{y}}
\newcommand{\rai}{\rightarrow\infty}
\newcommand{\parw}{\stackrel{P}{\rightarrow}}
\newcommand{\darw}{\stackrel{d}{\rightarrow}}
\newcommand{\eqd}{\overset{d}{=}}
\newcommand{\eps}{\epsilon}
\newcommand{\al}{\alpha}
\newcommand{\la}{\lambda}
\newcommand{\thb}{\boldsymbol{\theta}}
\newcommand{\lamb}{\boldsymbol{\lambda}}
\newcommand{\phib}{\boldsymbol{\phi}}
\newcommand{\norm}[2]{{\|{#1}\|}_{#2}}
\DeclareMathOperator*{\argmaxA}{argmax}
\begin{document}

\title{{\bf Statistical inference for subgraph counts and clustering coefficient using network sampling in a sparse Stochastic Block Model framework}}
\author{A.~Mandal\thanks{Supported by PhD scholarship from Indian Statistical Institute;
{\small{email:}} \texttt{anirban22r@isid.ac.in}}\ \ and\ \ A.~Chatterjee\thanks{%Research partially supported by SERB-MATRICS grant MTR/2017/000224;
 {\small{email:}} \texttt{cha@isid.ac.in}} 
\\Statistical Sciences Unit\\Indian Statistical Institute, Delhi
}
\date{}
\maketitle

\begin{abstract}
This article develops limit laws for network sampling based estimates of subgraph counts and clustering coefficient of a large population network, and uses them for predictive inference. A model based approach is used, where the population network is assumed to be generated from a {\it sparse} Stochastic Block Model (SBM). In order to quantify the effects of node sampling under resource constraints, a {\it sparse} Bernoulli node sampling scheme is introduced, where the node selection probability is allowed to decay to zero as the population size increases. Both induced and ego-centric network formation approaches are explored. Quantitative bounds on the speed of normal approximation for estimated subgraph counts are obtained in a joint model and design based asymptotic framework, and these bounds show that the accuracy of statistical inference depends intricately on the level of sparsity in the model, the sparsity level of the sampling scheme, and on features like the {\it edge density} and {\it minimum vertex cover size} of the target subgraph. We find that the ego-centric approach can handle higher levels of sparsity in the model and the sampling scheme, compared to the induced approach. We also show that if the model sparsity level remains below an initial threshold, then the quality of statistical inference remains unaffected by the model sparsity level. Beyond this threshold, the quality degrades rapidly. The sufficient conditions for obtaining a Gaussian limit law also turn out to be {\it necessary}. For {\it strictly balanced} target subgraphs, we obtain sharp transitions from Gaussian to various types of Poisson based limit laws, as the model and sampling sparsity levels increase. A complete description of all possible limit laws for estimated subgraph counts is found in the induced case, and a near-complete description is obtained in the ego-centric case. As an application of our results on estimated subgraph counts, we obtain Gaussian and Poisson based limit laws for the estimated clustering coefficient, in both induced and ego-centric cases. A simulation study provides strong support for the theoretical results at various levels of model sparsity and sampling sparsity. Finally, the proposed methodology is applied to a real data set. 

\end{abstract}

\noindent%
{Keywords: Induced, Ego-centric, Prediction, Poisson limit, Sparse sampling, Minimum vertex cover}.\newline
{\it AMS 2020 Subject Classification:} 62E20 (Primary); 62D05 (Secondary).
\vspace{5mm}

\section{Introduction}
\label{sec:intro}

In recent years, network sampling has emerged as a practical and feasible method to observe data from large networks (\cite{newman2018}), especially in cases where the {\it entire} target network (population) is difficult to access  (cf. \cite{kolaczyk09}, \cite{zhang2015}) or the entire data set is too large to use for storage and analysis (\cite{leskovec06}). Applied researchers are interested in accurate estimation of various population network level summary statistics using the sampled network data. However, a large amount of empirical research indicates that estimates obtained through network sampling can be severely biased, and the bias depends on the structure of the population network, the sampling mechanism, and the target summary statistic (cf. \cite{lee2006}, \cite{bliss-14}, \cite{rib-12}, \cite{ebbes2008}, \cite{gonzal-bias-14}, \cite{maiya-bias-11}, \cite{cost-03}, \cite{agc-lewis-16}, \cite{illenberger-12}, \cite{ruggeri19}, \cite{ko-jecon24}, and the references therein for a selective overview). In spite of extensive empirical research on network sampling, there has been a lack of rigorous theoretical results on the distributional properties of network sampling based estimates and their use for statistical inference about their population level counterparts. Initial investigations on network sampling have focused on a design based approach (cf. \cite{frank1978sampling}, \cite{franksjs-78}, \cite{frank2011survey}) and model based approaches have been explored more recently (cf. \cite{thompson-frank-2000}, \cite{shi-2019}, \cite{handcock-gile10} and \cite{gile2015}). Recently, \cite{bhattacharya2022} made a seminal contribution in this area and proved a CLT for estimated subgraph counts under a Bernoulli node sampling scheme (using induced network formation) in a design based framework, assuming that the population network is generated {\it non-stochastically}. Similar inferential questions arise in a model based setting, where the population network is generated stochastically. The question is relevant due to the extensive use of stochastic models in network modeling (see \cite{newman2018}, \cite{kolaczyk09}, \cite{van2017random}, \cite{fienberg-12}, \cite{izenman2023}). To the best of our knowledge, this question has not been explored in the existing literature. Neither exist any rigorous limit law results for alternative network formation methods, like the ego-centric approach. In this paper, we investigate the properties of network sampling in a model based framework and focus on the prediction of two different types of population summary statistics: subgraph counts and the clustering coefficient. We explore the effects of both induced and ego-centric network formation. We allow for sparsity in the population network and also introduce a {\it sparse} Bernoulli node sampling scheme to study the effects of sampling under {\it resource constraints}. We obtain different Gaussian and Poisson based limit laws for estimated subgraph counts and clustering coefficient, allowing for {\it sparsity} in both the model and the Bernoulli node sampling mechanism.

Consider an undirected population network on $N$ nodes. We assume that the population network is generated from a $K$-class Stochastic Block Model (SBM) (cf. \cite{holland1983}), where $K(\geq 2)$ denotes the number of classes in the SBM. %which implies that the model is {\it not} vertex exchangeable (cf. \cite{crane18}). 
We focus on the SBM framework primarily due to its simplicity and flexibility and because it is widely used in applications (cf. \cite{lee2019review}, \cite{abbe2018community}, \cite{airoldi2013stochastic}).  %In order to provide a detailed description of our results, we develop some additional notation and definitions. Edges are formed independently between any pair of nodes. 
It is assumed that every node in the population network has an unique and fixed (non-stochastic) class label and the edge probability between any two nodes depends only on their respective class labels. The population network is allowed to be dense or sparse and the edge probabilities in the SBM are allowed to remain fixed or decay to zero, as the size of the population network $N$ increases. Let $\pi_{N,r,s}$ denote the edge probability between two nodes in the population network, one of them having class label $r$, and another having class label $s$, where $r,s\in \{1,\ldots,K\}$. We allow $\pi_{N,r,s}$ to depend on $N$, and we assume that there exists a {\it model sparsity level} $\beta\geq 0$, such that 
\begin{align}
N^\beta\pi_{N,r,s} \rightarrow c_{r,s},\quad\text{as $N\rai$, for some $c_{r,s}\in [0,\infty)$, for all $r,s\in \{1,\ldots,K\}$,}
\label{model-sp}
\end{align}
where at least one of the $c_{r,s}$ values is assumed to be positive. If $\beta = 0$, then are some $\pi_{N,r,s}$ values that do not decay to zero and the population network will be dense. If $\beta>0$, then $\pi_{N,r,s}\rightarrow 0$, for all $r,s\in\{1,\ldots,K\}$, as $N\rai$, and the population network becomes sparse. A higher value of $\beta$ implies higher levels of sparsity. A Bernoulli node sampling scheme is used for selecting nodes from this population network. The node sampling probability $p_N$ is allowed to depend on $N$ and we set 
\begin{align}
p_N = \frac{c}{N^{\alpha}},\quad\text{for some $\alpha\in [0,1]$, $c>0$, and for all $N\geq 1$.}
\label{samp-sp}
\end{align}
The quantity $\alpha$ is called the {\it sampling sparsity level}. If $\al = 0$, we call the sampling {\it dense}; if $\alpha\in(0,1)$, the sampling is called {\it intermediately sparse}; and if $\al = 1$, the sampling is called {\it extremely sparse}. If $\al = 0$, then $p_N$ remains fixed (and bounded away from $0$ and $1$) and the expected number of sampled nodes is of the same order as $N$. If $\alpha\in (0,1]$, then $p_N$ decays to zero as $N\rai$. In the intermediately sparse case, the expected number of sampled nodes increases with $N$, but at a much slower rate than $N$. In the extremely sparse case, the expected number of sampled nodes remains constant even as $N\rai$. To the best of our knowledge, this sort of {\it sparse node sampling} framework has not been explored in the existing literature. This framework is introduced to quantify the effects of sampling very few nodes, compared to population size $N$ (cf. \cite{lee2006}, \cite{xu2017}, \cite{luo-15}, \cite{kaan-19}), and how this affects statistical inference when the underlying population network is sparse or dense. This type of situation can arise even in cases where $N$ is not very large but there are cost, storage, or access constraints on data collection or in situations where $N$ is extremely large and the sample size cannot be scaled up at the same rate. 

Following the initial sampling of nodes, we explore two different approaches for creating the sampled network: (a) induced approach, where the edge status between two population nodes is observed only if both nodes are sampled, and (b) ego-centric approach, where the edge status is observed if at least one of the two nodes is sampled. Induced and ego-centric network formation schemes have been widely studied in the existing literature (see \cite{kolaczyk09}, \cite{bhattacharya2022}, \cite{handcock-gile10}, \cite{ko-jecon24}, \cite{agc-lewis-16}). The node sampling scheme and the SBM are assumed to be {\it independent}. In order to describe our main contributions, we provide some initial definitions.

Throughout this article, we focus on a fixed, simple, undirected, and connected target subgraph $H$, having $R(\geq 2)$ vertices and $T$ edges. For any $t\in\{2,\ldots,R\}$ we define $m(t:H)$ as the maximum number of edges in {\it any} $t$ vertex induced subgraph of $H$ (see \eqref{mtH-def}), and write $m(H) = \displaystyle\max_{2\leq t\leq R}\displaystyle m(t:H)/t$, as the {\it maximum edge density} of $H$ and its subgraphs (see \eqref{def-mH}). We define $m(1:H)=0$, for any choice of $H$. We say $H$ is {\it strictly balanced} if $m(t:H)/t$ is uniquely maximized at $t=R$ (see p. 64 of \cite{janson-rucinski-randomgraphs}). A collection of nodes $C$ of $H$ is called a {\it vertex cover of $H$}, if an endpoint of {\it any} edge in $H$ is contained in $C$. Let $\tau(H)$ denote the {\it size of the minimum vertex cover of $H$} (see \eqref{def-tauH}). Let $\kl_{1,R-1}$, $\kl_R$ and $\mathbb{L}_R$, respectively, denote a star graph, a complete graph and a line graph on $R$ vertices. %Then, for $H = \kl_{1,R-1}$ and $H = \mathbb{L}_R$, $T = (R-1)$, $m(t:\kl_{1,R-1}) = m(t:\mathbb{L}_R) = (t-1)$, but $\tau(\kl_{1,R-1}) = 1$ and $\tau(\mathbb{L}_{R}) = \lfloor R/2\rfloor$; for $H = \kl_R$, $T=R(R-1)/2$, $m(t:\kl_R) = t(t-1)/2$ and $\tau(\kl_R) = (R-1)$. Also, $\tau(H) = 1$ if and only if $H = \kl_{1,R-1}$, for some $R\geq 2$. 
For any two random variables $X$ and $Y$, $d_1(X,Y)$ denotes a distance measure between $X$ and $Y$ (cf. \eqref{d1-def}) based on the class of all bounded maps with bounded first derivative, such that convergence in the $d_1$ distance implies convergence in distribution. Let $S_N(H)$, $\widehat{S}_{N,1}(H)$ and $\widehat{S}_{N,2}(H)$ (see \eqref{s-all-main}) denote the subgraph count (number of copies) of $H$ in the population network, the estimate obtained from the network sample constructed by induced subgraph formation, and the estimate obtained from the network sample constructed by ego-centric subgraph formation, respectively. The centered and scaled sample based subgraph counts $T_{N,1}(H)$ (induced case) and $T_{N,2}(H)$ (ego-centric case) are defined as
\begin{equation}
\left. \begin{aligned}
T_{N,l}(H) & = \frac{\widehat{S}_{N,l}(H) - f_l(p_N:H)\cdot S_N(H)}{\sigma_{N,l}(H)},\quad\text{where,}\\
\sigma^2_{N,l}(H) & = \Var\left(\widehat{S}_{N,l}(H) - f_{l}(p_N:H)\cdot S_N(H)\right),
\end{aligned}\qquad\right\} \quad\text{for $l = 1,2$.}
\label{TN-def-12}
\end{equation}
The variance term $\sigma^2_{N,l}(H)$ in \eqref{TN-def-12} is found jointly with respect to the model (SBM) and the node sampling design, taking into account the stochasticity from both the population network model and the node sampling scheme. The terms $f_l(p_N:H)$, $l=1,2$, are defined in \eqref{fl-def}, and are used for correct centering and to account for the effects of sampling.

\subsection{Overview of Gaussian approximation results for estimated subgraph counts}
\label{sec:intro:gauss}

One of the primary contributions of this article is to provide quantitative bounds on the speed of normal approximation for $T_{N,l}(H)$, for $l=1,2$, for suitable choices of $H$, and describe sufficient conditions on the model and sampling sparsity levels $\beta$ and $\al$, which will ensure asymptotic normality. Let $Z\sim N(0,1)$. Suppose $\beta$ and $\al$ denote the underlying model and sampling sparsity levels. Define 
\begin{align}
\Delta_{N,l}(H;\alpha,\beta)\equiv d_1\left(T_{N,l}(H),Z\right), \quad\text{for any $\al,\beta\geq 0$, and $l=1,2$ (cf. \eqref{TN-def-12}),}
\label{Delta-def}
\end{align}
where both $\al$ and $\beta$ are included in the definition of $\Delta_{N,l}(H;\alpha,\beta)$ to account for the effects of the underlying sampling and model sparsity levels. In Theorem \ref{thm-1-main} we show that for any target subgraph $H$ that satisfies certain assumptions, and for any $\al,\beta\geq 0$, the $d_1$ distance $\Delta_{N,l}(H;\al,\beta)$ (cf. \eqref{Delta-def}) satisfies the following upper bound,
\begin{equation}
\left.
\begin{aligned}
&\Delta_{N,l}(H;\al,\beta) = O\left(N^{~\displaystyle \left\{g_{l,\al,\beta,H}\left(t_l(\al,\beta;H)\right)/2\right\}\displaystyle}\right),\quad\text{for $l=1,2$, as $N\rai$, where}\\
& t_l(\al,\beta;H) \in \operatorname{argmax}\left\{g_{l,\al,\beta,H}(t): t = 1,\ldots,R\right\},\quad\text{for $l=1,2$, with}\\
&\left.
\begin{aligned}
& g_{1,\al,\beta,H}(t) = \displaystyle{}-t + t\cdot \al + m(t:H)\cdot \beta\displaystyle,\quad\text{and}\\
& g_{2,\al,\beta,H}(t) = \displaystyle{}-t + \min\{\tau(H),t\}\cdot \al + m(t:H)\cdot \beta,\displaystyle
 \end{aligned}\quad\right\}\ \text{for $t=1,\ldots,R$.}
\end{aligned}\right\}
\label{delta-bdd-temp}
\end{equation}
The bound in \eqref{delta-bdd-temp} shows that the $d_1$ distance between $T_{N,l}(H)$ and $Z$ is controlled by the model and the sampling sparsity levels $\beta$ and $\al$, and this also depends on two different features of the target subgraph: (i) the collection of all edge densities $\{m(t:H)/t:t\in\{1,\ldots,R\}\}$, for both induced and ego-centric cases; (ii) the minimum vertex cover size $\tau(H)$, for the ego-centric case. To obtain an asymptotic normality result, one has to show that the bound on $\Delta_{N,l}(H;\al,\beta)$ converges to zero. This can be achieved if the sparsity levels $\beta$ and $\al$ are restricted within a threshold, and this threshold will depend on the target subgraph $H$. In the induced case ($l=1$), for {\it any} simple, connected, and undirected target subgraph $H$, we define the following region of model and sampling sparsity levels, 
\begin{align}
  C_1(H) &= \left\{(\al,\beta):\displaystyle \al + \frac{\beta}{\frac{t}{m(t:H)}}<1\displaystyle,~\text{for $t=2,\ldots,R$},~\al\in[0,1),\beta\geq 0\right\}.%\notag\\
  %& = \left\{(\al,\beta):~\al + m(H)\cdot \beta <1,~\al\in[0,1), \beta \geq 0\right\},
\label{c-set-ind}
\end{align}
Using the definitions provided in \eqref{delta-bdd-temp}, we can write $C_1(H)=\{(\al,\beta):g_{1,\al,\beta,H}(t_1(\al,\beta;H))<0,\al\in[0,1),\beta\geq 0\}$. Clearly, the above description shows that if $(\al,\beta)\in C_1(H)$, then $\Delta_{N,1}(H;\al,\beta)\rightarrow 0$, as $N\rai$, thus ensuring the asymptotic normality of $T_{N,1}(H)$. 

In the ego-centric case, the allowable choices of $H$ depend on the sampling sparsity level $\al$. If the sampling is sparse, {\it i.e.}, $\al\in (0,1)$, in addition to the features described above in the induced case, we require $H$ to satisfy a specific assumption (see Assumption \ref{assump-5} in Section \ref{sec:main}). Common target subgraphs such as the complete graph or the star graph, as well as numerous other graphs, satisfy this assumption. If the sampling is dense ($\al = 0$), then $H$ does not need to satisfy this assumption. In fact, in the ego-centric case ($l=2$), the bound on $\Delta_{N,2}(H:\al,\beta)$ in \eqref{delta-bdd-temp} can be obtained at an $\al>0$, if $H$ satisfies this assumption. In the ego-centric case, for a given choice of $H$, we define the following region of model and sampling sparsity levels $\beta$ and $\al$,
\begin{align}
\label{c-set-ego}
C_2(H) &=\left\{(\al,\beta):~\displaystyle \frac{\al}{\frac{t}{\min\{t,\tau(H)\}}} + \frac{\beta}{\frac{t}{m(t:H)}} <1\displaystyle,~
\text{for $t = 2,\ldots,R$, $\al\in[0,1)$, $\beta\geq 0$}\right\}.
\end{align}
Rewriting the constraints within $C_2(H)$ in terms of $g_{2,\al,\beta,H}(\cdot)$ and $t_2(\al,\beta;H)$ (see \eqref{delta-bdd-temp}), we can show that if $(\al,\beta)\in C_2(H)$, then $T_{N,2}(H)\darw N(0,1)$. 

This asymptotic normality result is obtained under a joint model and design based framework. To the best of our knowledge, the asymptotic distribution of ego-centric sampling based estimates have not been studied in the existing literature under any type of framework (design or model based), and the same has not been done for the induced case in a model based setting. If the sampling is extremely sparse ($\al = 1$), a finite amount of data is observed in the induced case and it can be shown that $T_{N,1}(H)$ will not converge in distribution. In the ego-centric case, at $\al=1$, our results are limited to the case of $H=\kl_{1,R-1}$ (star graph), where we show that a Gaussian limit law does not arise.

The bound in \eqref{delta-bdd-temp} provides deep insight about the quality of normal approximation and how it is affected by the levels of model and sampling sparsity, the mechanism of network formation after node sampling, and the edge structure of the target subgraph. The speed of normal approximation for $T_{N,l}(H)$ depends on the magnitude of the term $g_{l,\al,\beta,H}(t_l(\al,\beta;H))$ in the exponent of $N$ (cf. \eqref{delta-bdd-temp}). From the definition of $g_{l,\al,\beta,H}(\cdot)$, it follows that the influence of the sparsity levels $\beta$ and $\al$ is transmitted through the edge structure of $H$. For example, in the ego-centric case, an increase in the sampling sparsity level $\al$ contributes at most $\tau(H)\cdot \al$ to the growth of $g_{2,\al,\beta,H}(t)$, even as $t$ increases beyond $\tau(H)$. The same is not true in the induced case. This explains how in the case of a star graph $H=\kl_{1,R-1}$, with $\tau(H)=1$, the ego-centric approach can handle relatively high levels of model sparsity $\beta$ and provide a Gaussian limit law for $T_{N,2}(\kl_{1,R-1})$, even if the sampling sparsity level $\al$ is high. Similarly, the impact of increasing the model sparsity level $\beta$ is regulated by the values of the edge densities $\{m(t:H)/t:t=1,\ldots,R\}$.

Each $C_l(H)$, $l=1,2$ (cf. \eqref{c-set-ind} and \eqref{c-set-ego}), can be further split into {\it sub-regions}, based on the value of the maxima $t_l(\al,\beta;H)$ (see \eqref{delta-bdd-temp}). Define
\begin{align}
 C_{l,j}(H) = \left\{(\al,\beta)\in C_l(H): t_l(\al,\beta;H) = j\right\},\quad \text{$j=1,\ldots,R$, $l=1,2$.}
 \label{Clj-def}
\end{align}
For some choices of $j$, $C_{l,j}(H)$ can be empty, but $\{C_{l,j}:j=1,\ldots,R\}$ forms a partition of $C_l(H)$. The amount of influence exerted by the model and sampling sparsity levels $\beta$ and $\al$ and the speed of normal approximation in \eqref{delta-bdd-temp} changes sharply when $(\al,\beta)$ move over different sub-regions. Similarly, the growth rate of the variance term in \eqref{TN-def-12} also changes over these sub-regions. To explain this clearly and to illustrate the utility of the bound in \eqref{delta-bdd-temp}, we consider the specific example of $H = \kl_3$ (triangle). 
\begin{figure}[htbp]
    \centering

    % ---------- First row ----------
    \begin{minipage}[b]{0.48\textwidth}
        \centering
        \includegraphics[scale = 0.27]{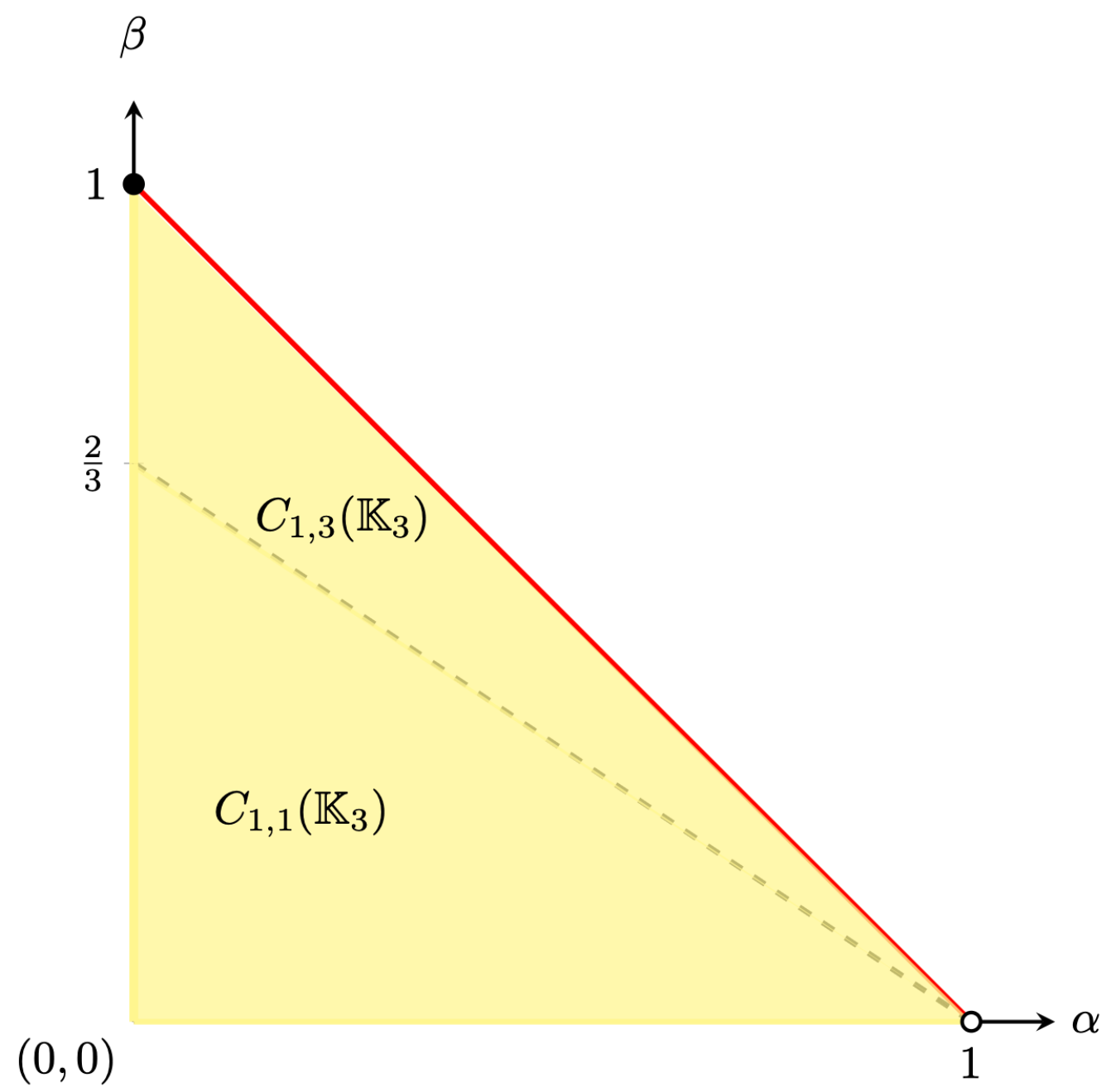}
        \\[-0.3ex]
        \footnotesize (a) Induced case: $C_{1}(\kl_3)$ and its sub-regions.
    \end{minipage}
    \hspace{1.5ex}
    \begin{minipage}[b]{0.48\textwidth}
        \centering
        \includegraphics[scale = 0.27]{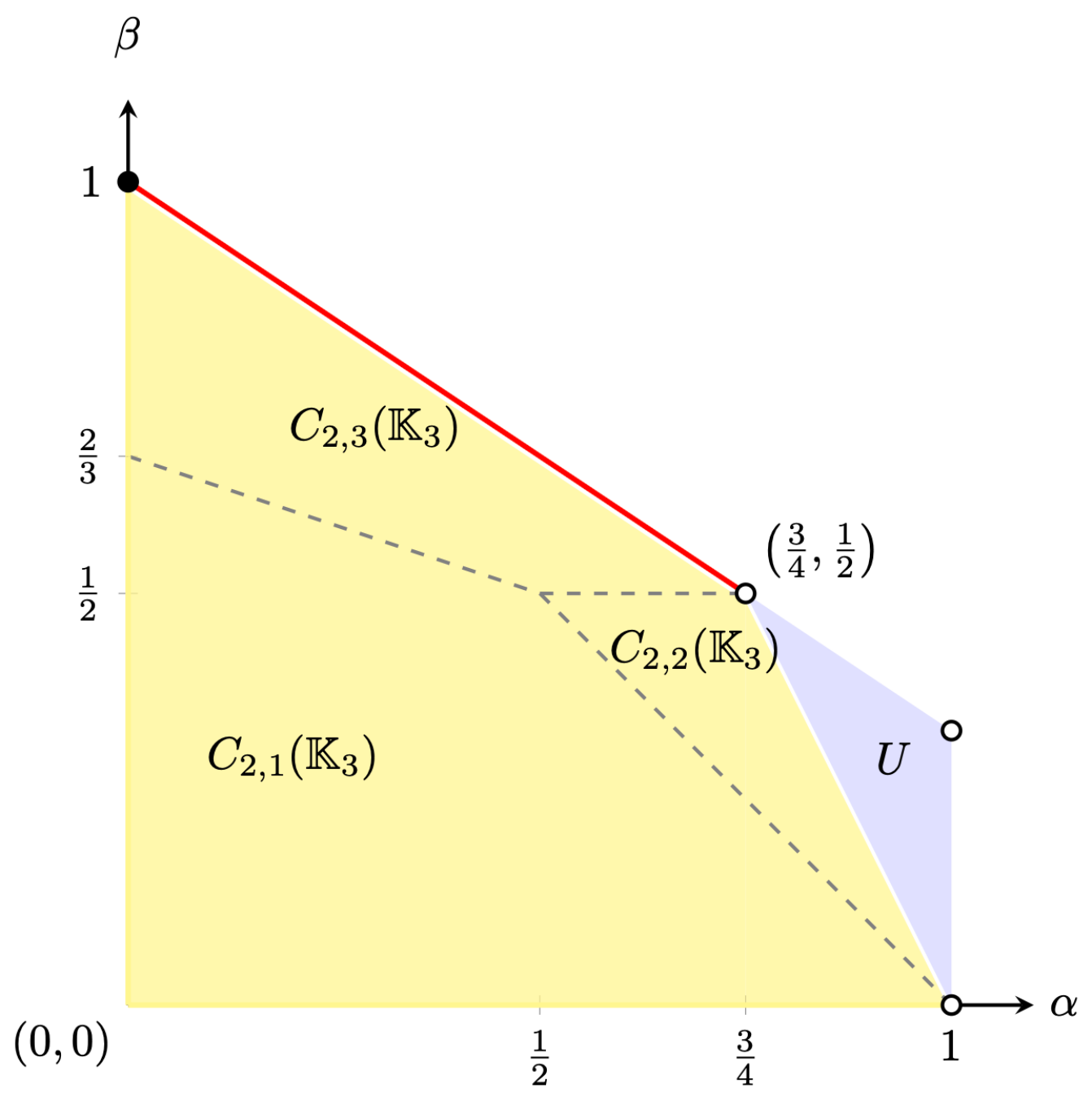}
        \\[-0.3ex]
        \footnotesize (b) Ego-centric case: $C_{2}(\kl_3)$ and its sub-regions.
    \end{minipage}
    \caption{{\small{{\bf Sparsity levels and limit laws for estimated triangle counts:} $T_{N,1}(\kl_3)$ and $T_{N,2}(\kl_3)$ have a Gaussian limit on $C_1(\kl_3)$ and $C_2(\kl_3)$ (yellow shaded regions), with different speeds of normal approximation within each sub-region (cf. \eqref{rate-tri-ind} and \eqref{rates-tri-ego}).  Similar types of Poisson based limit laws arise at $(\al,\beta)=(0,R/T)=(0,1)$ (marked by filled black dot on both figures) for both $T_{N,l}(\kl_3)$, $l=1,2$, see Theorem \ref{thm-2-main-temp}(a). A different type of Poisson based limit law arises on the upper boundaries of $C_{1,3}(\kl_3)$ and $C_{2,3}(\kl_3)$ (marked by solid red line), see Theorem \ref{thm-2-main-temp}(b). The limiting behavior of $T_{N,2}(\kl_3)$ remains unknown on the triangular region $U$, on and beyond the boundary of the sub-region $C_{2,2}(\kl_3)$. The triangle graph satisfies Assumption \ref{assump-5}.}}}
    \label{fig:main:tri}
\end{figure}
%    \vspace{3ex}

The yellow shaded regions in Figures \ref{fig:main:tri}(a) and \ref{fig:main:tri}(b) show the sets $C_{1}(\kl_3)$ and $C_{2}(\kl_3)$ (see \eqref{c-set-ind} and \eqref{c-set-ego}) and their non-empty sub-regions: $C_{1,1}(\kl_3)$ and $C_{1,3}(\kl_3)$, in the induced case; and $C_{2,1}(\kl_3)$, $C_{2,2}(\kl_3)$ and $C_{2,3}(\kl_3)$, in the ego-centric case. Using the bound in \eqref{delta-bdd-temp}, we can obtain the following results for $H=\kl_3$,
\begin{align}
&\Delta_{N,1}(\kl_3;\al,\beta) = \begin{cases}
O\left(N^{{}-\left(1-\al\right)/2}\right) & \text{if $(\al,\beta)\in C_{1,1}(\kl_3) = \left\{(\al,\beta):0\leq \beta < \frac{2(1-\al)}{3}\right\}$,}\\
O\left(N^{{}-3\left(1-\al-\beta\right)/2}\right) & \text{if $(\al,\beta)\in C_{1,3}(\kl_3) =\left\{(\al,\beta):\frac{2(1-\al)}{3}\leq \beta < (1-\al)\right\}$,}
\end{cases}
\label{rate-tri-ind}
\end{align}
and,
\begin{align}
%&\Delta_{N,2}(\kl_3;\al,\beta) \notag\\
\Delta_{N,2}(\kl_3;\al,\beta)&= \begin{cases}
O\left(N^{ {}-(1-\al)/2}\right) & \text{if $(\al,\beta)\in C_{2,1}(\kl_3)=\left\{(\al,\beta):\al+\beta<1,\frac{\al}{2}+\frac{3\beta}{2}<1\right\}$,}\\
O\left(N^{ {}-(1-\al)+\beta/2}\right) & \text{if $(\al,\beta)\in C_{2,2}(\kl_3)=\left\{(\al,\beta):\al+\beta\geq 1, \beta<\frac{1}{2}, \al+\frac{\beta}{2}<1\right\}$,}\\
O\left(N^{{}-3\left(1-2\al/3-\beta\right)/2}\right) & \text{if $(\al,\beta)\in C_{2,3}(\kl_3)=\left\{(\al,\beta):\frac{\al}{2}+\frac{3\beta}{2}>1,\beta>\frac{1}{2},\frac{2\al}{3}+\beta<1\right\}$.}
\end{cases}
\label{rates-tri-ego}
\end{align}
The bounds in \eqref{rate-tri-ind} and \eqref{rates-tri-ego} provide a precise understanding of the influence of the model and sampling sparsity levels $\beta$ and $\al$, as $(\al,\beta)$ vary over different sub-regions within $C_l(\kl_3)$, $l=1,2$. Similar results can be obtained for other choices of $H$. This information can be used by practitioners to choose the sampling sparsity level during data collection. One can show that at any common $(\al,\beta)\in C_1(H)\cap C_2(H)$, the ego-centric approach is always preferable, in terms of speed of convergence to normality (see Remark \ref{ind-ego-comp}). Remarkably, our results on Poisson approximation for $T_{N,l}(H)$, $l=1,2$, show that the thresholds on the sparsity levels $(\al,\beta)$, provided in $C_1(H)$ (cf. \eqref{c-set-ind}) and $C_2(H)$ (cf. \eqref{c-set-ego}) also turn out to be {\it necessary} for obtaining a Gaussian limit law. 

Obtaining the bound on the $d_1$ distance in \eqref{delta-bdd-temp} for an arbitrary choice of $H$ is a non-trivial task, especially in the ego-centric case. %obtained by using a well known Stein's method based upper bound on the $d_1$ distance for sums of dependent random variables which have a specific dependency structure (cf. Theorem 6.32 of \cite{janson-rucinski-randomgraphs} or \cite{bkr-89}). However, we face several difficulties in trying to use this result. One of the main challenges 
The first difficulty is to analyze the dependency within the components of $T_{N,l}(H)$ (cf. \eqref{TN-def-12}), which arises from the joint effect of the sampling design and the  network model. The strength of sampling based dependency is related to the number of common nodes while model based dependency is related to the number of common edges. There is no simple approach to relate these two sources of dependency. Secondly, we require the {\it exact} rate of growth of the variance $\sigma^2_{N,l}(H)$ (cf. \eqref{TN-def-12}) and to obtain this we have to identify the covariance term that has the highest contribution. This task is exceptionally difficult in the ego-centric case due to the presence of the minimum vertex cover size term. To address this issue, we have developed some novel upper and lower bounds on the minimum vertex cover size of the union of two or three arbitrary isomorphic copies of $H$. Lastly, we had to devise an approach to control the third order moments of the pivotal quantity in \eqref{TN-def-12} and compare them with the growth rate of the variance term $\sigma^2_{N,l}(H)$.

\subsection{Overview of Poisson based approximation results for estimated subgraph counts and results about the estimated clustering coefficient}
\label{sec:intro:pois}

As the sparsity levels $\beta$ and $\al$ increase and reach the boundaries of the sets $C_1(H)$ and $C_2(H)$ (see \eqref{c-set-ind} and \eqref{c-set-ego}), we observe sharp distributional transitions. We study the limit laws of $T_{N,l}(H)$, $l=1,2$, at these boundary values of $(\al,\beta)$, but focus only on those $H$, which are strictly balanced. As discussed earlier, in the ego-centric case, if $\al>0$, we assume that $H$ satisfies Assumption \ref{assump-5}. Most common target graphs are strictly balanced. Theorem \ref{thm-2-main-temp} provides three different types of Poisson distribution based limit laws for $T_{N,l}(H)$, $l=1,2$, depending on the values of $(\al,\beta)$ and the choice of $H$. Our results are exhaustive in the induced case. In the ego-centric setting, we provide a partially complete description, and some open questions remain. Our findings are listed below.

\begin{enumerate}
\item[(a)] When the sampling is dense ($\al=0$), Theorem \ref{thm-2-main-temp}(a) shows that both $T_{N,1}(H)$ and $T_{N,2}(H)$ have similar Poisson based limit laws at the extreme model sparsity level $\beta = R/T$. These limit laws are expressible as weighted sum of two independent Poisson random variables, and they differ only in their underlying parameters. In Figures \ref{fig:main:tri}(a) and \ref{fig:main:tri}(b), this corresponds to the boundary point $(\al,\beta)=(0,1)$ (marked by a solid black dot). 

\item[(b)] If the sampling is intermediately sparse ($\al\in (0,1)$), a second type of Poisson based limit law is obtained for $T_{N,1}(H)$, in the boundary region of $C_1(H)$ (see Theorem \ref{thm-2-main-temp}(b)), for all choices of $\al\in (0,1)$. In the ego-centric case, a similar type of Poisson based limit law (with different underlying parameters) is obtained for $T_{N,2}(H)$ in the boundary region of $C_2(H)$, when $\tau(H)=1$, {\it i.e.}, when $H = \kl_{1,R-1}$ (star graph). 

In case of $\tau(H)>1$, we obtain similar results for $T_{N,2}(H)$ at the boundary of $C_2(H)$, {\it but} our results are restricted and are only available for the range $\al\in (0,\al^\star_H)$, where $\al^\star_H\in (0,1)$ (cf. \eqref{al-star-def}) and depends on $H$. See Theorem \ref{thm-2-main-temp}(b) for more details. Thus, for $\tau(H)>1$, the limit law of $T_{N,2}(H)$ at certain boundary values of $C_2(H)$ remains unknown when $\al\in[\al^\star_H,1)$. 

\item[(c)] In the case of extremely sparse sampling ($\al=1$), too few sampled data points are available to obtain a proper limit law for $T_{N,1}(H)$ in the induced case. In the ego-centric case, we were able to obtain a Poisson based limit law for $T_{N,2}(H)$ only in the case of $\tau(H)=1$, {\it i.e.}, when $H=\kl_{1,R-1}$ is a star graph (see Theorem \ref{thm-2-main-temp}(c)). If $\tau(H)>1$, then the limit law of $T_{N,2}(H)$ remains unknown in the case of $\al=1$.

\end{enumerate}

To find these three types of limit laws, different techniques were used. This includes the application of multivariate size biased Poisson approximation results (cf. \cite{nualart2025}) in the context of network sampling to handle the dense sampling case and the use of total variation bounds for Poisson approximation (see Section \ref{sec:pois} for further details) to handle the intermediately sparse case. The emergence of these Poisson-based limit laws shows that the upper thresholds on the sparsity levels $(\al,\beta)\in C_l(H)$, $l=1,2$, which ensure a Gaussian limit law for $T_{N,l}(H)$, $l=1,2$, are also {\it necessary}. %Our results show that as soon as the model and sampling sparsity levels $(\al,\beta)$ exceed the upper thresholds in $C_l(H)$, $T_{N,l}(H)$ has a non-Gaussian limit law. 
An overview of the different criteria and restrictions that were used to derive the  Gaussian and Poisson based limit laws is provided in Table \ref{tab:limit-laws}. The conditions are classified according to the underlying sampling sparsity level. %The induced network formation approach can be used for statistical inference about $S_N(H)$ for any choice of $H$ and at all sampling sparsity levels, except in the extremely sparse case ($\al=1$), where no proper limit law can be obtained due to the finite amount of sampled data. Obviously, the quality of inferential accuracy degrades as the model and sampling sparsity levels increase. The ego-centric approach offers much higher inferential accuracy, but requires $H$ to satisfy the assumption \eqref{assump-5} when the sampling is intermediately sparse, {\it i.e.}, $\al\in (0,1)$. When the sampling is dense ($\al=0$), the ego-centric approach can be applied for {\it any} choice of $H$.   

\ctable[
  pos=!htbp,
  caption={An overview of conditions under which Gaussian and Poisson based limit laws for $T_{N,l}(H)$, $l=1,2$, are obtained at different sampling sparsity regimes.},
  label={tab:limit-laws},
  doinside={\scriptsize\setlength{\tabcolsep}{2pt}\renewcommand{\arraystretch}{0.95}}
]{%
  >{\centering\arraybackslash}p{0.15\textwidth}
  @{}
  >{\RaggedRight\arraybackslash}p{0.10\textwidth}
  @{}
  >{\RaggedRight\arraybackslash}p{0.15\textwidth}
  >{\RaggedRight\arraybackslash}p{0.35\textwidth}
  >{\RaggedRight\arraybackslash}p{0.15\textwidth}
}{
  \tnote[a]{We require $H$ to be strictly balanced.}
}{
\toprule
& &
\multicolumn{3}{c}{\textbf{Sampling sparsity level ($\alpha$)}} \\
\cmidrule(lr){3-5}

\parbox[c]{0.15\textwidth}{\centering\textbf{Network formation}\\\textbf{approach}} &
\textbf{Limit law} &
\multicolumn{1}{c}{\textbf{Dense} $(\alpha = 0)$} &
\multicolumn{1}{c}{\textbf{Intermediate} $(0 < \alpha < 1)$} &
\multicolumn{1}{c}{\textbf{Extreme} $(\alpha = 1)$} \\
\midrule

Induced  &
\textbf{Gaussian}  &
\multicolumn{2}{>{\RaggedRight\arraybackslash}p{0.50\textwidth}}%
{Need $(\al,\beta)\in C_1(H)$, for {\it any} choice of $H$. See Theorem \ref{thm-1-main}(a).} &
No limit law exists due to limited data. \\[4ex]
&
\textbf{Poisson based\tmark[a]}  & Only at $\beta = R/T$. See Theorem \ref{thm-2-main-temp}(a).  &
On $\beta = R(1-\al)/T$, for {\bf all} $\alpha\in (0,1)$. See Theorem \ref{thm-2-main-temp}(b). &
No limit law exists due to limited data.\\

\midrule

Ego-centric &
\textbf{Gaussian} &
\multicolumn{2}{>{\RaggedRight\arraybackslash}p{0.50\textwidth}}%
{Need $(\al,\beta)\in C_2(H)$. When $\al\in (0,1)$, $H$ needs to satisfy Assumption \ref{assump-5}. If $\al=0$, {\it any} choice of $H$ is allowed. See Theorem \ref{thm-1-main}(b).} &
A Gaussian limit law cannot arise.  \\[5ex]
&\textbf{Poisson based\tmark[a]} 
&
Only at $\beta = R/T$. See Theorem \ref{thm-2-main-temp}(a). 
&
On $\beta = R\!\left(1-\frac{\tau(H)\alpha}{R}\right)/T$, at {\bf all} $\alpha\in (0,1)$,
if $\tau(H)=1$. {\bf Only} for $\alpha\in (0,\alpha^\star_H)$ (see \eqref{al-star-def}) if $\tau(H)>1$. Boundary cases are unknown when
$\tau(H)>1$ and $\alpha\in [\alpha^\star_H,1)$. Require $H$ to satisfy Assumption \ref{assump-5}. See Theorem \ref{thm-2-main-temp}(b). 
&
Results are known only for $\tau(H)=1$ for $\beta\in [0,1)$. Limit laws are unknown when $\tau(H)>1$. See Theorem \ref{thm-2-main-temp}(c). \\

\bottomrule
}

As an application of the limit law results for subgraph counts we derive the asymptotic distribution of the sample based clustering coefficient statistic, in both induced and ego-centric cases (see Theorems \ref{thm-cc-ind} and \ref{thm-cc-ego}). The clustering coefficient is an important measure of network cohesion (see \cite{kolaczyk09}). Various researchers have focused on the estimation of the clustering coefficient through network sampling (cf. \cite{bliss-14}, \cite{lee2006}, \cite{illenberger-12}). %, \cite{hardiman2013}, \cite{iwasaki2018}). 
To the best our knowledge, the asymptotic distribution of the appropriately centered sample based clustering coefficient, which is a function of both the sample based and population based wedge and triangle counts, has not been studied in the existing literature under any sort of framework or under any type of sampling scheme. Our results show that for both induced and ego-centric cases, the estimated clustering coefficient has a Gaussian limit law if the sparsity levels remain below the thresholds defined by $C_{l}(\kl_3)$, $l=1,2$ (cf. \eqref{c-set-ind}, \eqref{c-set-ego}). In the boundary regions of $C_l(\kl_3)$, we obtain various Poisson-based limit laws. The Gaussian limit law is driven by the joint effect of the wedge ($\kl_{1,2}$) and triangle counts, if $(\al,\beta)\in C_{1,1}(\kl_3)$ (induced case) and if $(\al,\beta)\in C_{2,1}(\kl_3)$ (ego-centric case). If $(\al,\beta)$ move beyond these sub-regions, the Gaussian limit law is influenced only by the triangle count. The asymptotic variance expressions are complicated, and the details are provided in Section \ref{sec:cc}.

Using these limit laws, we obtain asymptotic prediction intervals for the population subgraph counts and the clustering coefficient. A simulation study explores the coverage accuracy of these prediction intervals at various levels of model sparsity and sampling sparsity, including extreme and intermediate levels. Our simulations strongly support our theoretical results. One of the primary challenges is to compute the variance term $\sigma^2_{N,l}(H)$ (cf. \eqref{TN-def-12}), for any arbitrary choice of $H$. In Remark \ref{rem:var-comp} we provide detailed expressions of the variance $\sigma^2_{N,l}(H)$, for $l=1,2$, for {\it any} choice of $H$. The code for computing this complicated variance expression is also included in the
\href{https://github.com/AnirbanM13/subgraph-count-estimate}{github page}, along with all other codes and data used in this article. We also successfully use our proposed methodology to the {\it 2004 US political blog network} data from \cite{adamic2005}, which is known to have a block structure (cf. \cite{le2019estimating}, \cite{Amini_2013}).

%ged in estimates are used in the prediction intervals. settings. prediction intervals depend on unknown model (SBM) parameters and we replace them by naive sample based estimators. This includes the number of classes $K$ in the SBM. Our simulation results strongly support the theoretical results. The simulations indicate that the prediction intervals can tolerate some amount of model misspecification, and they continue to have good coverage even if the number of classes in the SBM is incorrectly estimated. In Section xx we analyse the {\it 2004 US political blog networks} data from \cite{adamic2005}. The data set is known to have a block structure and it has been used earlier in the SBM literature (cf. \cite{le2019estimating}, \cite{Amini_2013}).

The remainder of the article is organized as follows. The sparse SBM and the sparse node sampling scheme are introduced in Section \ref{sec:inf-frame}, where we provide reasons for using the joint model and design based  inferential approach. In Section \ref{sec:main}, we provide the main theoretical results of this article. This includes CLT's for $T_{N,l}(H)$, $l=1,2$, along with the decay rates for the $d_1$ distance, a description of the Poisson-based limit laws at the boundary of $C_l(H)$, $l=1,2$, and limit laws for the estimated clustering coefficient in both induced and ego-centric cases. A detailed simulation study and real data analysis is provided in Section \ref{sec:sim}. The Appendix contains proofs of main results, supplementary lemmas, description of essential results from other sources, and formulas for $\sigma^2_{N,l}(H)$ (cf. \eqref{TN-def-12}) for some common target subgraphs.

\section{The model for the population network and the node sampling scheme}
\label{sec:inf-frame}

Consider a population network on $N$ nodes, labeled  $1,\ldots,N$, and a symmetric binary relation among pairs of nodes. The terms {\it vertex} and {\it node} will be used interchangeably. We will represent this population network as an undirected, simple graph $G_N$, with a vertex set $V(G_N) = \{1,\ldots,N\}=[N]$ and an edge set $E(G_N)$. An edge exists (does not exist) between two nodes if the binary relation exists (does not exist). Let ${[V(G_N)]}^2$ denote the set of all two element subsets of $V(G_N)$. Then $E(G_N)\subseteq {[V(G_N)]}^2$, is a collection of two element subsets of $V(G_N)$ (cf. \cite{diestel2018graph}). An edge between nodes $i$ and $j$ will be represented by the two element set $\{i,j\}$. The network $G_N$ can be described in terms of a $N\times N$ symmetric adjacency matrix $\Yb_N$, where
\begin{equation}
\begin{aligned}
\Yb_N  =((Y_{N,i,j}:1\leq i,j\leq N)),\quad\text{and}\quad
Y_{N,i,j}  = Y_{N,j,i}= \begin{cases}
    1 & \text{if $\{i,j\}\in E(G_N)$,}\\
    0 & \text{if $\{i,j\}\not\in E(G_N)$.}
\end{cases} 
\end{aligned}
\label{yNdef}
\end{equation}
We assume that the network does not contain multiple edges and self-edges, {\it i.e.}, $Y_{N,i,i} = 0$, for each $i\in [N]$. 

In this article we use a model based approach and assume that the population network is stochastically generated from a {\it sparse} Stochastic Block Model (SBM). The SBM was proposed by \cite{holland1983} and it assumes that nodes in the population network are split into $K$ disjoint classes and edge probabilities depend on the class label of each node. We will assume $K\geq 2$ and $K$ remains fixed even as $N$ increases. For each $i\in [N]$, we assume that the $i$-th node in the population has an unique and non-stochastic class label $\al_i\in [K]=\{1,\ldots,K\}$, and $\al_i$ does not depend on $N$. Stochastic block models with fixed class labels have been widely used (see \cite{abbe2018community}, \cite{rohe2011spectral}, \cite{lei2015consistency}, \cite{celisse2012consistency}, \cite{anderson1992}), especially in the community detection literature. We define a sequence of $K\times K$ symmetric matrices of class-wise edge probabilities
\begin{equation}
 \PiB_N  = ((\pi_{N,i,j}, 1\leq i,j\leq K)),\quad \text{for all $N\geq 1$,}
    \label{pi-mat-def}
\end{equation}
where $\pi_{N,i,j}$ denotes the probability of an edge between a node with class label $i$ and another node with class label $j$, in the population network $G_N$. We assume that the probabilities $\pi_{N,i,j}$ can depend on the size ($N$) of population network. %For simplicity of notation, we will suppress the subscript $N$ and write $\pi_{N,i,j} = \pi_{i,j}$ for all $1\leq i,j\leq K$. 
We also assume that the edges $\{Y_{N,i,j}:1\leq i<j\leq N\}$ are independently generated and 
\begin{align}
Y_{N,i,j}~{\sim} \text{ Bernoulli }(\pi_{N,\al_i,\al_j})\quad \text{for all $1\leq i<j\leq N$ and for every $N\geq 1$.}
\label{sbm:def}
\end{align}
If $K = 1$, the SBM reduces to the Erd\H{o}s-R\'{e}nyi model (cf. \cite{erdos-renyi60}). For ease of reference, we summarize the model description given above in the form of the following assumption.
\begin{enumerate}[label={(A.\arabic*)}]
\setcounter{enumi}{0}
\item\label{assump-1} We assume that the population network $G_N$ is stochastically generated by a $K$-class Stochastic Block Model with edge probability matrix $\PiB_N$ (cf. \eqref{pi-mat-def}), where $K(\geq 2)$ is fixed and does not depend on $N$. Each node in $G_N$ has a fixed and non-stochastic class label $\al_i\in[K]$, which does not depend on $N$. We also assume that the edges in $G_N$ are independently generated as per \eqref{sbm:def}.
\end{enumerate}
For simplicity of notation, we will drop the subscript $N$ and write $Y_{N,i,j} = Y_{i,j}$, for all $1\leq i,j\leq N$ and all $N\geq 1$. Write
\begin{align}
N_k = \sum_{i=1}^N \mathbf{1}(\al_i = k),\quad\text{and}\quad \la_{N,k} = \frac{N_k}{N},\quad\text{for all $k\in[K]$ and $N\geq 1$.}
\label{Nkdef}
\end{align}
These denote the count and the proportion of nodes in $G_N$, which have class label $k$. We assume the following:
\begin{enumerate}[label={(A.\arabic*)}]
\setcounter{enumi}{1}
\item\label{assump-2} There exists a collection of constants $\la_1,\ldots,\la_K\in (0,1)$, which satisfy the constraint $\sum_{k=1}^K \la_k = 1$, and the class size proportions $\lambda_{N,k}\rightarrow \lambda_k$, as $N\rai$, for each $k\in [K]$.
\end{enumerate}
Assumption \ref{assump-2} describes the growth rate of $N_k$ and assumes that the size of each class in the population grows linearly with $N$, as $N\rai$. It ensures that none of the $K$ classes in the population are negligible. If $\lambda_{k_0} = 0$, for some $k_0\in[K]$, then we can drop that class and work with the remaining $(K-1)$ classes. 

We focus on the SBM as it is one of the simplest network models which preserves edge independence, and yet it is flexible to approximate more complex network models (cf. \cite{airoldi2013stochastic},  \cite{olhede2014network}, \cite{matias2014modeling}). Edge independence has been the building block of other types of network models (cf. \cite{sischka2023graphon}, \cite{athreya2018statistical}). Additionally, SBM's are widely used in applications (cf. \cite{lee2019review}, \cite{nicola-22}, \cite{abbe2018community}). \cite{lee2019review} provide a detailed review about stochastic block models, its variations and applications. We now introduce {\it sparsity} into the SBM through the following assumption. 
\begin{enumerate}[label={(A.\arabic*)}]
\setcounter{enumi}{2}
\item\label{assump-3} There exists a constant $\beta \geq 0$, and a $K\times K$ symmetric matrix $\mathbf{C} = ((c_{i,j}:1\leq i,j\leq K))$, such that the edge probability matrix $\PiB_N$ defined in \eqref{pi-mat-def} satisfies
\begin{equation}
\left.\begin{aligned}
&\lim_{N\rai} N^\beta\cdot \boldsymbol{\Pi}_N = \Cb,\quad\text{where,}\\
& c_{i,j}\in [0,\infty),\quad\text{for all $1\leq i,j\leq K$,}\quad\text{and}\quad \max\{c_{i,i}:1\leq i\leq K\}\in (0,\infty).
\end{aligned}\right\}
\label{PiN-lim}
\end{equation}
To avoid trivialities we assume that there exists at least two unique entries in $\Cb$. If $\beta=0$, we assume $\PiB_N = \PiB$ for all $N\geq 1$, and $\Cb = \PiB$, where $\PiB$ is a fixed edge probability matrix. 
\end{enumerate}
Assumption \ref{assump-3} implies that if $\beta = 0$, then some edge probabilities in $\PiB$ remain positive and as a consequence we obtain a dense SBM through \eqref{sbm:def}. If $\beta>0$, then all edge probabilities in $\PiB_N$ decay to zero as $N\rai$, resulting in a sparse population network. The quantity $\beta$ is called the model sparsity level and it is introduced to quantify the amount of sparsity in the network model. Several researchers have constructed sparse network models in this manner by introducing such an extraneous sparsity controlling parameter (cf. \cite{pjb-chen-2011}, \cite{lei2015consistency}, \cite{sb-pjb-15}, \cite{agterberg2025}, \cite{stewart-aos-2025}, \cite{coulson2016}, \cite{otsu-24}). Similar approaches have been extensively used to model sparse Erd\H{o}s-R\'{e}nyi random graphs (see \cite{janson-rucinski-randomgraphs}). When $\beta>0$ and $c_{i,j}=0$, then $\pi_{N,i,j}\ll N^{-\beta}$. If $c_{i,j}>0$, then $\pi_{N,i,j}$ has the same rate as $N^{-\beta}$. As per Assumption \ref{assump-3}, some entries of $\Cb$ remain positive and all entries of $\Cb$ are finite. Thus, there cannot exist two different choices of $\beta$ which satisfy \eqref{PiN-lim}. We require $\max_{i\in [K]} c_{i,i}>0$, to ensure that all asymptotic variance terms obtained in our theoretical results remain positive. This is a very mild requirement.

Now we describe the sparse Bernoulli node sampling scheme and the induced and ego-centric network formation mechanisms. We make the following assumption about the Bernoulli node sampling scheme.
\begin{enumerate}[label={(A.\arabic*)}]
\setcounter{enumi}{3}
\item\label{assump-4} Each of the $N$ nodes in the population network $G_N$ is sampled independently with probability 
\begin{align}
    p_N = \frac{c}{N^\alpha},\quad\text{for some $\al\in[0,1]$ and some $c>0$, for all $N\geq 1$.}
\label{pN-def}
\end{align}
If $\al = 0$, we assume $p_N = c$, and $c\in (0,1)$.
\end{enumerate}
If $\al=0$, the node sampling scheme is called {\it dense}, otherwise we call it {\it sparse}. We consider three levels of sampling sparsity: $\al = 0$ (dense), $\al\in (0,1)$ (intermediate) and $\al = 1$ (extreme). 
%as nodes are sampled with a fixed probability $c\in (0,1)$, which implies that the expected number of sampled nodes $cN$ increases at the {\it same} rate as the population network size $N$. When $\al\in (0,1]$, we call the node sampling scheme {\it sparse}, because the expected number of sampled nodes $cN^{1-\al}$ increases at a much slower rate in comparison to $N$. In the extreme case of $\al = 1$, the number of sampled nodes remains fixed and does not increase even when $N$ keeps increasing. 
Obviously, a dense sampling scheme with $\al = 0$ is always preferable if adequate resources are available. However, if there are resource constraints, it can limit the number of nodes that can be sampled (cf. \cite{leskovec06}, \cite{lee2006}, \cite{luo-15}). For example, if $N$ is extremely large, it may be sensible to use a probability of node selection that depends on the size of the population $N$. The sparse node sampling framework is introduced to understand the effects of network sampling under resource constraints. We quantify the effects of sparse and dense node sampling and how their effects are related to the structure of the target graph, the sparsity level of the population model, and the network formation mechanism. In many situations, accurate inferential conclusions can be obtained even when the model or the sampling, or both, are sparse. To the best of our knowledge, this type of sparse node sampling framework has not been studied in the existing literature on network sampling, especially in a model based setup. The sparse sampling approach provides a more finer understanding of the node sampling methodology and its effects. We define the node inclusion indicators
\begin{equation}
\begin{aligned}
W_{N,i} &= \begin{cases}
    1 & \text{if the $i$-th node is selected,}\\
    0 & \text{otherwise,}
\end{cases}\quad \text{for all $i\in[N]$, and write}\\
\Wb_N & = {(W_{N,1},\ldots,W_{N,N})}^\primet,\quad \text{for each $N\geq 1$.}
\end{aligned}
\label{WNi}
\end{equation}
Note, $\{W_{N,i}:i\in [N]\}$ are independent and identically distributed (i.i.d.) Bernoulli $(p_N)$ random variables (see Assumption \ref{assump-4}) for all $N\geq 1$. Define the maps,
\begin{align}
h_1,h_2:\{0,1\}\times \{0,1\}\mapsto \{0,1\},\quad\text{where,}\quad h_1(x,y) = x\cdot y,\quad \text{and}\quad h_2(x,y) = \max\{x,y\}.
\label{h1h2def}
\end{align}
Once the nodes are sampled, we consider two distinct network formation methods based on the sampled nodes.
\begin{enumerate}
\item[(a)] Induced subgraph formation: the $(i,j)^{th}$ entry of $\Yb_N$ is observed in the induced subgraph formation approach if both nodes $i$ and $j$ are sampled, {\it i.e.}, $h_1(W_{N,i},W_{N,j})= W_{N,i}W_{N,j}=1$.
\item[(b)] Ego-centric subgraph formation: the $(i,j)^{th}$ entry of $\Yb_N$ is observed in the ego-centric subgraph formation approach if either of the nodes $i$ and $j$ is sampled, {\it i.e.}, $h_2(W_{N,i},W_{N,j})= \max\{W_{N,i},W_{N,j}\}=1$. 
\end{enumerate}

%\begin{enumerate}
%\item[(S.1)] Induced subgraph formation: we create a subgraph of $G_N$ by observing the edges within the selected set of nodes. Thus, we observe
%\begin{equation}
%\left\{Y_{i,j}:(i,j)\in I_{1,N}\right\}\quad\text{where}\quad I_{1,N}=\{(i,j)\in{[N]}^2: W_{N,i}W_{N,j} = 1\}.
%\label{s1-def}
%\end{equation}
%Under the induced subgraph based sampling scheme, in order to observe the $(i,j)$-th entry of $\Yb_N$, both nodes $i$ and $j$ should be selected.

%\item[(S.2)] Ego-centric subgraph formation: we create a subgraph of $G_N$ by observing the edges which are incident on the selected nodes. Thus, we observe
%\begin{equation}
%\left\{Y_{i,j}:(i,j)\in I_{2,N}\right\}\quad\text{where}\quad I_{2,N} = \{(i,j)\in{[N]}^2:\max\{W_{N,i},W_{N,j}\} = 1\}.
%\label{s2-def}
%\end{equation}
%Under the ego-centric sampling scheme (also known as {\it star}-sampling), the value of $Y_{i,j}$ can be observed if at least one among vertex $i$ or vertex $j$ is selected. Compared to induced subgraph sampling, the ego-centric approach provides a larger observed data set. If $r$ nodes get selected, then the induced subgraph approach allows us to observe $r(r-1)/2$ entries within $\Yb_N$, while the ego-centric approach allows to observe $r(N-1)/2$ entries. Figure \ref{fig-0} provides a toy example, comparing the observed datasets under the above two subgraph formation mechanisms.
%\end{enumerate}

Node sampling schemes are very common (cf. \cite{kolaczyk09}) and both induced and ego-centric sampling schemes were considered in \cite{handcock-gile10}, \cite{ko-jecon24} and \cite{agc-lewis-16}, while \cite{bhattacharya2022} only considered induced subgraph formation. Note, $\Wb_N$ (cf. \eqref{WNi}) is independent of the population adjancency matrix $\Yb_N$. %If we consider the class labels $\{\al_i\}$ as auxiliary information, then using \eqref{sbm:def} and \eqref{pi-mat-def}, for any $i\neq j\in [N]$, we obtain
As pointed out by \cite{handcock-gile10}, network sampling based inference can be carried out using a design based (cf. \cite{frank2005}, \cite{franksjs-78}) or a model based approach (cf. \cite{thompson-frank-2000}, \cite{vincent-21}, \cite{snijders1992estimation}, \cite{gile2015}, \cite{handcock-gile10}). In our setting, the node sampling scheme is independent of the population network model. We need to decide whether the effect of the sampling design should be included in our inferential framework, along with population model, when our primary objective is to predict population network summary statistics. The SBM formulation described above (see Assumption \ref{assump-1}) uses {\it non-stochastic} and fixed class labels $\{\al_i:i\in[N]\}$. This implies, the distributions of $Y_{i,j}$, $1\leq i<j\leq N$, are not identical and the nodes in $G_N$ are {\it not} homogeneous in terms of their edge forming characteristics. In fact, when the class labels are fixed, the SBM is no longer {\it vertex exchangeable} (see \cite{crane18}, \cite{sischka2023graphon}) and the stochastic block model formulation used in Assumption \ref{assump-1} cannot be cast in a graphon based framework. This implies, ignoring the design effect by conditioning on $\Wb_N$ (see \eqref{WNi}) and focusing on a {\it pre-selected} set of nodes would create bias. This type of bias can be removed if we include the effects of the sampling design (along with the population model) in our inferential framework. Joint design and model based approaches are frequently used in survey sampling (see Chapter 39 of \cite{pfeffermann-2000}, \cite{rubin-2005}, \cite{boistard2017}). Recall the definition of $h_1$ and $h_2$ from \eqref{h1h2def} and write
\begin{align}
{I}_{l,N} = \{(i,j)\in [N]^2: h_l(W_{N,i},W_{N,j}) = 1\}\quad \text{and}\quad \mathcal{Y}_{l,N} = \{Y_{U,V}:(U,V)\in I_{l,N}\},\quad \text{for $l=1,2$.}
\label{Ir-def}
\end{align}
Here, $I_{l,N}$ and $\mathcal{Y}_{l,N}$ denote the collection of indices $(i,j)$ and the corresponding $Y_{i,j}$ values, which have been observed through the induced ($l=1$) and the ego-centric ($l=2$) approaches. Note, for any $(U,V)\in I_{l,N}$, $\pr(Y_{U,V}=1)\neq \pr(Y_{i,j}=1)$, for any $(i,j)\in {[N]}^2$. Also, if $(U,V_1),(U,V_2)\in I_{l,N}$, then $Y_{U,V_1}$ and $Y_{U,V_2}$ are dependent, even though the $Y_{i,j}$'s in the population remain independent (see \eqref{sbm:def}). Thus, once the effect of sampling design is included, the original distribution of the $Y_{i,j}$'s is distorted and the observed data no longer remain independent. This also implies that asymptotic results developed in a design based framework (with a non-stochastic population adjacency matrix) can not be carried over trivially to a joint model and design based framework.

\section{Theoretical results}
\label{sec:main}

The main theoretical results of this paper are presented in this section. Firstly, we introduce some required notation and definitions. For any positive integer $m$, we write $[m]=\{1,\ldots,m\}$ to denote the set of the first $m$ positive integers. For any two sequences of positive real numbers $\{a_N:N\geq 1\}$ and $\{b_N:N\geq 1\}$, we write $a_N\asymp b_N$, if $a_N/b_N\rightarrow 1$, as $N\rai$. Similarly, we write $a_N\sim b_N$, if there exist constants $0<m_1 < m_2<\infty$, and an integer $N_0\geq 1$, such that $m_1<a_N/b_N <m_2$, for all $N\geq N_0$. We will always assume that the target subgraph $H$ is a fixed, simple, undirected and connected graph, with vertex set $V(H) = [R]$ and edge set $E(H) \subseteq {[V(H)]}^2$. Thus, we will not consider target graphs $H$ with isolated nodes. We assume $R(\geq 2)$ is a fixed positive integer and  $N\geq R$. The population network $G_N$ is represented as a graph with adjacency matrix $\Yb_N = ((Y_{i,j}:1\leq i,j\leq N))$ (cf. \eqref{yNdef}). Recall that $V(G_N) = [N]$. Define the following collections of $R$ dimensional vectors,
\begin{equation}
\begin{aligned}
&{[N]}^R = \left\{\mathbf{s}={\left(s_1,\ldots,s_{R}\right)}^\primet:s_i\in [N],\ \text{for all $i\in[R]$}\right\},\quad 
{[N]}_{R} = \left\{\mathbf{s}\in {[N]}^R:\text{$s_1,\ldots,s_{R}$ are distinct}\right\},\ \text{and}\\
&{[N]}_{<,R} = \left\{\mathbf{s}\in {[N]}_R:s_1<s_2<\cdots <s_R \right\}.
\label{n-sub-r}
\end{aligned}
\end{equation}
For any $\ub = {(u_1,\ldots,u_m)}^\primet\in \mathbb{R}^m$, we write $A(\ub)$ to denote the set of unique components of $\ub$. For example, if $\ub = {(1,4,3,1,3)}^\primet$, then $A(\ub)=\{1,3,4\}$. Two graphs $H_1$ and $H_2$ with vertex sets $V(H_1)$ and $V(H_2)$, and edge
sets $E(H_1)$ and $E(H_2)$ are said to be isomorphic to each other, if there exists a bijection $\phi: V(H_1)\mapsto V(H_2)$,
such that $\{u,v\}\in E(H_1)$ if and only if $\{\phi(u),\phi(v)\}\in E(H_2)$. We use the notation $H_1\simeq H_2$ to denote an isomorphism between $H_1$ and $H_2$. If $V(H_1)=V(H_2)$ and $H_1\simeq H_2$, then $\phi$ is called an automorphism. If $H_1$ and $H_2$ are automorphic, then $E(H_1)=E(H_2)$. We now define the edge densities of $H$ and its subgraphs. Fix a $t\in \{2,\ldots,R\}$ and write let $B_1,\ldots,B_{\binom{R}{t}}$ denote the collection of all possible $t$-element subsets of $[R]$. Consider the induced subgraph of $H_r$ of $H$, formed by the nodes in $B_r$, {\it i.e.}, $V(H_r) = B_r$ and $E(H_r) = \left\{\{i,j\}\in E(H):~i,j\in B_r\right\}$. Define, 
\begin{align}
m(t:H) = \max\left\{|E(H_r)|: r = 1,\ldots, \binom{R}{t}\right\},\quad \text{for all $t=2,\ldots,R$.} %\left|E\left(H_{r^\star(t:H)}\right)\right|.
\label{mtH-def}
\end{align}
Thus, $m(t:H)$ is the maximum number of edges contained in {\it any} $t$-vertex induced subgraph of $H$, and $m(t:H)/t$ denotes the highest {\it edge density} arising from any such $t$-vertex induced subgraph. It can be computed for any given choice of $t$ and $H$ and analytical expressions can be found for specific types of $H$. Note, the maximum in \eqref{mtH-def} can be attained at different choices of $H_r$. The {\it maximum edge density of $H$} (see pp. 56 of \cite{janson-rucinski-randomgraphs}) is defined as   
\begin{align}
m(H) = \max_{2\leq t\leq R} \frac{m(t:H)}{t}.
\label{def-mH}
\end{align}
We will define $m(1:H) = 0$, for any choice of $H$. A graph is said to be \textit{strictly balanced} if $t\mapsto m(t:H)/t$, is {\it uniquely maximized} at $t=R$ (see pp. 64 of \cite{janson-rucinski-randomgraphs}). Most common target graphs are strictly balanced, {\it viz.}, complete graph, 
star graph, line graph. The {\it vertex cover} (see pp. 36 of \cite{diestel2018graph}) of a graph $H$ is a set $D\subseteq V(H)$ such that 
$$
\{u,v\}\in E(H) \quad\Rightarrow \quad u \in D ~\text{or} ~v \in D.
$$
A set $D\subseteq V(H)$ is called a {\it minimum vertex cover} of $H$ if, $D$ is a vertex cover of $H$ and for any other vertex cover $\tilde{D}$ of $H$, $|D|\leq |\tilde{D}|$. A minimum vertex cover set need not be unique. The {\it size of the minimum vertex cover} of $H$ is defined as 
\begin{equation}
  \label{def-tauH}
  \tau(H) = |D|,\quad\text{where $D$ is a minimum vertex cover of $H$.}
\end{equation}
One can show that $\tau(H) = 1$ if and only if $H$ is a star graph (see Lemma \ref{lem-4}).

Let $S_N(H)$, $\widehat{S}_{N,1}(H)$ and $\widehat{S}_{N,2}(H)$ denote the subgraph counts for $H$ in the population network $G_N$, within the sample based network formed by the induced approach, and within the sample based network formed by the ego-centric approach, respectively. They can be expressed as 
\begin{equation}
\begin{aligned}
S_N(H) &= \sum_{\mathbf{s}\in {[N]}_{R}} \prod_{\{i,j\}\in E(H)} Y_{s_i,s_j},\quad\text{and}\quad
\widehat{S}_{N,l}(H)  = \sum_{\mathbf{s}\in {[N]}_{R}} \prod_{\{i,j\}\in E(H)} h_l\left(W_{N,s_i},W_{N,s_j}\right)\cdot Y_{s_i,s_j},\quad \text{for $l=1,2$,}
\end{aligned}
\label{s-all-main}
\end{equation}
where, $h_l$, $l=1,2$, are defined in \eqref{h1h2def}. The standardized sample based subgraph counts $T_{N,l}(H)$ and $T_{N,2}(H)$ have been defined in \eqref{TN-def-12}. Let ${W}_1,\ldots,{W}_R$, denote a collection of i.i.d. Bernoulli $(p)$ random variables, where $p\in (0,1)$. For a given target graph $H$, with edge set $E(H)$, we define 
\begin{align}
\label{fl-def}
    f_l(p:H) = \E\left(\prod_{\{i,j\}\in E(H)}h_l\left({W}_i,{W}_j\right)\right),\quad\text{for $l=1,2$.}
\end{align}
where $h_1$ and $h_2$ are defined in \eqref{h1h2def}. Since $H$ is connected (with $R = |V(H)|$), we can write
\begin{align}
f_1(p:H) = p^R\quad\text{and}\quad f_2(p:H) = \sum_{D\in \mathcal{D}} p^{|D|}\cdot {(1-p)}^{(R-|D|)},
\label{f2-def}
\end{align}
where $\mathcal{D}$ is the collection of all vertex covers of $H$. When $p_N\rightarrow 0$, then  $f_2(p_N:H)\sim p^{\tau(H)}_N$. 
%\begin{align}
%\begin{aligned}
% & \left.\begin{aligned}
% g_{1,\al,\beta,H}(t) & = \displaystyle{}-t + t\cdot \al + m(t:H)\cdot \beta\displaystyle,\quad\text{and}\\
% g_{2,\al,\beta,H}(t) & = \displaystyle{}-t + \min\{\tau(H),t\}\cdot \al + m(t:H)\cdot \beta\displaystyle
% \end{aligned}\right\},\ \text{for $t=1,\ldots,R$, and}\\
% & t_{l}(\al,\beta;H) \in \operatorname{argmax}\left\{g_{l,\al,\beta,H}(t):t=1,\ldots,R\right\},\quad\text{for $l = 1,2$.}
%\end{aligned}
%\label{bdd-main-new}
%\end{align}
The pivotal quantities $T_{N,l}(H)$, $l=1,2$, are defined in \eqref{TN-def-12}. The $d_1$ distance between two real valued random variables $X$ and $Y$ is defined as (see pp. 158 of \cite{janson-rucinski-randomgraphs} and \cite{bkr-89})
\begin{align}
 d_1(X,Y) = \sup\left\{|\E h(X) - \E h(Y)|: \sup_{x\in\rl} \left(|h(x)| + |h^{(1)}(x)|\right)\leq 1\right\}.
 \label{d1-def}
\end{align}
The $d_1$ distance uses the class of all bounded real valued maps $h:\rl\mapsto \rl$, with a bounded derivative as the class of test functions. Such a map $h$ is also uniformly continuous. From Theorem 25.8 of \cite{bill-95} it now follows that convergence in the $d_1$ distance implies convergence in distribution. Next, we describe a crucial assumption which allows us to connect sets of vertices within $V(H)$, which provide a minimum vertex cover of $H$ and simultaneously form an induced subgraph of $H$ with the highest possible number of edges. %We now provide a bound on the $d_1$ distance $\Delta_{N,l}(H;\al,\beta)$, $l=1,2$, (cf. \eqref{eq-limlaw}).

\begin{enumerate}[label={(A.\arabic*)}]
\setcounter{enumi}{4}
    \item \label{assump-5} Let $\mathcal{D}_{\min}(H)$ denote the collection of all minimum vertex covers of $H$. For any $t\in \{2,\ldots,R\}$, define the following sub-collection of $t$ vertex induced subgraphs of $H$ which have the highest possible number of edges, 
\begin{align}
\label{Gcal-t}
    \Gc_{t,H} = \left\{H_r: r = 1,\ldots, \binom{R}{t},\quad |E(H_r)| =m(t:H)\right\},
\end{align}
where $m(t:H)$ is defined in \eqref{mtH-def}. For every $t\in \{2,\ldots,R\}$ we assume the following:
\begin{enumerate}
    \item[(i)] If $t\geq \tau(H)$, there exists a $H_{r_0}\in \Gc_{t,H}$ and a $D_0\in\mathcal{D}_{\min}(H)$, such that $D_0\subseteq V(H_{r_0})$.
    \item[(ii)] If $t<\tau(H)$, there exists a $H_{r_0}\in \Gc_{t,H}$ and a $D_0\in\mathcal{D}_{\min}(H)$, such that $V(H_{r_0})\subseteq D_0$.
\end{enumerate}
The choice of $D_0$ is allowed to depend on $t$.
\end{enumerate}

In case $t\geq \tau(H)$, Assumption \ref{assump-5} implies that there exists at least one collection of $t$ vertices, say $B_{r_0} \subset [R]$, such that the corresponding induced subgraph $H_{r_0}$ has $m(t:H)$ edges and $B_{r_0}$ also contains a minimum vertex cover of $H$. For $t<\tau(H)$, there exists a minimum vertex cover of $H$ and a vertex collection $B_{r_0}$, such that $B_{r_0}$ is contained within the minimum vertex cover and the induced subgraph $H_{r_0}$ has $m(t:H)$ edges. One can easily check that two of the most common target subgraphs, the complete graph $\kl_R$ and the star graph $\kl_{1,R-1}$ satisfy Assumption \ref{assump-5}. For other choices of $H$ a direct verification is needed.

\begin{theorem}[A bound on the $d_1$ distance]\label{thm-1-main}
Let $Z\sim N(0,1)$. Recall the definitions of $g_{l,\al,\beta,H}(\cdot)$ and $t_l(\al,\beta;H)$, for $l=1,2$, from \eqref{delta-bdd-temp}. Suppose that Assumptions \ref{assump-1}, \ref{assump-2}, \ref{assump-3} and \ref{assump-4} hold. Fix any $\al,\beta\geq 0$ and consider any target subgraph $H$. For both $l=1$ (induced case) and $l=2$ (ego-centric case), there exists a constant $b_l\in (0,\infty)$ and a positive integer $N_{0,l}$, which depend on the model parameters, sampling and model sparsity levels $\al$ and $\beta$, and the edge structure of the target subgraph $H$, such that
\begin{align}
d_1\left(T_{N,l}(H),Z\right) = \Delta_{N,l}(H;\al,\beta) \leq b_l\cdot \displaystyle N^{~\displaystyle \left\{g_{l,\al,\beta,H}\left(t_l(\al,\beta;H)\right)/2\right\}\displaystyle}\displaystyle,\quad\text{for $N\geq N_{0,l}$ and $l=1,2$.}
\label{bdd-2-ind-ego}
\end{align}
In addition to the above, if $\al>0$, then \eqref{bdd-2-ind-ego} holds in the ego-centric case ($l=2$), if the target subgraph $H$ satisfies Assumption \ref{assump-5}.

\end{theorem}

The bound in \eqref{bdd-2-ind-ego} shows that if the exponent of $N$ becomes negative, then the $d_1$ distance $\Delta_{N,l}(H;\al,\beta)$, $l=1,2$, will converge to zero. The definition of $g_{l,\al,\beta,H}(\cdot)$ in \eqref{delta-bdd-temp} shows that their values increase if the sparsity levels $\beta$ and $\al$ become higher. So, in order to ensure $g_{l,\al,\beta,H}(t_l(\al,\beta;H))<0$, for $l=1,2$, we need to place upper thresholds on the sparsity levels. Precisely the same is ensured if $(\al,\beta)\in C_l(H)$, for $l=1$ and $l=2$ (see \eqref{c-set-ind} and \eqref{c-set-ego}). For example, in the ego-centric case, from the definition of $g_{2,\al,\beta,H}(\cdot)$ in \eqref{delta-bdd-temp} we can write
\begin{align}
g_{2,\al,\beta,H}(t) &= t\cdot\left[-1 + \frac{\al}{\frac{t}{\min\{\tau(H),t\}}} + \frac{\beta}{\frac{t}{m(t:H)}}\right] < 0,\quad\text{for $t=2,\ldots,R$,}
\label{g2-bdd-ego}
\end{align}
and also at $t=1$, if $(\al,\beta)\in C_2(H)$. This is equivalent to $g_{2,\al,\beta,H}(t_2(\al,\beta;H))<0$. A similar argument works in the induced case. As a result, we obtain the following CLT for $T_{N,1}(H)$ and $T_{N,2}(H)$.

\begin{cor}[CLT for sample based subgraph count estimates]
Assume that the conditions of Theorem \ref{thm-1-main} hold. If $(\al,\beta)\in C_l(H)$, $l=1,2$ (cf. \eqref{c-set-ind}, \eqref{c-set-ego}), then the following statements are true for $l=1,2$.
\begin{enumerate}
    \item[(a)] $g_{l,\al,\beta,H}(t_l(\al,\beta;H))<0$, where $g_{l,\al,\beta,H}(\cdot)$, $l=1,2$, are defined in \eqref{delta-bdd-temp}.
    \item[(b)] $d_1(T_{N,l}(H),Z)= \Delta_{N,l}(H;\al,\beta) = O\left(N^{~\displaystyle \left\{g_{l,\al,\beta,H}\left(t_l(\al,\beta;H)\right)/2\right\}\displaystyle}\right)$, as $N\rai$.
    \item[(c)] $T_{N,l}(H)\darw N(0,1)$, as $N\rai$, where $T_{N,l}(H)$ is defined in \eqref{TN-def-12}.
\end{enumerate}
\label{cor-clt-all}
\end{cor}

Theorem \ref{thm-1-main} connects the decay rate of $d_1(T_{N,l}(H),Z)$ with the sparsity level in the model, the sparsity level of the sampling scheme and the edge structure of the target subgraph $H$. It is true for {\it any} choice of sparsity levels $\beta$ and $\al$. However, as shown in Corollary \ref{cor-clt-all}, the bound is {\it effective} only when $(\al,\beta)\in C_l(H)$. One can show that the components of the sum involved in the pivotal quantity $T_{N,l}(H)$, $l=1,2$ (cf. \eqref{TN-def-12}) possess a dependency structure which satisfies the criteria provided in \cite{bkr-89}. As stated above, the proof of Theorem \ref{thm-1-main} is based on the careful application of Theorem 6.32 of \cite{janson-rucinski-randomgraphs} (also see Section 2 of \cite{bkr-89}). We discuss some of the main challenges in the derivation of Theorem \ref{thm-1-main}. In order to use the above mentioned result from \cite{janson-rucinski-randomgraphs} a key step is to obtain the {\it precise} growth rate of the variance term $\sigma^2_{N,l}(H)$ (cf. \eqref{TN-def-12}). For any $\sbb\in {[N]}_R$, write $W_{N,l}(\sbb:H) = \prod_{\{i,j\}\in E(H)} h_l\left(W_{N,s_i},W_{N,s_j}\right)$, for $l=1,2$, where $h_l(\cdot,\cdot)$, $l=1,2$, are defined in \eqref{h1h2def}. %Also, recall the definition of $f_l(p_N:H)$ (cf. \eqref{f2-def}). 
%It can be shown that for $l=1,2$, 
For both $l=1,2$, we can write 
\begin{align}
&\sigma^2_{N,l}(H)  %= \Var\left(\widehat{S}_{N,l}(H)-f_l(p_N:H)\cdot S_N(H)\right) %= \Var\left(\sum_{\sbb\in {[N]}_R} \widetilde{W}_{N,l}(\sbb:H)\cdot Y_{N}(\sbb:H)\right)
%\notag\\
%& = \sum_{t=1}^R \sum_{(\sbb_1,\sbb_2)\in\mathcal{M}_t}\cov\left({W}_{N,l}(\sbb_1:H),{W}_{N,l}(\sbb_2:H)\right)\cdot \E\left[Y_{N}(\sbb_1:H)\cdot Y_{N}(\sbb_2:H)\right]\notag\\
%& 
= \sum_{t=1}^R \sum_{(\sbb_1,\sbb_2)\in\mathcal{M}_t} \cov\left({W}_{N,l}(\sbb_1:H),{W}_{N,l}(\sbb_2:H)\right) \cdot \E\left[\prod_{\{i,j\}\in E(H(\sbb_1)\cup H(\sbb_2))}Y_{i,j}\right],
%&\equiv\sum_{t=1}^R\sum_{(\sbb_1,\sbb_2)\in\mathcal{M}_t}\psi_{N,l}(\sbb_1,\sbb_2:H),\quad\text{(say),}
\label{sig-temp-0}
\end{align}
where, $\mathcal{M}_t=\{(\sbb_1,\sbb_2):\sbb_1,\sbb_2\in \nsr,~|A(\sbb_1)\cap A(\sbb_2)|= t\}$ is the collection of $(\sbb_1,\sbb_2)$ pairs that have $t$ common components, $H(\sbb_1)$ and $H(\sbb_2)$ denote graphs which are isomorphic to $H$ and have vertex sets $V(H(\sbb_1))=A(\sbb_1)$ and $V(H(\sbb_2))=A(\sbb_2)$ and edge sets $E(H(\sbb_1))=\{\{s_{1,i},s_{1,j}\}:\{i,j\}\in E(H)\}$ and $E(H(\sbb_2)) = \{\{s_{2,i},s_{2,j}\}:\{i,j\}\in E(H)\}$, respectively. The union graph $H(\sbb_1)\cup H(\sbb_2)$ has vertex set $A(\sbb_1)\cup A(\sbb_2)$ and  edge set $E(H(\sbb_1))\cup E(H(\sbb_2))$. The intersection graph $H(\sbb_1)\cap H(\sbb_2)$ is defined in a similar way.

Using Assumptions \ref{assump-3} and \ref{assump-4}, it can be shown that in the induced case, the design based covariance term $\cov({W}_{N,1}(\sbb_1:H),W_{N,1}(\sbb_2:H))$ in \eqref{sig-temp-0} is an increasing function of $|V(H(\sbb_1)\cap H(\sbb_2))|$. Similarly, in the ego-centric case, this covariance decreases if $\tau(H(\sbb_1)\cup H(\sbb_2))$ increases. On the other hand, the order of magnitude of the model based expectation term involving $Y_{i,j}$'s in \eqref{sig-temp-0} is an increasing function of $|E(H(\sbb_1)\cap H(\sbb_2))|$. Note, even if $|V(H(\sbb_1)\cap H(\sbb_2))|=|A(\sbb_1)\cap A(\sbb_2)|$ increases, $|E(H(\sbb_1)\cap H(\sbb_2))|$ may {\it not} increase and can even decrease. Due to the method in which $H(\sbb_1)$ and $H(\sbb_2)$ are created, $|E(H(\sbb_1)\cap H(\sbb_2))|$ will depend on the positioning of the common components within the vectors $\sbb_1$ and $\sbb_2$. Similarly, no relation can be made between the values of $\tau(H(\sbb_1)\cup H(\sbb_2))$ and $|E(H(\sbb_1)\cup H(\sbb_2))|=2T-|E(H(\sbb_1)\cup H(\sbb_2))|$. Due to this reason, it is not possible to directly relate the dependencies between the sampling and model based components. These issues do not arise while studying the limit law of the population subgraph count $S_N(H)$ (cf. \cite{now-wierman-88}) or when using a purely design based approach (cf. \cite{bhattacharya2022}). To address this issue, one has to go through {\it all} possible structures of the union graph $H(\sbb_1)\cup H(\sbb_2)$. This can be achieved by further splitting $\mathcal{M}_t$ in terms of the positions of the common components of $\sbb_1$ and $\sbb_2$ and how these common components are mapped among each other. To describe this splitting, first consider the following example.

Let $N=5$, $R=4$, and consider the vectors $\sbb_1 = {(2,5,3,4)}^\primet$ and $\sbb_2 = {(1,4,5,2)}^\primet$. Thus, $(\sbb_1,\sbb_2)\in \mathcal{M}_3$ and the common elements are $\{2,4,5\}$. Write $\jbb = {(1,2,4)}^\primet$ and $\kbb = {(2,3,4)}^\primet$, to denote the positions at which these common elements are placed within $\sbb_1$ and $\sbb_2$, respectively. Consider the permutation $\xi:\{1,2,3\}\mapsto\{1,2,3\}$, defined by $\xi(1) = 3$, $\xi(2) = 2$ and $\xi(3) = 1$. Then, we can write, $
s_{1,j_1} = s_{2,k_3} = s_{2,k_{\xi(1)}}$, $s_{1,j_2} = s_{2,k_2} = s_{2,k_{\xi(2)}}$ and $s_{1,j_3}= s_{2,k_1} = s_{2,k_{\xi(3)}}$. For any $t\in[R]$, let $\mathcal{S}_t$ denote the collection of all permutation maps $\xi:[t]\mapsto [t]$. For any $\jbb,\kbb\in{[R]}_{<,t}$ (cf. \eqref{n-sub-r}) and $\xi\in\mathcal{S}_t$, define 
\begin{align}
\mathcal{M}_t(\jbb,\kbb,\xi) = \left\{(\sbb_1,\sbb_2)\in\mathcal{M}_t: s_{1,j_1} = s_{2,k_{\xi(1)}},\ldots,s_{1,j_t} = s_{2,k_{\xi(t)}}\right\}.
\label{Mt-jk-xi-def}
\end{align}
It is easy to check that $\{\mathcal{M}_t(\jbb,\kbb,\xi):\jbb,\kbb\in{[R]}_{<,t},\xi\in\mathcal{S}_t\}$ will form a disjoint partition of $\mathcal{M}_t$, and for any $(\sbb_1,\sbb_2)\in\mathcal{M}_t(\jbb,\kbb,\xi)$, the edge structure of the union graph $H(\sbb_1)\cup H(\sbb_2)$ will depend only on the underlying $(\jbb,\kbb,\xi)$, and not on the specific choice of $(\sbb_1,\sbb_2)$. This implies, for any $(\jbb,\kbb,\xi)$, we can study the edge structure of the union (or intersection) of $H(\sbb_1)$ and $H(\sbb_2)$ for any $(\sbb_1,\sbb_2)\in\mathcal{M}_t(\jbb,\kbb,\xi)$ by considering the following representative pair of vectors $(\tilde{\sbb}_1,\tilde{\sbb}_2)\in\mathcal{M}_t(\jbb,\kbb,\xi)$, where $A(\tilde{\sbb}_1)=[R]$ and $A(\tilde{\sbb}_1)\cup A(\tilde{\sbb}_2) = \{1,\ldots,2R-t\}$. Consider the union graph $H(\tilde{\sbb}_1)\cup H(\tilde{\sbb}_2)$. Then, for any $(\sbb_1,\sbb_2)\in\mathcal{M}_t(\jbb,\kbb,\xi)$, we can write
\begin{align*}
\E\left(W_{N,1}(\sbb_1:H)\cdot W_{N,1}(\sbb_2:H)\right)& = p^{|V(H(\sbb_1)\cup H(\sbb_2))|}_N = p^{(2R-t)}_N,\quad\text{and}\\
\E\left(W_{N,2}(\sbb_1:H)\cdot W_{N,2}(\sbb_2:H)\right)& = \sum_{D\in \mathcal{D}(H(\tilde{\sbb}_1)\cup H(\tilde{\sbb}_2))} p^{|D|}_N {(1-p_N)}^{(2R-t)-|D|},
\end{align*}
where $\mathcal{D}(H(\tilde{\sbb}_1)\cup H(\tilde{\sbb}_2))$ is the collection of all vertex covers of the union graph $H(\tilde{\sbb}_1)\cup H(\tilde{\sbb}_2)$. Further, one can show that under the stated assumptions, for all choices of $(\jbb,\kbb,\xi)$ and for large enough $N$,
\begin{align*}
\sum_{(\sbb_1,\sbb_2)\in\mathcal{M}_t(\jbb,\kbb,\xi)}\E\left[\prod_{\{i,j\}\in E(H(\sbb_1)\cup H(\sbb_2))} Y_{i,j}\right] & = N^{2R-t}\cdot N^{-\beta\cdot(2T-|E(H(\tilde{\sbb}_1)\cap H(\tilde{\sbb}_2))|)}\\
&\qquad \qquad \times\Theta_t\left(\jbb,\kbb,\xi: \Cb,\la_{1},\ldots,\la_{K},H\right)\cdot (1+o(1)),   
\end{align*}
where $\Theta_t\left(\jbb,\kbb,\xi: \Cb,\la_{1},\ldots,\la_{K},H\right)$ (cf. \eqref{Theta-t-def}) does not depend on $N$, and depends only on $(\jbb,\kbb,\xi)$, $\Cb$ and $\{\la_k:k\in[K]\}$ (cf. Assumptions \ref{assump-2} and \ref{assump-3}) and $H$. Continuing from \eqref{sig-temp-0}, in the induced case ($l=1$) we can now write
\begin{align*}
\sigma^2_{N,1}(H) & = \sum_{t=1}^R \left(p^{2R-t}_N-p^{2R}_N\right) N^{2R-t}\sum_{(\jbb,\kbb,\xi)}  N^{-\beta\cdot(2T-|E(H(\tilde{\sbb}_1)\cap H(\tilde{\sbb}_2))|)}\cdot \Theta_t\left(\jbb,\kbb,\xi: \Cb,\la_{1},\ldots,\la_{K}\right)(1+o(1)).
\end{align*}
Now we can easily see that the rate of growth of $\sigma^2_{N,1}(H)$ is determined by the maximum possible value of $|E(H(\tilde{\sbb}_1)\cap H(\tilde{\sbb}_2))|$. In Lemma \ref{lem-1} we identify the precise set of $(\jbb,\kbb)$ values where $|E(H(\tilde{\sbb}_1)\cap H(\tilde{\sbb}_2))|$ reaches its maximum possible value $m(t:H)$. 

In the ego-centric case, for $\al = 0$ (with $p_N =c\in (0,1)$), the growth rate of $\sigma^2_{N,2}(H)$ is influenced only by the highest value of $|E(H(\tilde{\sbb}_1)\cap H(\tilde{\sbb}_2))|$ and a similar argument can be applied. If $\al \in (0,1)$, the minimum vertex cover size of $H(\tilde{\sbb}_1)\cup H(\tilde{\sbb}_2)$ begins to play a role. In this case, one can easily verify that for all $(\sbb_1,\sbb_2)\in\mathcal{M}_t(\jbb,\kbb,\xi)$, and large enough $N$,
$$
\E\left(W_{N,2}(\sbb_1:H)\cdot W_{N,2}(\sbb_2:H)\right) = N^{-\alpha\cdot \tau(H(\tilde{\sbb}_1)\cup H(\tilde{\sbb}_2))}\cdot O(1), 
$$
where $\tau(H(\tilde{\sbb}_1)\cup H(\tilde{\sbb}_2))$ denotes the minimum vertex cover size of $H(\tilde{\sbb}_1)\cup H(\tilde{\sbb}_2)$, and depends only on the choice of $(\jbb,\kbb,\xi)$. For large enough $N$ and for $\al\in (0,1)$ we can show that
\begin{align*}
\sigma^2_{N,2}(H) & = \sum_{t=1}^R N^{-2T\beta + 2R-t}\sum_{(\jbb,\kbb,\xi)} N^{\displaystyle {}-\alpha\cdot \tau(H(\tilde{\sbb}_1)\cup H(\tilde{\sbb}_2)) + \beta\cdot |E(H(\tilde{\sbb}_1)\cap H(\tilde{\sbb}_2))|\displaystyle}\cdot O(1)\cdot (1+o(1)). 
\end{align*}
Thus, the growth rate of $\sigma^2_{N,2}(H)$ is determined by the choice of $(\jbb,\kbb,\xi)$, which maximizes the value of the linear combination 
\begin{align}
\psi_t(\jbb,\kbb,\xi:H) = {}-\alpha\cdot \tau(H(\tilde{\sbb}_1)\cup H(\tilde{\sbb}_2)) + \beta\cdot |E(H(\tilde{\sbb}_1)\cap H(\tilde{\sbb}_2))|.
\label{psi-def-new}
\end{align}
Lemma \ref{VC-lem-1} and Lemma \ref{VC-lem-2} provide a lower bound for $\tau(H(\tilde{\sbb}_1)\cup H(\tilde{\sbb}_2))$ and also establish that this lower bound is attained at some triplet $(\jbb_0,\kbb_0,\xi_0)$. However, there is {\it no guarantee} that $|E(H(\tilde{\sbb}_1)\cap H(\tilde{\sbb}_2))|$ will be maximized at $(\jbb_0,\kbb_0,\xi_0)$. As a result, there is no way to analytically find the choice of $(\jbb,\kbb,\xi)$ that maximizes $\psi_t(\jbb,\kbb,\xi:H)$. For specific choices of $H$, a brute-force search can be performed for every $t\in[R]$ and {\it all} possible triplets $(\jbb,\kbb,\xi)\in {[R]}_{<,t}\times {[R]}_{<,t}\times \mathcal{S}_t$. The pair of values $\tau(H(\tilde{\sbb}_1)\cup H(\tilde{\sbb}_2))$ and $E|H(\tilde{\sbb}_1)\cap H(\tilde{\sbb}_2)|$ has to be computed for each such triplet to find the maximizer of $\psi_t(\jbb,\kbb,\xi:H)$. Even this will become computationally infeasible for $R=10$ (which requires a search over nearly 234 million triplets). Obviously, this is not feasible for any arbitrary choice of $H$. This is due to the fact that for any arbitrary graph $H$, the graph features $\tau(H)$ and $|E(H)|$ are not {\it coherent} with each other. This means, if an additional edge is included in $H$, it does not {\it necessarily} imply that $\tau(H)$ will not decrease. Thus, it becomes impossible to find an analytical expression for the growth rate of $\sigma^2_{N,2}(H)$ for any choice of $(\al,\beta)$ and for any arbitrary choice of $H$. Consequently, Theorem 6.32 of \cite{janson-rucinski-randomgraphs} cannot be used to establish a Gaussian limit law result for $T_{N,2}(H)$.

It is precisely at this stage where Assumption \ref{assump-5} helps us avoid this tedious enumeration and ensures the existence of a triplet $(\jbb_0,\kbb_0,\xi_0)$, which will {\it simultaneously} minimize $\tau(H(\tilde{\sbb}_1)\cup H(\tilde{\sbb}_2))$ and maximize $E|H(\tilde{\sbb}_1)\cap H(\tilde{\sbb}_2)|$. This requires a special construction of the union graph $H(\tilde{\sbb}_1)\cup H(\tilde{\sbb}_2)$ using an appropriate choice of $(\jbb,\kbb,\xi)$ (see Lemma \ref{VC-lem-2} and \ref{VC-lem-1}). In addition to the above, in order to use Theorem 6.32 of \cite{janson-rucinski-randomgraphs} we also establish an upper bound on the third order moments of $\left(\widehat{S}_{N,2}(H)-f_2(p_N:H)\cdot S_N(H)\right)$ and then relate this to the previously found growth rate of $\sigma^2_{N,2}(H)$. For this task, we had to establish a lower bound on $\tau(\cup_{i=1}^3 H(\sbb_i))$ (see Lemma \ref{VC-lem-1}) and then achieve control on $|E(\cup_{i=1}^3 H(\sbb_i))|$ by using a carefully constructed splitting of $V(\cup_{i=1}^3 H(\sbb_i))$ (see Lemma \ref{maxEdge-lem} and the proof of Theorem \ref{thm-1-main}). This had to be carried out for any $\sbb_i\in \nsr$, for $i=1,2,3$, where, these vectors can have an arbitrary amount of overlap among themselves.

Now, we present some important aspects of the Gaussian approximation result presented in Theorem \ref{thm-1-main} and Corollary \ref{cor-clt-all}.

\begin{rem}[A special region of sparsity levels]
\label{rem:speed}
It is expected that if either the model or sampling sparsity levels increase, the quality of normal approximation for $T_{N,l}(H)$ should degrade. However, our results show that for any choice of $H$, there exists a collection of model and sampling sparsity $\beta$ and $\al$ (that depend on $H$ and the network formation mechanism), where the speed of normal approximation and the quality of statistical inference is {\it not} affected by the model sparsity level $\beta$, and is only affected by a change in the sampling sparsity level $\al$. In the case of the triangle, this corresponds to the regions $C_{1,1}(\kl_3)$ and $C_{2,1}(\kl_3)$ in Figure \ref{fig:main:tri} (see \eqref{rate-tri-ind} and \eqref{rates-tri-ego}) in the induced and ego-centric cases respectively. Recall the definition of the sets $C_{l,j}(H)$ (cf. \eqref{Clj-def}). %Define the sets,
%\begin{align}
%C_{l,1}(H) = \left\{(\al,\beta):(\al,\beta)\in C_l(H),t_l(\al,\beta;H) = 1\right\},\quad l=1,2,
%\label{Cl-1-def}
%\end{align}
%where $C_l(H)$ are defined in \eqref{c-set-ind} and \eqref{c-set-ego}, and $t_l(\al,\beta;H)$ is defined in \eqref{delta-bdd-temp}. 
It is easy to check, if $(\al,\beta)\in C_{l,1}(H)$, then $\Delta_{N,l}(H;\al,\beta) = O\left(N^{-(1-\al)/2}\right)$ for $l=1,2$. For example, in the induced case ($l=1$), for any $H$, $C_{1,1}(H) = \{(\al,\beta):\al + m(t:H) \beta/(t-1) < 1,\text{ for all $t=2,\ldots,R$}\}$. This implies, if the model sparsity level $\beta$ is not too high, {\it i.e.}, $\beta< \min\left\{{(t-1)}/{m(t:H)}:2\leq t\leq R\right\}$, and we use dense sampling ($\al = 0$), then the speed of normal approximation will be $O(N^{-1/2})$, and we will have the {\it same} inferential accuracy as in the case of a dense population network ($\beta = 0$). The same conclusion works at any other $(\al,\beta)\in C_{1,1}(H)$ with $\al>0$, except that the rate will be $O(N^{-(1-\al)/2})$. %One can check that there are minor differences between $C_1(H)$ and $C_{1,1}(H)$, and for all choices of $H$, $C_{1,1}(H)$ usually forms a large subset of $C_1(H)$. 
As the model sparsity level $\beta$ increases beyond the thresholds provided in $C_{1,1}(H)$, the inferential precision starts to degrade very rapidly. For example, in the case of the triangle (cf. \eqref{rate-tri-ind}) with $\al=0$, the decay rate stays at $O(N^{-1/2})$ if $\beta<2/3$, but in between $\beta\in[2/3,1)$, the exponent of $N$ in \eqref{rate-tri-ind} increases from $-1/2$ to $0$, as $\beta\uparrow 1$. The decay can be more rapid for other choices of $H$ and a similar pattern emerges at any $\al>0$.  

In the ego-centric case, if $\tau(H)=1$ ($H=\kl_{1,R-1}$, for some $R\geq 2$), then $C_{2,1}(\kl_{1,R-1})= {[0,1)}^2$ and
$C_2(\kl_{1,R-1}) = \{(\al,\beta):0\leq \al<1,~{\al} + (R-1)\beta < R\}$. Thus, for {\it any} $R$ vertex star graph, at any given sampling sparsity level $\al$, the inferential accuracy remains unaffected by the model sparsity level, as long as $\beta\in [0,1)$. The decay rate of $\Delta_{N,2}(\kl_{1,R-1})$ drops sharply when $\beta>1$, and the drop is faster for larger $R$. For any other $H$ satisfying Assumption \ref{assump-5}, with $\tau(H)\geq 2$, 
\begin{align*}
C_{2,1}(H) &= \left\{(\al,\beta):~\displaystyle \frac{\al}{\frac{t-1}{\min\{t,\tau(H)\}-1}} + \frac{\beta}{\frac{(t-1)}{m(t:H)}} <1\displaystyle,~\text{for $t = 2,\ldots,R$,} \right\}.
%\label{max-1-ego-gen}
\end{align*}
%Usually $C_{2,1}(H)$ forms a large subset of $C_2(H)$ (cf. \eqref{c-set-ego}). Similar to the induced case, the decay rate of $\Delta_{N,2}(H;\al,\beta)$ drops sharply as $(\al,\beta)$ move out of $C_{2,1}(H)$.
\end{rem}

\begin{rem}[Comparing induced and ego-centric approaches]
\label{ind-ego-comp}
The ego-centric network formation approach collects far more observations in comparison to the induced approach. However, at $\al = 0$, $g_{1,0,\beta,H}(t) = g_{2,0,\beta,H}(t)$, for all $t\in [R]$ (see \eqref{delta-bdd-temp}). This implies, $\Delta_{N,1}(H;0,\beta)$ and $\Delta_{N,2}(H;0,\beta)$ have the same decay rate at the same model sparsity level. As $p_N$ remains bounded away from zero, enough number of nodes are sampled (as $N\rai$) in both the induced and ego-centric approaches, so that the ego-centric approach does not offer any advantages in terms of the speed of normal approximation. If $(\al,\beta)\in C_1(H)\cap C_2(H)$ and $\al>0$, we note that $g_{1,\al,\beta,H}(t) \geq g_{2,\al,\beta,H}(t)$, for all $t\in[R]$ (cf. \eqref{delta-bdd-temp}). This immediately implies that the maximum value achieved by $g_{1,\al,\beta,H}(\cdot)$ will always be greater than or equal to the maximum value achieved by $g_{2,\al,\beta,H}(\cdot)$. On $C_1(H)\cap C_2(H)$, the maximum values of both maps are negative (see Corollary \ref{cor-clt-all}(a)). This implies, if $(\al,\beta)\in C_1(H)\cap C_2(H)$, and if the sampling is sparse ($\al>0$), the ego-centric approach always provides a faster speed of normal approximation (cf. Corollary \ref{cor-clt-all}(b)) and better inferential accuracy at any model sparsity level $\beta$. At $\al=1$, the induced approach collects a finite size of data, even as $N\rai$, implying that no meaningful inferential conclusions can be obtained. The ego-centric approach observes $O(N)$ entries from the population adjacency matrix, allowing the possibility of developing inferential results.

In addition, the ego-centric approach also highlights finer differences among two different target graphs through the use of the minimum vertex cover size $\tau(H)$. For example, consider $H_1 = \kl_{1,3}$ (star graph on $R=4$ vertices) and $H_2 = \mathbb{L}_4$ (line on $R=4$ vertices). Both graphs satisfy Assumption \ref{assump-5} and have the same edge densities, but $\tau(\kl_{1,3}) = 1 < \tau(\mathbb{L}_4) = 2$. The asymptotic properties of $T_{N,1}(H)$ would be exactly similar for $H_1$ and $H_2$. However, in the ego-centric case, due to the smaller value of $\tau(H_1)$, we will have $C_2(\mathbb{L}_4)\subset C_2(\kl_{1,3})$, and the speed of normal approximation will be better for $T_{N,2}(H_1)$. Intuitively, if $\tau(H)$ is small, then it is easier to capture a copy of a graph $H$ in $G_N$ using the ego-centric approach. %However, the benefits of the ego-centric approach are observable (asymptotically) only when the sampling is sparse and there is a {\it shortage} in the number of sampled nodes. %For example, at $\al = 0$ (dense node sampling), we can see that the decay rates found in \eqref{rate-tri-ind} and \eqref{rates-tri-ego} are same.

\end{rem}

\begin{rem}[Continuity of the exponent of $N$ in \eqref{bdd-2-ind-ego}]
\label{rem:g-cont}
As $(\al,\beta)$ vary over different sub-regions in $C_1(H)$ and $C_2(H)$ (cf. \eqref{rate-tri-ind} and \eqref{rates-tri-ego}), the exponent of $N$ in \eqref{bdd-2-ind-ego}, $g_{l,\al,\beta,H}(t_l(\al,\beta;H))$ remains a continuous function of $(\al,\beta)$, even though $(\al,\beta)\mapsto t_l(\al,\beta;H)$ can have jumps when $(\al,\beta)$ vary over sub-regions of $C_l(H)$. This is due to the fact that choices of $t_l(\al,\beta;H)$ are themselves determined by comparing $R$ different linear maps of $(\al,\beta)$ and as a result, the transitions in the values of $t_l(\al,\beta;H)$ arise when $g_{l,\al,\beta,H}(t)\geq g_{l,\al,\beta,H}(t^\prime)$, for some $t\neq t^\prime$. This ensures the continuity of $(\al,\beta)\mapsto g_{l,\al,\beta,H}(t_l(\al,\beta;H))$, for $l=1,2$.
\end{rem}

\begin{rem}[Comparison with the population subgraph count $S_N(H)$]
\label{rem:population-comp}
Using similar proof techniques, we can show that if $\beta\in [0, 1/m(H))$, then for any choice of $H$, 
\begin{equation*}
\begin{aligned}
d_1\left(\frac{S_N(H)-\E(S_N(H))}{\sqrt{\Var(S_N(H))}}, Z\right) &= O\left(N^{\displaystyle g_{1,0,\beta,H}(t_P(\beta;H))/2\displaystyle}\right),\quad\text{where, $Z\sim N(0,1)$, and}\\
t_P(\beta;H) &= \operatorname{argmax}\left\{g_{1,0,\beta,H}(t):t = 2,\ldots,R\right\}.
\end{aligned}
\end{equation*}
The same upper threshold on the model sparsity level was obtained by \cite{janson-rucinski-randomgraphs} for sparse Erd\H{o}s-R\'{e}nyi  random graphs. Unlike $t_l(\al,\beta;H)$ (cf. \eqref{delta-bdd-temp}), the maximization is carried out over $t\in \{2,\ldots,R\}$, because in the model based setup, edge dependency can not arise if $|A(\sbb_1)\cap A(\sbb_2)|\leq 1$. This also impacts the speed of convergence to normality. For example, at $\beta=0$, $S_N(H)$ converges in law to $N(0,1)$ at the speed $O(N^{-1})$, for any $H$. In comparison, for the sampling based case, we have $\Delta_{N,l}(H;0,0) = O\left(N^{-1/2}\right)$. The speed of convergence to normality slows down by an order of magnitude in the case of the sample based subgraph counts, only because of the extra dependency arising from the effect of the sampling design. 
\end{rem}
%\begin{align*}
%d_1\left(\frac{S_N(\kl_3)-\E(S_N(\kl_3))}{\sqrt{\Var(S_N(\kl_3))}},~Z\right) &=
%\begin{cases}
%O\left(N^{{}-1 + \beta/2}\right) & \text{if $0\leq \beta < \frac{1}{2}$,}\\
%O\left(N^{{}-3(1-\beta)/2}\right) & \text{if $\frac{1}{2}\leq \beta < 1$.}
%\end{cases}
%\end{align*}

\begin{rem}
\cite{bhattacharya2022} made a seminal contribution in the area of network sampling based inference. They used the so called {\it fourth moment convergence criterion}, to obtain a CLT for the induced sample based subgraph count $\widehat{S}_{N,1}(H)$ in a purely design based framework, where the population network $G_N$ is {\it non-stochastic}. They did not consider the ego-centric approach. Our results are not directly comparable to those obtained in \cite{bhattacharya2022}, as we use a different inferential framework. However, some basic differences can be pointed out. Unlike the joint model and the design based variance expression in \eqref{sig-temp-0}, \cite{bhattacharya2022} used the design based variance for standardization. The proof techniques used in this article are not related to the approach used by \cite{bhattacharya2022}. In fact, we do not require the use of fourth order moments of the pivotal quantity $T_{N,l}(H)$ (cf. \eqref{TN-def-12}), neither we need to verify its convergence to the fourth moment of a standard normal random variable. We utilize a Stein's method based bound (cf. Theorem 6.32 of \cite{janson-rucinski-randomgraphs}), which uses second and third order moments of the pivotal quantity $T_{N,l}(H)$. Interestingly, their proposed fourth moment convergence criterion is usable only when the node selection probability is less than or equal to $1/20$.  
\label{rem:bb-aos}
\end{rem}

For the ease of practitioners, we now state the  bounds on $\Delta_{N,l}(H;\al,\beta)$, $l=1,2$, for some commonly used target subgraphs $H$. The following graphs are considered: complete graph $\kl_R$, star graph $\kl_{1,R-1}$ and line graph $\mathbb{L}_{R}$ on $R$ vertices. All the above mentioned graphs are strictly balanced. Assumption \ref{assump-5} is satisfied by any complete graph and star graph, but this assumption is not satisfied by a line graph if $R>4$. The edge ($H = \kl_2$) is treated as a star graph on two vertices.

\begin{cor}[Decay rates for $\Delta_{N,l}(H;\al,\beta)$ for specific graphs] 
\label{cor:spl-graphs}
Suppose the conditions of Theorem \ref{thm-1-main} hold and recall the definition of $C_{l,j}(H)$ (cf. \eqref{Clj-def}).
\begin{enumerate}
    \item[(a)] Consider the star graph $H=\kl_{1,R-1}$ on $R\geq 2$ vertices, with $T = (R-1)$, $m(t:\kl_{1,R-1}) = (t-1)$, for $t\in\{2,\ldots,R\}$, and $\tau(\kl_{1,R-1})=1$. In this case, $C_1(\kl_{1,R-1}) = \{(\al,\beta):\al + (R-1)\beta/R <1,~\al\in [0,1)\}$ and $C_2(\kl_{1,R-1}) = \{(\al,\beta):\al/R+ (R-1)\beta/R <1,~\al\in [0,1)\}$.
%    \begin{small}
 %   \begin{align*}
 %   C_1(\kl_{1,R-1}) &= \left\{(\al,\beta): \al + \frac{\beta}{R/(R-1)}< 1,~\al\in [0,1)\right\},\quad\text{and}\\
%    C_2(\kl_{1,R-1}) & = \left\{(\al,\beta): \frac{\al}{R}+\frac{\beta}{R/(R-1)}<1,~\al\in [0,1)\right\}.
%    \end{align*}
%    \end{small}
For any $(\al,\beta)\in C_1(\kl_{1,R-1})$, 
\begin{small}
    \begin{equation*}%\label{star-ind}
        \begin{aligned}
            &\Delta_{N,1}(\kl_{1,R-1};\al,\beta)\notag\\
            &= \begin{cases}
                O\left(N^{-\frac{1}{2}(1-\al)}\right)&\quad\text{if $(\al,\beta)\in C_{1,1}(\kl_{1,R-1}) = \{(\al,\beta):0\leq \beta< 1-\al\}$,}\\
                O\left(N^{-\frac{(R-1)}{2}\left[\frac{R}{R-1}(1-\al)-\beta\right]}\right)&\quad \text{if $(\al,\beta)\in C_{1,R}(\kl_{1,R-1})=\{(\al,\beta): 1-\al\leq \beta< \frac{R(1-\al)}{R-1},\al\in [0,1)\}$.}
            \end{cases}
        \end{aligned}
    \end{equation*}
\end{small} 
For any $(\al,\beta)\in C_2(\kl_{1,R-1})$, 
\begin{small}    
    \begin{equation*}%\label{star-ind}
        \begin{aligned}
            &\Delta_{N,2}(\kl_{1,R-1};\al,\beta)= \begin{cases}
                O\left(N^{-(1-\al)/2}\right)&\ \text{if $(\al,\beta)\in C_{2,1}(\kl_{1,R-1}) ={[0,1)}^2$,}\\
                O\left(N^{-\frac{(R-1)}{2}\left[\frac{(R-\al)}{R-1}-\beta\right]}\right)&\  \text{if $(\al,\beta)\in C_{2,R}(\kl_{1,R-1}) = \left\{(\al,\beta):1\leq\beta < \frac{(R-\al)}{R-1},\al\in[0,1)\right\}$.}
            \end{cases}
        \end{aligned}
    \end{equation*}
\end{small}    
\noindent
The expressions for $\sigma^2_{N,1}(\kl_{1,R-1})$ and $\sigma^2_{N,2}(\kl_{1,R-1})$ are provided in \eqref{var-star-1} and \eqref{var-star-2} respectively (see Appendix \ref{sec:app-E}). %in \eqref{sig-temp-0} can be approximated as
%\begin{align*}
%\sigma^2_{N,l}(\kl_{1,R-1})  =  \begin{cases}
%p_N^{2R-1}(1-p_N)N^{2R-1}\sum_{a,b=1}^R\sum_{\ub,\vb\in [K]^R}\mathbf{1}(u_{a} =v_b)\prod_{j=1}^R\pi_{u_1,u_j}\pi_{v_1,v_j}\left(\prod_{j\in [R]}\la_{u_j}\prod_{k\in [R]\setminus{b}}\la_{v_k}\right),
%\end{cases}
%\end{align*}

\item[(b)] Consider the complete graph $H=\kl_R$ on $R\geq 3$ vertices, with $T = R(R-1)/2$, $m(t:H)=t(t-1)/2$, for $t\in\{2,\ldots,R\}$ and $\tau(\kl_R) = (R-1)$. In this case, $C_1(\kl_R)=\{(\al,\beta):\al+\frac{(R-1)\beta}{2} <1,\al\in[0,1)\}$ and 
\begin{small}
\begin{align*}
%C_1(\kl_{R}) &= \left\{(\al,\beta): \al + \frac{\beta}{2/(R-1)}< 1,\al\in [0,1)\right\},\quad\text{and}\\
C_2(\kl_R) & = \left\{(\al,\beta):\al+\frac{(R-2)\beta}{2}<1,\frac{(R-1)\al}{R} + \frac{(R-1)\beta}{2} <1,~\al\in [0,1)\right\}.
%C_2(\kl_R) & = \left\{(\al,\beta):\al+\frac{\beta}{2/(R-2)}<1,\frac{\al}{R/(R-1)} + \frac{\beta}{2/(R-1)} <1,~\al\in [0,1)\right\}.
\end{align*}
\end{small}
For any $(\al,\beta)\in C_1(\kl_{R})$,
\begin{small}
\begin{equation*}%\label{comp-ind}
\begin{aligned}
&\Delta_{N,1}(\kl_R;\al,\beta)= 
\begin{cases}
O\left(N^{-\frac{1}{2}(1-\al)}\right)&\  \text{if $(\al,\beta)\in C_{1,1}(\kl_{R})=\left\{(\al,\beta):0\leq \beta < \frac{2(1-\al)}{R}\right\}$,}\\
O\left(N^{-\frac{R(R-1)}{4}\left(\frac{2}{R-1}(1-\al) -\beta\right)}\right)&\  \text{if $(\al,\beta)\in C_{1,R}(\kl_R) = \left\{(\al,\beta):\frac{2(1-\al)}{R}\leq \beta < \frac{2(1-\al)}{R-1}\right\}$.}
\end{cases}
\end{aligned}
\end{equation*}
\end{small}
Define the following sub-regions within $C_2(\kl_R)$,
\begin{footnotesize}
\begin{align*}
C_{2,1}(\kl_R) & = \left\{(\al,\beta):\al +\frac{\beta}{2/(R-1)}<1,~\frac{\al}{(R-1)/(R-2)}+\frac{\beta}{2/R}<1,\al\in[0,1)\right\},\\
C_{2,R-1}(\kl_R) & = \left\{(\al,\beta):\al+\frac{\beta}{2/(R-1)}>1,~\beta <\frac{1}{R-1},~\al+\frac{\beta}{2/(R-2)}<1,\al\in[0,1)\right\},\quad\text{and}\\
C_{2,R}(\kl_{R}) &= \left\{(\al,\beta):\frac{\al}{(R-1)/(R-2)}+\frac{\beta}{2/R}>1,~ \beta >\frac{1}{R-1},~\frac{(R-1)\al}{R} +\frac{(R-1)\beta}{2} <1,\al\in[0,1)\right\}.
\end{align*}
\end{footnotesize}
For any $(\al,\beta)\in C_2(\kl_{R})$, 
\begin{small}
\begin{align*}
\Delta_{N,2}(\kl_R;\al,\beta) &=\begin{cases}
O\left(N^{-\frac{1}{2}(1-\al)}\right) & \text{if $(\al,\beta)\in C_{2,1}(\kl_R)$,}\\
O\left(N^{\frac{1}{2}\left(-(R-1)(1-\al) + \frac{\beta}{2}(R-1)(R-2)\right)}\right) & \text{if $(\al,\beta)\in C_{2,R-1}(\kl_R)$,}\\
O\left(N^{\frac{1}{2}\left(-R+(R-1)\al + \frac{\beta}{2}R(R-1)\right)}\right)& \text{if $(\al,\beta)\in C_{2,R}(\kl_{R})$.}
\end{cases}
\end{align*}
\end{small}
The expression for $\sigma^2_{N,1}(\kl_{R})$ and $\sigma^2_{N,2}(\kl_{R})$ are provided in \eqref{var-comp-1} and \eqref{var-comp-2} respectively (see Appendix \ref{sec:app-E}). %$l=1,2$ (cf. \eqref{var-comp-1} for $l=1$), depend on the choice of $(\al,\beta)$ and are provided in the Appendix \ref{sec:app-E}.

\item[(c)] Consider the line graph $H=\mathbb{L}_{R}$ on $R$ vertices, with $R\geq 4$. Here, $T = (R-1)$, $m(t:\mathbb{L}_{R})=(t-1)$, for $t\in \{2,\ldots,R\}$.
%, and $\tau(\mathbb{L}_{R}) = \lfloor 2R/2\rfloor = R$.
In this case, $C_1(\mathbb{L}_{R}) = C_1(\kl_{1,R-1})$ (see part (a) above). 
%C_2(\mathbb{L}_{2R}) & = \left\{(\al,\beta):\al\in [0,1),~\al + \frac{\left(R-1\right)\beta}{R} < 1,~ \al + \frac{(2R-1)\beta}{2R} <1\right\}. 
%For any $(\al,\beta)\in C_1(\mathbb{L}_{R})$, $\Delta_{N,1}(\mathbb{L}_{R};\al,\beta) = \Delta_{N,1}(\kl_{1,R-1};\al,\beta)$. For any $(\al,\beta)\in C_2(\mathbb{L}_{2R})$,
%\begin{equation*}
%\begin{aligned}
%\Delta_{N,2}(\mathbb{L}_{2R};\al,\beta) 
%=\begin{cases}
%O\left(N^{-\frac{1}{2}(1-\al)}\right)&\text{if $\al+\beta\leq 1$},\\
%O\left(N^{\frac{1}{2}\left({{R}}\cdot\left\{\al+\beta-1\right\} -\beta\right)} \right)&\text{if $\al+\beta>1,~\beta \leq 1$ and $\al + \frac{(R-1)\beta}{{R}} < 1$},\\
%O\left(N^{\frac{1}{2}(-2R +{{R}}\al + (2R-1)\beta)}\right)&\text{if $\beta>1$ and ~$\frac{\al}{2} + \frac{(2R-1)\beta}{2R} <1$.}
%\end{cases}
%\end{aligned}
%\end{equation*}
The expression for $\sigma^2_{N,1}(\mathbb{L}_{R})$ is provided in \eqref{var-path-1} (see Appendix \ref{sec:app-E}). %, $l=1,2$ (cf. \eqref{var-path-1} for $l=1$) are complicated and need to obtained on a case-by-case basis for specific choices of $R$ using the general expression provided in Appendix \ref{sec:app-E}.
\end{enumerate}
\end{cor}

%%%%%%%%%%% SAME COROLLARY UNDER EGO-CENTRIC %%%%%%%%
%\begin{cor}[Limit laws for specific graphs under ego-centric sampling]
%Suppose the conditions of Theorem \ref{thm-1-main} hold.
%\begin{enumerate}
%    \item[(a)] Let $H=\kl_{1,R-1}$ or an $R$-star graph with $|V(\kl_{1,R-1})|$ and $|E(\kl_{1,R-1})|=R-1$. Then $\Delta^{[2]}(\al,\beta:\kl_{1,R-1})$ is given in \eqref{rate-ego-1} with Gaussianity condition \eqref{cond-ego-1}.
%    \item[(b)] Let $H=\kl_R$ or a complete graph with $|V(\kl_R)| =R$ and $|E(\kl_R)| = R(R-1)/2$. Then from the condition \eqref{cond-ego} if $\al\in [0,1)$ and $\beta$ satisfies     $    \al + \frac{R-2}{2}\beta <1,~\text{and}~\frac{R-1{R}\al +\frac{R-1}{2}\beta <1.$ Then,
%\item[(c)] Let $H=\mathbb{L}_R$ or a line graph with $|V(\mathbb{L}_R)| =R$ and $|E(\mathbb{L}_R)| =R-1$. Then from the condition \eqref{cond-ego}, if $\al\in [0,1)$ and $\beta$ satisfies $\al + \frac{\floor{R/2}-1}{\floor{R/2}}\beta < 1$ and $\frac{\floor{R/2}}{R}\al + \frac{R-1}{R}\beta <1$. Then,
%\end{cor}

\subsection{Poisson-based limit laws for estimated subgraph counts}
\label{sec:pois}

In this section, we investigate the limit laws of $T_{N,1}(H)$ and $T_{N,2}(H)$, when the model and sampling sparsity levels reach the boundaries of $C_1(H)$ and $C_2(H)$ respectively. We only focus on target graphs $H$ which are strictly balanced, {\it i.e.}, $m(t:H)$ (cf. \eqref{mtH-def}) is uniquely maximized at $t=R$, and $m(H) = T/R$ (cf. \eqref{def-mH}). In addition, for the ego-centric case, for $\al>0$, we consider $H$ satisfying Assumption \ref{assump-5} and for $\al=0$, all choices of $H$ are allowed. If $H$ is not strictly balanced, it is not clear if any proper limit law for $T_{N,l}(H)$ can be obtained at the boundary points of $C_l(H)$ for $l=1,2$. Most common target graphs of interest are strictly balanced. \cite{janson-rucinski-randomgraphs} obtained Poisson based limit laws for $S_N(H)$ in the case of sparse Erd\H{o}s-R\'{e}nyi  random graphs at the extreme model sparsity level $\beta = R/T$. They obtained these results only in the case of strictly balanced $H$. Let $C^b_l(H)$ denote the boundary of $C_l(H)$, $l=1,2$ (cf. \eqref{c-set-ind}, \eqref{c-set-ego}). As $H$ is strictly balanced, splitting $C^b_1(H)$ in terms of the level of sampling sparsity (dense, intermediate and extreme), we can write
\begin{align*}
C^b_1(H) %= \left\{(\al,\beta):\al + \frac{\beta}{{R}/{T}} = 1,~0\leq \al\leq 1\right\} 
=  \{(0,R/T)\} \cup \left\{\left(\al,{R(1-\al)}/{T}\right): \al\in (0,1)\right\} \cup \{(1,0)\}.
\end{align*}
Figures \ref{fig:main:tri}(a) and \ref{fig:main:star}(a) show $C^b_1(\kl_3)$ and $C^b_1(\kl_{1,3})$ respectively. In the ego-centric case, at $\tau(H)=1$, we can similarly split, 
\begin{align*}
C^b_2(\kl_{1,R-1}) = \left\{\left(0,{R}/
{(R-1)}\right)\right\}\cup \left\{\left(\al,{(R-\al)}/{(R-1)}\right): \al\in (0,1)\right\}\cup \{(1,\beta): 0\leq \beta \leq 1\}.
\end{align*}
Figure \ref{fig:main:star}(b) displays $C^b_2(\kl_{1,3})$. When $\tau(H)>1$, $C^b_2(H)$ is defined by the points of intersection of the lines
\begin{align}
%\al + \frac{\beta}{\frac{t}{m(t:H)}} = 1,\quad t=2,\ldots,\tau(H),\quad\text{and}\quad 
\min\{t,\tau(H)\}\cdot {\al} + m(t:H)\cdot{\beta} = t,\quad t=2,\ldots,R.
\label{line-set}
\end{align}
The boundary $C^b_2(H)$ starts at $(\al,\beta)=(0,R/T)$, follows the equation of the line at $t=R$ within the collection of lines \eqref{line-set}, till it intersects another line from \eqref{line-set} at an intermediate point $(\al^\star_H,\beta^\star_H)$. For $j=1,\ldots,(R-1)$, let $(\al_{R,j}, \beta_{R,j})$ denote the point of intersection of the line at $t=R$ with the line at $t=j$ from the collection in \eqref{line-set}. Then 
\begin{align}
\al^\star_H = \min\{\al_{R,j}: j = 1,\ldots,(R-1)\} = 
\min_{j\in\{1,\ldots,R-1\} }\frac{j - \frac{R}{T}\cdot m(j:H)}{\min\{\tau(H),j\} - \frac{\tau(H)}{T}\cdot m(j:H)}
.%,\quad\text{and}\quad \beta^\star_H = \left(1-\frac{\tau(H)\cdot \al^\star_H}{R}\right)\cdot \frac{R}{T}.
\label{al-star-def}
\end{align}
It can be checked that $\al^\star_H\in (0,1)$, for any choice of $H$ (with $\tau(H)>1$) and it can be easily computed for any choice of $H$. The boundary $C^b_2(H)$ continues as per the above procedure and the last line segment always ends at $(\al,\beta)=(1,0)$. For example, in the case of $H = \kl_3$, Figure \ref{fig:main:tri}(b) shows that the boundary of $C_2(\kl_3)$ consists of only two line segments with $\al^\star_{\kl_3} = 3/4$. We now summarize our findings on Poisson-based limit laws based on the underlying sampling sparsity level $\al$.
\begin{enumerate}
\item[(i)] {\bf Dense case ($\al=0$):} Theorem \ref{thm-2-main-temp}(a) establishes a Poisson based limit law at the boundary point $(\al,\beta)=(0,R/T)$, for {\it any} $H$, in both induced and ego-centric cases. These two limit laws are similar and differ in their underlying parameters. They are  expressible as a weighted sum of two independent Poisson random variables. We use a multivariate Poisson approximation result, developed by \cite{nualart2025} to obtain this result. It involves analyzing the joint limiting behavior of the pair of counts $(\widehat{S}_{N,l}(H),S_N(H))$, where both components are {\it dependent} and both converge marginally to different Poisson distributions. 
In Figures \ref{fig:main:tri} and \ref{fig:main:star}, this specific limit law arises at the solid black dotted points $(0,1)$ and $(0,4/3)$ respectively.

\item[(ii)] {\bf Intermediate case ($\al\in (0,1)$):} In this scenario, a second type of Poisson-based limit law is obtained for $T_{N,1}(H)$, for any $H$ that satisfies Assumption \ref{assump-5}, on the boundary $C^b_1(H)$ at {\bf all} choices of $\al\in (0,1)$ (see Theorem \ref{thm-2-main-temp}(b)). This limit law is expressible in terms of a scaled and centered Poisson random variable. In Figures \ref{fig:main:tri}(a) and \ref{fig:main:star}(a), this corresponds to points on the solid red line, at the boundary of the sub-regions $C_{1,3}(\kl_3)$ and $C_{1,4}(\kl_{1,3})$ respectively. 

In the ego-centric case, we are able to obtain the same limit law (with different parameters) for $T_{N,2}(H)$ (cf. Theorem \ref{thm-2-main-temp}(b)) on the boundary $C^b_2(H)$, when $\al\in (0,\widetilde{\al}_H)$, where
\begin{align}
\widetilde{\al}_H = \begin{cases}
    1 & \text{if $\tau(H)=1$,}\\
    \al^\star_H & \text{if $\tau(H)>1$,}
\end{cases}    
\label{al-tild-def}
\end{align}
where $\al^\star_H$ is defined in \eqref{al-star-def}. This implies, in the case of a star graph $H = \kl_{1,R-1}$, the limit law for $T_{N,2}(\kl_{1,R-1})$ on the boundary $C^b_2(\kl_{1,R-1})$ is obtained for the {\it entire} range of sampling sparsity levels $\al\in (0,1)$. In case of other graphs $H$ with $\tau(H)>1$, our results are available for $\al\in (0,\alpha^\star_H)$, which provides a {\it restricted} range of sampling sparsity levels. See Theorem \ref{thm-2-main-temp}(b) for more details. The existence and nature of limiting distributions of $T_{N,2}(H)$ on the boundary $C^b_2(H)$, for $\tau(H)>1$ and $\al\geq \al^\star_H$, is not yet known and needs to be investigated. Theorem \ref{thm-2-main-temp}(b) is obtained by using a well know total-variation distance bound for Poisson approximation (see Theorem 6.23 of \cite{janson-rucinski-randomgraphs}). The solid red line on top of the sub-region $C_{2,2}(\kl_{1,3})$ in Figure \ref{fig:main:star}(b), displays the boundary region $C^b_2(\kl_{1,3})$ for $\al\in (0,1)$. The solid red line on top of the sub-region $C_{2,3}(\kl_3)$ in Figure \ref{fig:main:tri}(b) displays the boundary region $C^b_2(\kl_{3})$ for $\al\in (0,\al^\star_{\kl_3})$, where $(\al^\star_{\kl_3},\beta^\star_{\kl_3}) = (3/4,1/2)$.

\item[(iii)] {\bf Extreme case ($\al=1$):} At this sampling sparsity level, in the induced case, only a finite amount of data is observed, even when the population size $N\rai$. This is true at all levels of model sparsity. This implies, there is no possibility of meaningful statistical inference using extremely sparse sampling and induced network formation, for any choice of $H$. 

The ego-centric approach provides an opportunity to obtain inferential conclusions even when $\al=1$. However, our existing results are limited in scope. We have been able to establish a third type of Poisson based limit law for $T_{N,2}(\kl_{1,R-1})$ on $C^b_2(\kl_{1,R-1})$, when the model sparsity level $\beta\in [0,1)$. This limit law is expressible as linear combination of $K$ independent Poisson random variables (see Theorem \ref{thm-2-main-temp}(c)) and is obtained by a characteristic function argument. We are unable to derive the limit law of $T_{N,2}(\kl_{1,R-1})$ at the corner point $(\al,\beta)=(1,1)$ (see Figure \ref{fig:main:star}(b)).

For $\tau(H)>1$, the existence and nature of limiting distributions of $T_{N,2}(H)$ on the boundary $C^b_2(H)$, at $\al=1$, is not yet known and needs to be investigated. Refer to Figure \ref{fig:main:tri}(b), which displays a region $U$, where the nature of limit laws of $T_{N,2}(H)$ remains unknown.
\end{enumerate}

Now we state the main result on Poisson based limit laws for $T_{N,l}(H)$, $l=1,2$, at boundary points of $C_1(H)$ and $C_2(H)$.

\begin{theorem}[Poisson based limit laws for $T_{N,l}(H)$]
\label{thm-2-main-temp}
Suppose that Assumptions \ref{assump-1}, \ref{assump-2}, \ref{assump-3} and \ref{assump-4} hold. Assume $H$ is a strictly balanced target subgraph with $m(H) = T/R$ (cf. \eqref{mtH-def}).   
\begin{enumerate}
\item[(a)] {\bf Dense sampling:} At $\al=0$, $p_N =c\in (0,1)$ for all $N\geq 1$ (cf. \eqref{pN-def}), and the only boundary point in $C_l(H)$, $l=1,2$, is $(\al,\beta) = (0,R/T)$, for both $l=1,2$. Recall the definitions of $f_l(\cdot:H)$, $l=1,2$,  $\Cb$ and $\{\la_k:k=1,\ldots,K\}$ from \eqref{f2-def}, \eqref{PiN-lim} and Assumption \ref{assump-2} respectively. Define the constant
\begin{align}
\kappa(\Cb,\la_1,\ldots,\la_K:H) & = \sum_{u_1,\ldots,u_R\in [K]}\la_{u_1}\cdot \la_{u_2}\cdots \la_{u_R}\cdot \prod_{\{i,j\}\in E(H)} c_{u_i,u_j}.
\label{kap-def}
\end{align}
For $l=1$ and $l=2$, we define two independent Poisson random variables $U_{1,l,H}$ and $U_{2,l,H}$ with means $f_l(c:H)\cdot \kappa(\Cb,\la_1,\ldots,\la_K:H)$ and $\left\{1-f_l(c:H)\right\}\cdot \kappa(\Cb,\la_1,\ldots,\la_K:H)$, respectively. Then, 
\begin{align}
T_{N,l}(H) &\darw \frac{\left(1-f_l(c:H)\right)\cdot U_{1,l,H} - f_l(c:H)\cdot U_{2,l,H}}{[f_l(c:H)\cdot\{1-f_l(c:H)\}\cdot \kappa(\Cb,\la_1,\ldots,\la_K:H)]^{1/2}},\quad\text{for $l=1,2$.}
\label{lim-pois-case-1}
\end{align}

\item[(b)] {\bf Intermediately sparse sampling:} Consider the following regions of boundary values within $C_1(H)$ and $C_2(H)$, corresponding to intermediate sampling sparsity levels:
\begin{equation}
\begin{aligned}
F_{1}(H) &= \left\{(\al,\beta): 0<\al<1,~\beta = \frac{R\cdot(1-\al)}{T}\right\},\quad\text{and}\\
F_2(H) &= \left\{(\al,\beta): 0 <\al < \widetilde{\al}_H,~\beta = \left(1-\frac{\tau(H)\cdot\al}{R}\right)\cdot\frac{R}{T}\right\},
\end{aligned}
\label{F1F2-set}
\end{equation}
where, $\widetilde{\al}_H$ is defined in \eqref{al-tild-def}. In the ego-centric case (for $l=2$), assume that $H$ satisfies Assumption \ref{assump-5}. Recall that $p_N = c/N^{\al}$ (cf. \eqref{pN-def}). Define the maps, $\phi_1(c:H) = c^R$ and $\phi_2(c:H)=c^{\tau(H)}$, for any $c>0$. Define two Poisson random variables $V_{1,H}$ and $V_{2,H}$, with means $\phi_1(c:H)\cdot \kappa(\Cb,\la_1,\ldots,\la_K:H)$ and $\phi_2(c:H)\cdot \kappa(\Cb,\la_1,\ldots,\la_K:H)$, respectively, where $\kappa(\Cb,\la_1,\ldots,\la_K:H)$ is defined in \eqref{kap-def}. Then,
\begin{equation}
T_{N,l}(H) \darw \frac{V_{l,H}-\phi_l(c:H)\cdot \kappa(\Cb,\la_1,\ldots,\la_K:H)}{\left[\phi_l(c:H)\cdot \kappa(\Cb,\la_1,\ldots,\la_K:H)\right]^{1/2}},\quad \text{if $(\al,\beta)\in F_l(H)$, for both $l=1,2$.}
\label{lim-pois-case-2}
\end{equation}

\item[(c)] {\bf Extremely sparse sampling with $\tau(H)=1$:} Consider $H = \kl_{1,R-1}$, with $\tau(H)=1$ and assume that $\al=1$ and $\beta\in [0,1)$. Let $Z_1,\ldots,Z_K$ denote a collection of independent Poisson random variables with means $\la_1,\ldots,\la_K$, respectively (see Assumption \ref{assump-2}). Then
\begin{equation}
\begin{aligned}
&T_{N,2}(H)\darw \frac{1}{[\nu(\Cb,\la_1,\ldots,\la_K:H)]^{1/2}}\sum_{u_1=1}^K(Z_{u_1} -c\cdot\la_{u_1})\cdot \left(\sum_{u_2,\ldots,u_R\in[K]} c_{u_1,u_2}\cdots c_{u_1,u_R}\times \la_{u_2}\cdots \la_{u_R}\right),\\
&\text{where,}\quad\nu\left(\Cb,\la_1,\ldots,\la_K:H\right)=
c\cdot\sum_{u_1=1}^K\la_{u_1}\left(\sum_{u_2,\ldots,u_R\in [K]}c_{u_1,u_2}\cdots c_{u_1,u_R}\times \la_{u_2}\cdots \la_{u_R}\right)^2.
\end{aligned}
\label{nu-def}
\end{equation}

\end{enumerate}
\end{theorem}

\begin{rem}[Limiting behavior of $T_{N,l}(H)$ beyond the boundaries of $C_l(H)$]
\label{rem:out-bound}
An important question is, what happens to the limit law of $T_{N,l}(H)$, $l=1,2$, when the sparsity levels $(\al,\beta)$ go beyond the boundaries of $C_l(H)$, $l=1,2$? We are able to provide partial answers to this question, and we focus only on strictly balanced $H$. Firstly, in the induced case, it can be shown that, $\widehat{S}_{N,1}(H) - f_1(p_N:H)\cdot S_N(H) \parw 0$, if $\al+m(H)\cdot \beta>1$ and $\al\in [0,1]$. For example, in Figures \ref{fig:main:tri}(a) and \ref{fig:main:star}(a), this is the region above the solid red line. Similarly, in the ego-centric case, for $\tau(H)=1$ (star graph), we can show $\widehat{S}_{N,2}(\kl_{1,R-1}) - f_2(p_N:\kl_{1,R-1})\cdot S_N(\kl_{1,R-1})\parw 0$, if ${\al}/{R}+(R-1)\cdot{\beta}/R>1$ and $\al\in[0,1]$. In Figure \ref{fig:main:star}(b), this is the region above the solid red line, including the region above the point $(\al,\beta)=(1,1)$. The situation becomes complicated if $\tau(H)>1$. In this case if we consider $\al\in [0,\al^\star_H)$ (cf. \eqref{al-star-def}), then the boundary of $C_2(H)$, on this range of $\al$ values is the line, $\tau(H)\cdot \al + T\cdot\beta = R$. It can be shown 
$$
\widehat{S}_{N,2}(H)-f_2(p_N:H)\cdot S_N(H) \parw 0,\quad \text{if $\tau(H)\cdot\al + T\cdot\beta>R$ and $\al\in [0,\al^\star_H)$.}
$$
If $\al\in [\al^\star_H,1]$ and $(\al,\beta)$ exceed (or are on) the boundary of $C_2(H)$, then the limit law of $T_{N,2}(H)$ is currently unknown (provided it exists). For example, this scenario arises on the region $U$, shown in Figure \ref{fig:main:tri}(b) for $H = \kl_3$. If $(\al,\beta)$ keeps increasing further, so that the point $(\al,\beta)$ is `higher' than all the $(R-1)$ lines provided in \eqref{line-set}, then $\widehat{S}_{N,2}(H)-f_2(p_N:H)\cdot S_N(H)\parw 0$, irrespective of the value of $\al$. In Figure \ref{fig:main:tri}(b), this happens when $(\al,\beta)$ is above the boundary of $C_{2,3}(\kl_3)$ and also above the region $U$.

\end{rem}

\begin{rem}[Necessary conditions for a Gaussian limit law]
\label{rem:necessity}
In the induced case and in the ego-centric case for the star graph, the thresholds on the sparsity levels, provided in $C_1(H)$ and $C_2(\kl_{1,R-1})$ are not only sufficient, but they are also {\it necessary} to achieve a Gaussian limit law. This can be clearly seen from the shape of the boundary regions, where Poisson based limit laws are found. In the ego-centric case, if $\tau(H)>1$, the thresholds in $C_2(H)$ are also necessary (if we leave out the case of $\al\geq \al^\star_H$, see \eqref{al-star-def}). Even then, in the case of the triangle ($H=\kl_3$), we conjecture that $T_{N,2}(\kl_3)$  {\it cannot} have a Gaussian limit law on the region $U$ (see Figure \ref{fig:main:tri}(b)). \cite{janson-rucinski-randomgraphs} exhibit similar sharp transitions from Gaussian to Poisson based limit laws for the population subgraph count $S_N(H)$ in the case of Erd\H{o}s-R\'{e}nyi random graphs, as the model sparsity level $\beta$ increases. However, it is remarkable to find that a similar sharp transition is observed even in the case of estimated subgraph counts, for both induced and ego-centric approaches, as either one or both of the model and sampling sparsity levels increases.
\end{rem}

\begin{rem}[Extensions to other models]
\label{rem:other-model}
Based on the techniques used for proving our main results, we believe that the above described Gaussian and Poisson based limit law results for the estimated subgraph counts can be extended to other population network models, including popular variants of the SBM, such as the degree corrected SBM (\cite{karrer2011stochastic}), or for random dot product graph models (\cite{athreya2018statistical}), at the cost of more complicated technical assumptions. We plan to investigate these issues in a future article. 
\end{rem}
\begin{figure}[tbp]
    \centering
    % ---------- Second row ----------
    \begin{minipage}[b]{0.46\textwidth}
        \centering
        \includegraphics[scale = 0.3]{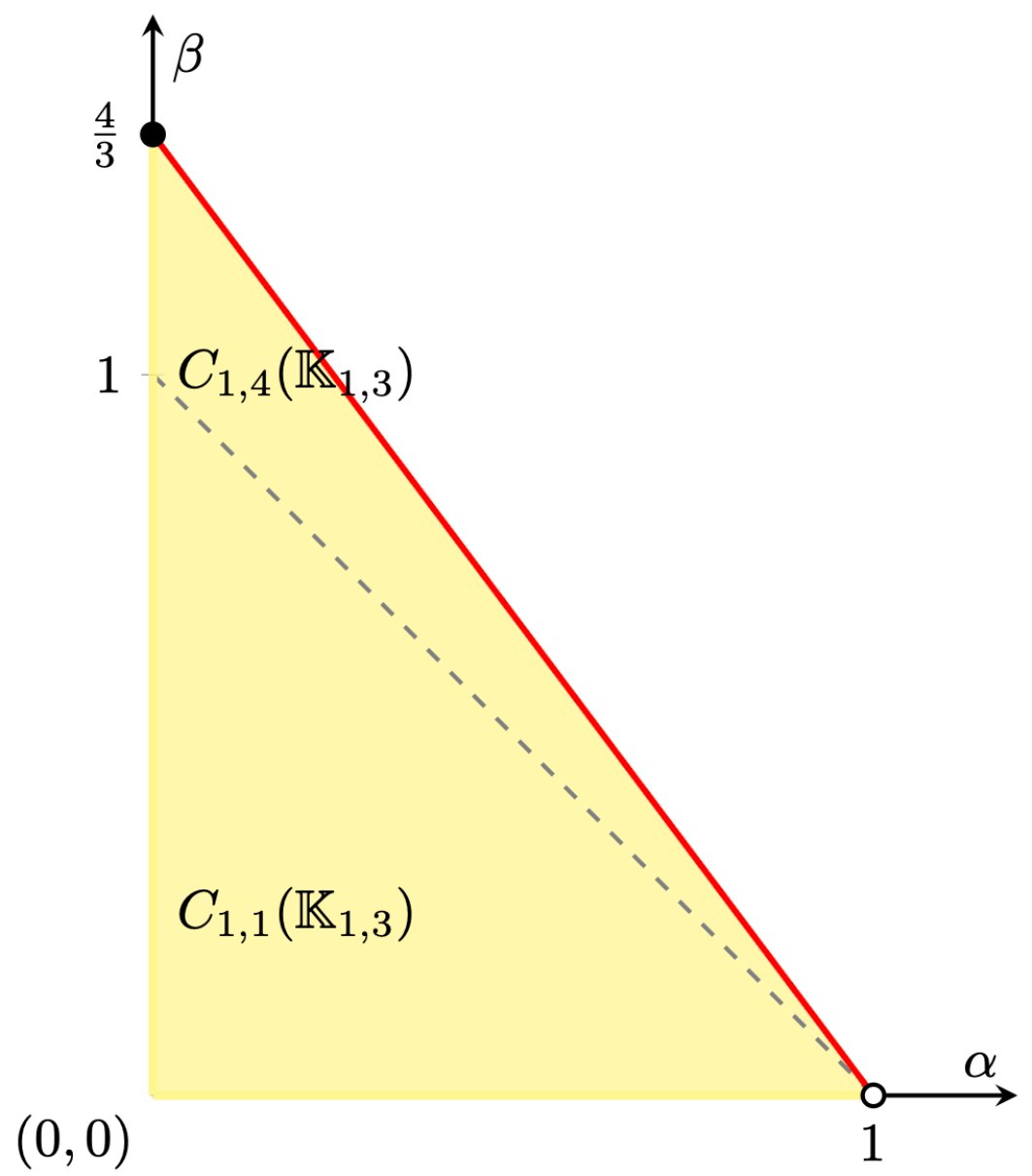}
        \\[-0.3ex]
        \footnotesize (a) Induced case: $C_{1}(\kl_{1,3})$ and its sub-regions.
    \end{minipage}
    \hspace{1.5ex}
    \begin{minipage}[b]{0.46\textwidth}
        \centering
        \includegraphics[scale = 0.3]{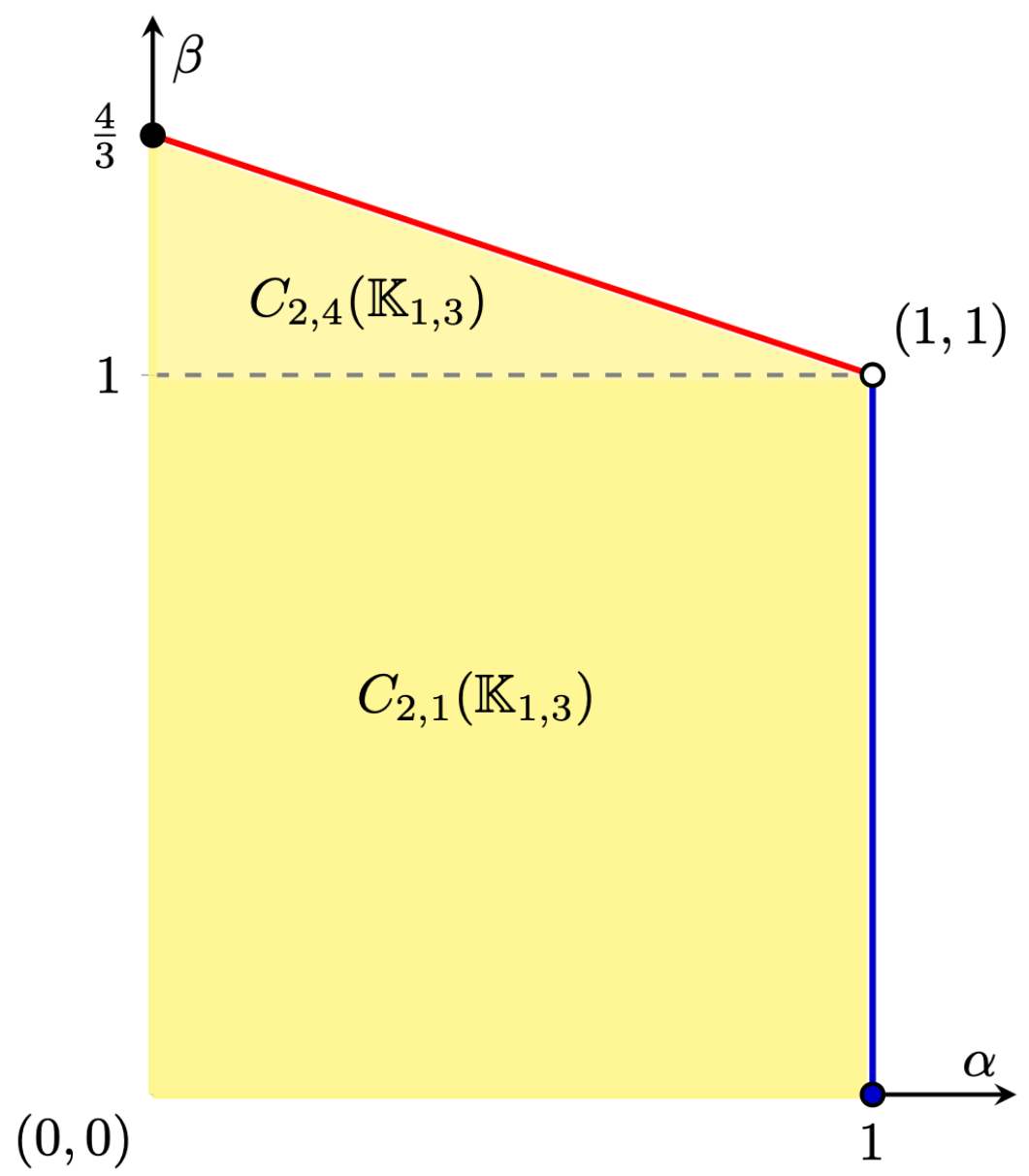}
        \\[-0.3ex]
        \footnotesize (b) Ego-centric case: $C_2(\kl_{1,3})$ and its sub-regions.
    \end{minipage}

    \caption{{\small{{\bf Sparsity levels and limit laws for estimated star graph counts:} $T_{N,1}(\kl_{1,3})$ and $T_{N,2}(\kl_{1,3})$ have a Gaussian limit on $C_1(\kl_{1,3})$ and $C_2(\kl_{1,3})$ (yellow shaded regions), with different decay rates for $\Delta_{N,l}(\kl_{1,3};\al,\beta)$, $l=1,2$, within each sub-region. Similar Poisson based limit laws arise at $(\al,\beta)=(0,R/T)=(0,4/3)$ (marked by filled black dot on both figures) for both $T_{N,l}(\kl_{1,3})$, $l=1,2$, see Theorem \ref{thm-2-main-temp}(a). A different Poisson based limit law arises on the upper boundaries of $C_{1,4}(\kl_{1,3})$ and $C_{2,2}(\kl_{1,3})$ (marked by solid red line), see Theorem \ref{thm-2-main-temp}(b). Another Poisson based limit law arises for $T_{N,2}(\kl_{1,3})$ on the line $\al=1$, for $\beta\in[0,1)$, see solid blue line in figure (b) and Theorem \ref{thm-2-main-temp}(c). Limit law of $T_{N,2}(\kl_{1,3})$ is unknown when $(\al,\beta)=(1,1)$.}}}
    \label{fig:main:star}
\end{figure}

\subsection{Limit laws for the estimated clustering coefficient}
\label{sec:cc}

In this section, we focus on the prediction of the clustering coefficient statistic (see Chapter 4 of \cite{kolaczyk09}) of the population network $G_N$. The clustering coefficient is a widely used measure of cohesion within the network. Various researchers have focused on the estimation of the clustering coefficient through network sampling (cf. \cite{bliss-14}, \cite{lee2006}, \cite{illenberger-12}, \cite{hardiman2013}, \cite{iwasaki2018}). To the best our knowledge, the asymptotic distribution of the sample based clustering coefficient estimate has not been studied in the existing literature, under any sort of framework or under any type of sampling scheme. According to \cite{bianconi-2018} (cf. p. 17), the clustering coefficient of $G_N$ can be written as
\begin{align}
\Gamma_N \equiv \frac{3\times \text{(number of triangles in ${G}_N$)}}{\text{(number of distinct paths of length two in ${G}_N$)}} = \begin{cases}
\displaystyle\frac{S_N(\kl_3)}{S_N(\kl_{1,2})} \displaystyle& \text{if $S_N(\kl_{1,2})> 0$,}\\
         0 & \text{otherwise,}.
         \end{cases}
         \label{CC-def}
\end{align}
%This can be written as,
%\begin{align}
%\Gamma_N & = \begin{cases}
%\displaystyle\frac{S_N(\kl_3)}{S_N(\kl_{1,2})} %\displaystyle& \text{if $S_N(\kl_{1,2})> 0$,}\\
%         0 & \text{otherwise,}
%    \end{cases}
%\label{CC-def}    
%\end{align}
where $S_N(\kl_{1,2})$ and $S_N(\kl_3)$ denote the number of wedges and triangles in the population network $G_N$. The induced and ego-centric sample based estimates of the clustering coefficient are defined as
\begin{align}
    \widehat{\Gamma}_{N,l} = 
    \begin{cases}        \displaystyle\frac{\widehat{S}_{N,l}(\kl_3)/{f_l(p_N:\kl_3)}}{\widehat{S}_{N,l}(\kl_{1,2})/{f_l(p_N:\kl_{1,2})}} \displaystyle &\quad \text{if $\widehat{S}_{N,l}(\kl_{1,2})>0$,}\\
         0&\quad \text{otherwise,}
    \end{cases},\quad\text{for $l=1,2$.}
\label{CC-est}    
\end{align}
where, $\widehat{S}_{N,l}(\kl_{1,2})/f_l(p_N:\kl_{1,2})$ and $\widehat{S}_{N,l}(\kl_3)/f_l(p_N:\kl_3)$, $l=1,2$, (cf. \eqref{s-all-main}) are Horvitz-Thompson estimators of $S_N(\kl_{1,2})$ and $S_N(\kl_3)$, $f_l(p_N:\kl_{1,2})$ and $f_l(p_N:\kl_3)$ have been defined in \eqref{fl-def}. Define the map $\psi(x,y) = x/y$, for all $x\in\rl$ and $y\neq 0$. The design unbiasedness of the Horvitz Thompson estimator implies $\E(\widehat{S}_{N,l}(H)) = f_l(p_N:H)\cdot\E(S_N(H))$, for any choice of $H$. Thus, if $\widehat{S}_{N,l}(\kl_{1,2})>0$, then we can use Taylor expansion, to write 
\begin{align}
&N^\beta\cdot\left(\widehat{\Gamma}_{N,l}-{\Gamma}_N \right)= \frac{a_{N,l}\cdot\sigma_{N,l}(\kl_3)}{N^{(3-3\beta)}\cdot f_l(p_N:\kl_3)}\cdot T_{N,l}(\kl_3) + \frac{b_{N,l}\cdot\sigma_{N,l}(\kl_{1,2})}{N^{(3-2\beta)}\cdot f_l(p_N:\kl_{1,2})}\cdot T_{N,l}(\kl_{1,2}) + R_N,
\label{lin-comb}
\end{align}
where, $a_{N,l}$ and $b_{N,l}$ depend on the model parameters, and involve the partial derivatives of $\psi(x,y)$, $R_N$ is a remainder term, $T_{N,l}(H)$ and $\sigma_{N,l}(H)$, $l=1,2$, are defined in \eqref{TN-def-12} and \eqref{sig-temp-0}. The limit law of $\left(\widehat{\Gamma}_{N,l}-\widehat{\Gamma}_N\right)$, $l=1,2$, is decided by the limiting distribution of the linear combination of $T_{N,1}(\kl_{1,2})$ and $T_{N,l}(\kl_{3})$ in \eqref{lin-comb}. As it turns out, for certain values of $(\al,\beta)$, the triangle based term in this linear combination becomes the stochastically dominant term, and the wedge based term has negligible contribution. In the induced case, this happens if $(\al,\beta)\in R_{1,3}\cup R_{1,4}\cup R_{1,5}$ and in the ego-centric case, the same situation arises if $(\al,\beta)\in R_{2,3}\cup R_{2,4}\cup R_{2,5}\cup R_{2,6}$ (see blue shaded area in Figures \ref{fig:main:cc}(a) and \ref{fig:main:cc}(b)). For all other choices of $(\al,\beta)$, both the triangle and wedge based terms in \eqref{lin-comb} contribute equally to the limiting behavior of this linear combination. In the induced case, this happens if $(\al,\beta)\in R_{1,1}\cup R_{1,2} = C_{1,1}(\kl_3)$ (see Figure \ref{fig:main:cc}(a) and Figure \ref{fig:main:tri}(a)). Similarly in the ego-centric case, the same situation is observed if $(\al,\beta)\in R_{2,1}\cup R_{2,2} = C_{2,1}(\kl_3)$ (see Figure \ref{fig:main:cc}(b) and Figure \ref{fig:main:star}(b)). If the $(\al,\beta)$ pair increases further, beyond the regions shown in Figures \ref{fig:main:cc}(a) and \ref{fig:main:cc}(b) (in the induced and ego-centric cases, respectively), then the triangle based term in the linear combination (in \eqref{lin-comb}) continues to be the dominant term, but it does not lead to any non-degenerate limit law. Note that the wedge ($\kl_{1,2}$) and triangle ($\kl_3$) graphs, which are involved in the definition of the clustering coefficient, satisfy Assumption \ref{assump-5}. In order to state our main result, we define the following regions of sparsity levels,

\begin{figure}[htbp]
    \centering
    % ---------- First row ----------
    \begin{minipage}[b]{0.48\textwidth}
        \centering
        \includegraphics[scale = 0.28]{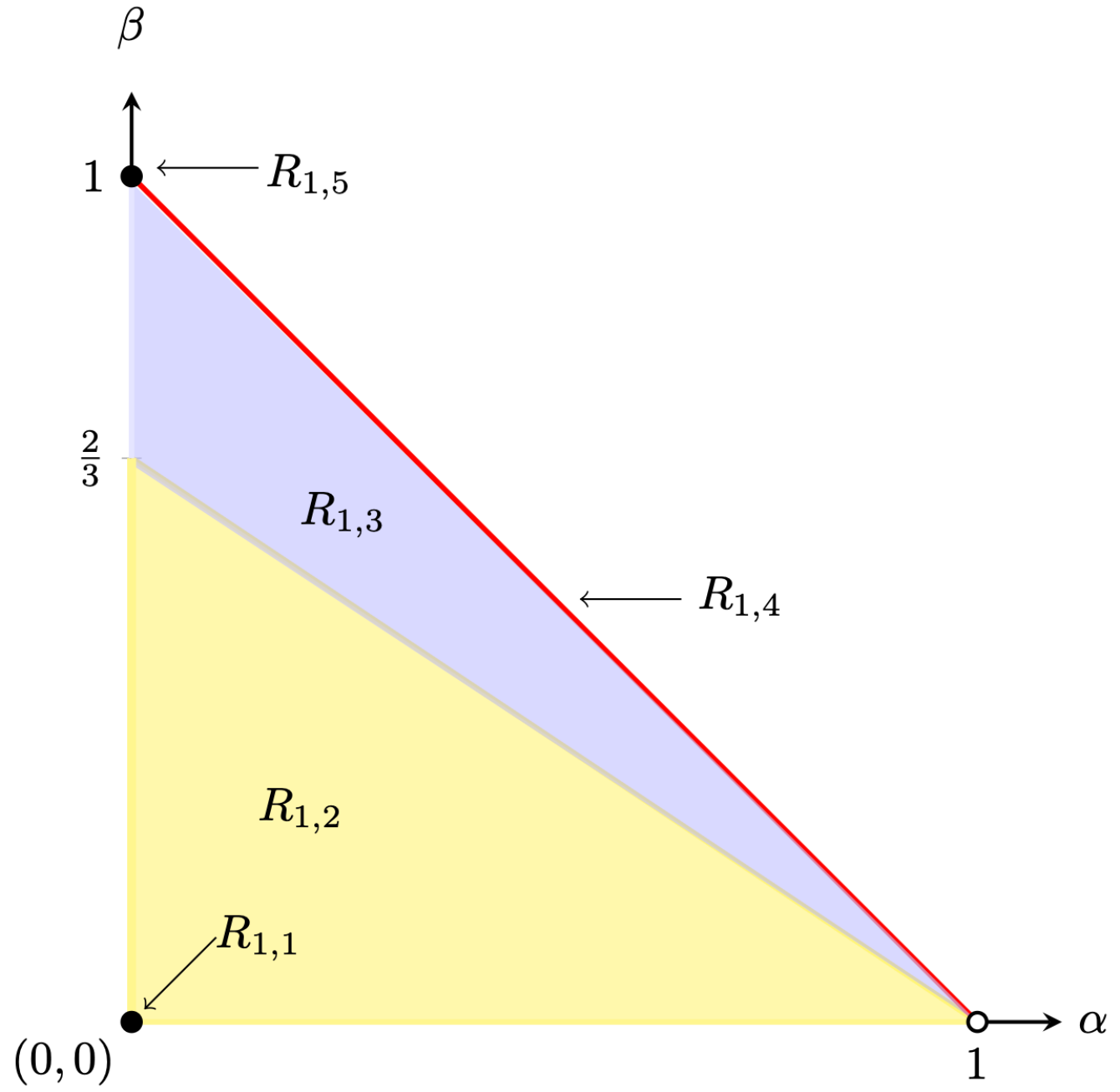}
        \\[-0.3ex]
        \footnotesize (a)  Sub-regions $R_{1,j}$, $j=1,\ldots,5$, in the induced case.
    \end{minipage}
    \hspace{1.5ex}
    \begin{minipage}[b]{0.48\textwidth}
        \centering
        \includegraphics[scale = 0.28]{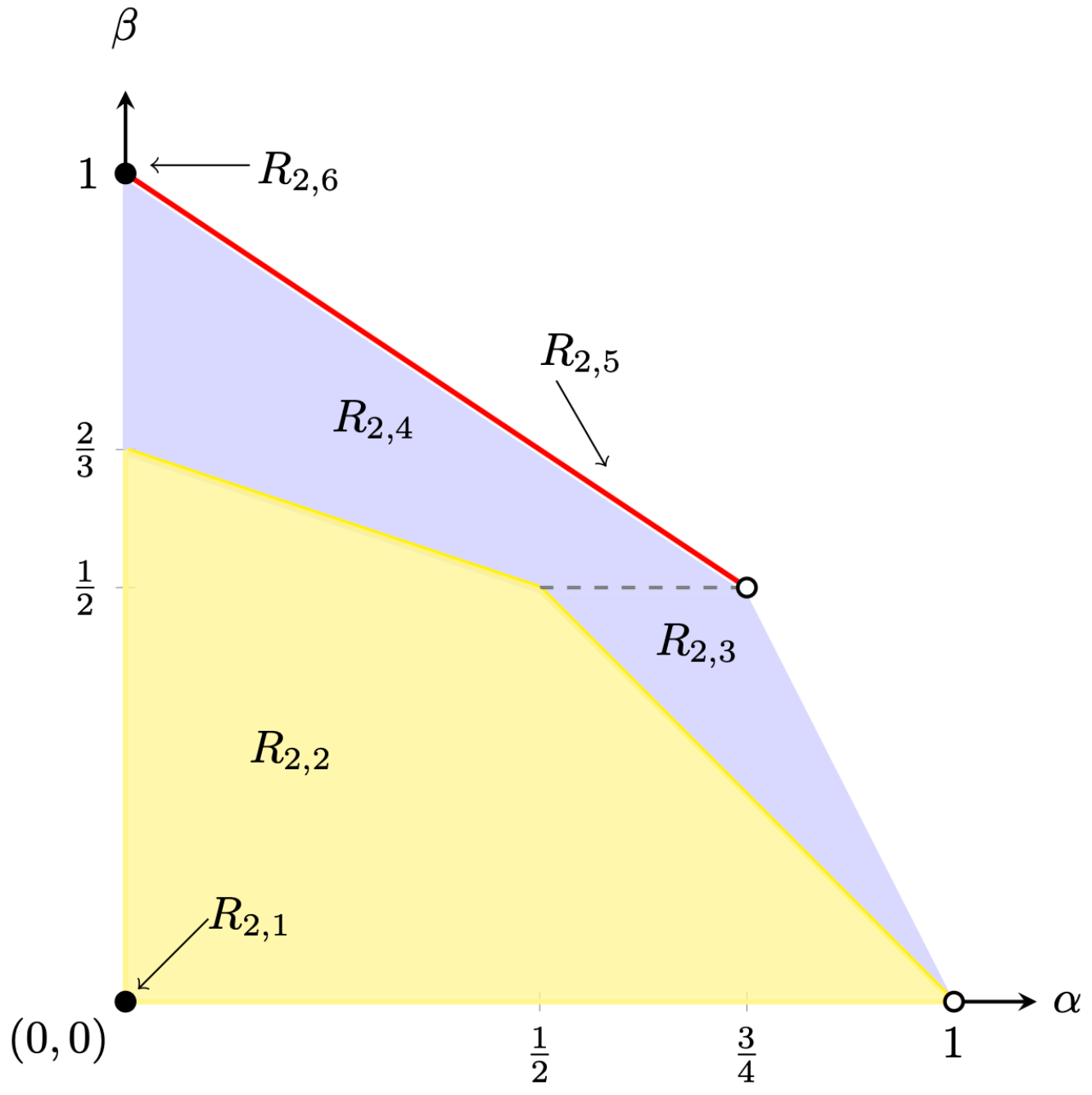}
        \\[-0.3ex]
        \footnotesize (b) Sub-regions $R_{2,j}$, $j=1,\ldots,6$, in the ego-centric case.
    \end{minipage}
    \caption{Figure showing various sub-regions arising during the estimation of the clustering coefficient (see \eqref{rsets-def} for details). The point $(\al,\beta)=(0,0)$ and all yellow shaded areas give rise to Gaussian limit laws which are influenced by both wedge and triangle counts. Blue shaded areas give rise to Gaussian limit laws which are influenced only by the triangle counts. Poisson based limit laws arise on the red solid lines and on the black filled dot at $(\al,\beta)=(0,1)$. See Theorems \ref{thm-cc-ind} and \ref{thm-cc-ego} for details.}
    \label{fig:main:cc}
\end{figure}

\begin{small}

\begin{equation}
\label{rsets-def}
\left. \begin{aligned}
        &R_{1,1} = R_{2,1} = \left\{(\al,\beta):\al=\beta=0\right\},\\
        &R_{1,2} = \left\{(\al,\beta): \al \in (0,1),~\al+\frac{\beta}{2/3}\leq 1\right\},~R_{2,2} = \left\{(\al,\beta):\al\in (0,1),\frac{\al}{2}+\frac{\beta}{2/3}\leq 1 ,~\al+\beta\leq 1\right\},\\
        &R_{1,3} =\left\{(\al,\beta):\al\in(0,1),~\al+\frac{\beta}{2/3}>1,\al+\beta <1 \right\},~R_{2,3} =\left\{(\al,\beta):\beta<\frac{1}{2},\al+\beta>1, \al+\frac{\beta}{2}<1\right\},\\
        &R_{1,4} = \left\{(\al,\beta):\al\in (0,1),~\al+\beta =1\right\},~R_{2,4} = \left\{(\al,\beta): \beta\geq \frac{1}{2},\frac{\al}{2}+\frac{\beta}{2/3}>1, \frac{\al}{3/2}+\beta<1\right\},\\
        &R_{1,5} = \left\{(\al,\beta) : \al = 0,\beta = 1\right\}, ~ ~R_{2,5} = \left\{(\al,\beta): \al\in \left(0,3/4\right), \frac{\al}{3/2} +\beta =1\right\}\quad\text{and}\\
        &R_{2,6} = \left\{(\al,\beta) : \al = 0,\beta = 1\right\}.
    \end{aligned}\right\}
\end{equation}
\end{small}
Recall the definition of $\Cb=((c_{u,v}:1\leq u,v\leq K))$ and $\la_1,\ldots,\la_K$ from \eqref{PiN-lim} and Assumption \ref{assump-2}. Define the constants,
\begin{align}
    &\theta_1 = \sum_{u,v,w=1}^K c_{u,v}c_{u,w}\cdot \la_u\la_v\la_w,\quad\text{and}\quad\theta_2=  \sum_{u,v,w=1}^Kc_{u,v}c_{u,w}c_{v,w}\cdot \la_u\la_v\la_w.
    \label{th1-th2-def}
\end{align}
Recall the definition of $p_N$ from \eqref{pN-def}. Note, when $\al = 0$, $p_N = c$, where $c\in (0,1)$. When $\al>0$, then $p_N = c/N^\al$, where $c>0$. We define the following asymptotic variance and asymptotic mean expressions, which are associated with limiting Gaussian and limiting Poisson based distributions, arising on the regions $R_{1,j}$, $j=1,\ldots,5$ (cf. \eqref{rsets-def}), in the induced case:
\begin{equation}
\left.\begin{aligned}
&\left[\rho_{1,1}(\kl_{1,2},\kl_{3})\right]^2 = \sum_{u,v,w,x,y=1}^K\pi_{u,v}\pi_{u,w}\cdot\left\{\frac{\theta_2^2}{\theta_1^4}\cdot \frac{c^5(1-c)}{c^6}\cdot \left\{\pi_{u,x}\pi_{u,y}  + 4\pi_{u,x}\pi_{x,y} + 4\pi_{v,x}\pi_{x,y}\right\}+\right.\\
            &\quad \left. {}+\frac{1}{\theta_1^2}\cdot \frac{9c^5(1-c)}{c^6}\cdot \pi_{v,w}\pi_{u,x}\pi_{x,y}\pi_{y,u}-\frac{2\theta_{2}}{\theta_1^3}\cdot \frac{c^5(1-c)}{c^6}\cdot\left\{3\pi_{u,x}\pi_{x,y}\pi_{y,u} + 6\pi_{v,x}\pi_{x,y}\pi_{y,v}\right\}\right\}\cdot\la_{u}\la_{v}\la_{w}\la_{x}\la_{y},
\end{aligned}\right.
\label{var-rho-1-1}
\end{equation}
\begin{align}
&\left[\rho_{1,2}(\kl_{1,2},\kl_{3})\right]^2 = \frac{1}{c \theta_1^2}\sum_{u,v,w,x,y=1}^Kc_{u,v}c_{u,w}\left\{\frac{\theta_2^2}{\theta_1^2}\left\{c_{u,x}c_{u,y}  + 4c_{u,x}c_{x,y} + 4c_{v,x}c_{x,y}\right\}+\right.\notag\\
            &\left. {}+ 9\cdot c_{v,w}c_{u,x}c_{x,y}c_{y,u}-\frac{2\theta_{2}}{\theta_1}\cdot\left\{3c_{u,x}c_{x,y}c_{y,u} + 6c_{v,x}c_{x,y}c_{y,v}\right\}\right\}\cdot\la_{u}\la_{v}\la_{w}\la_{x}\la_{y},
\label{var-rho-1-2}            
\end{align}
\begin{equation}
\left[\rho_{1,3}(\kl_3)\right]^2 = \frac{\theta_2}{c^3 \theta_{1}^2}%\cdot\sum_{u,v,w=1}^Kc_{u,v}c_{v,w}c_{w,u}\cdot\la_u\la_v\la_w
\quad\text{and}\quad
\kappa_{1,4}(\kl_3)  = c^3\theta_2.%\sum_{u,v,w=1}^K c_{u,v}c_{v,w}c_{w,u}\cdot\la_u\la_v\la_w.
\label{var-rho-1-3-kap-1-4}
\end{equation}
The subscript $j$ (in $\rho_{1,j}$ or $\kappa_{1,j}$) in the above expressions indicate that these asymptotic variance or mean expressions are associated with the limit law of the estimated clustering coefficient on the region $R_{1,j}$, for $j=1,\ldots,5$ (cf. \eqref{rsets-def}). Similarly, the use of both the triangle ($\kl_3$) and wedge ($\kl_{1,2}$) symbols indicates that the Gaussian limit law is influenced by both the triangle and wedge based terms in \eqref{lin-comb}, while if only the triangle symbol is used, it indicates that the limit law is influenced only by the triangle based term in \eqref{lin-comb}. Similar notational convention will be used in the ego-centric case. Now we describe the main result about the limit law for the estimated clustering coefficient in the induced case. 

\begin{theorem}[Limit laws for the estimated clustering coefficient in the induced case]
\label{thm-cc-ind}
Suppose that Assumptions \ref{assump-1}, \ref{assump-2}, \ref{assump-3} and \ref{assump-4} hold. Recall the definitions of $\theta_{1,N}$ and $\theta_{2,N}$ in \eqref{th1-th2-def} and the asymptotic variance expressions $[{\rho_{1,1}(\kl_{1,2},\kl_3)}]^2$, $[{\rho_{1,2}(\kl_{1,2},\kl_3)}]^2$, $[{\rho_{1,3}(\kl_3)}]^2$, from \eqref{var-rho-1-1}, \eqref{var-rho-1-2} and \eqref{var-rho-1-3-kap-1-4}. Define the independent Poisson random variables, $U_{1,4}$, $U_{1,5,1}$ and $U_{1,5,2}$, with means $\kappa_{1,4}(\kl_3)$ (cf. \eqref{var-rho-1-3-kap-1-4}), $c^3\cdot\kappa_{1,4}(\kl_3)$ and $(1-c^3)\cdot \kappa_{1,4}(\kl_3)$, respectively, where $c$ is defined in \eqref{pN-def}. Recall the definitions of the regions $R_{1,j}$, $j=1,\ldots,5$, in \eqref{rsets-def}. Then,
\begin{align}
 r_{N,1}(\al,\beta)\cdot\left(\widehat{\Gamma}_{N,1}-\Gamma_N\right)\darw V_{1,\al,\beta},   
 \label{ll-cc-ind}
\end{align}
where,
\begin{equation}
 \left.
 \begin{aligned}
 r_{N,1}(\al,\beta) =\sqrt{N}\quad &\text{and}\quad V_{1,\al,\beta} \stackrel{d}{=} N\left(0,{\left[\rho_{1,1}(\kl_{1,2},\kl_{3})\right]}^2\right),&\text{if $(\al,\beta)\in R_{1,1}$,}\\
 r_{N,1}(\al,\beta) = N^{\displaystyle\beta + (1-\al)/2\displaystyle}\quad &\text{and}\quad V_{1,\al,\beta} \stackrel{d}{=} N\left(0,\left[\rho_{1,2}(\kl_{1,2},\kl_{3})\right]^2\right),&\text{if $(\al,\beta)\in R_{1,2}$,}\\
 r_{N,1}(\al,\beta) = N^{\beta + 3(1-\al-\beta)/2}\quad &\text{and}\quad V_{1,\al,\beta} \stackrel{d}{=} N\left(0,{\left[\rho_{1,3}(\kl_3)\right]}^2\right),&\text{if $(\al,\beta)\in R_{1,3}$,}\\
 r_{N,1}(\al,\beta) = N^\beta\quad &\text{and}\quad V_{1,\al,\beta}\stackrel{d}{=}\frac{U_{1,4}-\kappa_{1,4}(\kl_3)}{\theta_1},&\text{if $(\al,\beta)\in R_{1,4}$,}\\
 r_{N,1}(\al,\beta) = N\quad&\text{and}\quad V_{1,\al,\beta}\stackrel{d}{=} \frac{(1-c^3)\cdot U_{1,5,1} - c^3\cdot U_{1,5,2}}{\theta_1},&\text{if $(\al,\beta)\in R_{1,5}$.}
 \end{aligned}
 \right\}
\end{equation}
\end{theorem}

Similarly to the induced case, we define some asymptotic variance and mean expressions which will be used to define the limit law of the estimated clustering coefficient in the ego-centric case:
\begin{align}
&{\left[\rho_{2,1}(\kl_{1,2},\kl_{3})\right]}^2\notag\\
&= \sum_{\substack{u,v,\\w,x,y=1}}^K\pi_{u,v}\pi_{u,w}\left\{\frac{\theta_2^2}{\theta_1^4}\cdot \frac{\left\{c(1 - c)^3(1+c)^2\cdot\pi_{u,x}\pi_{u,y} + 
4c^2{(1 - c)}^3(1 + c)\cdot \pi_{u,x}\pi_{x,y} +
4c^3{(1 - c)}^3\cdot\pi_{v,x} \pi_{x,y}\right\}}{{\left(c +(1-c)\cdot c^2\right)}^2}\right.\notag\\
&\left. {}+\frac{1}{\theta_1^2}\cdot\frac{36c^3{(1-c)}^3\cdot\pi_{v,w}\pi_{u,x}\pi_{x,y}\pi_{y,u}}{{(3c^2 -2c^3)}^2}\right.\notag\\
&\left.{}-\frac{2\theta_{2}}{\theta_1^3}\cdot\frac{\left\{6c^2{(1 - c)}^3(1+ c)\cdot\pi_{u,x}\pi_{x,y}\pi_{y,u} +
12c^3{(1 - c)}^3\cdot\pi_{v,x} \pi_{x,y}\pi_{y,v}\right\}}{(c+c{(1-c)}^2)\cdot(3c^2-2c^3)}\right\}\cdot\la_{u}\la_{v}\la_{w}\la_{x}\la_{y},
\label{var-rho-2-1}
\end{align}

\begin{align}
&{\left[\rho_{2,1}(\kl_{1,2},\kl_{3})\right]}^2= \frac{1}{c\theta_1^2}\sum_{\substack{u,v,\\w,x,y=1}}^Kc_{u,v}c_{u,w}c_{u,x}c_{u,y}\left\{\frac{\theta_2^2}{\theta_1^2}+4\cdot c_{v,w}c_{x,y}-\frac{4\theta_{2}}{\theta_1}c_{x,y}\right\}\cdot\la_{u}\la_{v}\la_{w}\la_{x}\la_{y},
\label{var-rho-2-2}
\end{align}
%\begin{align}
%&{\left[\rho_{2,2}(\kl_{1,2},\kl_{3})\right]}^2 = c^5(1-c)\cdot\frac{\theta^2_2}{\theta_1^4}\cdot\sum_{u,v,w,x=1}^K \left(c_{u,v}c_{u,w}c_{u,x}c_{u,y} + 2c_{u,v}c_{u,w}c_{v,x}c_{v,y}\right)\cdot\la_u\la_v\la_w\la_x\la_y\notag\\
%            &\quad {}+ \frac{c^5(1-c)}{\theta_{1}^2}\cdot\sum_{u,v,w,x,y=1}^Kc_{u,v}c_{v,w}c_{w,u}c_{u,x}c_{u,y}\cdot\la_u\la_v\la_w\la_x\la_y\notag\\
%            &\quad{}-2c^5(1-c)\frac{\theta_2}{\theta_2^3}\cdot\sum_{u,v,w,x,y=1}^K\left(c_{u,v}c_{v,w}c_{w,u}c_{u,x}c_{u,y} + 2c_{u,v}c_{v,w}c_{w,u}c_{u,x}c_{x,y}\right),
%\label{var-rho-2-2}
%\end{align}
\begin{equation}
{\left[\rho_{2,3}(\kl_3)\right]}^2 = \frac{1}{9 c^2\theta_1^2}\cdot\sum_{u,v,w,x=1}^Kc_{u,v}c_{v,w}c_{w,u}c_{u,x}c_{w,x}\cdot\la_u\la_v\la_w\la_x,\quad\text{and}\quad {\left[\rho_{2,4}(\kl_3)\right]}^2 =\frac{\theta_2}{3c^2 \theta_1^2}.%\sum_{u,v,w=1}^Kc_{u,v}c_{v,w}c_{w,u}\cdot\la_u\la_v\la_w,
\label{var-rho-2-3-2-4}
\end{equation}

\begin{theorem}[Limit laws for the estimated clustering coefficient in the ego-centric case]
\label{thm-cc-ego}
Suppose that Assumptions \ref{assump-1}, \ref{assump-2}, \ref{assump-3} and \ref{assump-4} hold. Recall the definitions of $\theta_{1,N}$ and $\theta_{2,N}$ in \eqref{th1-th2-def} and the asymptotic variance expressions ${[\rho_{2,1}(\kl_{1,2},\kl_3)]}^2$, ${[\rho_{2,2}(\kl_{1,2},\kl_3)]}^2$, ${[\rho_{2,3}(\kl_3)]}^2$, ${[\rho_{2,4}(\kl_3)]}^2$ defined in \eqref{var-rho-2-1}, \eqref{var-rho-2-2}, \eqref{var-rho-2-3-2-4}. Define the independent Poisson random variables, $U_{2,5}$, $U_{2,6,1}$ and $U_{2,6,2}$, with means $3c^2\cdot\theta_2$, $(3c^2-c^3)\cdot\theta_2$ and $\{1-(3c^2-c^3)\}\cdot\theta_2$, respectively, where $c$ is defined in \eqref{pN-def}. Define the constant $\delta_c = (3c^2-c^3)$. Recall the definitions of the regions $R_{2,j}$, $j=1,\ldots,6$, in \eqref{rsets-def}. Then
\begin{align}
 r_{N,2}(\al,\beta)\cdot\left(\widehat{\Gamma}_{N,2}-\Gamma_N\right)\darw V_{2,\al,\beta},   
 \label{ll-cc-ego}
\end{align}
where,
\begin{equation}
 \left.
 \begin{aligned}
 r_{N,2}(\al,\beta) =\sqrt{N}\quad&\text{and}\quad V_{2,\al,\beta} \stackrel{d}{=} N\left(0,{\left[\rho_{2,1}(\kl_{1,2},\kl_{3})\right]}^2\right),&\text{if $(\al,\beta)\in R_{2,1}$,}\\
 r_{N,2}(\al,\beta) = N^{\displaystyle\beta + (1-\al)/2\displaystyle}\quad &\text{and}\quad V_{2,\al,\beta} \stackrel{d}{=} N\left(0,\left[\rho_{2,2}(\kl_{1,2},\kl_{3})\right]^2\right),&\text{if $(\al,\beta)\in R_{2,2}$,}\\
 r_{N,2}(\al,\beta) = N^{\displaystyle\beta + (2-2\al-\beta)/2\displaystyle}\quad &\text{and}\quad V_{2,\al,\beta} \stackrel{d}{=} N\left(0,{\left[\rho_{2,3}(\kl_3)\right]}^2\right),&\text{if $(\al,\beta)\in R_{2,3}$,}\\
 r_{N,2}(\al,\beta) = N^{\displaystyle\beta+(3-2\al-3\beta)/3\displaystyle}\quad &\text{and}\quad V_{2,\al,\beta}\stackrel{d}{=}N\left(0,{\left[\rho_{2,4}(\kl_3)\right]}^2\right),&\text{if $(\al,\beta)\in R_{2,4}$,}\\ 
 r_{N,2}(\al,\beta) = N^\beta\quad&\text{and}\quad V_{2,\al,\beta}\stackrel{d}{=} \frac{U_{2,5}-3c^2\cdot\theta_2}{\theta_1},&\text{if $(\al,\beta)\in R_{2,5}$,}\\
 r_{N,2}(\al,\beta) = N\quad &\text{and}\quad V_{2,\al,\beta}\stackrel{d}{=} \frac{\{1-\delta_c\}\cdot U_{2,6,1}-\delta_c\cdot U_{2,6,2}}{\theta_1},&\text{if $(\al,\beta)\in R_{2,6}$.}
 \end{aligned}
 \right\}
\end{equation}

\end{theorem}

In the ego-centric case, the limiting distribution of the estimated triangle count remains unknown on the boundary of the region $R_{2,3}$ in Figure \ref{fig:main:cc}(b) (also see the region $U$ in Figure \ref{fig:main:tri}(b)). For this reason, the limit law of the estimated clustering coefficient also remains unknown on this region. The variance expressions obtained in Theorem \ref{thm-cc-ind} and Theorem \ref{thm-cc-ego} are complicated, especially when $\al$ or $\beta$ is zero. This happens due to the various complicated covariance terms, which arise while finding the limiting distribution of the linear combination of the wedge and triangle based terms in \eqref{lin-comb}.

\section{Simulation results and real data analysis}
\label{sec:sim}

If $T_{N,l}(H)\darw F$, where $F$ is a c.d.f., then we can construct the following asymptotic two-sided level $(1-\eta)$ {\it prediction interval} (PI) for the unknown population subgraph density $N^{-R}\cdot S_N(H)$ (where $R = |V(H)|$),
\begin{align}
\label{pred-int-1}
J_{\eta,l,F}(H)\equiv \left[\frac{\left(\widehat{S}_{N,l}(H)-\xi_{1-\eta/2}\cdot\sigma_{N,l}(H)\right)}{N^R\cdot f_{l}(p_N:H)},\frac{\left(\widehat{S}_{N,l}(H)-\xi_{\eta/2}\cdot\sigma_{N,l}(H)\right)}{N^R\cdot f_l(p_N:H)}\right],
\end{align}
for $l=1,2$, where $\xi_\eta$ denotes the $\eta$-quantile of the distribution $F$, $f_l(p_N:H)$ are defined in  \eqref{fl-def} and $\sigma_{N,l}(H)$ are defined in \eqref{sig-temp-0}. A similar approach can be used to construct an asymptotic two-sided PI for the unknown clustering coefficient $\Gamma_N$. 

In our simulation study, we will study the empirical coverage of $J_{\eta,l,F}(H)$ for various choices of $H$ (edge, wedge and triangle) and do the same with the PI for the clustering coefficient, in both induced and ego-centric cases. We will focus on various model and sampling sparsity levels. The expressions of $\sigma^2_{N,l}(H)$ for the above choices of $H$ are provided in Appendix \ref{sec:app-E}, and the expressions of asymptotic variance for the estimated clustering coefficient is provided in Section \ref{sec:cc}. Appendix \ref{sec:app-E} also contains the variance expressions for $\sigma^2_{N,l}(\kl_{1,R-1})$ and $\sigma^2_{N,l}(\kl_R)$, for $l=1,2$.

The main challenge is to estimate the unknown variance term $\sigma^2_{N,l}(H)$, since it depends on unknown model parameters like the limiting matrix $\Cb$ (cf. \eqref{PiN-lim}), the limiting class size proportions $\la_1,\ldots,\la_K$ (cf. Assumption \ref{assump-2}), the unknown edge probabilities $\pi_{N,u,v}$ (cf. \eqref{pi-mat-def}) and also on the model sparsity level $\beta$. If $F$ is a Poisson based limit law, then $\xi_\eta$ will also depend on these model parameters. Throughout the simulation study, we will assume that the sparsity level of the model ($\beta$) is {\it known}. Similarly, the sampling sparsity level $\al$ will be {\it known}. Since our results show sharp transitions from Gaussian to Poisson-based limit laws as the levels of $(\al,\beta)$ change, even an accurate estimate of $\beta$ (if available), can lead us to the use of an incorrect limit law for obtaining the PI's. In fact, $\beta$ is an extraneous quantity that is used to quantify the sparsity in $G_N$, and it is not a model (SBM) parameter. We construct the proposed PI's (cf. \eqref{pred-int-1}) in two different situations: when all above described model parameters are known, and when all of them are unknown and estimated from the sampled data.

We use the \texttt{Bethe-hessian clustering} algorithm for community detection proposed by \cite{saade2014spectral}, which uses spectral clustering to detect communities within a network.. We obtain an estimate $\widehat{K}$ of $K$ by using the largest estimated class label. Using the estimated class label information $\{\widehat{\al}_i: 1\leq i\leq N, W_{N,i}=1\}$ from sampled nodes, we construct a naive estimate of the class-wise edge probabilities,
\begin{align*}
\widehat{\pi}_{k,m}(l) & = \frac{\sum_{(i,j)\in I_{l,N}} Y_{i,j}\cdot \mathbf{1}\left(\widehat{\al}_i=k,\widehat{\al}_j=m\right)}{\sum_{(i,j)\in I_{l,N}} \mathbf{1}\left(\widehat{\al}_i=k,\widehat{\al}_j=m\right)},\quad\text{for $l=1,2$, for all $1\leq k,m\leq \widehat{K}$,}
\end{align*}
where $I_{l,N}$ is defined in \eqref{Ir-def}. The above mentioned community detection algorithm can not be used on the data set obtained from the ego-centric network formation approach. Thus, in case of $l=2$ (ego-centric), we make use of $\widehat{K}$ and class label estimates obtained from  $\mathcal{Y}_{1,N}$ (cf. \eqref{Ir-def}), but $\widehat{\pi}_{k,m}(2)$ are obtained by using the ego-centric sample based dataset $\mathcal{Y}_{2,N}$.

When model parameters are estimated using sampled network data $\mathcal{Y}_{l,N}$, $l=1,2$, this dataset consists of {\it dependent} entries, as the sampling design effect is included. Due to this dependence, the usual theoretical properties (consistency, etc.) of the estimates of $K$ or class labels (obtained through community detection algorithms) or estimates of edge probabilities may not hold. As a result, the `estimated' version $\widehat{J}_{\eta,l,F}(H)$ of the PI in \eqref{pred-int-1}, obtained by plugging in these estimated parameters, {\it does not} provide any theoretical guarantee about its nominal coverage level, even asymptotically. 

%We start by estimating $K$, and this is done by using the spectral method proposed by \cite{le2019estimating} and \cite{ 2014spectral}. They use the {\it Bethe Hessian matrix} method, and apply it on the (observed) sample based adjacency matrix. The details of the algorithm are available in \cite{le2019estimating}. Once an estimate $\widehat{K}$ is obtained, class labels and edge probabilities are estimated similarly as above. The estimates used in the ego-centric case are same as those used in the induced case. 

Five different simulation settings are explored, each focusing on a specific choice of model and sampling sparsity level. In all cases, empirical coverages of the proposed PI's were obtained by using 2000 Monte-Carlo replications. Each Monte-Carlo replication consists of initially generating $\Yb_N$ from the true model (SBM), independently selecting a sample of nodes from $[N]$ by Bernoulli sampling, and then observing the sampled data (through the induced/ego-centric approach). This process is repeated during every Monte-Carlo replication. This allows us to replicate the joint model and design based inferential framework. Throughout the simulation study we used a two class SBM ($K=2$), with equal class size proportions $\la_1 = \la_2 = 1/2$, while the edge probabilities, population size $N$ and the node sampling probability are changed.

\begin{enumerate}
\item[(S.1)] {\bf Dense network and dense sampling:} In this case, we set $\al=\beta = 0$. We use $\pi_{N,1,1}=0.20$, $\pi_{N,1,2}=0.05$, $\pi_{N,2,2}=0.10$ and $N = 20000$. We use $p_N\in\{0.01,0.05\}$ (cf. \eqref{pN-def}), which implies that the expected number of sampled nodes will be 200 and 1000, respectively. The empirical coverages are provided in Table \ref{tab:sim-1}.
%$$
%\boldsymbol{\Pi}_{N} =
%\begin{bmatrix}
%0.20 & 0.05 \\
%     & 0.10
%\end{bmatrix}.
%$$

%Stronger within-class than between-class connection probabilities induce a clear community structure. The population size is $N = 20000$, and sampling proportions $p \in \{0.01, 0.05\}$ are considered. Each Monte Carlo iteration independently generates a population adjacency matrix $\mathbf{Y}_N$ and a node sample. Prediction intervals are constructed for the population edge density, wedge density, triangle density, and clustering coefficient; asymptotic variances for the edge, wedge, and triangle counts are given in Corollary~XX.

%\item[Simulation set-up 2:]
\item[(S.2)] {\bf Sparse network and dense sampling:} In this case we set $\beta = 1/2$, $\al = 0$, $\pi_{N,1,1} = 0.00506$, $\pi_{N,1,2}=0.00206$, $\pi_{N,2,2}=0.00901$ and $N = 100000$. We use $p_N \in\{0.01,0.05\}$, which implies that the expected number of sampled nodes will be 1000 and 5000, respectively. The results are provided in Table \ref{tab:sim-2}.

\item[(S.3)] {\bf Sparse network and sparse sampling:} We use $\beta = 1/2$ and $\al = 1/4$, $\pi_{N,1,1} = 0.00112$, $\pi_{N,1,2} = 0.000112$, $\pi_{N,2,2}=0.000447$ and $N = 200000$. We use $p_N = 0.0002 =(0.00423)\cdot N^{-1/4}$. The expected number of sampled nodes is approximately 845. Table \ref{tab:sim-3} displays the results and only focuses on the case where the parameters are known. 

\item[(S.4)] {\bf Extreme model sparsity and dense sampling:} We focus on $H = \kl_2$ (edge, with $R=2$, $T=1$) and consider the highest possible model sparsity level $\beta = 2~(= R/T)$. We use $\al = 0$. Edge probabilities are  $\pi_{N,1,1} = 3\times 10^{-7}$, $\pi_{N,1,2}= 10^{-7}$ and $\pi_{N,2,2} = 2\times 10^{-7}$. We use $N = 10^6$ (1 million) and $p_N\in \{0.01,0.05\}$. The expected number of sampled nodes will be 10000 and 50000, respectively. Table \ref{tab:sim-4} displays the results and only focuses on the case where the true parameters are known. The quantiles of the limit law (cf. Theorem \ref{thm-2-main-temp}(a)) are approximated by Monte-Carlo simulation.

\item[(S.5)] {\bf Dense model and extremely sparse sampling:} We focus on the edge graph $H = \kl_2$. The model is dense and the sampling is extremely sparse ($\beta = 0$ and $\al=1$). The edge probabilities are $\pi_{N,1,1}  = 0.2$, $\pi_{N,1,2}=0.05$ and $\pi_{N,2,2}=0.10$. We use $N = 2000000$ (2 million) and $p_N = 0.000025 = 50/N$. The expected number of sampled nodes is 50. We focus on ego-centric network formation and the Poisson-based limit law of $T_{N,2}(\kl_2)$ is provided in Theorem \ref{thm-2-main-temp}(c). We found that the nominal 95\% two-sided asymptotic PI in \eqref{pred-int-1} (with true model parameters) provided an empirical coverage of $0.935$, with average width $0.057$.
%$$
%\boldsymbol{\Pi} =
%\begin{bmatrix}
%0.5 & 0.02 \\
%     & 0.3
%\end{bmatrix},
%$$
%and since $\beta=0$, the edge probabilities do not depend on $N$. The population size is $N=10{,}000$, and the sampling proportion is $p_N = 100/N$ ($\alpha = 1$). We focus only on the edge count, and the network formation approach in the simulation is ego-centric. The limiting distribution of
%$$
%\frac{\widetilde{S}_{N,2}(\kappa_2) - f_2(p_N;\kappa_2)\, S_N(\kappa_2)}{\sigma_{N,2}(\kappa_2)}
%$$
%provided in Theorem~\ref{thm-2-main-temp} for the extremely sparse sampling case can be used to construct asymptotic level $(1-\eta)$ prediction intervals (PIs) for the unknown target $S_N(\kappa_2)$, for any $\eta \in (0,1)$. A two-sided PI based on Theorem~\ref{thm-2-main-temp} has the form
%$$
%P_{\eta,2}(\kappa_2) \equiv 
%\left[
%\frac{\widehat{S}_{N,2}(\kappa_2) - \xi_{1-\eta/2}}{f_2(p_N;\kappa_2)},\;
%\frac{\widehat{S}_{N,2}(\kappa_2) - \xi_{\eta/2}}{f_2(p_N;\kappa_2)}\right].
%$$
%To construct this interval, we estimate $\xi_{1-\eta/2}$ and $\xi_{\eta/2}$ by simulating from the limiting distribution given in Theorem~\ref{thm-2-main-temp}, which is Poisson-based. We first consider the case in which all model parameters are known. Using this PI over 2{,}000 Monte Carlo replications, we obtain an empirical coverage of $0.952$ with a average interval length of $0.08503$.

\end{enumerate}

The results in Tables \ref{tab:sim-1}, \ref{tab:sim-2}, \ref{tab:sim-3} and \ref{tab:sim-4} and the results for simulation setup (S.5) show excellent empirical coverages for the proposed PI's in all the above five simulation scenarios, even when the model parameters are estimated on the basis of the sampled data. This strongly supports our theoretical results and also illustrates their applicability. The computations were carried out on a 64 core server with 128 GB RAM. The code was written in Rcpp and it was further optimized for increased speed through the use of generative AI. All code, data and outputs are available on the github page \href{https://github.com/AnirbanM13/subgraph-count-estimate}{https://github.com/AnirbanM13/subgraph-count-estimate}.

%$$
%\boldsymbol{\Pi}_{N} =
%\begin{bmatrix}
%0.00506 & 0.00206 \\
%     & 0.00901
%\end{bmatrix},
%$$
%where $\beta = 1/2$ controls model sparsity. The population size is $N = 100000$, and sampling proportions $p \in \{0.01, 0.05\}$ are again used.

%\item[Simulation set-up 3:]
%Both the network model and the sampling design are \emph{sparse}. Data are generated from an SBM with $K = 2$ equally sized classes ($\lambda_1 = \lambda_2 = 1/2$) and edge probabilities
%$$
%\boldsymbol{\Pi}_{N} =
%\begin{bmatrix}
%0.00112 & 0.000112 \\
%     & 0.000447
%\end{bmatrix},
%$$
%with $\beta = 1/2$. The population size is $N = 200000$. The sampling proportion is $p_N = 0.25 / N^{\alpha}$, where $\alpha = 1/4$ denotes the design sparsity level.

%\item[Simulation set-up 4:] 

%\item[Simulation set-up 5:] 

%\end{description}

\begin{rem}[Evaluation of the variance expression]
\label{rem:var-comp}
For arbitrary choices of $H$, the expression for $\sigma^2_{N,l}(H)$ can involve a sum over numerous terms, each requiring the construction of an union graph formed by two overlapping isomorphic copies of $H$. It is nearly impossible to do this manually for even for relatively simple choices of $H$. But this can be achieved through the use of a computer program by using the following expressions. Fix any $t\in[R]$ and consider the set of vectors ${[R]}_{<,t}$ (cf. \eqref{n-sub-r}), and the collection $\mathcal{S}_t$ of all permutations of $\{1,\ldots,t\}$. For any $\kbb\in{[R]}_{<,t}$, let $B(\kbb,t)$ denote the $t$-vertex subgraph of $H$, induced by the vertices in $A(\kbb)$. Recall the definition of $f_2(p:H)$ (cf. \eqref{f2-def}), the node sampling probability $p_N$ (cf. \eqref{pN-def}), the class size proportions $\{\lambda_{N,k}:k\in[K]\}$ described in \eqref{Nkdef} and the edge probability matrix $\PiB_N=((\pi_{N,u,v}:1\leq u,v\leq K))$ defined in \eqref{pi-mat-def}. For any $t\in[R]$, any $\jbb,\kbb\in{[R]}_{<,t}$ and any permutation $\xi\in\mathcal{S}_t$, define the quantity,
\begin{align}
&\Theta_{N,t}(\jbb,\kbb,\xi:\PiB_N,\la_{N,1},\ldots,\la_{N,K},H) \notag \\
& = \sum_{\ub,\vb\in {[K]}^R}\prod_{\{r,s\}\in E(H)}\pi_{N,u_r,u_s}\prod_{\{r,s\}\in E(H)\setminus E(B(\kbb,t))}\pi_{N,v_r,v_s} \prod_{r=1}^R \la_{N,u_r}\prod_{s\in [R]\setminus A(\kbb)} \la_{N,v_s} \prod_{r=1}^t \mathbf{1}\left(u_{j_r} = v_{k_{\xi(r)}}\right).
\label{Theta-t-def}
\end{align}
Then, for any target subgraph $H$, we can write (as $N\rai$),
\begin{align}
\sigma^2_{N,1}(H) & \asymp \sum_{t=1}^R \left(p^{2R-t}_N-p^{2R}_N\right) N^{2R-t}\sum_{\jbb,\kbb\in {[R]}_{<,t}}\sum_{\xi\in\mathcal{S}_t}\Theta_{N,t}(\jbb,\kbb,\xi:\PiB_N,\la_{N,1},\ldots,\la_{N,K},H).
\label{var-exp-ind-gen-H}
\end{align}
Now, recall the definition of $\mathcal{M}_t(\jbb,\kbb,\xi)$ from \eqref{Mt-jk-xi-def}. Let $(\tilde{\sbb}_1,\tilde{\sbb}_2)\in\mathcal{M}_t(\jbb,\kbb,\xi)$, with $A(\tilde{\sbb}_1)=[R]$ and $A(\tilde{\sbb}_1)\cup A(\tilde{\sbb}_2) = \{1,\ldots,2R-t\}$. Consider the union graph $H(\tilde{\sbb}_1)\cup H(\tilde{\sbb}_2)$ and denote it by $H(\jbb,\kbb,\xi)$, where these graphs are created according to the method described above in \eqref{sig-temp-0}. Let $\mathcal{D}\left(H(\jbb,\kbb,\xi)\right)$ denote the collection of all vertex covers of $H(\jbb,\kbb,\xi)$. Then, for any target subgraph $H$ satisfying Assumption \ref{assump-5} we can write (as $N\rai$), 
\begin{align}
\sigma^2_{N,2}(H) \asymp \sum_{t=1}^R N^{2R-t} \sum_{\jbb,\kbb\in {[R]}_{<,t}}\sum_{\xi\in\mathcal{S}_t} & \left[\sum_{D\in\mathcal{D}(\jbb,\kbb,\xi)} p^{|D|}_N {(1-p_N)}^{2R-t-|D|} - f^2_2(p_N:H)\right]\notag\\
&\qquad \qquad \times \Theta_{N,t}(\jbb,\kbb,\xi:\PiB_N,\la_{N,1},\ldots,\la_{N,K},H).
\label{var-exp-ego-gen-H}
\end{align}
For the benefit of practitioners, the relevant code for computing these variance expressions for any choice of $H$ is provided on the github page \href{https://github.com/AnirbanM13/subgraph-count-estimate}{https://github.com/AnirbanM13/subgraph-count-estimate}. They require inputs of (either true or estimated) model parameters, $K$, $\PiB_N$, $\{\la_{N,k}\}$, the adjacency matrix of $H$ the value of $p_N$ and the choice of $\beta$. The code can be used for target graphs $H$ with $R=7$ vertices. The computational burden increases rapidly as $R$ increases. The github page also includes a code for verifying Assumption \ref{assump-5} for a given choice of $H$.
\end{rem}
\ctable[
  pos=htbp,
  caption={\footnotesize {\bf Simulation setup (S.1), with $\al=\beta = 0$ and $N=20000$}: Empirical coverage and average lengths (in parenthesis) of 95\% asymptotic PI's for various population summary statistics. Results are reported for both induced and ego-centric approaches and for both \textbf{known (true)} and \textbf{estimated} model parameters.},
  label={tab:sim-1},
  captionskip = -2ex,
  doinside=\scriptsize
]{c c 
  >{\centering\arraybackslash}p{1.25cm} >{\centering\arraybackslash}p{1.25cm}
  >{\centering\arraybackslash}p{1.25cm} >{\centering\arraybackslash}p{1.25cm}
  >{\centering\arraybackslash}p{1.25cm} >{\centering\arraybackslash}p{1.25cm}
  >{\centering\arraybackslash}p{1.25cm} >{\centering\arraybackslash}p{1.25cm}
}{
\setlength{\tabcolsep}{1.2pt}
\renewcommand{\arraystretch}{1.02}
}{
\toprule
 & & \multicolumn{8}{c}{Population level summary statistic} \\
 \cmidrule(lr){3-10}

        &  & 
\multicolumn{2}{c}{$N^{-2}S_N(\kl_2)$} &
\multicolumn{2}{c}{$N^{-3}S_N(\kl_{1,2})$} &
\multicolumn{2}{c}{$N^{-3}S_N(\kl_3)$} &
\multicolumn{2}{c}{$\Gamma_N$} \\

\cmidrule(lr){3-4} \cmidrule(lr){5-6} \cmidrule(lr){7-8} \cmidrule(lr){9-10}

{Method} & {$p_N$} 
& \textbf{True} & \textbf{Est.}
& \textbf{True} & \textbf{Est.}
& \textbf{True} & \textbf{Est.}
& \textbf{True} & \textbf{Est.} \\
\midrule

\textbf{Induced} 
& 0.01 
& 0.952 & 0.951 
& 0.959 & 0.949 
& 0.959 & 0.940 
& 0.961 & 0.932 \\[-1.5ex]

&       
& (0.0572) & (0.0572)
& (0.0097) & (0.0096)
& (0.00145) & (0.0014)
& (0.0348) & (0.0329) \\[0.5ex]

& 0.05 
& 0.951 & 0.951
& 0.953 & 0.952
& 0.954 & 0.951
& 0.950 & 0.944 \\[-1.5ex]

&       
& (0.0249) & (0.0250)
& (0.0042) & (0.0042)
& (0.00061) & (0.0006)
& (0.0127) & (0.0127) \\

\midrule

\textbf{Ego-centric} 
& 0.01 
& 0.954 & 0.950
& 0.950 & 0.941
& 0.954 & 0.934
& 0.950 & 0.940 \\[-1.5ex]

&       
& (0.0283) & (0.0283)
& (0.0033) & (0.0032)
& (0.00093) & (0.0009)
& (0.0482) & (0.0465) \\[0.5ex]

& 0.05 
& 0.952 & 0.952
& 0.955 & 0.948
& 0.961 & 0.952
& 0.950 & 0.949 \\[-1.5ex]

&       
& (0.0121) & (0.0121)
& (0.0015) & (0.0015)
& (0.00041) & (0.0003)
& (0.0195) & (0.0195) \\

\bottomrule
}

%%%%%%%%%%%%%%%%%%%%%%%%%%%%

%%%%%%%%%%%%%%%%%%%%%%%%%%%%%%%%%%%%%%%%

\ctable[
  pos=t,
  caption={\footnotesize {\bf Simulation setup (S.2), with $\al=0$, $\beta = 1/2$ and $N=100000$}: Empirical coverage and average lengths (in parenthesis) of 95\% asymptotic PI's for various population summary statistics. Results are reported for both induced and ego-centric approaches and for both \textbf{known (true)} and \textbf{estimated} model parameters.},
  label={tab:sim-2},
  captionskip = -2ex,
  doinside=\scriptsize
]{c c
  >{\centering\arraybackslash}p{1.15cm} >{\centering\arraybackslash}p{1.15cm}
  >{\centering\arraybackslash}p{1.15cm} >{\centering\arraybackslash}p{1.15cm}
  >{\centering\arraybackslash}p{1.15cm} >{\centering\arraybackslash}p{1.15cm}
  >{\centering\arraybackslash}p{1.15cm} >{\centering\arraybackslash}p{1.15cm}
}{
\setlength{\tabcolsep}{1.1pt}
\renewcommand{\arraystretch}{1.02}
\tnote[a]{{\scriptsize{Lengths of PI's for the edge, wedge and triangle densities should be multiplied by $10^{-4}$, $10^{-6}$ and $10^{-8}$, respectively.}}}
}
{
\toprule
 & & \multicolumn{8}{c}{Population level summary statistic} \\
\cmidrule(lr){3-10}

 &  &
\multicolumn{2}{c}{$N^{-2}S_N(\kl_2)$\tmark[a]}&
\multicolumn{2}{c}{$N^{-3}S_N(\kl_{1,2})$\tmark[a]} &
\multicolumn{2}{c}{$N^{-3}S_N(\kl_3)$\tmark[a]} &
\multicolumn{2}{c}{$\Gamma_N$} \\

\cmidrule(lr){3-4} \cmidrule(lr){5-6} \cmidrule(lr){7-8} \cmidrule(lr){9-10}

Method & {$p_N$}
& \textbf{True} & \textbf{Est.}
& \textbf{True} & \textbf{Est.}
& \textbf{True} & \textbf{Est.}
& \textbf{True} & \textbf{Est.} \\
\midrule

% ---------- Induced ----------
\textbf{Induced}
& 0.010
& 0.949 & 0.952
& 0.950 & 0.945
& 0.955 & 0.946
& 0.953 & 0.943 \\[-1.5ex]

&
& $(11.7758)$ & $(11.9296)$
& $(9.1028)$ & $(8.8997)$
& $(12.8037)$ & $(12.4201)$
& $(0.0051)$ & $(0.0049)$ \\[0.5ex]

& 0.050
& 0.948 & 0.948
& 0.949 & 0.949
& 0.952 & 0.949
& 0.954 & 0.953 \\[-1.5ex]

&
& $(5.0548)$ & $(5.0505)$
& $(3.7741)$ & $(3.7613)$
& $(2.7353)$ & $(2.7104)$
& $(0.0005)$ & $(0.0005)$ \\

\midrule

% ---------- Ego-centric ----------
\textbf{Ego-centric}
& 0.010
& 0.954 & 0.956
& 0.950 & 0.940
& 0.953 & 0.921
& 0.950 & 0.900 \\[-1.5ex]

&
& $(5.7118)$ & $(5.7853)$
& $(2.8968)$ & $(2.7748)$
& $(3.9551)$ & $(3.5238)$
& $(0.0011)$ & $(0.0009)$ \\[0.5ex]

& 0.050
& 0.950 & 0.948
& 0.944 & 0.945
& 0.953 & 0.951
& 0.940 & 0.949 \\[-1.5ex]

&
& $(2.4505)$ & $(2.4483)$
& $(1.2680)$ & $(1.2631)$
& $(1.6284)$ & $(1.6121)$
& $(0.0004)$ & $(0.0004)$ \\

\bottomrule
}

\ctable[
  pos=ht,
  caption={\footnotesize {\bf Simulation setup (S.3), with $\al=1/4$, $\beta = 1/2$ and $N=200000$}: Empirical coverage and average lengths (in parenthesis) of 95\% asymptotic PI's for various population summary statistics. Results are reported for both induced and ego-centric approaches and only for \textbf{known (true)} model parameters. Here, we use $p_N = (0.00423)\cdot N^{-1/4} = 0.0002$.},
  label={tab:sim-3},
  captionskip = -2ex,
  doinside=\footnotesize
]{c c >{\centering\arraybackslash}p{2cm} >{\centering\arraybackslash}p{2cm} >{\centering\arraybackslash}p{2cm} >{\centering\arraybackslash}p{2cm}}{
\tnote[a]{{\scriptsize{Lengths of PI's for the edge, wedge and triangle densities should be multiplied by $10^{-4}$, $10^{-6}$ and $10^{-8}$, respectively.}}}
}{
\toprule
 & & \multicolumn{4}{c}{Population level summary statistic} \\
\cmidrule(lr){3-6}
Method & & $N^{-2}S_N(\kl_2)$\tmark[a] & $N^{-3}S_N(\kl_{1,2})$\tmark[a] & $N^{-3}S_N(\kl_3)$\tmark[a] & $\Gamma_N$ \\
\midrule

\textbf{Induced} 
&  
& 0.930 
& 0.939 
& 0.941
& 0.938\\[-1.5ex]
&        
& ($11.61$) 
& ($6.65924$) 
& ($26.3866$) 
& ($0.03812$)\\

\midrule

\textbf{Ego-centric} 
&  
& 0.949 
& 0.947 
& 0.943 
& 0.946\\[-1.5ex]
&        
& (5.2335) 
& ($1.72755$) 
& ($2.62651$) 
& (0.00279)\\

\bottomrule
}

\ctable[
  caption = {\footnotesize {\bf Simulation setup (S.4), with $\al=0$, $\beta = 2$ and $N=10^6$}: Empirical coverage and average lengths (in parenthesis) of 95\% asymptotic PI's for the \textbf{edge density} $N^{-2}S_N(\kl_2)$. Results are reported for both induced and ego-centric approaches and only for \textbf{known (true)} model parameters.},
  label   = {tab:sim-4},
  pos     = ht!,
  captionskip = -2ex,
  doinside = \footnotesize,
]{ccccc}{

}{
\FL
\multicolumn{2}{c}{$p_N = 0.01$} && \multicolumn{2}{c}{$p_N = 0.05$} \NN
\cmidrule(lr){1-2}\cmidrule(lr){4-5}
\textbf{Induced} & \textbf{Ego-centric} && \textbf{Induced} & \textbf{Ego-centric}\\
\midrule
$0.95$ & $0.93$ && $0.947$ & $0.93$ \\[-1.5ex]
$(1.1912\times 10^{-7})$ & $(8.1377\times 10^{-9})$ && $(2.3156\times 10^{-8})$ & $(3.5278\times 10^{-9})$
\LL}

\subsection{Analysis of a real data set}
\label{sec:real}

In this section, we apply our proposed methodology on a real data set. We focus on the \textit{2004 US Political blogs networks data}\footnote{\url{https://public.websites.umich.edu/~mejn/netdata/}}. The data set is compiled by \cite{adamic2005} and has been used earlier in the context of community detection (cf. \cite{le2019estimating}, \cite{Amini_2013}). Each node in the data set represents blogs focused on US politics, and the edges are the hyperlinks between these blogs. After removing isolated nodes from the original dataset, the population network consists of $N=1224$ nodes. The original data had directed edges. We transform the data to create an undirected population network in the following manner. If $Z_{i,j}$ denotes the $(i,j)$-th entry of the adjacency matrix of the original (directed) network, we create a new symmetric adjacency matrix with the $(i,j)$-th entry $Y_{i,j} = Y_{j,i} = \max\{Z_{i,j},Z_{j,i}\}$. Our data analysis will be based on this newly created adjacency matrix. Each blog is manually labeled {\it liberal} (denoted as class 1) or {\it conservative} (denoted as class 2). This manual labeling is available from this  \href{https://github.com/NiccoloBalestrieri/Political-Blog-2004-U.S.-Election-Analysis}{source}. Labeling {\it liberal} or {\it conservative} automatically defines two classes in the population, with $N_1 = 588$ and $N_2 = 636$. The average degree and edge density of the population network are $27.312$ and $0.02233$ respectively. Using the class label information for all $N$ nodes in the population, we can obtain the {\it true} edge connection probabilities, $\pi_{1,1} = 0.0423$, $\pi_{1,2} = 0.0042$ and $\pi_{2,2} = 0.0388$.

\ctable[
    caption = {Predicted values of subgraph counts and clustering coefficient and associated 95\% prediction intervals for the 2004 US Political blog data. We use $p_N = 0.1$. Estimated model parameters were used for constructing the PI's.},
    label   = {tab:real-data},
    pos = ht!,
    captionskip=-2ex,
    doinside = \footnotesize,
]{cccccc}{}{
\FL
  & & \multicolumn{2}{c}{Estimate} &\multicolumn{2}{c}{Prediction Interval}\NN
 \cmidrule(l){3-4}\cmidrule{5-6}
Summary statistic & True value & \textbf{Induced} & \textbf{Ego-centric} & \textbf{Induced} & \textbf{Ego-centric}\\
\midrule
$S_N(\kl_2)$ & $33430$ & $39600$ & $36389$ & $[26097.18,~53102.82]$ & $[30186.12,~42592.83]$\NN
$S_N(\kl_{1,2})$ & $2683050$ & $2842000$ & $2737266$ & $[1890248,~3793752]$ & $[2438309,~3036223]$\NN
$S_N(\kl_3)$ & $303129$ & $240000$ & $263357$ &$[79775.94,~400224.1]$ & $[178028.4,~348685.9]$ \NN
$\Gamma_N$ & $0.2259585$ & $0.1688951$ & $0.1924235$  &$[0.09685586,~0.2409344]$ & $[0.1473414,~0.2375056]$\LL
}

A {\it single} Bernoulli sample of nodes was drawn from the population network. We used $p_N = 0.1$, and the expected number of sampled nodes is $\approx 122$. We use $\al=0$ and $\beta = 0$. The following estimates of the model parameters were obtained from the induced sample, $\widehat{K}=2$, $\widehat{\la}_{N,1} = 0.2061$, $\widehat{\la}_{N,2} = 0.7938$, $\widehat{\pi}_{1,1} = 0.1908$, $\widehat{\pi}_{1,2}=0.0064$ and $\widehat{\pi}_{2,2}=0.0211$. Table \ref{tab:real-data} shows the true edge, wedge and triangle counts, and the true clustering coefficient. It also shows the corresponding sampling based estimates in the induced and ego-centric cases and the resulting 95\% PI's. The above found estimates of the model parameters were used to construct these PI's. The real data analysis shows that the proposed 95\% PI's contain the values of the true population summary statistics.

%$0.947$ && 0. 

%\\[-1.5ex]
%& $(1.1912\times 10^{-7})$ & $(2.3156\times 10^{-8})$\\
%\midrule
%\textbf{Ego-centric} 
%& $0.93$ & $0.93$ \\[-1.5ex]
%&$(8.1377\times 10^{-9})$& $(3.5278\times 10^{-9})$
%\LL
%}

\section*{Conclusion}
%\ref{sec:cc}

This article provides a rigorous study of the effects of network sampling, especially when sampling is carried out under resource constraints, and the population network can be 
dense or sparse. The effects of both induced and ego-centric network formation are explored. A model based setup is used and the asymptotic theory for estimated subgraph counts is developed a joint model and design based framework. Our results provide deep insights into the effects of network sampling under resource constraints and its complex relation with the sparsity level in the population network and the structure of the target subgraph. We have also developed Poisson based limit laws for estimated subgraph counts at more extreme sparsity levels and showed that various types of Poisson based limit laws can arise. Most importantly, the criteria for obtaining a Gaussian limit law also turn out to be necessary. In addition, we have developed limit laws for the estimated clustering coefficient statistic under similar settings. This is a completely new and important finding in the context of the clustering coefficient statistic. Our results are potentially applicable in any situation where the population network exhibits a block structure and theory can be possibly extended for more complicated network models. We believe that this article makes an important and meaningful contribution in the area of network sampling based inference.

%\subsection*{Acknowledgments}

%The first author is supported by a research fellowship from the Indian Statistical Institute. %The authors are grateful to Indian Statistical Institute, Delhi, with regard to the use of their computing infrastructure.  

\bibliographystyle{apalike}
\bibliography{resplanbib-1}

\newpage

\appendix
%\section{Auxiliary Results}
%\setcounter{theorem}{0}
%\renewcommand{\thetheorem}{\thesection.\arabic{theorem}}

\section*{Appendix}

%\ref{sec:app}

%\localtableofcontents
\renewcommand{\thesubsection}{\Alph{subsection}}
% --- appendix theorem prefix ---
\newcommand{\currentappendix}{}
\renewcommand{\thetheorem}{\currentappendix.\arabic{theorem}}

% --- helper for starred appendix subsection ---
%\newcommand{\appsubsection}[2]{%
%  \renewcommand{\currentappendix}{#1}%
%  \subsection*{Appendix #1: #2}%
%}

\newcommand{\setappendix}[1]{%
  \renewcommand{\currentappendix}{#1}%
}
\newcommand{\appitem}[3]{%
\noindent Appendix #1: #2 \dotfill \pageref{#3}\par
}

\renewcommand{\theequation}{E.\arabic{equation}}
\setcounter{equation}{0}

\subsection*{Contents}

\appitem{A}{Notation and definitions}{sec:app-A}
\appitem{B}{Proofs of main results}{sec:app-B}
\appitem{C}{Results from other sources}{sec:app-C}
\appitem{D}{Supplementary results}{sec:app-D}
\appitem{E}{Variance expressions for some common target subgraphs}{sec:app-E}

\bigskip
%\renewcommand{\thelemma}{\thesection.\arabic{lemma}}

% MATERIAL FOR APPENDIX A #
% TEMPORARY #

%\appsubsection{A}{Notation and definitions}

This appendix contains the following materials. Appendix \ref{sec:app-A} contains some notation and definitions that have been used in our proofs. Appendix \ref{sec:app-B} contains proofs of our main results. Appendix \ref{sec:app-C} contains results from other sources that have been used in our proofs. Appendix \ref{sec:app-D} contains some supplementary results that are required to prove the main results. Appendix \ref{sec:app-E} contains the variance expressions $\sigma^2_{N,l}(H)$ for specific choices of $H$, like the edge, wedge and triangle graphs and also the expressions for any complete and star graph. 

\setappendix{A}
\refstepcounter{subsection}
\subsection*{Appendix A: Notation and definitions}
\addcontentsline{toc}{subsection}{Appendix A: Notation and definitions}
\phantomsection
\label{sec:app-A}

Firstly, we introduce some definitions and notation. Some of these definitions are repeated for the sake of completeness. For any positive integer $m$, we write $[m]=\{1,\ldots,m\}$ to denote the set of the first $m$ positive integers. For any two sequences of positive real numbers $\{a_N:N\geq 1\}$ and $\{b_N:N\geq 1\}$, we write $a_N\asymp b_N$, if $a_N/b_N\rightarrow 1$, as $N\rai$. Similarly, we write $a_N\sim b_N$, if there exist constants $0<m_1 < m_2<\infty$, and an integer $N_0\geq 1$, such that $m_1<a_N/b_N <m_2$, for all $N\geq N_0$. Define the following collections of $R$ dimensional vectors,
\begin{equation}
\begin{aligned}
&{[N]}^R = \left\{\mathbf{s}={\left(s_1,\ldots,s_{R}\right)}^\primet:s_i\in [N],\ \text{for all $i\in[R]$}\right\},\quad 
{[N]}_{R} = \left\{\mathbf{s}\in {[N]}^R:\text{$s_1,\ldots,s_{R}$ are distinct}\right\},\ \text{and}\\
&{[N]}_{<,R} = \left\{\mathbf{s}\in {[N]}_R:s_1<s_2<\cdots <s_R \right\}.
\label{n-sub-r-2}
\end{aligned}
\end{equation}
For any vector $\ub = {(u_1,\ldots,u_t)}^\primet\in\rl^t$, we write
\begin{equation}
\begin{aligned}
A(\ub) & = \{a\in \rl:\exists\ \text{some $i\in [t]$, such that, $u_i = a$}\} = \text{the set of unique components of $\ub$.}
\end{aligned}
\label{i-del}
\end{equation}
For example, consider any $\sbb = {(s_{1},\ldots, s_{R})}^\primet\in {[N]}_R$, where ${[N]}_R$ is defined in \eqref{n-sub-r}. Then $A(\sbb) = \{s_{1},\ldots,s_{R}\}$. If $\sbb_1 = {(1,2,3)}^\primet$ and $\sbb_2 = {(2,1,3)}^\primet$, then $A(\sbb_1) = A(\sbb_2)=\{1,2,3\}$.
Let $t$ be a positive integer and consider another integer $s\in\{1,\ldots,t\}$. Consider a $s$ dimensional vector $\mathbf{i} = {(i_1,\ldots,i_s)}^\primet\in {[t]}_{<,s}$. For any $\ub={(u_1,\ldots,u_t)}^\primet\in\rl^t$, we write the $s$-dimensional sub-vector of $\ub$, ${(\ub)}_{\mathbf{i}} = {\left(u_{i_1},\ldots,u_{i_s}\right)}^\primet$, containing components of $\ub$ indexed by $\mathbf{i}$. For example, if $\ub = {(1,3,2,6)}^\primet$ and $\mathbf{i} = {(1,4,3)}^\primet$, then $\ub_{\mathbf{i}} = {(1,6,2)}^\primet$. Following Assumption \ref{assump-2}, define the $K$ dimensional vector of class proportions and their limiting values,
\begin{align}
    &\lamb_N = {(\la_{N,1},\ldots,\la_{N,K})}^\primet\quad\text{and}\quad\lamb={(\la_{1},\ldots,\la_K)}^\primet.
    \label{lamb-N},
\end{align}
Recall that $\la_{N,i}$ converges to $\la_i$ for each $i\in [K]$ (see Section~\ref{sec:inf-frame}), for all $i\in[K]$. In addition, we write $\mathbb{N}_0=\{0,1,2,\ldots\}$ as the set of all non-negative integers.

Now, we define some distance metrics on the space of all probability measures. Following \cite{dudley-real} (see p. 420), the Wasserstein distance between two $d$ dimensional random vectors $\Xb$ and $\Yb$ is defined as 
\begin{align}\label{wass-dist}
    d_{{W}}(\Xb,\Yb) =\sup\bigg\{\left|\E(h(\Xb)) - \E(h(\Yb))\right|:h\in \mathcal{A}_W\bigg\},
\end{align}
where, $\mathcal{A}_W = \{h\mid h:\mathbb{R}^{d}\mapsto\mathbb{R},~\norm{h}{L} \leq 1\}$, $\norm{h}{L} = \sup\{|h(\xb) - h(\yb)|/\norm{\xb-\yb}{1}:\xb\neq\yb\in \mathbb{R}^d\}$ is the Lipschitz norm of the map $h$, and ${\|\xb\|}_1=\sum_{j=1}^d |x_j|$, is the $\ell_1$ norm for any $\xb\in\rl^d$. The bounded Wasserstein distance (also known as the bounded Lipschitz distance) between two $d$ dimensional random vectors $\Xb$ and $\Yb$ is defined as (see p. 394 \cite{dudley-real}),
\begin{align}\label{bdd-wass}
    d_{{BL}}\left(\Xb,\Yb\right) = \bigg\{\left|\E(h(\Xb)) - \E(h(\Yb))\right|:h\in \mathcal{A}_{BL}\bigg\},
\end{align}
where, $\mathcal{A}_{BL} = \{h\mid h:\mathbb{R}^d\mapsto \mathbb{R},~\norm{h}{L}+\norm{h}{\infty}\leq 1\}$, $\norm{h}{\infty} = \sup\{|h(\xb)|:\xb\in \mathbb{R}^d\}$  and $\norm{h}{L}$ has been defined earlier. Obviously, $\mathcal{A}_{BL}\subseteq \mathcal{A}_W$ and $d_{BL}(\Xb,\Yb)\leq d_W(\Xb,\Yb)$. Convergence in $d_W$ implies convergence in $d_{BL}$. The total variation distance between two real valued random variables $X$ and $Y$ is defined as 
\begin{align}\label{TV-dist}
    d_{{TV}}\left(X,Y\right) = \sup\bigg\{\left|\E(h(X)) - \E(h(Y))\right|:h\in \mathcal{A}_{TV}\bigg\},
\end{align}
where, $\mathcal{A}_{TV}=\{h_B\mid B\in\mathcal{B}(\rl)\}$, and $h_B(x) = \mathbf{1}(x\in B)$, for all $x\in\rl$, and $\mathcal{B}(\rl)$ is the Borel sigma-field in $\rl$. 
%Suppose that $\{U_N:N\geq 1\}$ and $U$ are discrete random variables. It is known (see Theorem xx of \cite{thorisson-coup}) that $U_N\darw U$, if and only if $d_{TV}(U_N,U)\rightarrow 0$, as $N\rai$. 

Various constant terms will arise within the proofs of our main and supplementary results. Using appropriate notation, we will keep track of how these constant terms depend on all or some of the following quantities: the underlying sparsity levels $\al$ and $\beta$, the underlying (limiting) model parameters $\Cb,\lamb$ (see \eqref{PiN-lim}, \eqref{lamb-N}), the constant term $c$ within $p_N$ (see \eqref{pN-def}), the target subgraph $H$ and also on the network formation mechanism $l$ (with $l=1$, for the induced case and $l=2$ for the ego-centric case). If a constant $a_1$ depends on the parameters $\alpha$, $\beta$, $H$, $\lamb$, $\Cb$, and $l$, we write, $a_1 = a_1(l,\alpha,\beta,H,\lamb,\Cb)$. If $a_1$ depends only on $\al,\beta$ and $\lamb$, we write $a_1 = a_1(\alpha,\beta,\lamb)$. %, following the same parameter ordering $(l,\alpha,\beta,H,\lamb,\Cb)$. 
We will keep track of how the various constants arising through our proofs depend on all or some of the underlying parameters. But, for notational simplicity we will suppress this dependency. %, we will write the constant as $a_1$, suppressing its dependency on the underlying parameters, but we will keep track of the dependency. $a_i = a_i(l,\alpha,\beta,H,\lamb,\Cb)$ for $i \in \mathbb{N}$.
Similarly, certain statements will hold if the population network size $N$ becomes large enough and exceeds a threshold $M_1$ (or $M_2,\ldots$). We will keep track of how each $M_i$ depends on $(l,\alpha,\beta,H,\lamb,\Cb)$ (or a subset of them), but we will suppress the dependency for notational convenience. Sometimes, the same symbol or variable names may appear in the proof of different results, but they may denote or define different quantities within the different proofs. The meaning of this sort of symbol or variable will only be confined in that proof.

\setappendix{B}
\refstepcounter{subsection}
%\appsubsection{B}{Proofs of main results}
\subsection*{Appendix B: Proofs of main results}
\label{sec:app-B}

%\subsubsection*{Proof of Theorem \ref{thm-1-main}: CLT for sample based subgraph count estimates}

%\begin{rem}
%Suppose that $\lvert V(H_1) \cap V(H_2) \rvert = k > 0$. Then $\lvert D \cap (V(H_1) \cap V(H_2)) \rvert \le k$. By Lemma~\eqref{VC-lem-1}, $\lvert D \cap (V(H_1) \cap V(H_2)) \rvert
%\le \lvert D \cap V(H_1) \rvert = \tau(H)$. Hence,
%$$
%\lvert D \cap (V(H_1) \cap V(H_2)) \rvert
%\le \min\{k,\tau(H)\}.
%$$
%\end{rem}

\begin{proof}[Proof of Theorem \ref{thm-1-main}] 
For any $\sbb\in{[N]}_R$, define $Y_N(\sbb:H) = \prod_{\{i,j\}\in E(H)} Y_{\sbb_i,\sbb_j}$ and $\widetilde{W}_{N,l}(\sbb:H) = \left(W_{N,l}(\sbb:H)-f_l(p_N:H)\right)$, where $W_{N,l}(\sbb:H) = \prod_{\{i,j\}\in E(H)}h_l\left(W_{N,s_i},W_{N,s_j}\right)$, and $f_l(p:H)$ is defined in \eqref{fl-def} for $l=1,2$. We can write   
\begin{align}
\label{piv-1}
T_{N,l}(H) 
= \frac{1}{\sigma_{N,l}(H)}\sum_{\sbb\in [N]_R}\widetilde{W}_{N,l}(\sbb:H)\cdot Y_N(\sbb:H),
\end{align}
where, $\sigma^2_{N,l}(H) = \Var\left(\sum_{\sbb\in [N]_R}\widetilde{W}_{N,l}(\sbb:H)\cdot Y_N(\sbb:H)\right)$, $l=1,2$. This is a sum of dependent random variables. For any $\sbb\in{[N]}_R$, the corresponding component $\widetilde{W}_{N,l}(\sbb:H)\cdot Y_N(\sbb:H)$ in the above sum has its own dependency neighborhood $N(\sbb)=\{\sbb_1\in{[N]}_R:|A(\sbb)\cap A(\sbb_1)|\geq 1\}$. Let $Z\sim N(0,1)$. Following the arguments presented in \cite{bkr-89} (cf. Section 3), the existence of the dependency neighborhood structure for each $\sbb\in {[N]}_R$ in \eqref{piv-1} allows us to apply Theorem 6.32 of \cite{janson-rucinski-randomgraphs} (see Theorem \ref{thm-ruc}) on the summands involved in \eqref{piv-1}, and obtain the following upper bound,
\begin{align}
&d_1\left(T_{N,l}(H),Z\right) \leq \frac{1}{\sigma^3_{N,l}(H)}
\sum_{\sbb_1\in \nsr}\sum_{\sbb_2\in N(\sbb_1)}\sum_{\sbb_3\in N(\sbb_1)}\left[\E\left|\prod_{i=1}^3 \widetilde{W}_{N,l}(\sbb_i:H)\right|\cdot \E\left(\prod_{i=1}^3 Y_N(\sbb_i:H)\right)
\right.\notag\\
&\quad \left. {}+\E\left|\widetilde{W}_{N,l}(\sbb_1:H)\cdot \widetilde{W}_{N,l}(\sbb_2:H)\right|\cdot \E|\widetilde{W}_{N,l}(\sbb_3:H)|\cdot \E\left(Y_N(\sbb_1:H)\cdot Y_N(\sbb_2:H)\right)\cdot\E(Y_N(\sbb_3:H))\right]\notag\\
&\equiv \frac{1}{\sigma^3_{N,l}(H)}\left(A_{N,l,1} + A_{N,l,2}\right),\quad\text{for all $N\geq 1$ and for both $l=1,2$.}
\label{ANl-brk}
\end{align}
For any $k_1,k_2\in[R]$, we define the following collection of vectors,
\begin{align}
\Mc_{k_1,k_2}
&= \Bigl\{(\sbb_1,\sbb_2,\sbb_3) :
\sbb_1,\sbb_2,\sbb_3 \in [N]_R,\,
\lvert A(\sbb_2)\cap A(\sbb_1)\rvert = k_1,\,
\lvert A(\sbb_3)\cap (A(\sbb_1)\cup A(\sbb_2))\rvert = k_2
\Bigr\}.
\label{Mck-setdef}
\end{align}
From the definitions of the neighborhood $N(\sbb)$ and the set $\Mc_{k_1,k_2}$, it follows that
$$
\Bigl\{(\sbb_1,\sbb_2,\sbb_3):\sbb_1,\sbb_2,\sbb_3 \in [N]_R,\,
\sbb_2 \in N(\sbb_1),\,
\sbb_3 \in N(\sbb_1)
\Bigr\}
\subseteq
\bigcup_{k_1,k_2=1}^{R} \Mc_{k_1,k_2}.
$$
As a result, to obtain an upper bound on $A_{N,l,1}$, $l=1,2$ (cf. \eqref{ANl-brk}), which involves non-negative expected values, we can enlarge the sum using the above set inclusion, and write 
\begin{align}\label{A-1}
    A_{N,l,1} \leq \sum_{k_1=1}^R \sum_{k_2=1}^R \sum_{(\sbb_1,\sbb_2,\sbb_3)\in \Mc_{k_1,k_2}}\E\left|\prod_{i=1}^3 \widetilde{W}_{N,l}(\sbb_i:H)\right|\cdot \E\left(\prod_{i=1}^3 Y_N(\sbb_i:H)\right).
\end{align}
Similarly, we can obtain an upper bound for $A_{N,l,2}$ by extending the summation over the larger set $\cup_{k_1,k_2}\Mc_{k_1,k_2}$. Consider the expected value of the design based quantity in the r.h.s. of \eqref{A-1}. Note that $\E(W_{N,l}(\sbb:H)) = f_l(p_N:H)$, for any $\sbb\in \nsr$, for both $l=1,2$. Thus, for any $\sbb_1,\sbb_2,\sbb_3\in \nsr$,
\begin{align}
\label{exp-W}
    &\E\left|\prod_{i=1}^3\widetilde{W}_{N,l}(\sbb_i:H)\right|= \E\left|\prod_{i=1}^3\left(W_{N,l}(\sbb_i:H) - f_l(p_N:H)\right)\right|\notag\\
    &\leq \E\left(\prod_{i=1}^3 W_{N,l}(\sbb_i:H)\right)+3f_l(p_N:H)\cdot\E\left(W_{N,l}(\sbb_1:H)\cdot W_{N,l}(\sbb_2:H)\right)+4{(f_l(p_N:H))}^3,\quad\text{for $l=1,2$.}
\end{align}
Also note, if $(\sbb_1,\sbb_2,\sbb_3)\in \mathcal{M}_{k_1,k_2}$ (cf. \eqref{Mck-setdef}), then 
$$
|A(\sbb_1)\cup A( \sbb_2)\cup A(\sbb_3)| = |A(\sbb_1)| + |A(\sbb_2)\cap A(\sbb_1)^c| + \left|A(\sbb_3)\cap \left(A(\sbb_1)\cup A(\sbb_2)\right)^c\right| = (3R-k_1-k_2).
$$
Now consider the induced case ($l=1$) and note that $f_1(p_N:H) = p_N^R$. Then, for any $(\sbb_1,\sbb_2,\sbb_3)\in \mathcal{M}_{k_1,k_2}$, the expected values in the r.h.s. of \eqref{exp-W} can be written as,
\begin{equation}
\label{prod-W-ind}
\begin{aligned}
&\E\left(\prod_{i=1}^3 W_{N,1}(\sbb_i:H)\right) =p_N^{|A(\sbb_1)\cup A(\sbb_2)\cup A(\sbb_3)|} =p_N^{3R - k_1-k_2},\quad {(f_1(p_N:H))}^3 = p_N^{3R},\\
&\quad\text{and}\quad f_1(p_N:H)\cdot \E\left(W_{N,1}(\sbb_1:H)W_{N,l}(\sbb_2:H)\right) = p_N^{3R - k_1}.
\end{aligned}
\end{equation}
Adding up the quantities in the r.h.s. of \eqref{exp-W} (for $l=1$) and using the expressions obtained above in \eqref{prod-W-ind}, it now follows that for any $k_1,k_2\in[R]$ and $(\sbb_1,\sbb_2,\sbb_3)\in \Mc_{k_1,k_2}$,
\begin{align}
\E\left|\prod_{i=1}^3\widetilde{W}_{N,1}(\sbb_i:H)\right|\leq 8\E\left(\prod_{i=1}^3 W_{N,1}(\sbb_i:H)\right) = 8\cdot p_N^{3R - k_1-k_2}.
\label{bdd-imp-1}
\end{align}
A similar argument can be carried out for the components within $A_{N,1,2}$ (cf. \eqref{ANl-brk}) and using \eqref{prod-W-ind}, we can claim,
$$
\E\left|\widetilde{W}_{N,1}(\sbb_1:H)\cdot \widetilde{W}_{N,1}(\sbb_2:H)\right|\cdot \E|\widetilde{W}_{N,1}(\sbb_3:H)| \leq 8\cdot p_N^{3R - k_1 -k_2},\quad\text{if $(\sbb_1,\sbb_2,\sbb_3)\in\Mc_{k_1,k_2}$.}
$$
Next, we consider the ego-centric case ($l=2$). Consider any $(\sbb_1,\sbb_2,\sbb_3)\in \Mc_{k_1,k_2}$ (cf. \eqref{Mck-setdef}) and let $H(\sbb_i)$, $i=1,2,3$, denote an isomorphic copy of $H$ with vertex set $V(H(\sbb_i)) = A(\sbb_i)$ and edge set $E(H(\sbb_i)) = \{\{s_{i,r},s_{i,s}\}:\{r,s\}\in E(H)\}$, $i=1,2,3$. Let $\mathcal{D}(\cup_{i=1}^3 H(\sbb_i))$ denote the collection of all vertex covers of $\cup_{i=1}^3 H(\sbb_i)$, and let $\mathcal{D}_{min}(\cup_{i=1}^3 H(\sbb_i))$ denote the collection of all minimum vertex covers  within $\mathcal{D}(\cup_{i=1}^3 H(\sbb_i))$. Let $|\mathcal{D}(\cup_{i=1}^3 H(\sbb_i))|$ denote the total number of vertex covers of $\cup_{i=1}^3 H(\sbb_i)$. Note that, $|\mathcal{D}(\cup_{i=1}^3 H(\sbb_i))| \leq {|\mathcal{D}(H)|}^3$, where $|\mathcal{D}(H)|$ denotes the total number of vertex covers of $H$. Due to Lemma \ref{VC-lem-1} part (b), there exists a minimum vertex cover $D^\star(\sbb_1,\sbb_2,\sbb_3)\in \mathcal{D}_{min}(\cup_{i=1}^3 H(\sbb_i))$, such that 
%\begin{equation}
%\begin{aligned}
%    &\left|D^\star(\sbb_1,\sbb_2,\sbb_3)\cap V(H(\sbb_1)\cap H(\sbb_2))\right| \leq \min\{\tau(H), k_1\},\\
%    &\left|D^\star(\sbb_1,\sbb_2,\sbb_3)\cap V\left(H(\sbb_3)\cap (H(\sbb_1)\cup H(\sbb_2))\right)\right| \leq \min\{\tau(H), k_2\},\quad\text{and}\\
%    & \left|D^\star(\sbb_1,\sbb_2,\sbb_3)\cap V(H(\sbb_i))\right| \geq \tau(H),\quad\text{for $i=1,2,3$,}
%\end{aligned}
%\end{equation}
%where the last inequality is due to the fact that $D^\star(\sbb_1,\sbb_2,\sbb_3)$ is also a vertex cover of $H(\sbb_i)$, for $i=1,2,3$. Therefore, from \eqref{ver-cov-bdd} it follows that if $(\sbb_1,\sbb_2,\sbb_3)\in\Mc_{k_1,k_2}$, then 
\begin{align}\label{ver-cov-bdd}
    |D^\star(\sbb_1,\sbb_2,\sbb_3)| 
    %&= \sum_{i=1}^3 |D^\star(\sbb_1,\sbb_2,\sbb_3)\cap V(H(\sbb_i))| -  \left|D^\star(\sbb_1,\sbb_2,\sbb_3)\cap V(H(\sbb_1)\cap H(\sbb_2))\right|\notag \\
    %&\qquad {}- \left|D^\star(\sbb_1,\sbb_2,\sbb_3)\cap V\left(H(\sbb_3)\cap (H(\sbb_1)\cup H(\sbb_2))\right)\right|\notag\\
    &\geq 3\tau(H) - \min\{\tau(H), k_1\} - \min\{\tau(H), k_2\}.
\end{align}
Now, using \eqref{f2-def}, the inequalities in \eqref{ver-cov-bdd} and the minimum vertex cover $D^\star(\sbb_1,\sbb_2,\sbb_3)$ defined above, for any $(\sbb_1,\sbb_2,\sbb_3)\in \Mc_{k_1,k_2}$ we can write the first term on the r.h.s. of \eqref{exp-W} as 
%for $D^\star\in \mathcal{D}_{\min}(\cup_{i=1}^3 H(\sbb_i))$, we can write 
\begin{align}
\label{prod-W-ego}
&\E\left(\prod_{r=1}^3 W_{N,2}(\sbb_r:H)\right)= \E\left(\prod_{\{i,j\}\in E(H(\sbb_1)\cup H(\sbb_2)\cup H(\sbb_3))}\max\{W_{N,i},W_{N,j}\}\right)\notag\\
& = \sum_{D\in \mathcal{D}({\cup_{i=1}^3H(\sbb_i)})}p_N^{|D|}\left(1-p_N\right)^{3R - |D|}\notag\\
&= \sum_{D\in \mathcal{D}_{\min}({\cup_{i=1}^3H(\sbb_i)})}p_N^{|D|}\cdot (1-p_N)^{3R-|D|} + \sum_{D\in \mathcal{D}(\cup_{i=1}^3 H(\sbb_i))\setminus\mathcal{D}_{\min}(\cup_{i=1}^3 H(\sbb_i))}p_N^{|D|}\cdot (1-p_N)^{3R-|D|}\notag\\
%&\leq  2\cdot\max\left\{|\mathcal{D}_{\min}({\cup_{i=1}^3H(\sbb_i)}),\mathcal{D}^c_{\min}({\cup_{i=1}^3H(\sbb_i)})\right\}p_N^{|D^\star|}\cdot (1-p_N)^{3R-|D^\star|}\notag\\
& \leq p_N^{|D^\star(\sbb_1,\sbb_2,\sbb_3)|}\left\{ (1-p_N)^{3R-|D^\star(\sbb_1,\sbb_2,\sbb_3)|}\cdot\left|\mathcal{D}_{\min}(\cup_{i=1}^3 H(\sbb_i))\right|\right.\notag\\
&\qquad \left. {}+ \sum_{D\in \mathcal{D}(\cup_{i=1}^3 H(\sbb_i))\setminus\mathcal{D}_{\min}(\cup_{i=1}^3 H(\sbb_i))}p_N^{|D| - |D^\star(\sbb_1,\sbb_2,\sbb_3)|}\cdot (1-p_N)^{3R-|D|}\right\}\notag\\
&\leq  p_N^{|D^\star(\sbb_1,\sbb_2,\sbb_3)|}\cdot\left|\mathcal{D}(\cup_{i=1}^3 H(\sbb_i))\right|\notag\\
& \leq  p_N^{{}3\tau(H) - \min\{\tau(H,k_1) - \min\{\tau(H),k_2\}\}}{\left|\mathcal{D}(H)\right|}^3.
\end{align}
Using Lemma \ref{VC-lem-1} part (a),
%and a similar argument as the one used in \eqref{prod-W-ego} above,
for any  $(\sbb_1,\sbb_2,\sbb_3)\in\Mc_{k_1,k_2}$, we can put the following upper bounds on the different expected values in r.h.s. of \eqref{exp-W},
\begin{equation}
\label{bdd-W-term}
\begin{aligned}
&{(f_2(p_N:H))}^3  \leq p^{3\tau(H)}_N{\left|\mathcal{D}(H)\right|}^3, \quad\text{and}\\
& f_2(p_N:H)\cdot \E(W_{N,2}(\sbb_1:H)\cdot W_{N,2}(\sbb_2:H)) \leq p_N^{\{3\tau(H) - \min\{\tau(H),k_1\}\}}{\left|\mathcal{D}(H)\right|}^3.
\end{aligned}
\end{equation}
We can now combine the bounds in \eqref{prod-W-ego}, \eqref{bdd-W-term} and use them in \eqref{exp-W} to claim that 
\begin{align}
    \E\left|\prod_{r=1}^3 \widetilde{W}_{N,2}(\sbb_r:H)\right|&\leq 8\cdot\E\left(\prod_{r=1}^3 W_{N,2}(\sbb_r:H)\right)\leq 8\cdot{\left|\mathcal{D}(H)\right|}^3\cdot p^{3\tau(H)-\min\{\tau(H),k_1\}-\min\{\tau(H),k_2\}}_N,
    \label{bdd-imp-2}
\end{align}
for any  $(\sbb_1,\sbb_2,\sbb_3)\in\Mc_{k_1,k_2}$. To obtain the above described exponent of $p_N$ (in the r.h.s. of \eqref{bdd-imp-2}), we have used the fact that $p_N<1$, and $k_1$, $k_2$, and $\tau(H)$ are strictly positive. Similarly to the induced case, we can get the same upper bound for the components within $A_{N,2,2}$ (cf. \eqref{ANl-brk}),
$$
\E\left|\widetilde{W}_{N,2}(\sbb_1:H)\cdot \widetilde{W}_{N,2}(\sbb_2:H)\right|\cdot \E|\widetilde{W}_{N,2}(\sbb_3:H)| \leq 8\cdot {\left|\mathcal{D}(H)\right|}^3\cdot p^{3\tau(H)-\min\{\tau(H),k_1\}-\min\{\tau(H),k_2\}}_N,
$$
%for large enough $N$.
Combining \eqref{bdd-imp-1} and \eqref{bdd-imp-2}, and defining $a_{1}=a_1(H)=\max\left\{1,\left|\mathcal{D}(H)\right|^3\right\}$, we have (from \eqref{ANl-brk}), the following bound for both $l=1$ and $l=2$,
\begin{align}
\label{num-dW-l-1}
& A_{N,l,1}\leq 8\cdot a_{1}\cdot  \sum_{k_1=1}^{R}\sum_{k_2=1}^{R} p_N^{\displaystyle e_l(k_1,k_2:H)\displaystyle}\sum_{(\sbb_1,\sbb_2,\sbb_3)\in \Mc_{k_1,k_2}} \E\left(\prod_{i=1}^3 Y_N(\sbb_i:H)\right),
\end{align}
where,
\begin{align}\label{def-el}
    e_l(k_1,k_2:H) = \begin{cases}
        3R -k_1-k_2& \text{if $l=1$,}\\
        3\tau(H) - \min(\tau(H),k_1) -\min(\tau(H),k_2) & \text{if $l=2$.}
    \end{cases}
\end{align}
In order to control the expected values for the model based terms in \eqref{A-1}, we define the following sub-collection of vectors within the collection $\Mc_{k_1,k_2}$ (cf. \eqref{Mck-setdef}),  
\begin{equation*}
%\label{O-set}
\begin{aligned}
\mathcal{O}_{k_1,k_2}
= \Bigl\{(\sbb_1,\sbb_2,\sbb_3)\in \mathcal{M}_{k_1,k_2}:~
&|E(H(\sbb_1)\cap H(\sbb_2))| = m(k_1:H),\\
&|E(H(\sbb_3))\cap (E(H(\sbb_1)\cup H(\sbb_2)))| = m(k_2:H)
\Bigr\}.
\end{aligned}
\end{equation*}
Note that, the collection $\mathcal{O}_{k_1,k_2}$ is non-empty. Since there is a sub-collection $$\mathcal{O}_{0,k_1,k_2}=\{(\sbb_1,\sbb_2,\sbb_3)\in \Mc_{k_1,k_2}: |E(H(\sbb_1)\cap H(\sbb_2))| = m(k_1:H), |E(H(\sbb_1)\cap H(\sbb_3))| = m(k_2:H)\}\subseteq \mathcal{O}_{k_1,k_2},$$ and $\mathcal{O}_{0,k_1,k_2}$ is non-empty.
Thus, if $(\sbb_1,\sbb_2,\sbb_3)\in \mathcal{O}_{k_1,k_2}$, then for these choices of $(\sbb_1,\sbb_2,\sbb_3)$, the graphs $H(\sbb_1)\cap H(\sbb_2)$ and $H(\sbb_3)\cap \left(H(\sbb_1)\cup H(\sbb_2)\right)$ achieve their highest possible edge count sizes (see Lemma \ref{lem-1}). As a result, if $(\sbb_1,\sbb_2,\sbb_3)\in \mathcal{O}_{k_1,k_2}$, then splitting the edges within $\cup_{i=1}^3 H(\sbb_i)$ into disjoint collections, we can write
\begin{align}\label{E-set-bdd}
&|E(H(\sbb_1)\cup H(\sbb_2)\cup H(\sbb_3))| \notag\\
& = |E(H(\sbb_1))| + |E(H(\sbb_2))| - |E(H(\sbb_1)\cap E(H(\sbb_2))| + |E(H(\sbb_3))| - |E(H(\sbb_3)\cap \left(H(\sbb_1)\cup H(\sbb_2)\right)|\notag\\
& = T+\left(T-m(k_1:H)\right) + \left(T-m(k_2:H)\right) = 3T - m(k_1:H)-m(k_2:H).
\end{align}
And if $(\sbb_1,\sbb_2,\sbb_3)\in \Mc_{k_1,k_2}\setminus\mathcal{O}_{k_1,k_2}$, then the number of edges within these graph intersections must be smaller than their maximum possible values, {\it i.e.}, 
\begin{align*}
|E(H(\sbb_1)\cap E(H(\sbb_2))|& =m(k_1:H)-\delta_1(\sbb_1,\sbb_2,\sbb_3)\quad\text{and}\\
|E(H(\sbb_3)\cap \left(H(\sbb_1)\cup H(\sbb_2)\right)| & = m(k_2:H)-\delta_2(\sbb_1,\sbb_2,\sbb_3),
\end{align*}
where, $\delta_1(\sbb_1,\sbb_2,\sbb_3)$ and $\delta_2(\sbb_1,\sbb_2,\sbb_3)$ are some non-negative integers, and at least one of them is strictly positive. This implies, if $(\sbb_1,\sbb_2,\sbb_3)\in \Mc_{k_1,k_2}\setminus\mathcal{O}_{k_1,k_2}$, then 
\begin{align}\label{E-set-bdd-1}
&|E(H(\sbb_1)\cup H(\sbb_2)\cup H(\sbb_3))| = T + \left(T-\{m(k_1:H)-\delta_1(\sbb_1,\sbb_2,\sbb_3)\}\right) + \left(T- \{m(k_2:H)-\delta_2(\sbb_1,\sbb_2,\sbb_3)\}\right)\notag\\
& = 3T - m(k_1:H)-m(k_2:H) + \delta_1(\sbb_1,\sbb_2,\sbb_3)+\delta_2(\sbb_1,\sbb_2,\sbb_3)\notag\\
& \leq 3T - m(k_1:H)-m(k_2:H) + \delta_{k_1,k_2},
\end{align}
where, $\delta_{k_1,k_2} = 2\cdot \max_{(\sbb_1,\sbb_2,\sbb_3)\in \mathcal{M}_{k_1,k_2}}\left\{\delta_1(\sbb_1,\sbb_2,\sbb_3),\delta_2(\sbb_1,\sbb_2,\sbb_3)\right\}$, is a non-negative integer that depends on $k_1$ and $k_2$, and it is bounded above by $\max_{1\leq t\leq R}{m(t:H)} = T$, which is fixed and finite. Also note that from Assumption \ref{assump-3} in \eqref{PiN-lim}, we can claim that there exists a positive integer $M_1$ (possibly depending on $\beta$), such that for all $N\geq M_1$,
$$
N^\beta \max_{1\leq i,j\leq K}\pi_{N,i,j} \leq 2\cdot \max_{1\leq i,j\leq K} c_{i,j} = 2\cdot c_{\max},\quad\text{(say).}
$$
For $N\geq M_1$, using \eqref{E-set-bdd} and \eqref{E-set-bdd-1} the sum over $\Mc_{k_1,k_2}$ in the r.h.s. of \eqref{num-dW-l-1} can be written as 
\begin{align}\label{exp-Y}
&\sum_{(\sbb_1,\sbb_2,\sbb_3)\in \Mc_{k_1,k_2}} \E\left(\prod_{i=1}^3 Y_N(\sbb_i:H)\right)= \sum_{(\sbb_1,\sbb_2,\sbb_3)\in \mathcal{O}_{k_2,k_2}} \E\left(\prod_{i=1}^3 Y_N(\sbb_i:H)\right)+ \sum_{(\sbb_1,\sbb_2,\sbb_3)\in  \mathcal{O}^c_{k_2,k_2}} \E\left(\prod_{i=1}^3 Y_N(\sbb_i:H)\right)\notag\\
    &= \sum_{(\sbb_1,\sbb_2,\sbb_3)\in  \mathcal{O}_{k_2,k_2}} \prod_{\{i,j\}\in E\left(\cup_{i=1}^3 H(\sbb_i)\right)}\pi_{N,\al_i,\al_j}+ \sum_{(\sbb_1,\sbb_2,\sbb_3)\in \mathcal{O}^c_{k_2,k_2}} \prod_{\{i,j\}\in E\left(\cup_{i=1}^3 H(\sbb_i)\right)}\pi_{N,\al_i,\al_j}\notag\\
    %&\leq  \sum_{(\sbb_1,\sbb_2,\sbb_3)\in \mathcal{O}_{k_2,k_2}} \left(\max_{1\leq i,j\leq K}\pi_{N,i,j}\right)^{|E(H(\sbb_1)\cup H(\sbb_3)\cup H(\sbb_3))|}+ \sum_{(\sbb_1,\sbb_2,\sbb_3)\in  \mathcal{O}^c_{k_2,k_2}} \left(\max_{1\leq i,j\leq K}\pi_{N,i,j}\right)^{|E(H(\sbb_1)\cup H(\sbb_2)\cup H(\sbb_3))|}\notag\\
    %&\leq\sum_{(\sbb_1,\sbb_2,\sbb_3)\in \Mc_{k_1,k_2}\cap \mathcal{O}_{k_2,k_2}} \left(\max_{1\leq i,j\leq K}\pi_{N,i,j}\right)^{3T - m(k_1:H) -m(k_2:H)}+\notag\\
    %&\qquad{}+ \sum_{(\sbb_1,\sbb_2,\sbb_3)\in \Mc_{k_1,k_2}\cap \mathcal{O}^c_{k_2,k_2}} \left(\max_{1\leq i,j\leq K}\pi_{N,i,j}\right)^{3T - (m(k_1:H) - \delta_1(\sbb_1,\sbb_2,\sbb_3)) -(m(k_2:H) - \delta_2(\sbb_1,\sbb_2,\sbb_3))}\notag\\
    &\leq\sum_{(\sbb_1,\sbb_2,\sbb_3)\in  \mathcal{O}_{k_2,k_2}} N^{-\beta(3T - m(k_1:H) -m(k_2:H))}{\left(N^{\beta}\max_{1\leq i,j\leq K}\pi_{N,i,j}\right)}^{3T - m(k_1:H) -m(k_2:H)}\notag\\
    &\qquad{}+ \sum_{(\sbb_1,\sbb_2,\sbb_3)\in  \mathcal{O}^c_{k_2,k_2}} N^{-\beta(3T - m(k_1:H) -m(k_2:H))}N^{-\beta\cdot\delta_{k_1,k_2}}\left(N^{\beta}\max_{1\leq i,j\leq K}\pi_{N,i,j}\right)^{{}3T - m(k_1:H) -m(k_2:H) +\delta_{k_1,k_2} }\notag\\
    &\leq {\left(2c_{\max}\right)}^{3T - m(k_1:H)-m(k_1:H) +\delta_{k_1,k_2}}\sum_{(\sbb_1,\sbb_2,\sbb_3)\in \Mc_{k_1,k_2}} N^{-\beta(3T - m(k_1:H) -m(k_2:H))}\notag\\
    &=a_2\cdot N^{3R-k_1-k_2-\beta(3T - m(k_1:H) -m(k_2:H))},
    %&\leq \sum_{(\sbb_1,\sbb_2,\sbb_3)\in \Mc_{k_1,k_2}\cap \mathcal{O}_{k_2,k_2}} N^{-\beta(3T - m(k_1:H) -m(k_2:H))}\left(N^{\beta}\max_{1\leq i,j\leq K}\pi_{N,i,j}\right)^{3T - m(k_1:H) -m(k_2:H)}\notag\\
    %&\leq \left(N^{\beta}\max_{1\leq i,j\leq K}\pi_{N,i,j}\right)^{3T - m(k_1:H) -m(k_2:H)}\sum_{(\sbb_1,\sbb_2,\sbb_3)\in \Mc_{k_1,k_2}} N^{-\beta(3T - m(k_1:H) -m(k_2:H))},
\end{align}
where $a_2 = a_2(\beta,H,\Cb) = \max_{k_1,k_2\in[R] }\left\{{\left(2c_{\max}\right)}^{3T - m(k_1:H)-m(k_1:H) +\delta_{k_1,k_2}} \right\}$, is a positive constant depending on $\beta$, the graph $H$, and $\Cb$. Using similar arguments as used earlier, the same upper bound can be used for the term $\E(Y_N(\sbb_1:H)\cdot Y_N(\sbb_2:H))\cdot \E(Y_N(\sbb_3:H))$, within the term $A_{N,l,2}$ in \eqref{ANl-brk}. Then, combining \eqref{num-dW-l-1}, and a similar upper bound for $A_{N,l,2}$ and using the definition of $g_{l,\al,\beta,H}(t)$, $t_l(\al,\beta;H)$ and $e_l(k_1,k_2:H)$, $l=1,2$, defined in \eqref{delta-bdd-temp} and \eqref{def-el}, we can claim that there exists a positive integer $M_2$, depending on    $l,\al,\beta$ and $H$, such that for all $N\geq M_2$, 
\begin{align}\label{dW-num}
     &A_{N,l,1}\leq a_{1}\cdot a_2\cdot \sum_{k_1,k_2=1}^R N^{3R -k_1-k_2-\al\cdot e_l(k_1,k_2) - \beta(2T- m(k_1:H) - m(k_2:H))} \notag\\
     &=a_{1}\cdot a_{2}\cdot N^{-3\cdot g_{l,\al,\beta,H}(R)}\sum_{k=1}^RN^{2\cdot g_{l,\al,\beta,H}(k)}
    %&\leq a_{H,1}\cdot a_{H,2}\sum_{k_1,k_2=1}^R N^{3R -k_1-k_2-\al\cdot e_l(k_1,k_2) - \beta(2T- m(k_1:H) - m(k_2:H))} \notag\\
    %& =a_{1}\cdot a_{2}\cdot N^{-3\cdot g_{l,\al,\beta,H}(R)}\sum_{k=1}^RN^{2\cdot g_{l,\al,\beta,H}(k)}\notag\\
    \leq  a_{1}\cdot a_{2}\cdot N^{{}-3g_{l,\al,\beta,H}\left(R\right)}N^{ 2g_{l,\al,\beta,H}\left(t_l(\al,\beta;H)\right)}\{1+o(1)\}.
    %&\leq 8a_{l,H}\sum_{k_1=1}^R\sum_{k_2=1}^Rc^{e_l(k_1,k_2)}N^{-\al e_l(k_1,k_2)}\left(N^{\beta}\max_{1\leq i,j\leq K}\pi_{N,i,j}\right)^{3T - m(k_1:H) -m(k_2:H)}\notag\\
    %&\qquad\qquad\qquad \times \sum_{(\sbb_1,\sbb_2,\sbb_3)\in \Mc_{k_1,k_2}} N^{-\beta(3T - m(k_1:H) -m(k_2:H))}\notag\\
    %&= 8a_{l,H}\sum_{k_1=1}^R\sum_{k_2=1}^Rc^{e_l(k_1,k_2)}N^{-\al e_l(k_1,k_2)}\left(N^{\beta}\max_{1\leq i,j\leq K}\pi_{N,i,j}\right)^{3T - m(k_1:H) -m(k_2:H)}N^{3R-k_1-k_2}\notag\\
    %&\qquad\qquad\qquad \times N^{-\beta(3T - m(k_1:H) -m(k_2:H))}\notag\\
\end{align}
From Lemma \ref{lem-var}, there exists a positive integer $M_3$, depending on $l,\al,\beta$ and the graph $H$, such that for all $N\geq M_3$
\begin{align}\label{dW-denom}
    & a_{3}\leq \frac{\sigma^3_{N,l}(H)}{\left[N^{-2g_{l,\al,\beta,H}(R)} \cdot N^{g_{l,\al,\beta,H}(t_l(\al,\beta;H))}\right]^{3/2}}\leq a_{4},
   % &=\left[a_l^{\star}(H)\right]^{3/2} N^{-3g_{l,\al,\beta,H}(R)}N^{\frac{3}{2}g_{l,\al,\beta,H}(t_{l}(\al,\beta;H)}(1+o(1)),
\end{align}
where $a_{3} = \left[L(l,\al,\beta,H,\lamb, \Cb)\right]^{3/2}$ and $a_{4} = \left[U(l,\al,\beta,H,\lamb, \Cb)\right]^{3/2}$, are finite positive constants defined in Lemma \ref{lem-var}. Recall that Assumption \ref{assump-5} is used obtained \eqref{dW-denom} in case of $l=2$ or egocentric network formation. Then, combining \eqref{dW-num} and the lower bound of \eqref{dW-denom}, for $N\geq \max\{M_1,M_2,M_3\}$, we get the following bound
\begin{align*}%\label{up-bdd-1}
    A_{N,l,1} \leq  \frac{a_{1}\cdot a_{2}}{a_{3}}\cdot \displaystyle N^{~\displaystyle \left\{g_{l,\al,\beta,H}\left(t_l(\al,\beta;H)\right)/2\right\}\displaystyle}.
\end{align*}
A similar upper bound is also available for the term $A_{N,l,2}$ using the previously stated arguments. As a result, 
\begin{align*}%\label{up-bdd-thm-1}
    d_1\!\left(T_{N,l}(H),~Z\right)
    \leq b_l(H)\,
    N^{~\displaystyle \left\{g_{l,\al,\beta,H}\left(t_l(\al,\beta;H)\right)/2\right\}\displaystyle},\quad\text{for $l=1,2$ and $N\geq \max\{M_1,M_2,M_3\}$,}
\end{align*}
where, $b_l(H) = \frac{a_1\cdot a_2}{a_3}$, $l=1,2$, depend on $H$, the model parameters, and the sparsity levels $(\al,\beta)$. %and  = \max\{a_{H,4}, \frac{a_{H,1}\cdot a_{H,2}}{a_{H,l,3}}\}$, $l=1,2$, are positive constants which depend on $H$. 
This completes the proof. 
\end{proof}
\bigskip

%%%%%%%%%%%%%%%%%%%%%%%%%%%
%%%%%%%%%%%%%%%%%%%%%%%%%%%

\begin{proof}[Proof of Corollary \ref{cor-clt-all}]
For part (a), we have already established this statement in the ego-centric case (for $l=2$) in \eqref{g2-bdd-ego}. The  induced case (for $l=1$) follows similarly. To prove part (b), note that from Theorem \eqref{thm-1-main}, we know
\begin{align*}
d_1\left(T_{N,l}(H),Z\right) \leq b_l(H)\cdot \displaystyle N^{~\displaystyle \left\{g_{l,\al,\beta,H}\left(t_l(\al,\beta;H)\right)/2\right\}\displaystyle},
\end{align*}
for large enough $N$. When $(\al,\beta)\in C_l(H)$ (cf. \eqref{c-set-ind}, \eqref{c-set-ego}), the exponent of $N$ in the above bound is negative, which implies that stated order relation holds for $l=1,2$. Part (c) is trivial consequence of the fact that convergence in $d_1$ distance implies convergence in distribution. 
%for part (b), suppose $(\al,\beta)\in C_{l}(H)$ then,
%    \begin{align}
%        d_1\left(T_{N,l}(H),~N(0,1)\right)=O\left(\displaystyle N^{~\displaystyle \left\{g_{l,\al,\beta,H}\left(t_l(\al,\beta;H)\right)/2\right\}\displaystyle}\right)
%    \end{align}
%As $N\to\infty$, $d_1\left(T_{N,l}(H),~N(0,1)\right)\to0$. Part (c) follows from part (b).
\end{proof}
\bigskip

%%%%%%%%%%%%%%%%%%%%%%%%%%%
%%%%%%%%%%%%%%%%%%%%%%%%%%%

\begin{proof}[Proof of Theorem \ref{thm-2-main-temp}(a): Dense sampling]
In this case, $(\al,\beta)=(0,R/T)$. As a result $p_N= c/N^{\al} = c\in (0,1)$, for all $N\geq 1$. For $l=1,2$, define the random vectors 
\begin{equation}
\begin{aligned}
\label{U-vec-def}
\mathbf{U}_{N,l}
&= (U_{N,l,1},U_{N,l,2})^\primet
 = \bigl(\widehat{S}_{N,l}(H),\, S_N(H)-\widehat{S}_{N,l}(H)\bigr)^\primet = \left(\sum_{\sbb\in\nsr} X_l^1(\sbb),\, \sum_{\sbb\in\nsr} X_l^2(\sbb)\right)^\primet,~\text{where,}\\
X_l^{1}(\sbb)
&= \prod_{\{i,j\}\in E(H)} h_l(W_{N,s_i},W_{N,s_j})\, Y_{s_i,s_j},
\qquad
X_l^{2}(\sbb)
 = \left(1-\prod_{\{i,j\}\in E(H)} h_l(W_{N,s_i},W_{N,s_j})\right)
   \prod_{\{i,j\}\in E(H)} Y_{s_i,s_j}.
\end{aligned}    
\end{equation}
We develop a multivariate Poisson approximation for the vector $\mathbf{U}_{N,l}$ using Theorem~2.6 of~\cite{nualart2025}
(see Theorem~\ref{thm-mult-SB} for details). The first step is to construct a size-biased coupling for $\mathbf{U}_{N,l}$, for $l=1,2$. For each $\rb \in \nsr$, we define the following random vectors,
\begin{align}\label{new-U-vec}
    &\mathbf{U}_{N,l}^{1,\rb} = \left(\left(X_l^{1}({\sbb})\right)_{\sbb\in \nsr, \sbb\neq \rb}\right)^\primet~\text{and}~\mathbf{U}_{N,l}^{2,\rb} = \left(\left(X_l^{1}({\sbb})\right)_{\sbb\in \nsr}, \left(X_l^{2}({\sbb})\right)_{\sbb\in \nsr,\sbb\neq \rb}\right)^\primet,~\text{for $l=1,2$.}
\end{align}
Next, we define the following random vectors on the same probability space,
\begin{align}
\label{new-U-vec-1}
\mathbf{U}_{N,l}^{1,(1,\rb)}
= \Bigl(\bigl(X_{l}^{1,(1,\rb)}(\sbb)\bigr)_{\sbb\in\nsr,\ \sbb\neq \rb}\Bigr)^\primet~\text{and}\quad\mathbf{U}_{N,l}^{2,(2,\rb)}
= \Bigl(\bigl(X_{l}^{1,(2,\rb)}(\sbb)\bigr)_{\sbb\in\nsr},\,
          \bigl(X_{l}^{2,(2,\rb)}(\sbb)\bigr)_{\sbb\in\nsr,\ \sbb\neq \rb}\Bigr)^\primet,
\end{align}
such that the following relation holds,
\begin{align}\label{cond-dist}
        \mathbf{U}_{N,l}^{1,(1,\rb)} \eqd \left[\mathbf{U}_{N,l}^{1,\rb}~|~ X_l^1({\rb})=1\right]~\text{and}~\mathbf{U}_{N,l}^{2,(2,\rb)} \eqd \left[\mathbf{U}_{N,l}^{2,\rb}\,| X_l^2(\rb)=1\right].
\end{align}
We choose an index $\mathbf{I}_i\in \nsr$ for $i=1,2$, with the following probability distribution, 
\begin{align}\label{rand-ind}
        \pr\left(\mathbf{I}_i = \rb\right) = \frac{\E(X_{l}^i({\rb}))}{\E(U_{N,l,i})},\quad \rb\in \nsr~\text{and}~\sum_{\rb\in \nsr}\pr\left(\mathbf{I}_i = \rb\right)=1,
\end{align}
where the choice of $\mathbf{I}_i$, $i=1,2$, is independent of $\mathbf{U}_{N,l}^{j,\rb}$ and $\mathbf{U}_{N,l}^{j,(j,\rb)}$ for all $j=1,2$, see \eqref{new-U-vec-1}. The size biased coupling of $\mathbf{U}_{N,l}$, $l=1,2$, (cf. \eqref{U-vec-def}) is denoted as $\widetilde{\mathbf{U}}_{N,l} = {\left(\widetilde{\mathbf{U}}_{N,l}^1,\widetilde{\mathbf{U}}_{N,l}^2\right)}^\primet$, and it is defined as
\begin{equation}
\label{SB-U}
\begin{aligned}
&\widetilde{\mathbf{U}}_{N,l}^1 = \left(\sum_{\sbb\in \nsr,\sbb \neq \mathbf{I}_1}X_l^{1,(1,\mathbf{I}_1)}({\sbb}) + 1\right) = \widetilde{U}^1_{N,l,1},\notag\\
&\widetilde{\mathbf{U}}_{N,l}^2 = {\left(\sum_{\sbb\in \nsr}X_l^{1,(2,\mathbf{I}_2)}({\sbb}),\sum_{\sbb\in \nsr,\sbb \neq \mathbf{I}_2}X_l^{2,(2,\mathbf{I}_2)}({\sbb}) + 1\right)}^\primet = {\left(\widetilde{U}^2_{N,l,1},\widetilde{U}^2_{N,l,2}\right)}^\primet,
\end{aligned}\quad \text{for $l=1,2$.}
\end{equation}
Firstly we show that $\widetilde{\mathbf{U}}_{N,l}={\left(\widetilde{\mathbf{U}}_{N,l}^1,\widetilde{\mathbf{U}}_{N,l}^2\right)}^\primet$ (cf. \eqref{SB-U}) is indeed a size biased coupling of $\Ub_{N,l}$, for $l=1,2$. Using \eqref{cond-dist} and \eqref{rand-ind}, we note that for any suitable value of $u$,
\begin{align}\label{SB-P}
    &\E(U_{N,l,1})\cdot\pr\left(\widetilde{U}^1_{N,l,1} = u\right) = \E(U_{N,l,1})\cdot\pr\left(\sum_{\sbb\in \nsr,\sbb \neq \mathbf{I}_1}X_l^{1,(1,\mathbf{I}_1)}({\sbb}) + 1 = u\right)\notag\\
    &=\sum_{\rb\in\nsr }\pr(\mathbf{I}_1 = \rb)\cdot\pr\left(\sum_{\sbb\in \nsr,\sbb \neq \rb}X_l^{1,(1,\rb)}({\sbb}) + 1 = u\right)\notag\\
    &= \sum_{\rb\in \nsr}\pr(X^1_{l}(\rb) =1)\cdot\pr\left(\sum_{\sbb\in \nsr}X_l^{1}({\sbb}) - X^1_{l}(\rb)+ 1 = u~|~X^1_{l}(\rb) =1\right)\notag\\
    &= \sum_{\rb\in \nsr}\pr\left(\sum_{\sbb\in \nsr}X_l^{1}({\sbb}) = u,~X^1_{l}(\rb) =1\right) = \sum_{\rb\in \nsr}\pr\left(U_{N,l,1} = u,~X^1_{l}(\rb) =1\right)\notag\\
    &= \sum_{\rb\in \nsr}\E\left(X^1_{l}(\rb)\cdot\mathbf{1}( U_{N,l,1}=u)\right) = \E\left(\sum_{\rb\in \nsr}X^1_{l}(\rb)\cdot\mathbf{1}( U_{N,l,1}=u) \right) = u\cdot \pr\left(U_{N,l,1} = u\right)\notag\\
    &\Rightarrow ~ \pr\left(\widetilde{U}^1_{N,l,1} = u\right) = \frac{u}{\E\left(U_{N,l,1}\right)}\cdot\pr\left(U_{N,l,1}=u\right).
\end{align} 
A similar argument can be used to show that 
\begin{align}\label{SB-P-1}
    \pr\left((\widetilde{U}^2_{N,l,1}, \widetilde{U}^2_{N,l,2})^\primet = (u_1,u_2)^\primet\right) = \frac{u_2}{\E(U_{N,l,2})}\cdot \pr\left((U_{N,l,1},U_{N,l,2})^\primet = (u_1,u_2)^\primet\right).
\end{align}
This ensures that $\widetilde{\mathbf{U}}_{N,l}$ is a size biased version of $\mathbf{U}_{N,l}$, for $l=1,2$. Now, define the random vectors $\mathbf{Z}_{N,l} = {(Z_{N,l,1},\, Z_{N,l,2})}^\primet$, for $l=1,2$, where $Z_{N,l,1}\sim \mathrm{Poisson}\bigl(\E(U_{N,l,1})\bigr)$ and $Z_{N,l,2}\sim \mathrm{Poisson}\bigl(\E(U_{N,l,2})\bigr)$, for $l=1,2$, and $Z_{N,l,1}$ and $Z_{N,l,2}$ are independent. We apply Theorem 2.6 \cite{nualart2025} (see Theorem \ref{thm-mult-SB}) to obtain the following upper bound on the Wasserstein distance $d_W$ between the random vectors $\Ub_{N,l}$ and $\mathbf{Z}_{N,l}$, for $l=1,2$,
\begin{align}\label{dW-bdd-poi}
    d_{{W}}\left(\mathbf{U}_{N,l},~ \mathbf{Z}_{N,l}\right)&\leq \sum_{i=1}^2\min\{1,E(U_{N,l,i})\}\cdot \E\left|\widetilde{U}_{N,l,i}^i-1-U_{N,l,i}\right|+ 2\cdot \E(U_{N,l,2}) \cdot\E\left|\widetilde{U}_{N,l,1}^2-U_{N,l,1}\right|\notag\\
    &= \sum_{i=1}^2\min\{1,E(U_{N,l,i})\}\cdot B_{N,l,i} +2\cdot \E(U_{N,l,2})\cdot B_{N,l,3},~\text{(say)},
\end{align}
where, $\widetilde{U}^1_{N,l,1}, \widetilde{U}^2_{N,l,1}$ and $\widetilde{U}^2_{N,l,1}$, $l=1,2$, are defined in \eqref{SB-U}.  Using \eqref{U-vec-def}, \eqref{SB-U} and \eqref{rand-ind}, for $i=1,2$, we can write
\begin{align}\label{B-1}
    &B_{N,l,i} = \E\left|\widetilde{U}^i_{N,l,i} -1 -U_{N,l,i}\right|=\E\left|\sum_{\sbb\in \nsr,\sbb\neq \mathbf{I}_i}X_l^{i,(i,\mathbf{I}_i)}(\sbb) -\sum_{\sbb\in \nsr}X^i_l(\sbb)\right|\notag\\
    &= \sum_{\rb\in \nsr}\pr\left(\mathbf{I}_i=\rb\right)\E\left|\sum_{\sbb\in \nsr,\sbb\neq \rb}\left(X^{i,(i,\rb)}_l(\sbb) - X^i_l(\sbb)\right) -X^i_l(\rb)\right|\notag\\
    &= \sum_{\rb\in \nsr}\pr\left(\mathbf{I}_i=\rb\right)\E\left|\sum_{\substack{\sbb \in \nsr,\sbb\neq \rb \\ A(\sbb)\cap A(\rb) \neq\varnothing}}\left(X^{i,(i,\rb)}_l(\sbb) - X^i_l(\sbb)\right) -X^i_l(\rb)\right|\notag\\
    &\leq \sum_{\rb\in \nsr}\pr\left(\mathbf{I}_i=\rb\right)\E\left(\sum_{\substack{\sbb \in \nsr ,\sbb\neq \rb\\ A(\sbb)\cap A(\rb) \neq\varnothing}}\left(X^{i,(i,\rb)}_l(\sbb) + X^i_l(\sbb)\right) +X^i_l(\rb)\right)\notag\\
    &=\sum_{\rb\in \nsr}\pr\left(\mathbf{I}_i=\rb\right)\E\left(\sum_{\substack{\sbb \in \nsr,\sbb\neq \rb \\ A(\sbb)\cap A(\rb) \neq\varnothing}}\left(X^{i,(i,\rb)}_l(\sbb) - X^i_l(\sbb)\right) +2\sum_{\substack{\sbb \in \nsr,\sbb\neq\rb \\ A(\sbb)\cap A(\rb) \neq\varnothing}}X^i_l(\sbb) +X^i_l(\rb)\right)\notag\\
    &=\sum_{\rb\in \nsr}\pr\left(\mathbf{I}_i=\rb\right)\E\left(\sum_{\sbb \in \nsr,\sbb\neq \rb}\left(X^{i,(i,\rb)}_l(\sbb) - X^i_l(\sbb)\right) +2\sum_{\substack{\sbb \in \nsr,\sbb\neq\rb \\ A(\sbb)\cap A(\rb) \neq\varnothing}}X^i_l(\sbb) +X^i_l(\rb)\right)\notag\\
    &=\sum_{\rb\in \nsr}\pr\left(\mathbf{I}_i=\rb\right)\E\left(\left\{\sum_{\sbb \in \nsr}X^{i,(i,\rb)}_l(\sbb)-1 - \sum_{\sbb \in \nsr}X^i_l(\sbb)\right\} +2\sum_{\substack{\sbb \in \nsr \\ A(\sbb)\cap A(\rb) \neq\varnothing}}X^i_l(\sbb)\right)\notag\\
    &=\E\left(\widetilde{U}_{N,l,i}^i -1- U_{N,l,i} \right) +2\sum_{\rb\in \nsr}\pr\left(\mathbf{I}_i=\rb\right)\sum_{\sbb\in N(\rb)}\E\left(X^i_l(\sbb)\right)\notag\\
    &= \sum_{x_i\in \mathbb{N}_0}x_i\pr(\widetilde{U}_{N,l,i}^i = x_i) - 1 -\E(U_{N,l,i}) + \frac{2}{\E(U_{N,l,i})}\sum_{\rb\in \nsr}\sum_{\sbb\in N(\rb)}\E(X_l^i(\sbb))\E(X_l^i(\rb))\notag\\
     &= \sum_{x_1,\ldots,x_i\in \mathbb{N}_0}x_i\cdot\pr(\widetilde{U}^i_{N,l,1} =x_1,\ldots,\widehat{U}^i_{N,l,i-1}=x_{i-1},\widetilde{U}_{N,l,i}^i = x_i) \notag\\
     &\qquad{}\qquad{}- 1 -\E(U_{N,l,i})+ \frac{2}{\E(U_{N,l,i})}\sum_{\rb\in \nsr}\sum_{\sbb\in N(\rb)}\E(X_l^i(\sbb))\E(X_l^i(\rb))\notag\\
     &=\frac{1}{\E(U_{N,l,i})} \sum_{x_1,\ldots,x_i\in \mathbb{N}_0}x_i^2\cdot\pr({U}_{N,l,1} =x_1,\ldots,{U}_{N,l,i-1}=x_{i-1},{U}_{N,l,i} = x_i) \notag\\
     &\qquad{}\qquad{}- 1 -\E(U_{N,l,i})+ \frac{2}{\E(U_{N,l,i})}\sum_{\rb\in \nsr}\sum_{\sbb\in N(\rb)}\E(X_l^i(\sbb))\E(X_l^i(\rb))\notag\\
     &=\frac{1}{\E(U_{N,l,i})} \sum_{x_i\in \mathbb{N}_0}x_i^2\cdot\pr({U}_{N,l,i} = x_i) - 1 -\E(U_{N,l,i})\notag\\
     &\qquad {}+ \frac{2}{\E(U_{N,l,i})}\sum_{\rb\in \nsr}\sum_{\sbb\in N(\rb)}\E(X_l^i(\sbb))\E(X_l^i(\rb))\notag\\
    %&= \sum_{x\in \mathbb{N}_0}\frac{x^2\cdot \pr(U_{N,l,1} =x)}{f_l(p:H)\kappa_N\left(\PiB_N,\lamb_N\right)} - 1 -\E(U_{N,l,1}) + \frac{2}{\E(U_{N,l,1})}\sum_{\rb\in \nsr}\sum_{\sbb\in N(\rb)}\E(X_l^1(\sbb))\E(X_l^1(\rb))\notag\\
    &= \frac{\E([U_{N,l,i}]^2)}{\E(U_{N,l,i})} - 1 -\E(U_{N,l,i}) + \frac{2}{\E(U_{N,l,i})}\sum_{\rb\in \nsr}\sum_{\sbb\in N(\rb)}\E(X_l^i(\sbb))\E(X_l^i(\rb))\notag\\
    &=\frac{1}{\E(U_{N,l,i})}\left(\Var\left(U_{N,l,i}\right) - \E\left(U_{N,l,i}\right)\right) +\frac{2}{\E(U_{N,l,i})}\sum_{\rb\in \nsr}\sum_{\sbb\in N(\rb)}\E(X_l^i(\sbb))\E(X_l^i(\rb)).
    \end{align}
%We observe that for some fixed $\rb \in \nsr$, the following holds,
%\begin{align}\label{pos-1}
%    &\widetilde{U}_{N,l,1} - U_{N,l,1}\notag\\
%    &= \sum_{\sbb\in \nsr}\left(\prod_{\{i,j\}\in E(H)}h^{(1,\rb)}_l\left(W_{N,s_i},W_{N,s_j}\right)\cdot Y_{s_i,s_j}^{(1,\rb)} - \prod_{\{i,j\}\in E(H)}h_{W_{N,s_i}},W_{N,s_j}\cdot Y_{s_i,s_j}\right)\notag\\
%    &= \sum_{\sbb\in \nsr\setminus\{\rb\}}\left(\prod_{\{i,j\}\in E(H)}h^{(1,\rb)}_l\left(W_{N,s_i},W_{N,s_j}\right)\cdot Y_{s_i,s_j}^{(1,\rb)} - \prod_{\{i,j\}\in E(H)}h_{l}(W_{N,s_i},W_{N,s_j})\cdot Y_{s_i,s_j}\right) + (1-X^1_l(\rb))\notag\\
    %&= \sum_{\sbb\in \nsr\setminus\{\rb\}}\left(\prod_{\{i,j\}\in E(H(\sbb))}h_l\left(W^{(1,\rb)}_{N,i},W^{(1,\rb)}_{N,j}\right)\cdot Y_{i,j}^{(1,\rb)} - \prod_{\{i,j\}\in E(H)}h_{l}(W_{N,s_i},W_{N,s_j})\cdot Y_{s_i,s_j}\right) + (1-X^1_l(\rb))\notag\\
    %&= \sum_{\sbb\in \nsr\setminus\{\rb\}}\left(\prod_{\{i,j\}\in E(H(\sbb))\setminus E(H(\rb))}h_l\left(W_{N,i},W_{N,j}\right)\cdot Y_{i,j} - \prod_{\{i,j\}\in E(H(\sbb))}h_{l}(W_{N,i},W_{N,j})\cdot Y_{i,j}\right) + (1-X^1_l(\rb))\notag\\
    %&= \sum_{\sbb\in \nsr\setminus\{\rb\}}\left(\prod_{\{i,j\}\in E(H(\sbb))\setminus E(H(\rb))}h_l\left(W_{N,i},W_{N,j}\right)\cdot Y_{i,j}\left( 1- \prod_{\{i,j\}\in E(H(\rb))}h_{l}(W_{N,i},W_{N,j})\cdot Y_{i,j}\right)\right) + (1-X^1_l(\rb))\notag\\
    %&\geq 2-X^1_{l}(\rb) \geq 1\notag\\
    %&\Rightarrow \widetilde{U}_{N,l,1} - U_{N,l,1} -1 \geq 0.
%\end{align}
Next, we analyze the term $B_{N,l,3}$ from the expression \eqref{dW-bdd-poi}, that is,
\begin{align}\label{B-3}
    &B_{N,l,3}= \E\left|\widetilde{U}^2_{N,l,1} - U_{N,l,1}\right|=\sum_{\rb\in \nsr}\pr(\mathbf{I}_2 = \rb)\cdot \E\left|\sum_{\sbb\in \nsr}X_l^{1,(2,\rb)}(\sbb)- \sum_{\sbb\in \nsr}X_l^1(\sbb)\right|\notag\\
    &= \sum_{\rb\in \nsr}\pr(\mathbf{I}_2 = \rb)\cdot \E\left|\sum_{\substack{\sbb\in \nsr\\ |A(\sbb)\cap A(\rb)|\neq \varnothing}}\left(X_l^{1,(2,\rb)}(\sbb)- X_l^1(\sbb)\right)\right|\notag\\
    &\leq \sum_{\rb\in \nsr}\pr(\mathbf{I}_2 = \rb)\cdot \E\left(\sum_{\substack{\sbb\in \nsr\\ |A(\sbb)\cap A(\rb)|\neq \varnothing}}\left(X_l^{1,(2,\rb)}(\sbb)- X_l^1(\sbb)\right)\right)\notag\\
    &\qquad {}+ 2\sum_{\rb\in \nsr}\pr(\mathbf{I}_2 = \rb)\sum_{\substack{\sbb\in \nsr\\ |A(\sbb)\cap A(\rb)|\neq \varnothing}}\E(X_l^1(\sbb))\notag\\
    &= \sum_{\rb\in \nsr}\pr(\mathbf{I}_2 = \rb)\cdot \E\left(\sum_{\sbb\in \nsr}X_l^{1,(2,\rb)}(\sbb)- \sum_{\sbb\in \nsr}X_l^1(\sbb)\right)+ 2\sum_{\rb\in \nsr}\sum_{\sbb\in N(\rb)}\pr(\mathbf{I}_2 = \rb)\E(X_l^1(\sbb))\notag\\
    &= \E\left(\widetilde{U}^2_{N,l,1} - U_{N,l,1}\right) + \frac{2}{\E(U_{N,l,2})}\sum_{\rb\in \nsr}\sum_{\sbb\in N(\rb)}\E(X^2_l(\rb))\E(X_l^1(\sbb))\notag\\
    &= \E\left(\widetilde{U}^2_{N,l,1}\right) - \E(U_{N,l,1}) +\frac{2}{\E(U_{N,l,2})}\sum_{\rb\in \nsr}\sum_{\sbb\in N(\rb)}\E(X^2_l(\rb))\E(X_l^1(\sbb))\notag\\
    &= \sum_{x\in \mathbb{N}_0}x\pr\left(\widetilde{U}^2_{N,l,1} = x\right)- \E(U_{N,l,1}) +\frac{2}{\E(U_{N,l,2})}\sum_{\rb\in \nsr}\sum_{\sbb\in N(\rb)}\E(X^2_l(\rb))\E(X_l^1(\sbb))\notag\\
    &= \sum_{x\in \mathbb{N}_0} \sum_{y\in \mathbb{N}_0}x\pr\left(\widetilde{U}^2_{N,l,1} = x, \widetilde{U}^2_{N,l,2} =y\right) - \E(U_{N,l,1}) +\frac{2}{\E(U_{N,l,2})}\sum_{\rb\in \nsr}\sum_{\sbb\in N(\rb)}\E(X^2_l(\rb))\E(X_l^1(\sbb))\notag\\
    &=\sum_{x\in \mathbb{N}_0} \sum_{y\in \mathbb{N}_0}\frac{xy\cdot \pr\left(U_{N,l,1} = x, {U}_{N,l,2} =y\right)}{E(U_{N,l,2})}- \E(U_{N,l,1}) +\frac{2}{\E(U_{N,l,2})}\sum_{\rb\in \nsr}\sum_{\sbb\in N(\rb)}\E(X^2_l(\rb))\E(X_l^1(\sbb))\notag\\
    &= \frac{1}{\E(U_{N,l,2})}\left(\E(U_{N,l,1}\cdot U_{N,l,2}) - \E(U_{N,l,1})\E(U_{N,l,2})\right) +\frac{2}{\E(U_{N,l,2})}\sum_{\rb\in \nsr}\sum_{\sbb\in N(\rb)}\E(X^2_l(\rb))\E(X_l^1(\sbb))\notag\\
    &= \frac{\cov(U_{N,l,1}, U_{N,l,2})}{\E(U_{N,l,2})} +\frac{2}{\E(U_{N,l,2})}\sum_{\rb\in \nsr}\sum_{\sbb\in N(\rb)}\E(X^2_l(\rb))\E(X_l^1(\sbb)).
\end{align}
Therefore, combining \eqref{B-1} and \eqref{B-3}, the upper bound in \eqref{dW-bdd-poi} reduces to
\begin{align}\label{dW-bdd-poi-1}
&d_{{W}}(\Ub_{N,l}, \mathbf{Z}_{N,l})\leq\sum_{i=1}^2 \min\left\{1,\frac{1}{\E(U_{N,l,i})}\right\}\left\{\Var\left(U_{N,l,i}\right) - \E\left(U_{N,l,i}\right) +2\sum_{\rb\in \nsr}\sum_{\sbb\in N(\rb)}\E\left(X_l^i(\sbb)\right)\cdot\E\left(X_l^i(\rb)\right)\right\}\notag\\
    &\qquad{}\qquad{}+2\cdot\cov(U_{N,l,1}, U_{N,l,2}) +4\sum_{\rb\in \nsr}\sum_{\sbb\in N(\rb)}\E\left(X^2_l(\rb)\right)\cdot\E\left(X_l^1(\sbb)\right),\ \text{for $l=1,2$,}
\end{align}
where, $N(\rb) = \{\sbb: |A(\sbb)\cap A(\rb)|\geq 1\}$. 
%Now, from the upper bound derived in the expression \eqref{dW-bdd-poi-1}, it remains to show that all the terms in the r.h.s. of \eqref{dW-bdd-poi-1} converges to zero as $N \to \infty$. We begin by studying the difference $\Var\left(U_{N,l,i}\right) - \E\left(U_{N,l,i}\right)$, that is, for $l=1,2$,
Note that, for $l=1,2$, using the definitions provided in \eqref{U-vec-def} we can write
\begin{align}\label{var-exp-diff}
    &\left|\Var(U_{N,l,1}) - \E\left(U_{N,l,1}\right)\right| = \left|\Var\left(\widehat{S}_{N,l}(H)\right) - \E\left(\widehat{S}_{N,l}(H)\right)\right|\notag\\
    &= \left|\sum_{t=1}^R\sum_{(\sbb_1,\sbb_2)\in \mathcal{M}_t}\cov\left(W_{N,l}(\sbb_1:H)Y_N(\sbb_1:H), W_{N,l}(\sbb_2:H)Y_N(\sbb_2:H)\right) - \sum_{\sbb\in \nsr}\E(W_{N,l}(\sbb:H)\cdot Y_N(\sbb:H))\right|\notag\\
    &= \left|\sum_{t=1}^{R-1}\sum_{(\sbb_1,\sbb_2)\in \mathcal{M}_t}\cov\left(W_{N,l}(\sbb_1:H)Y_N(\sbb_1:H), W_{N,l}(\sbb_2:H)Y_N(\sbb_2:H)\right) \right.\notag\\
    &\left.{}+ \sum_{\sbb\in \nsr}\Var\left(W_{N,l}(\sbb:H)\cdot Y_N(\sbb:H)\right)- \sum_{\sbb\in \nsr}\E(W_{N,l}(\sbb:H)\cdot Y_N(\sbb:H))\right|\notag\\
    &=\left| \sum_{t=1}^{R-1}\sum_{(\sbb_1,\sbb_2)\in \mathcal{M}_t}\cov\left(W_{N,l}(\sbb_1:H)Y_N(\sbb_1:H), W_{N,l}(\sbb_2:H)Y_N(\sbb_2:H)\right)+\right.\notag\\
    &\left.+ \sum_{\sbb\in \nsr}\left(\E\left(W_{N,l}(\sbb:H)Y_N(\sbb:H)\right) - \left[\E\left(W_{N,l}(\sbb:H)Y_N(\sbb:H)\right)\right]^2\right)- \sum_{\sbb\in \nsr}\E(W_{N,l}(\sbb:H)\cdot Y_N(\sbb:H))\right|\notag\\
    &=\left| \sum_{t=1}^{R-1}\sum_{(\sbb_1,\sbb_2)\in \mathcal{M}_t}\cov\left(W_{N,l}(\sbb_1:H)Y_N(\sbb_1:H), W_{N,l}(\sbb_2:H)Y_N(\sbb_2:H)\right)\right.\notag\\
    &\qquad \left. {}-\sum_{\sbb\in \nsr}\left[\E\left(W_{N,l}(\sbb:H)Y_N(\sbb:H)\right)\right]^2\right|\notag\\
    &= \left|D^{(1)}_{N,l,1} -D^{(1)}_{N,l,2}\right|,\quad\text{(say),}\notag\\
    & \leq |D^{(1)}_{N,l,1}| +|D^{(1)}_{N,l,2}|.
    %&= \sum_{t=1}^{R-1}\sum_{(\sbb_1,\sbb_2)\in \mathcal{M}_t}\E(W_{N,l}(\sbb_1:H)W_{N,l}(\sbb_2:H))\E\left(Y_N(\sbb:H)\cdot Y_{N}(\sbb_2:H)\right)-\sum_{\sbb\in \nsr}\left[\E\left(W_{N,l}(\sbb:H)Y_N(\sbb:H)\right)\right]^2\notag\\
\end{align}
From the expression in \eqref{var-exp-diff}, using the fact that $H$ is strictly balanced, $(\al,\beta) = (0,R/T)$ and using Lemma \ref{lem-1}, there exists a positive integer $M_4$, dependent on the graph $H$ and the model parameters, such that for $N\geq M_4$ we can claim,
\begin{align}\label{D-1}
    &|D^{(1)}_{N,l,1}| = \left|\sum_{t=1}^{R-1}\sum_{(\sbb_1,\sbb_2)\in \mathcal{M}_t}\cov\left(W_{N,l}(\sbb_1:H)Y_N(\sbb_1:H), W_{N,l}(\sbb_2:H)Y_N(\sbb_2:H)\right)\right|\notag\\
    &= \left|\sum_{t=1}^{R-1}\sum_{(\sbb_1,\sbb_2)\in \mathcal{M}_t}\E(W_{N,l}(\sbb_1:H)W_{N,l}(\sbb_2:H))\cdot\E(Y_N(\sbb_1:H)Y_N(\sbb_2:H))\right.\notag\\
    &\left.\qquad{}\qquad{}- \sum_{t=1}^{R-1}\sum_{(\sbb_1,\sbb_2)\in \mathcal{M}_t}\E(W_{N,l}(\sbb_1:H))\E(W_{N,l}(\sbb_2:H))\cdot \E(Y_N(\sbb_1:H))\E(Y_N(\sbb_2:H))\right|\notag\\
    &\leq \left|\sum_{t=1}^{R-1}\sum_{(\sbb_1,\sbb_2)\in \mathcal{M}_t}\E(Y_{N}(\sbb_1,\sbb_2)Y_{N}(\sbb_2:H)) - \sum_{t=1}^{R-1}\sum_{(\sbb_1,\sbb_2)\in \mathcal{M}_t}f_l(c:H)^2\prod_{\{i,j\}\in E(H)}\pi_{N,\al_{s_{1,i}},\al_{s_{1,j}}}\pi_{N,\al_{s_{2,i}},\al_{s_{2,j}}}\right|\notag\\
    &\leq \left|\sum_{t=1}^{R-1}N^{2R-2T\beta -t + \beta m(t;H)}\left(N^{\beta}\max_{i,j\in [K]}\pi_{N,i,j}\right)^{2T -m(t:H)}\right|\notag\\
    &\qquad{}\qquad{}+\left|\sum_{t=1}^{R-1}N^{2R-t - 2T\beta}\sum_{\ub,\vb\in [K]^R}f_l(c:H)^2\prod_{\{i,j\}\in E(H)}N^{2\beta}\pi_{N,u_i,u_j}\pi_{N,v_i,v_j}\left(\prod_{i=1}^R\la_{N,u_i}\la_{N,v_i}+O\left(\frac{1}{N}\right)\right)\right|\notag\\
    &= \left(N^{\beta}\max_{i,j\in [K]}\pi_{N,i,j}\right)^{2T}\sum_{t=1}^{R-1}N^{-t + \frac{R}{T} m(t;H)}\notag\\
    &\qquad{}\qquad{}+\sum_{t=1}^{R-1}N^{-t}\sum_{\ub\in [K]^R}f_l(c:H)^2\prod_{\{i,j\}\in E(H)}N^{2\beta}\pi_{N,u_i,u_j}\pi_{N,v_i,v_j}\left(\prod_{i=1}^R\la_{N,u_i}\la_{N,v_i}+O\left(\frac{1}{N}\right)\right).
    %&= \sum_{t=1}^{R-1}\sum_{(\sbb_1,\sbb_2)\in \mathcal{M}_t}\prod_{\{i,j\}\in E(H(\sbb_1)\cup H(\sbb_2))}\pi_{\al_{i}, \al_{j}} - \sum_{t=1}^{R-1}\sum_{(\sbb_1,\sbb_2)\in \mathcal{M}_t}f_l(p:H)^2\prod_{\{i,j\}\in E(H)}\pi_{N,\al_{s_{1,i}},\al_{s_{1,j}}}\notag\\
    %&\leq \sum_{t=1}^{R-1}\sum_{(\sbb_1,\sbb_2)\in \mathcal{M}_t}N^{-\beta|E(H(\sbb_1)\cup H(\sbb_2))|}\left(N^{\beta}\max_{i,j\in [K]}\pi_{N,i,j}\right)^{|E(H(\sbb_1)\cup H(\sbb_2))|}\notag\\
    %&- \sum_{t=1}^{R-1}\sum_{\ub\in [K]}\sum_{(\sbb_1,\sbb_2)\in \mathcal{M}_t}f_l(p:H)^2\prod_{\{i,j\}\in E(H)}\pi_{N,\al_{s_{1,i}},\al_{s_{1,j}}}\notag\\
    %&= c_
\end{align}
In the expression \eqref{D-1}, the exponent in the first term is negative since H is strictly balanced. This implies $\frac{t}{m(t:H)}> \frac{R}{T}$ for all $t\in [R-1]$. Hence, using Assumptions \ref{assump-2} and \ref{assump-3}, it follows that $D^{(1)}_{N,l,1}\to 0$, as $N\to\infty$. Next, consider the term $|D^{(1)}_{N,l,2}|$ in \eqref{var-exp-diff}. Write 
\begin{align}\label{D-2}
    &|D^{(1)}_{N,l,2}| = \sum_{\sbb\in \nsr}\left[\E\left(W_{N,l}(\sbb:H)Y_N(\sbb:H)\right)\right]^2 = \sum_{\sbb\in \nsr}f_l^2(c:H)\prod_{\{i,j\}\in E(H)}\pi_{N,\al_{s_i},\al_{s_j}}\notag\\
    &= N^{R-2T\beta}f_l^2(c:H)\sum_{\ub\in [K]^R}\prod_{\{i,j\}\in E(H)}N^{2\beta}\pi^2_{N,u_i,u_j}\left(\prod_{i=1}^R\la_{N,s_i}+O\left(\frac{1}{N}\right)\right)\notag\\
    &= N^{-T\beta}f_l^2(c:H)\sum_{\ub\in [K]^R}\prod_{\{i,j\}\in E(H)}N^{2\beta}\pi^2_{N,u_i,u_j}\left(\prod_{i=1}^R\la_{N,s_i}+O\left(\frac{1}{N}\right)\right).
\end{align}
Therefore, using Assumptions \ref{assump-2} and \ref{assump-3}, $D_{2,N}\to 0$ as $N\to\infty$. Hence, combining \eqref{D-1} and \eqref{D-2}, we can say that $\left\{\Var(U_{N,l,1}) - \E\left(U_{N,l,1}\right)\right\}\to 0$ as $N\to\infty$. Using a similar argument as the one used in \eqref{var-exp-diff} above, one can show that $|\Var(U_{N,l,2}) - \E(U_{N,l,2})|\leq |D^{(2)}_{N,l,1}| + |D^{(2)}_{N,l,2}|$, where %. Using a similar argument as in \eqref{D-1} and \eqref{D-2}, we can write
\begin{align*}
    &\left|D^{(2)}_{N,l,1}\right|\leq \left(N^{\beta}\max_{i,j\in [K]}\pi_{N,i,j}\right)^{2T}\sum_{t=1}^{R-1}N^{-t + \frac{R}{T} m(t;H)}\notag\\
    &\qquad{}\qquad{}+\sum_{t=1}^{R-1}N^{-t}\sum_{\ub\in [K]^R}(1-f_l(c:H))^2\prod_{\{i,j\}\in E(H)}N^{2\beta}\pi^2_{N,u_i,u_j}\left(\prod_{i=1}^R\la_{N,u_i}\la_{N,v_i}+O\left(\frac{1}{N}\right)\right),\quad\text{and}\notag\\
    &|D^{(2)}_{N,l,2}| = N^{-T\beta}(1-f_l(c:H))^2\sum_{\ub\in [K]^R}\prod_{\{i,j\}\in E(H)}N^{2\beta}\pi^2_{N,u_i,u_j}\left(\prod_{i=1}^R\la_{N,s_i}+O\left(\frac{1}{N}\right)\right),
\end{align*}
using arguments similar to those used in \eqref{D-1} and \eqref{D-2}. Thus, using Assumption \ref{assump-2}, \ref{assump-3} and the fact that $\beta=R/T$, we can say that $\left|D^{(2)}_{N,l,1}\right|+\left|D^{(2)}_{N,l,2}\right|\to 0$ as $N\to\infty$. Hence $\{\Var(U_{N,l,2}) -\E(U_{N,l,2})\}\to 0$ as $N\to \infty$. Next, consider the term $\cov(U_{N,l,1},U_{N,l,2})$ in the r.h.s. of \eqref{dW-bdd-poi-1}. Using similar arguments as \eqref{D-1} we write,
\begin{align}\label{cov-term}
    &\left|\cov\left(U_{N,l,1},U_{N,l,2}\right)\right|\notag\\
    &=\left|\sum_{t=1}^{R}\sum_{(\sbb_1,\sbb_2)\in \mathcal{M}_t}\cov\left(W_{N,l}(\sbb_1:H)Y_N(\sbb:H),(1-W_{N,l})Y_N(\sbb_2:H)\right)\right|\notag\\
    &\leq \left|\sum_{t=1}^{R-1}\sum_{(\sbb_1,\sbb_2)\in \mathcal{M}_t}\cov(W_{N,l}(\sbb_1:H) Y_{N}(\sbb_1:H), (1-W_{N,l}(\sbb_2:H)) Y_{N}(\sbb_2:H))\right|\notag\\
    &\qquad{}+ \left|\sum_{(\sbb_1,\sbb_2)\in \mathcal{M}_R}\cov(W_{N,l}(\sbb_1:H) Y_{N}(\sbb_1:H),(1- W_{N,l}(\sbb_2:H)) Y_{N}(\sbb_2:H))\right|\notag\\
    &\leq \left|\sum_{t=1}^{R-1}\sum_{(\sbb_1,\sbb_2)\in \mathcal{M}_t}\left(\E(Y_N(\sbb_1:H)Y_N(\sbb_2:H)) -[((1-f_l(c,H))f_l(c:H))\cdot\E(Y_N(\sbb_1:H))]^2\right)\right|\notag\\
    &\qquad{}+ \left|\sum_{(\sbb_1,\sbb_2)\in \mathcal{M}_R}\cov(W_{N,l}(\sbb_1:H) Y_{N}(\sbb_1:H),(1- W_{N,l}(\sbb_2:H)) Y_{N}(\sbb_2:H))\right|\notag\\
    &\leq \left|\left(N^{\beta}\max_{i,j\in [K]}\pi_{N,i,j}\right)^{2T}\sum_{t=1}^{R-1}N^{2R-2T\beta -t + \beta m(t;H)}\right|\notag\\
    &\qquad{}+\left|\sum_{t=1}^{R-1}N^{2R-t - 2T\beta}\sum_{\ub\in [K]^R}f_l(c:H)(1-f_l(c:H))\prod_{\{i,j\}\in E(H)}N^{2\beta}\pi^2_{N,u_i,u_j}\left(\prod_{i=1}^R\la_{N,u_i}+O\left(\frac{1}{N}\right)\right)\right|\notag\\
    &\qquad{}\qquad{}+\left|f_l(c:H)(1-f_l(c:H))\sum_{\sbb\in \nsr}\prod_{\{i,j\}\in E(H)}\pi^2_{N,\al_{s_i},\al_{s_j}}\right|\notag\\
    &=\left|\left(N^{\beta}\max_{i,j\in [K]}\pi_{N,i,j}\right)^{2T}\sum_{t=1}^{R-1}N^{2R-2T\beta -t + \beta m(t;H)}\right|\notag\\
    &\qquad{}+\left|\sum_{t=1}^{R-1}N^{2R-t - 2T\beta}\sum_{\ub\in [K]^R}f_l(c:H)(1-f_l(c:H))\prod_{\{i,j\}\in E(H)}N^{2\beta}\pi^2_{N,u_i,u_j}\left(\prod_{i=1}^R\la_{N,u_i}+O\left(\frac{1}{N}\right)\right)\right|\notag\\
    &\qquad{}\qquad{}+\left|N^{R-2T\beta}f_l(c:H)(1-f_l(c:H))\sum_{\ub\in [K]^R}N^{2\beta}\prod_{\{i,j\}\in E(H)}\pi^2_{N,u_i,u_j}\left(\prod_{i=1}^R\la_{N,u_i} +O\left(\frac{1}{N}\right)\right)\right|\notag\\
    &=\left|\left(N^{\beta}\max_{i,j\in [K]}\pi_{N,i,j}\right)^{2T}\sum_{t=1}^{R-1}N^{ -t + (R/T) m(t;H)}\right|\notag\\
    &+\left|\sum_{t=1}^{R-1}N^{-t }\sum_{\ub\in [K]^R}f_l(c:H)(1-f_l(c:H))\prod_{\{i,j\}\in E(H)}N^{2\beta}\pi^2_{N,u_i,u_j}\left(\prod_{i=1}^R\la_{N,u_i}+O\left(\frac{1}{N}\right)\right)\right|\notag\\
    &+\left|N^{T\beta}f_l(c:H)(1-f_l(c:H))\sum_{\ub\in [K]^R}N^{2\beta}\prod_{\{i,j\}\in E(H)}\pi^2_{N,u_i,u_j}\left(\prod_{i=1}^R\la_{N,u_i} +O\left(\frac{1}{N}\right)\right)\right|.
\end{align}
Therefore, using the fact that $H$ is strictly balanced, that is, $t/m(t:H)<R/T$, for all $t\in[R-1]$, $\beta=R/T$ and using Assumptions \ref{assump-2} and \ref{assump-3}, we can say that $\cov\left(U_{N,l,1},U_{N,l,2}\right)\to 0$, as $N\rai$.
We show that the remaining terms on the r.h.s. of \eqref{dW-bdd-poi-1} also converge to zero as $N \rai$. Using Assumption \ref{assump-3} and the fact that $\beta = R/T$, we obtain the following
\begin{align}
    &\sum_{\rb\in \nsr}\sum_{\sbb\in N(\rb)}\E\left(X_l^i(\rb)\right)\cdot \E\left(X_l^i(\sbb)\right) =\sum_{\rb\in \nsr}\sum_{\sbb\in N(\rb)}\left[f_l(c:H)\right]^2 \prod_{\{i,j\}\in E(H)}\pi_{N,\al_{s_i},\al_{s_j}}\cdot \pi_{N,\al_{r_i},\al_{r_j}}\notag\\
    &\leq \sum_{\rb\in \nsr}\sum_{\sbb\in N(\rb)} N^{-2R}\left(N^{2(R/T)}\max_{1\leq i,j\leq K}\pi^2_{N,i,j}\right)^T\leq R\cdot \frac{1}{N} \left(N^{2(R/T)}\max_{1\leq i,j\leq K}\pi^2_{N,i,j}\right)^T \to 0,~\text{and}\notag\\
    &\sum_{\rb\in \nsr}\sum_{\sbb\in N(\rb)}\E\left(X_l^2(\rb)\right)\cdot \E\left(X_l^1(\sbb)\right) \leq R\cdot \frac{1}{N} \left(N^{2(R/T)}\max_{1\leq i,j\leq K}\pi^2_{N,i,j}\right)^T \to 0,~\text{as}~N\to\infty.
\end{align}
Combining all the above steps, it follows that  $d_{{W}}(\mathbf{U}_{N,l},\mathbf{Z}_{N,l})\rightarrow 0$ (see \eqref{dW-bdd-poi-1}), for both $l=1,2$, as $N\rai$. Convergence in $d_{{W}}$ implies convergence in $d_{{BL}}$ from the definition (see. \eqref{bdd-wass}). Hence, we can write 
\begin{equation}
d_{{BL}}(\mathbf{U}_{N,l},\mathbf{Z}_{N,l})\rightarrow 0,\quad\text{as $N\rai$, for $l=1,2$.}
\label{BL-conv-1}
\end{equation}
Next, using Assumptions \ref{assump-2} and \ref{assump-3} and $\beta = R/T$, we can write
\begin{align*}%\label{def-kappa}
    & \E\left(U_{N,l,1}\right) = f_l(c:H)\cdot \kappa_N(\PiB_N, \lamb_N:H),~\text{and} ~\E\left(U_{N,l,2}\right) = (1-f_l(c:H))\cdot \kappa_N(\PiB_N, \lamb_N:H),~\text{where},\notag\\
    &\kappa_N(\PiB_N, \lamb_N:H) = \sum_{\sbb\in \nsr}\E\left(Y_N(\sbb:H)\right) = \sum_{\sbb\in \nsr}\prod_{\{i,j\}\in E(H)} \pi_{N,\al_{s_i},\al_{s_j}}\notag\\
    &= N^{R-T\beta }\sum_{\ub\in [K]^R}\prod_{\{i,j\}\in E(H)}N^{\beta}\pi_{N,u_i,u_j}\left(\prod_{i=1}^R\la_{N,u_i}+O\left(\frac{1}{N}\right)\right)\notag\\
    &\to \kappa(\Cb,\lamb:H), %\sum_{\ub\in [K]^R}\prod_{\{i,j\}\in E(H)}c_{u_i,u_j}\cdot\prod_{i=1}^R\la_{u_i}\equiv \kappa(\Cb,\lamb:H),
    \quad \text{(cf. \eqref{kap-def}).}
\end{align*}
%Thus, using Theorem \ref{thm-mult-SB} and the above expressions for $\E\left(U_{N,l,1}\right)$ and $\E\left(U_{N,l,2}\right)$, we can claim that the components of the 
Define the random vector $\mathbf{Z}_l = {(Z_{l,1},Z_{l,2})}^\primet$, for $l=1,2$, where $Z_{l,1}$ and $Z_{l,2}$ are independent,  $Z_{l,1}\sim \text{Poisson}\bigg(f_l(c:H)\cdot\kappa\left(\Cb,\lamb:H\right)\bigg)$,  $Z_{l,2}\sim \text{Poisson}\bigg((1-f_l(c:H))\cdot\kappa\left(\Cb,\lamb\right)\bigg)$, and $\kappa(\Cb,\lamb:H)$ is defined in \eqref{kap-def}. From the definition of $\mathbf{Z}_{N,l}$ and $\mathbf{Z}_l$, $l=1,2$, and using the fact that $\kappa_N(\PiB_N,\lamb_N:H)\rightarrow \kappa(\Cb,\lamb:H)$ (see \eqref{kap-def}), it implies that the characteristic function of $\mathbf{Z}_{N,l}$ converges to the characteristic function of $\mathbf{Z}_l$, for $l=1,2$. Thus, 
$\mathbf{Z}_{N,l}\darw \mathbf{Z}_l$, for $l=1,2$. Using Theorem 11.3.3 of \cite{dudley-real} we can claim that $d_{{BL}}(\mathbf{Z}_{N,l},\mathbf{Z}_l)\to 0$, as $N\rai$, for $l=1,2$. By triangle inequality and \eqref{BL-conv-1},
$$
d_{{BL}}\left(\mathbf{U}_{N,l},~\mathbf{Z}_{l} \right)\leq d_{{BL}}\left(\mathbf{U}_{N,l},\mathbf{Z}_{N,l}\right)+d_{{BL}}\left(\mathbf{Z}_{N,l},\mathbf{Z}_l\right)\to 0,\quad\text{as $N\rai$, for $l=1,2$.}
$$
Again, using Theorem 11.3.3 from \cite{dudley-real}, we can say that 
\begin{align}
\mathbf{U}_{N,l}\darw\mathbf{Z}_{l}, \quad\text{for $l=1,2$.}
\label{conv-main-1}
\end{align}
Using Lemma \ref{var-exp-bdd}, we can claim that $\sigma^2_{N,l}(H)\rightarrow f_l(c:H)(1-f_l(c:H))\cdot\kappa(\Cb,\lamb:H)$, for $l=1,2$, where $\kappa(\Cb,\lamb:H)$ is defined in \eqref{kap-def}. From the definition of the pivotal quantity in \eqref{TN-def-12} we can write (using \eqref{conv-main-1} and the continuous mapping theorem)
\begin{align*}
    T_{N,l}(H) &= \frac{\widehat{S}_{N,l}(H) - f_l(c:H)\cdot S_{N}(H)}{\sigma_{N,l}(H)}\notag\\
    &= \frac{(1-f_l(c:H))\cdot \widehat{S}_{N,l}(H) - f_l(c:H)\cdot (S_N(H) - \widehat{S}_{N,l}(H))}{\sigma_{N,l}(H)} \notag\\
    & = \frac{(1-f_l(c:H))\cdot U_{N,l,1} - f_l(c:H)\cdot U_{N,l,2}}{\sigma_{N,l}(H)}\\
    & \darw \frac{(1-f_l(c:H))\cdot Z_{l,1} - f_l(c:H)\cdot Z_{l,2}}{\left[f_l(c:H)(1-f_l(c:H))\cdot\kappa(\Cb,\lamb:H)\right]^{1/2}},\quad\text{for $l=1,2$.}
\end{align*}
%where, $\left(\sigma_{N,l}(H)\right)^2\to f_l(c:H)(1-f_l(c:H))\kappa(\Cb,\lamb)$ from Lemma \ref{var-exp-bdd}. Therefore,
%\begin{align*}
%    T_{N,l}(H)\darw \frac{(1-f_l(c:H))\cdot Z_{l,1} - %f_l(c:H)\cdot Z_{l,2}}{\left[f_l(c:H)(1-f_l(c:H))\cdot\kappa(\Cb,\lamb)\right]^{1/2}},~l=1,2.
%\end{align*}
This completes the proof.

\end{proof}

%%%%%%%%%%%%%%%%%%%%%%%%%%%%%
%%%%%%%%%%%%%%%%%%%%%%%%%%%%%

\begin{proof}[Proof of Theorem \ref{thm-2-main-temp}(b): Intermediately sparse sampling.]
In this case $\al\in (0,1)$. In the induced case, it is assumed that the sparsity levels $(\al,\beta)$ are within the boundary region
$$
F_{1}(H) = \left\{(\al,\beta): 0<\al<1,~\beta = \frac{R\cdot(1-\al)}{T}\right\},\quad\text{(see \eqref{F1F2-set}).}
$$
In the ego-centric case, $(\al,\beta)$ are within the boundary region 
\begin{align*}
F_2(H) &= \left\{(\al,\beta): 0 <\al < \widetilde{\al}_H,~\beta = \left(1-\frac{\tau(H)\cdot\al}{R}\right)\cdot\frac{R}{T}\right\},
\end{align*} 
where $\widetilde{\al}_H$ is defined \eqref{al-tild-def} and $H$ satisfies the Assumption \ref{assump-5}. Write
\begin{align}
    \widehat{S}_{N,l}(H) - f_l(p_N:H)\cdot S_N(H) &= \sum_{\sbb\in \nsr}W_{N,l}(\sbb:H)\cdot Y_N(\sbb:H) - f_l(p_N:H)\sum_{\sbb\in \nsr}Y_N(\sbb:H)\notag\\
    &\equiv E_{N,l,1} - E_{N,l,2},\quad\text{(say), for $l=1,2$.}
    \label{ENl1-2}
\end{align}
Recall (see~\eqref{pN-def}) that $p_N = c N^{-\alpha}$ for some
$\alpha \in (0,1)$ and $c>0$. When $(\alpha,\beta) \in F_l(H)$, $l=1,2$, we can use Assumptions \ref{assump-2} and \ref{assump-3} to write
\begin{align}\label{exp-B-1-B-2}
    &\E\left(E_{N,l,1}\right) = \E\left(E_{N,l,2}\right)=f_l(p_N:H)\sum_{\sbb\in\nsr}\prod_{\{i,j\}\in E(H)} \pi_{N,\al_{s_i},\al_{s_j}}\notag\\
    &=f_l(p_N:H)\cdot N^{R-T\beta}\sum_{\ub\in [K]^R}\prod_{\{i,j\}\in E(H)} N^{\beta}\pi_{N,u_i,u_j}\left(\prod_{i=1}^R\lambda_{N,u_i}+O\left(\frac{1}{N}\right)\right)\notag\\
    &\to \phi_l(c:H)\sum_{\ub\in [K]^R}\prod_{\{i,j\}\in E(H)}c_{u_i,u_j}\prod_{i=1}^R\lambda_{u_i} = \phi_l(c:H)\cdot \kappa(\Cb, \lamb:H),~\text{as $N\rai$, for $l=1,2$,}
\end{align}
where, $\phi_1(c:H) = c^R$, $\phi_2(c:H) = \lvert \mathcal{D}_{\min}(H) \rvert \cdot c^{\tau(H)}$, and $\kappa(\Cb,\lamb:H)$ is defined in \eqref{kap-def} and $|\mathcal{D}_{\min}(H)|$ is the size of the collection of all  minimum vertex cover sets for $H$. Note, $|\mathcal{D}_{\min}(H)|$ is a finite quantity depending on $R$ and $\tau(H)$. We will show that $E_{N,l,2} \parw \phi_l(c:H)\cdot \kappa(\Cb,\lamb:H)$ (cf. \eqref{ENl1-2}). As $H$ is strictly balanced and for $l=2$, $H$ satisfies Assumption \ref{assump-5}. Then, for $(\al,\beta)\in F_l(H)$, we can write (for $l=1$ and $l=2$)
\begin{align}\label{V-2}
    &\Var\left(E_{N,l,2}\right) = [f_l(p_N:H)]^2\cdot\Var\left(\sum_{\sbb\in \nsr}Y_N(\sbb:H)\right)\notag\\
    &= [f_l(p_N:H)]^2\sum_{t=2}^R\sum_{(\sbb_1,\sbb_2)\in \Mc_t}\cov\left(Y_N(\sbb_1:H),Y_N(\sbb_2:H)\right)\notag\\
    &= [f_l(p_N:H)]^2 \sum_{t=2}^R\sum_{(\sbb_1,\sbb_2)\in \Mc_t}\left[\E\left\{Y_N(\sbb_1:H)Y_N(\sbb_2:H)\right\}-\right\{\E\left(Y_N(\sbb:H)\right)\left\}^2\right]\notag\\
    &\leq  \phi^2_l(p_N:H)\cdot a_{\max}(H)\sum_{t=2}^RN^{2R- t - \beta(2T - m(t:H))}\notag\\
    &=\begin{cases}
        a_{\max}(H) c^{2R} \cdot N^{2R - 2R\al -2T\beta}\sum_{t=2}^RN^{- t + \beta m(t:H)}&\quad \text{if $l=1$,}\\
        |\mathcal{D}_{\min}(H)|^2c^{2\tau(H)}a_{\max}(H)\cdot N^{2R - 2\tau(H)\al - 2T\beta}\sum_{t=2}^RN^{- t + \beta m(t:H)}&\quad\text{if $l=2$,}
    \end{cases}\notag\\
    &=\begin{cases}
        a_{\max}(H)c^{2R}\sum_{t=2}^RN^{- t + \left(\frac{R}{T}(1-\al)\right) m(t:H)}&\qquad \quad\text{if $l=1$,}\\
        |\mathcal{D}_{\min}(H)|^2c^{2\tau(H)}a_{\max}(H)\cdot\sum_{t=2}^RN^{- t + \left(\frac{R}{T}\left(1-\frac{\tau(H)}{R}\al\right)\right) m(t:H)}&\qquad \quad\text{if $l=2$,}
    \end{cases}\notag\\
    &=\begin{cases}
        a_{\max}(H)c^{2R}\sum_{t=2}^RN^{m(t:H)\left\{- \frac{t}{m(t:H)} + \frac{R}{T}(1-\al)\right\} }&\quad \text{if $l=1$,}\\
        |\mathcal{D}_{\min}(H)|^2c^{2\tau(H)}a_{\max}(H)\cdot\sum_{t=2}^RN^{m(t:H)\left\{- \frac{t}{m(t:H)} + \frac{R}{T}\left(1-\frac{\tau(H)}{R}\al\right)\right\}}&\quad \text{if $l=2$,}
    \end{cases}
%    &\to 0,\quad\text{for both $l=1$ and $l=2$.}
\end{align}
where, $a_{\max}(H) = \left(N^{\beta}\max_{i,j\in [K]}\pi_{N,i,j}\right)^{2T}$ converges to a finite positive constant (due to Assumption \ref{assump-3}), depending only on $H$ and the model parameters. As $H$ is strictly balanced, $\frac{t}{m(t:H)} > \frac{R}{T}$ for all $t \in\{2,\ldots,R-1\}$, and at $t=R$, $\frac{R}{T} > \frac{(1-\delta)\cdot R}{T}$ for any $\delta \in (0,1)$. This implies
$$
-\frac{t}{m(t:H)} + \frac{R}{T}\cdot (1-\delta) <0,\quad\text{for all $t=2,\ldots,R$,}
$$
where $\delta = \al$, if $l=1$, and $\delta = \tau(H)\al/R$, if $l=2$. Hence, the exponent of $N$ (for both $l=1$ and $l=2$) in the upper bound provided in \eqref{V-2} will be strictly negative. Therefore, $\Var\left(E_{N,l,2}\right) \to 0$ as $N \to \infty$, for $l=1,2$. Hence, $E_{N,l,2} - \E\left(E_{N,l,2}\right) = o_P(1)$, for $l=1,2$ (see \eqref{ENl1-2}).  
 %Therefore,
%\begin{align}\label{pcnv-B2}
%    B_{N,l,2} - \E\left(B_{N,l,2}\right) = o_P(1).
%\end{align}
Next, applying Theorem 6.23 of \cite{janson-rucinski-randomgraphs} (see Theorem \ref{thm-ruc-1}) on $E_{N,l,1}$ (cf. \eqref{ENl1-2}) for both choices of $l$, we obtain the following bound on the total variation distance between the random variable $E_{N,l,1}$ and $V_{N,l}$, where $V_{N,l}\sim \text{Poisson}\left(f_l(p_N:H)\cdot \kappa_N(\PiB_N, \lamb_N:H)\right)$, (for both $l=1,2$), %Therefore, using $\eta_{N,l}\left(\sbb: p_N,\PiB_N,H\right) = \E(W_{N,l}(\sbb:H)Y_N(\sbb:H))$, we get the following bound,
\begin{align}\label{poi-TV-bdd}
    &d_{{TV}}\left(E_{N,l,1},V_{N,l}\right)\leq \min\left\{1, \frac{1}{f_l(p_N:H)\kappa_N(\PiB_N, \lamb_N:H)}\right\}\bigg(\left\{\Var\left(E_{N,l,1}\right) - \E(E_{N,l,1})\right\} \notag\\
    &\qquad{} +2\sum_{\sbb_1\in \nsr}\sum_{\sbb_2\in N(\sbb_1)}\eta_{N,l}\left(\sbb_1: p_N,\PiB_N,H\right)\eta_{N,l}\left(\sbb_2: p_N,\PiB_N,H\right)+ 2\sum_{\sbb\in \nsr}\left[\eta_{N,l}\left(\sbb: p_N,\PiB_N,H\right)\right]^2\bigg)\notag\\
    &= \min\left\{1, \frac{1}{f_l(p_N:H)\kappa_N(\PiB_N, \lamb_N:H)}\right\}\bigg(\left\{\Var\left(E_{N,l,1}\right) - \E(E_{N,l,1})\right\} +2\cdot G_{N,l,1}+ 2\cdot G_{N,l,2}\bigg),~\text{(say)},
\end{align}
where, $\eta_{N,l}\left(\sbb: p_N,\PiB_N,H\right) = \E(W_{N,l}(\sbb:H)Y_N(\sbb:H))$, for $l=1,2$. It remains to show that the upper bound in~\eqref{poi-TV-bdd} converges to zero for $l=1$ and $2$, as $N \to \infty$. First, in case of induced network formation ($l=1$), for all $(\alpha,\beta) \in F_1(H)$, we study the quantity $|\Var(E_{N,1,1}) - \E(E_{N,1,1})|$ from \eqref{poi-TV-bdd} in the following way. 
%with the additional Assumption \ref{assump-5} over $H$ for the case of $l=2$ hold. 
%for all $(\alpha,\beta) \in F_1(H)$, we study the quantity $|\Var(E_{N,1,1}) - \E(E_{N,1,1})|$ from \eqref{poi-TV-bdd} in the following way. 
%Consequently, there exists a positive integer $M_5$, depending only on the graph $H$ and the model parameters, such that for all $N \ge M_5$ the following holds, for $l=1$ or in the induced case,
\begin{align}\label{1st-term-TV}
    &\lvert \Var(E_{N,1,1}) - \E(E_{N,1,1}) \rvert\notag\\
    &=\left| \sum_{t=1}^R\sum_{(\sbb_1,\sbb_2)\in \Mc_t}\cov\left(W_{N,1}(\sbb_1:H)\cdot Y_N(\sbb_1:H),~W_{N,1}(\sbb_2:H)\cdot Y_N(\sbb_2:H)\right)-\E(E_{N,1,1})\right|\notag\\
    &= \left|\sum_{t=1}^{R-1}\sum_{(\sbb_1,\sbb_2)\in \Mc_t}\cov\left(W_{N,1}(\sbb_1:H)\cdot Y_N(\sbb_1:H),~W_{N,1}(\sbb_2:H)\cdot Y_N(\sbb_2:H)\right)\right.\notag\\
    &\left.\quad{}+\sum_{(\sbb_1,\sbb_2)\in \Mc_R}\cov\left(W_{N,1}(\sbb_1:H)\cdot Y_N(\sbb_1:H),~W_{N,1}(\sbb_2:H)\cdot Y_N(\sbb_2:H)\right)-\E(E_{N,1,1})\right|\notag\\
    &\leq \left|\sum_{t=1}^{R-1}\sum_{(\sbb_1,\sbb_2)\in \Mc_t}\bigg\{\E\left(W_{N,1}(\sbb_1:H)W_{N,1}(\sbb_2:H))\E( Y_N(\sbb_1:H)Y_N(\sbb_2:H)\right)\right.\notag\\
    &\left.\qquad{}- \E\left(W_{N,1}(\sbb_1:H)\right)\E\left(Y_N(\sbb_1:H)\right)\E\left(W_{N,1}(\sbb_2:H)\right)\E\left(Y_N(\sbb_2:H)\right)\bigg\}\right|\notag\\
    &\qquad{}\qquad{}+\left|\sum_{\sbb\in \nsr}\Var\left(W_{N,1}(\sbb:H)\cdot Y_N(\sbb:H)\right)-\E(E_{N,1,1})\right|\notag\\
    &\leq \left|\sum_{t=1}^{R-1}\sum_{(\sbb_1,\sbb_2)\in \Mc_t}\bigg\{\E\left(W_{N,1}(\sbb_1:H)W_{N,1}(\sbb_2:H))\cdot\E( Y_N(\sbb_1:H)Y_N(\sbb_2:H)\right)\bigg\}\right|\notag\\
    &\qquad{}+\left|\sum_{\sbb\in \nsr}\Var\left(W_{N,1}(\sbb:H)\cdot Y_N(\sbb:H)\right)-\E(E_{N,1,1})\right|\notag\\
    &=\left|\sum_{t=1}^{R-1}\left(\frac{c}{N^{\al}}\right)^{2R-t}\left(1-\left(\frac{c}{N^{\al}}\right)^{-t}\right)\sum_{(\sbb_1,\sbb_2)\in \Mc_t}\E\left( Y_N(\sbb_1:H)Y_N(\sbb_2:H)\right)\right|\notag\\
    &\qquad{}+\left|\sum_{\sbb\in \nsr}\Var\left(W_{N,1}(\sbb:H)\cdot Y_N(\sbb:H)\right)-\E(E_{N,1,1})\right|.
    \end{align}
Then under Assumptions \ref{assump-2} and \ref{assump-3}, using Lemma \ref{lem-1} in the first term of \eqref{1st-term-TV}, we can say that there exists a positive integer $M_5$, depending only on the graph $H$ and the model parameters, such that for all $N \ge M_5$,     
    \begin{align}
   \text{r.h.s. of \eqref{1st-term-TV}} &\leq \left|a_{\max}(H)\cdot c^{2R}\cdot N^{-g_{1,\al,\beta,H}(R)}\sum_{t=1}^{R-1}N^{{}g_{1,\al,\beta,H}(t)} \right| \notag\\
    &\qquad{}+\left|\sum_{\sbb\in \nsr}\E(W_{N,1}(\sbb:H)Y_{N}(\sbb:H)) - \sum_{\sbb\in \nsr}\bigg(\E(W_{N,1}(\sbb:H)Y_{N}(\sbb:H))\bigg)^2 - \E(E_{N,1,1})\right|\notag\\
    &\leq \left|a_{\max}(H)\cdot c^{2R}\cdot  N^{-g_{1,\al,\beta,H}(R)}\sum_{t=1}^{R-1}N^{{}g_{1,\al,\beta,H}(t)} \right|+ \left| \sum_{\sbb\in \nsr}\bigg(\E(W_{N,1}(\sbb:H)Y_{N}(\sbb:H))\bigg)^2\right|\label{G-1}\\ 
    & =a_{\max}(H)\cdot c^{2R}\cdot \left(N^{-g_{1,\al,\beta,H}(R)}\sum_{t=1}^{R-1}N^{{}g_{1,\al,\beta,H}(t)} +  N^{-R+2g_{1,\al,\beta,H}(R)}\right),
\label{var-exp-diff-1}
\end{align}
where, $a_{\max}(H)$ is a finite positive constant (defined in \eqref{V-2}). Next, we study the quantity $|\Var(E_{N,l,1}) - \E(E_{N,l,1})|$ from \eqref{1st-term-TV} in case of ego-centric network formation ($l=2$), that is 
%Under Assumption \ref{assump-5} over the graph $H$ along with Assumptions \ref{assump-2} and \ref{assump-3}, when $(\al,\beta)\in F_2(H)$, we can write
\begin{align}\label{1st-term-TV-1}
    &\lvert \Var(E_{N,2,1}) - \E(E_{N,2,1}) \rvert\notag\\
    &=\left| \sum_{t=1}^R\sum_{(\sbb_1,\sbb_2)\in \Mc_t}\cov\left(W_{N,2}(\sbb_1:H)\cdot Y_N(\sbb_1:H),~W_{N,2}(\sbb_2:H)\cdot Y_N(\sbb_2:H)\right)-\E(E_{N,2,1})\right|\notag\\
    &= \left|\sum_{t=1}^{R-1}\sum_{(\sbb_1,\sbb_2)\in \Mc_t}\cov\left(W_{N,2}(\sbb_1:H)\cdot Y_N(\sbb_1:H),~W_{N,2}(\sbb_2:H)\cdot Y_N(\sbb_2:H)\right)\right.\notag\\
    &\left.\quad{}+\sum_{(\sbb_1,\sbb_2)\in \Mc_R}\cov\left(W_{N,2}(\sbb_1:H)\cdot Y_N(\sbb_1:H),~W_{N,2}(\sbb_2:H)\cdot Y_N(\sbb_2:H)\right)-\E(E_{N,2,1})\right|\notag\\
    &\leq \left|\sum_{t=1}^{R-1}\sum_{(\sbb_1,\sbb_2)\in \Mc_t}\bigg\{\E\left(W_{N,2}(\sbb_1:H)W_{N,2}(\sbb_2:H))\E( Y_N(\sbb_1:H)Y_N(\sbb_2:H)\right)\right.\notag\\
    &\left.\qquad{}- \E\left(W_{N,2}(\sbb_1:H)\right)\E\left(Y_N(\sbb_1:H)\right)\E\left(W_{N,2}(\sbb_2:H)\right)\E\left(Y_N(\sbb_2:H)\right)\bigg\}\right|\notag\\
    &\qquad{}\qquad{}+\left|\sum_{\sbb\in \nsr}\Var\left(W_{N,2}(\sbb:H)\cdot Y_N(\sbb:H)\right)-\E(E_{N,2,1})\right|\notag\\
    &\leq \left|\sum_{t=1}^{R-1}\sum_{(\sbb_1,\sbb_2)\in \Mc_t}\bigg\{\E\left(W_{N,2}(\sbb_1:H)W_{N,2}(\sbb_2:H))\cdot\E( Y_N(\sbb_1:H)Y_N(\sbb_2:H)\right)\bigg\}\right|\notag\\
    &\qquad{}+\left|\sum_{\sbb\in \nsr}\Var\left(W_{N,2}(\sbb:H)\cdot Y_N(\sbb:H)\right)-\E(E_{N,2,1})\right|\notag\\
    &\leq \left|a_{\max}(H)\cdot c^{2\tau(H)}\left|\mathcal{D}(H)\right|^2 N^{-g_{2,\al,\beta,H}(R)}\sum_{t=1}^{R-1}N^{{}g_{2,\al,\beta,H}(t)} \right| \notag\\
    &\qquad{}+\left|\sum_{\sbb\in \nsr}\E(W_{N,2}(\sbb:H)Y_{N}(\sbb:H)) - \sum_{\sbb\in \nsr}\bigg(\E(W_{N,2}(\sbb:H)Y_{N}(\sbb:H))\bigg)^2 - \E(E_{N,2,1})\right|.
    \end{align}
    Then under Assumption \ref{assump-2} and \ref{assump-3}, along with the Assumption \ref{assump-5} over the graph $H$, using Lemma \ref{lem-var-ego-1} in the first term of \eqref{1st-term-TV-1}, we can say that there exists a positive integer $M_6$, depending on the graph and model parameters, such that for all $N\geq M_6$,
    \begin{align}
    &\text{r.h.s. of \eqref{1st-term-TV-1}}\leq \left|a_{\max}(H)\cdot c^{2\tau(H)}\left|\mathcal{D}(H)\right|^2 N^{-g_{2,\al,\beta,H}(R)}\sum_{t=1}^{R-1}N^{{}g_{2,\al,\beta,H}(t)} \right|\notag\\
    &\qquad{}\qquad{}+ \left| \sum_{\sbb\in \nsr}\bigg(\E(W_{N,2}(\sbb:H)Y_{N}(\sbb:H))\bigg)^2\right|\label{G-1-ego}\\ 
    & =a_{\max}(H)\cdot c^{2\tau(H)}\left|\mathcal{D}(H)\right|^2\left(N^{-g_{2,\al,\beta,H}(R)}\sum_{t=1}^{R-1}N^{{}g_{2,\al,\beta,H}(t)} +  N^{-R+2g_{2,\al,\beta,H}(R)}\right),
\label{var-exp-diff-2}
\end{align}
where, $|\mathcal{D}(H)|$ is the size of the collection of all possible vertex covers of $H$, which is a finite quantity. Next, when $(\alpha,\beta) \in F_l(H)$, we have $g_{l,\alpha,\beta,H}(R)=0$ for $l=1,2$. Moreover, for $(\alpha,\beta) \in F_l(H)$, $l=1,2$, and $t \in \{1,\ldots,R-1\}$, writing $\beta$ in terms of $\al$ we have the following.
\begin{align*}
    g_{1,\alpha,\beta,H}(t)
    &= (1-\alpha)\left(-\frac{t}{m(t:H)} + \frac{R}{T}\right)m(t:H),\quad\text{and}\\
    g_{2,\alpha,\beta,H}(t)
    &= m(t:H)\left\{\left(-\frac{t}{m(t:H)} + \frac{R}{T}\right)
    + \left(\frac{\min(\tau(H),t)}{m(t:H)} - \frac{\tau(H)}{T}\right)\alpha\right\}.
\end{align*}
Since $\alpha \in (0,1)$ in the induced case and
$\alpha \in (0,\widetilde{\alpha}_H)$ in the egocentric case, it follows that $g_{l,\alpha,\beta,H}(t) < 0$ for all $t \in \{1,\ldots,R-1\}$. Therefore, $\{\Var(E_{N,l,1}) - \E(E_{N,l,1})\}\to 0$ as $N\rai$.
\begin{align*}
    \{\Var(E_{N,l,1}) - \E(E_{N,l,1})\}\to 0,\quad\text{as $N\to\infty$ for $l=1,2$.}
\end{align*}
The expression for the term $G_{N,l,2},~l=1,2$ (see \eqref{poi-TV-bdd}) is the same as the second term in \eqref{G-1} and \eqref{G-1-ego} respectively; hence $G_{N,l,2} \to 0$, under the same assumptions. Using Assumptions \ref{assump-2} and \ref{assump-3} and the fact that $(\alpha,\beta) \in F_l(H),~l=1,2$, we obtain the following
\begin{align*}
    &G_{N,l,1} = \sum_{\sbb_1\in \nsr}\sum_{\sbb_2\in N(\sbb_1)}\eta_N\left(\sbb_1: p_N,\PiB_N,H\right)\eta_N\left(\sbb_2: p_N,\PiB_N,H\right)\notag\\
    &\leq R\sum_{(\sbb_1,\sbb_2)\in \mathcal{M}_1}\E(W_{N,l}(\sbb_1:H)Y_{N,l}(\sbb_1))\cdot \E(W_{N,l}(\sbb_2:H)Y_N(\sbb_2:H) )\notag\\
    &= R\sum_{(\sbb_1,\sbb_2)\in \mathcal{M}_1}f^2_l(p_N:H)\prod_{\{i,j\}\in E(H)}\pi_{\al_{s_{1,i}},\al_{s_{1,j}}}\pi_{\al_{s_{2,i}},\al_{s_{2,j}}}\leq R f^2_l(p_N:H)\cdot\left(N^{\beta}\max_{i,j\in [K]}\pi_{N,i,j}\right)^{2T} N^{2R-1 -2T\beta}\notag\\
    &\leq R\cdot \max\left\{c^{2R},c^{2\tau(H)}|\mathcal{D}(H)|^2\right\}\cdot\left(N^{\beta}\max_{i,j\in [K]}\pi_{N,i,j}\right)^{2T} N^{2g_{l,\al,\beta,H}(R)-1} \notag\\
    &= R \cdot\max\{c^{2R},|\mathcal{D}(H)|^2\}\cdot\left(N^{2\beta}\max_{i,j\in [K]}\pi^2_{i,j}\right) N^{-1}\rightarrow 0,\quad\text{as $N\rai$.}
\end{align*}
Thus, all terms in the r.h.s. of \eqref{poi-TV-bdd} go to zero as $N\rai$, and we can claim 
\begin{align}\label{TV-conv-2}
    d_{TV}\left(E_{N,l,1}, V_{N,l}\right)\to 0,\quad\text{as $N\to \infty$, for $l=1,2$.}
\end{align}
Define two random variables $V_1$ and $V_2$, where $V_l \sim{\text{Poisson}}\left(\phi_l(c:H)\cdot\kappa(\Cb,\lamb:H)\right)$, for $l=1,2$ (cf. \eqref{exp-B-1-B-2}). As $\E\left(V_{N,l}\right)\to \E(V_l)$, for $l=1,2$, it implies $V_{N,l} \xrightarrow{d} V_l$, for $l=1,2$. Now, using Lemma \ref{WL-TV} we can claim that $d_{TV}(V_{N,l},V_l)\to 0$, as $N\to\infty$, for $l=1,2$. By triangle inequality and \eqref{TV-conv-2}, we can write
\begin{align}\label{conv-main-2}
    d_{{TV}}\left(E_{N,l,1}, V_l\right)
    &\leq d_{{TV}}\left(E_{N,l,1}, V_{N,l}\right)
      + d_{{TV}}\left(V_{N,l}, V_l\right)
      \to 0,\quad\text{as $N\rai$, for $l=1,2$.}
\end{align}
Using Lemma \ref{var-exp-bdd}, we can claim that $\sigma^2_{N,l}(H) \to \phi_l(c:H)\cdot \kappa(\Cb,\lamb:H)$ for $l=1,2$, where $\kappa(\Cb,\la:H)$ is defined in \eqref{kap-def} and $\phi_l(c:H)$ is defined in \eqref{exp-B-1-B-2}, for $l=1,2$. 
Combining this with the fact that $E_{N,l,2} \parw \phi_l(c:H)\cdot\kappa(\Cb,\lamb:H)$ (see \eqref{exp-B-1-B-2}) and using \eqref{conv-main-2}, we can write
\begin{equation*}
    \begin{aligned}
      &T_{N,l}(H) \darw \frac{V_{l} - \phi_l(c:H)\cdot\kappa(\mathbf{C}, \boldsymbol{\lambda}:H)}{\left[\phi_l(c:H)\cdot\kappa(\mathbf{C}, \boldsymbol{\lambda}:H)\right]^{1/2}},\quad\text{for $l=1,2$.}
    \end{aligned}
\end{equation*}
This completes the proof.
\end{proof}

\begin{proof}[Proof of Theorem \ref{thm-2-main-temp} part (c): Extremely sparse sampling in case of $\tau(H)=1$]
As $\tau(H)= 1$, $H=\kl_{1,R-1}$ is a star graph on $R$ vertices (see Lemma \ref{lem-4} for a proof of this fact). We focus only on the case of ego-centric network formation and the sampling and model sparsity levels are $\al=1$ and $\beta\in [0,1)$. This implies, $p_N = c/N$, for all $N\geq 1$. Note that due to the edge structure of the star graph, we have $f_2(p_N:\kl_{1,R-1}) = p_N + {(1-p_N)}\cdot p^{R-1}_N$ (cf. \eqref{fl-def}) and for any $\sbb\in \nsr$ we can write (see \eqref{piv-1}), 
\begin{align*}
W_{N,2}(\sbb:\kl_{1,R-1})& =\prod_{\{i,j\}\in E(\kl_{1,R-1})} \max\{W_{N,s_i},W_{N,s_j}\} = W_{N,s_1} + (1-W_{N,s_1})\cdot \prod_{j=2}^R W_{N,s_j},\quad\text{and}\\
Y_N(\sbb:\kl_{1,R-1}) & = \prod_{j=2}^R Y_{s_1,s_j}.
\end{align*}
As a result (see \eqref{piv-1}),
\begin{align}
    &T_{N,2}(\kl_{1,R-1})= \frac{\widehat{S}_{N,2}(\kl_{1,R-1}) - f_2(p_N:\kl_{1,R-1})\cdot S_N(\kl_{1,R-1})}{\sigma_{N,2}(\kl_{1,R-1})}\notag\\
    &= \frac{1}{\sigma_{N,2}(\kl_{1,R-1})}\sum_{\sbb\in \nsr}\left(W_{N,2}(\sbb:\kl_{1,R-1}) - (p_N+(1-p_N)p_N^{R-1})\right)Y_N(\sbb:\kl_{1,R-1})\notag\\
    &= \frac{1}{\sigma_{N,2}(\kl_{1,R-1})}\sum_{\sbb\in \nsr}\left(W_{N,s_1} - p_N\right)Y_{N}(\sbb:\kl_{1,R-1})\notag\\
    &+ \frac{1}{\sigma_{N,2}(\kl_{1,R-1})}\sum_{\sbb\in \nsr}\left[(1-W_{N,s_1})W_{N,s_2}\cdots W_{N,s_R} - (1-p_N)p_N^{R-1}\right]Y_N(\sbb:\kl_{1,R-1})\notag\\
    &= \frac{G_{N,1}}{\sigma_{N,2}(\kl_{1,R-1})} + \frac{G_{N,2}}{\sigma_{N,2}(\kl_{1,R-1})},\quad\text{(say).}
\label{split-case-c}    
\end{align}
Using Lemma \ref{var-exp-bdd}, we can claim
\begin{align}\label{var-star}
    &\sigma_{N,2}^2(\kl_{1,R-1})=\Var\left(\widehat{S}_{N,2}(\kl_{1,R-1}) - f_2(p_N:\kl_{1,R-1})\cdot S_N(\kl_{1,R-1})\right)\notag\\
    %&\sum_{t=1}^R\sum_{(\sbb_1,\sbb_2)\in \mathcal{M}_t}\cov\left(W_{N,2}(\sbb_1:\kl_{1,R-1}),~W_{N,2}(\sbb_2:\kl_{1,R-1})\right)\E\left(Y_N(\sbb_1:\kl_{1,R-1})Y_N(\sbb_2:\kl_{1,R-1})\right)\notag\\
    %&\sum_{(\sbb_1,\sbb_2)\in \mathcal{M}_1}\cov\left(W_{N,2}(\sbb_1:\kl_{1,R-1}),~W_{N,2}(\sbb_2:\kl_{1,R-1})\right)\E\left(Y_N(\sbb_1:\kl_{1,R-1})Y_N(\sbb_2:\kl_{1,R-1})\right)\notag\\
    %&+\sum_{t=2}^R\sum_{(\sbb_1,\sbb_2)\in \mathcal{M}_t}\cov\left(W_{N,2}(\sbb_1:\kl_{1,R-1}),~W_{N,2}(\sbb_2:\kl_{1,R-1})\right)\E\left(Y_N(\sbb_1:\kl_{1,R-1})Y_N(\sbb_2:\kl_{1,R-1})\right)\notag\\
    %&\asymp cN^{2R-2}N^{-\beta(2(R-1)}\sum_{\ub,\vb\in [K]^R}\prod_{j=2}^RN^{2\beta}\pi_{u_1,u_j}\pi_{u_1,v_j}\prod_{j=1}^R\la_{N,u_j}\prod_{j=2}^R\la_{N,v_j}(1 +o(1))+\notag\\
    %&+ \sum_{t=2}^R N^{(2R - t-1)(1-\beta)}\notag\\
    &= N^{2(R-1)(1-\beta)}\left\{c\sum_{\ub\in [K]^R,(v_2,\ldots,v_R)\in [K]^{R-1}}\prod_{j=2}^RN^{2\beta}\pi_{u_1,u_j}\pi_{u_1,v_j}\prod_{j=1}^R\la_{N,u_j}\prod_{j=2}^R\la_{N,v_j}\left(1 +O\left(\frac{1}{N}\right)\right) + O\left(\frac{1}{N}\right)\right\},\notag\\
    &\Rightarrow \frac{\sigma^2_{N,2}(\kl_{1,R-1})}{N^{2(R-1)(1-\beta)}} \to \nu(\Cb,\lamb),\quad\text{as $N\rai$,}%c\sum_{\ub\in [K]^R,(v_2,\ldots,v_R)\in [K]^{R-1}}\prod_{j=2}^Rc_{u_1,u_j}c_{u_1,v_j}\cdot\prod_{j=1}^R\la_{u_j}\prod_{j=2}^R\la_{v_j} = \nu(\Cb,\lamb),~N\to\infty.
\end{align}
where, $\nu(\Cb,\lamb)$ is defined in \eqref{nu-def}. 
%Next, we we analyze the term $G_{N,1}$ and $G_{N,2}$ separately. 
Note that $\E(G_{N,1}) = \E(G_{N,2}) = 0$, and 
\begin{align}\label{var-G2}
    %&\Var\left(G_{N,1}\right) = \Var\left(\sum_{\sbb\in \nsr}\left(W_{N,s_1} - p_N\right)Y_{N}(\sbb:\kl_{1,R-1})\right)\notag\\
    %&= \sum_{t=1}^R\sum_{(\sbb_1,\sbb_2)\in\mathcal{M}_t }\cov\left(W_{N,s_{1,1}}, W_{N,s_{2,1}}\right)\E\left(Y_N(\sbb_1:\kl_{1,R-1})Y_{N}(\sbb_2:\kl_{1,R-1})\right)\notag\\
    %&\asymp cN^{2(R-1)(1-\beta)}\sum_{\ub,\vb\in [K]^R}\prod_{j=2}^RN^{2\beta}\pi_{u_1,u_j}\pi_{u_1,v_j}\prod_{j=1}^R\la_{N,u_j}\prod_{j=2}^R\la_{N,v_j}(1 +o(1)) + O\left(\frac{1}{N}\right),\notag\\
    &\Var\left(G_{N,2}\right)%\notag\\&
    = \Var\left(\sum_{\sbb\in \nsr}\left[(1-W_{N,s_1})W_{N,s_2}\cdots W_{N,s_R} - (1-p_N)p_N^{R-1}\right]Y_N(\sbb:\kl_{1,R-1})\right)\notag\\
    &\leq \sum_{t=1}^R\sum_{(\sbb_1,\sbb_2)\in \mathcal{M}_t}\bigg|\cov\left((1-W_{N,s_{1,1}})W_{N,s_{1,2}}\cdots W_{N,s_{1,R}}, (1-W_{N,s_{2,1}})W_{N,s_{2,2}}\cdots W_{N,s_{2,R}}\right)\bigg|\notag\\
    &\qquad{}\qquad{}\times\E(Y_N(\sbb_1:\kl_{1,R-1}))Y_N(\sbb_2:\kl_{1,R-1})).
\end{align}
Using the Cauchy-Schwarz inequality, we can write
\begin{align*}
    &\bigg|\cov\left((1-W_{N,s_{1,1}})W_{N,s_{1,2}}\cdots W_{N,s_{1,R}}, (1-W_{N,s_{2,1}})W_{N,s_{2,2}}\cdots W_{N,s_{2,R}}\right)\bigg|\notag\\
    &\leq \Var\left((1-W_{N,s_{1,1}})W_{N,s_{1,2}}\cdots W_{N,s_{1,R}}\right)= N^{-(R-1)} c^{R-1}\left(1-\frac{c}{N}\right)\left(1- \left(\frac{c}{N}\right)^{R-1}\left(1-\frac{c}{N}\right)\right).
\end{align*}
Then, using \eqref{var-G2} and dividing by the variance term $\sigma^2_{N,2}(\kl_{1,R-1})$ in \eqref{var-star}, and using the fact that $\beta\in[0,1)$, we can claim
\begin{align*}
    &\Var\left(G_{N,2}\right) \leq \sum_{t=1}^R N^{2R-t} N^{-(R-1)} N^{-\beta(2(R-1) -(t-1))} = N^{-(R-1)}N^{2R - 2(R-1)\beta} \sum_{t=1}^RN^{-t +\beta m(t:H)} \notag\\
    &= N^{-(R-1)}N^{2R - 2(R-1)\beta} \sum_{t=1}^RN^{m(t:H)\left(-\frac{t}{m(t:H)} +\beta \right)} = N^{-(R-1)}N^{2R - 2(R-1)\beta} N^{-R + \beta(R-1)}= N^{1-(R-1)\beta}\notag\\
    &\Rightarrow\frac{\Var\left(G_{N,2}\right)}{\sigma^2_{N,2}(\kl_{1,R-1})}\leq \frac{N^{1-(R-1)\beta}}{N^{2(R-1)(1-\beta)}} = N^{-(2R-3) + (R-1)\beta}\rightarrow 0,\quad\text{as $N\rai$.} 
\end{align*}
Returning to \eqref{split-case-c}, we can now claim that
\begin{align}\label{G-2}
    \frac{G_{N,2}}{\sigma_{N,2}(\kl_{1,R-1})} = o_P(1).
\end{align}
We split the $G_{N,1}$ term in \eqref{split-case-c} as follows
\begin{align}
    &\frac{G_{N,1}}{\sigma_{N,2}(\kl_{1,R-1})}= \frac{1}{\sigma_{N,2}(\kl_{1,R-1})}\left(\sum_{\sbb\in \nsr}W_{N,s_{1}}\E\left(Y_N(\sbb:\kl_{1,R-1})\right) \right.\notag\\
    &\left.+\sum_{\sbb\in \nsr}W_{N,s_{1}}\bigg(Y_{N}(\sbb:\kl_{R-1})-\E\left(Y_{N}(\sbb:\kl_{R-1})\right)\bigg)- p_N\sum_{\sbb\in \nsr}Y_N(\sbb:\kl_{1,R-1})\right)\notag\\
    &= \frac{1}{\sigma_{N,2}(\kl_{1,R-1})}\left(L_{N,1,1} +L_{N,1,2} +L_{N,1,3} \right),~\text{(say)}.
    \label{GN1-split}
\end{align}
Note that, $\E(L_{N,1,2}) =0$. Using the definition of $\Mc_t$ (see \eqref{Etset}) and using Assumptions \ref{assump-2} and \ref{assump-3}, we can write
\begin{align*}
    &\frac{\Var\left(L_{N,1,2}\right)}{\sigma^2_{N,2}(\kl_{1,R-1})}= \frac{1}{\sigma^2_{N,2}(\kl_{1,R-1})}\Var\left(\sum_{\sbb\in \nsr}W_{N,s_{1}}\bigg(Y_{N}(\sbb:\kl_{1,R-1})-\E\left(Y_{N}(\sbb:\kl_{R-1})\right)\bigg)\right)\notag\\
    &= \frac{1}{\sigma^2_{N,2}(\kl_{1,R-1})}\sum_{t=1}^R\sum_{(\sbb_1,\sbb_2)\Mc_t}\E(W_{N,s_{1,1}}W_{N,s_{2,1}})\cdot\cov\left(Y_N(\sbb_1:\kl_{1,R-1}),Y_N(\sbb_2:\kl_{1,R-1})\right)\notag\\
    &=\frac{1}{\sigma^2_{N,2}(\kl_{1,R-1})}\sum_{t=2}^R\sum_{(\sbb_1,\sbb_2)\Mc_t}\E(W_{N,s_{1,1}}W_{N,s_{2,1}})\cdot\cov\left(Y_N(\sbb_1:\kl_{1,R-1}),Y_N(\sbb_2:\kl_{1,R-1})\right)\notag\\
    &\leq \frac{d_{\max}(H)}{N^{2(R-1)(1-\beta)}}\sum_{t=2}^RN^{2R-t-1}N^{-\beta(2(R-1) -(t-1))}\notag\\
    &= \frac{d_{\max}(H)}{N^{2(R-1)(1-\beta)}}\sum_{t=2}^RN^{(2R-t-1)(1-\beta)}\notag\\
    &=d_{\max}(H)\cdot \frac{N^{(2R-3)(1-\beta)}}{N^{2(R-1)(1-\beta)}} \to 0,\quad\text{as $N\rai$,}
\end{align*}
where, $d_{\max}(H) = c^2\left(N^{\beta}\max_{i,j\in [K]}\pi_{N,i,j}\right)^{2(R-1)}$ is a finite positive constant that depends on $H$ and the underlying model parameters. Hence, 
\begin{align}\label{G-1-2}
    \frac{L_{N,1,2}}{\sigma_{N,2}(\kl_{1,R-1})} = o_P(1).
\end{align}
Next, we analyze the term $L_{N,1,3}/\sigma_{N,2}(\kl_{1,R-1})$ in the expression \eqref{GN1-split}. Using Assumptions \ref{assump-2}, \ref{assump-3}
and the variance expression written in \eqref{var-star}, we can claim the following
\begin{align*}
    &\E\left(\frac{L_{N,1,3}}{\sigma_{N,2}(\kl_{1,R-1})}\right) \to \frac{c\cdot\kappa(\Cb,\lamb:\kl_{1,R-1})}{\left[\nu(\Cb,\lamb)\right]^{1/2}},\quad\text{and}\notag\\
    &\Var\left(\frac{L_{N,1,3}}{\sigma_{N,2}(\kl_{1,R-1})}\right) =O\left(\frac{N^{-2}\sum_{t=2}^RN^{-\beta(2R-t-1)}}{N^{2(R-1)(1-\beta)}}\right) = O\left(\frac{1}{N^{(R-1)(1-\beta)}N^{R+1}}\right) \to 0,\quad\text{as $N\to\infty$,}
\end{align*}
where $\kappa(\Cb,\lamb:H)$ defined in \eqref{kap-def}. Hence, 
\begin{align}\label{G-1-3}
    \frac{L_{N,1,3}}{\sigma_{N,2}(\kl_{1,R-1})} \parw \frac{c\cdot\kappa(\Cb,\lamb:\kl_{1,R-1})}{\left[\nu(\Cb,\lamb)\right]^{1/2}}.
\end{align}
%From the expression in \eqref{var-star}, 
%\begin{align*}
%    \frac{\sigma^2_{N,2}(\kl_{1,R-1})}{N^{2(R-1)(1-\beta)}} \to c\sum_{\ub,\vb\in [K]^R}\prod_{j=2}^Rc_{u_1,u_j}c_{u_1,v_j}\cdot\prod_{j=1}^R\la_{u_j}\prod_{j=2}^R\la_{v_j} = \nu(\Cb,\lamb),~N\to\infty.
%\end{align*}
Next, it remains to analyze the term $L_{N,1,1}/\sigma_{N,2}(\kl_{1,R-1})$. Using the convergence result for $\sigma_{N,2}(\kl_{1,R-1})$ in \eqref{var-star} we can write 
\begin{align*}
    &\frac{L_{N,1,1}}{N^{(R-1)(1-\beta)}}=\frac{1}{N^{(1-\beta)(R-1)}}\sum_{\sbb\in \nsr}W_{N,s_1}\prod_{j=2}^R\pi_{\al_{s_1}, \al_{s_j}}\notag\\
    &=\frac{N^{(R-1) -\beta(R-1)}}{N^{(1-\beta)(R-1)}} \left(\sum_{u_1=1}^K\sum_{s_1=1}^NW_{N,s_1}\mathbf{1}(\al_{s_1} =u_1)\left(\sum_{u_2,\ldots,u_R=1}^R\prod_{j=2}^RN^{\beta}\pi_{u_1,u_j}\left(\prod_{j=2}^R\la_{N,u_j} + O\left(\frac{1}{N}\right)\right)\right)\right) \notag\\
    &= \sum_{u_1=1}^K\sum_{s_1=1}^NW_{N,s_1}\mathbf{1}(\al_{s_1} =u_1)\left(\sum_{u_2,\ldots,u_R=1}^R\prod_{j=2}^RN^{\beta}\pi_{u_1,u_j}\left(\prod_{j=2}^R\la_{N,u_j} + O\left(\frac{1}{N}\right)\right)\right)\notag\\
    &= \sum_{u_1=1}^K\sum_{s_1=1}^NW_{N,s_1}\mathbf{1}(\al_{s_1} =u_1)\cdot a_{N,u_1},~\text{(say)}\notag\\
    &= \sum_{s_1=1}^NW_{N,s_1}\mathbf{1}(\al_{s_1} =1)a_{N,1}+\cdots+\sum_{s_1=1}^NW_{N,s_1}\mathbf{1}(\al_{s_1} =K)a_{N,K}\notag\\
    &= \sum_{s_1=1}^NW_{N,s_1}\left(\mathbf{1}(\al_{s_1} = 1)a_{N,1}+\cdots + \mathbf{1}(\al_{s_1} = K)\right)a_{N,K} \equiv J_N,~\text{(say)}.
\end{align*}
where, for fixed $u_1\in [K]^R$, using Assumptions \ref{assump-2} and \ref{assump-3}, we can write
\begin{align}\label{au-def}
    a_{N,u_1} = \sum_{u_2,\ldots,u_R=1}^R\prod_{j=2}^RN^{\beta}\pi_{u_1,u_j}\left(\prod_{j=2}^R\la_{N,u_j} + O\left(\frac{1}{N}\right)\right)\to a_{u_1}\equiv \sum_{u_2,\ldots,u_R=1}^R\prod_{j=2}^Rc_{u_1,u_j}\prod_{j=2}^R\la_{u_j}.
\end{align}
Now, using Assumptions \ref{assump-2} and \ref{assump-3}, the characteristic function of $J_N$ can be expressed as,
\begin{align}
    &\E(\exp(itJ_N)) = \E\left(\exp\left(it\sum_{s_1=1}^NW_{N,s_1}\left(\mathbf{1}(\al_{s_1} = 1)a_{N,1} +\cdots + \mathbf{1}(\al_{s_1} = K)\right)a_{N,K}\right)\right)\notag\\
    &= \prod_{s_1=1}^N\E\left(\exp\left(itW_{N,s_1}\left(\mathbf{1}(\al_{s_1} = 1)a_{N,1} +\cdots + \mathbf{1}(\al_{s_1} = K)a_{N,K}\right)\right)\right)\notag\\
    &=\prod_{s_1=1}^N \left(\exp\left(it[\mathbf{1}(\al_{s_1} = 1)a_{N,1} +\cdots + \mathbf{1}(\al_{s_1} = K)a_{N,K}]\right)\frac{c}{N} + \left(1-\frac{c}{N}\right)\right)\notag\\
    &= \prod_{u=1}^K\left(\left(\frac{c\cdot(\exp\left(ita_{N,u}\right)-1)}{N} + 1\right)^N\right)^{N_u/N}\notag\\
    &\to \exp\left(c\sum_{u_1=1}^K\lambda_{u_1}\left(e^{ita_{u_1}}-1\right)\right),~\text{as $N\rai$, for all $t\in\rl$.}
    \label{cf-Jlim}
\end{align} 
The above expression (in \eqref{cf-Jlim}) is the characteristic function of the limiting random variable $\sum_{u_1=1}^K a_{u_1}\cdot Z_{u_1}$, where $Z_{1},\ldots,Z_K$ are independent and $Z_{u_1} \sim \text{Poisson}(c\cdot\la_{u_1})$ for all $u_1\in [K]$. Therefore, using \eqref{var-star} we can claim
\begin{align}\label{G-1-1}
    \frac{L_{N,1,1}}{\sigma_{N,2}(\kl_{1,R-1})} \darw \frac{1}{[\nu(\Cb,\lamb:H)]^{1/2}}\sum_{u_1=1}^Ka_{u_1} Z_{u_1}. 
\end{align}
Hence, combining \eqref{G-2}, \eqref{G-1-2},\eqref{G-1-3} and \eqref{G-1-1}, also using the expression of $\kappa(\Cb,\lamb:\kl_{1,R-1})$ (cf. \eqref{kap-def}), we can write
\begin{align*}
    T_{N,2}(\kl_{1,R-1}) \darw \frac{1}{[\nu(\Cb,\lamb:H)]^{1/2}} \left(\sum_{u_1=1}^Ka_{u_1}(Z_{u_1} - c\cdot\la_{u_1})\right).
\end{align*}
where $a_{u_1}$ defined in \eqref{au-def} and $\nu(\Cb,\lamb:H)$ defined in \eqref{var-star}. This completes the proof.

\end{proof}

%%%%%%%%%%%%%%%%%%%%%%%%%%%%%%%%
%%%%%%%%%%%%%%%%%%%%%%%%%%%%%%%%

\begin{proof}[Proof of Theorem \ref{thm-cc-ind}] Even though this proof considers the estimated clustering coefficient in the induced case, it includes arguments which are usable in both the induced and the ego-centric cases. The clustering coefficient of the population network $G_N$ is defined as
\begin{align*}
\Gamma_N &= \begin{cases}
        \displaystyle\frac{S_N(\kl_3)}{S_N(\kl_{1,2})} \displaystyle& \text{if $S_N(\kl_{1,2})> 0$,}\\
         0 & \text{otherwise.}
    \end{cases}      
\end{align*}
The sample based estimated clustering coefficient is defined as 
\begin{align*}
    \widehat{\Gamma}_{N,l} = \begin{cases}
        \displaystyle\frac{\widehat{S}_{N,l}(\kl_3)/f_l(p_N,\kl_3)}{\widehat{S}_{N,l}(\kl_{1,2})/f_l(p_N:\kl_{1,2})} \displaystyle &\text{if $\widehat{S}_{N,l}(\kl_{1,2})>0$,}\\
         0,&\text{otherwise.}
    \end{cases},\quad\text{for $l=1,2$,}
\end{align*}
where, $\widehat{S}_{N,l}(\kl_{1,2})$ and $\widehat{S}_{N,l}(\kl_3)$, $l=1,2$, are defined in \eqref{s-all-main}, with
\begin{equation}
\label{fl-wedge-tri-def}
\begin{aligned}
&f_l(p_N:\kl_{1,2}) =\begin{cases}
    p_N^3 &\text{if $l=1$,}\\
    p_N^2(1-p_N) + p_N&\text{if $l=2$,}
\end{cases}\quad
\text{and}\quad\ f_l(p_N:\kl_3) = \begin{cases}
    p_N^3 &\text{if $l=1$,}\\
    3p_N^2(1-p_N) + p^3_N&\text{if $l=2$.}
\end{cases}
\end{aligned}
\end{equation}
From the definition provided above, one can write (for both $l=1$ and $l=2$),
\begin{align*}
    &\widehat{\Gamma}_{N,l} - \Gamma_N
    =\begin{cases}
    \displaystyle\frac{\widehat{S}_{N,l}(\kl_3)}{\widehat{S}_{N,l}(\kl_{1,2})}\frac{1}{b_l(p_N)} - \frac{S_N(\mathbb{K}_{3})}{S_N(\mathbb{K}_{1,2})} \displaystyle &\quad \text{if $S_N(\mathbb{K}_{1,2})>0$ and $\widehat{S}_{N,l}(\mathbb{K}_{1,2})>0$},\\
    \qquad {} 0&\quad \text{if $S_N(\mathbb{K}_{1,2})=0$},\\
     \displaystyle{}-\frac{S_N(\mathbb{K}_{3})}{S_N(\mathbb{K}_{1,2})}\displaystyle &\quad \text{if $S_N(\mathbb{K}_{1,2})>0$ and $\widehat{S}_{N,l}(\mathbb{K}_{1,2})=0$,}
    \end{cases},\\
\text{where,}\quad &b_l(p_N) = \frac{f_l(p_N:\kl_3)}{f_l(p_N:\kl_{1,2})},\quad\text{for $l=1,2$.}
\end{align*}
Define the sequences of events 
\begin{align}
D_{1,N} = \left[{S}_N(\mathbb{K}_{1,2})=0\right]\quad\text{and}\quad D_{l,2,N} = \left[\widehat{S}_{N,l}(\mathbb{K}_{1,2})=0\right],\quad\text{for each $N\geq 1$, and for $l=1,2$.}
\label{Dset-defs}
\end{align}
Then, for any $t\in\rl$ and the scaling sequence $\{r_{N,l}(\al,\beta):N\geq 1\}$, $l=1,2$, defined in \eqref{ll-cc-ind} and \eqref{ll-cc-ego}, we can write
\begin{align}
&\pr\left(r_{N,l}(\al,\beta)\left(\widehat{\Gamma}_{N,l} - \Gamma_N\right)\leq t\right) = \pr\left(r_{N,l}(\al,\beta)\left(\widehat{\Gamma}_{N,l} - \Gamma_N\right)\leq t\mid {\left(D_{l,2,N}\right)}^c\cap D^c_{1,N}\right)\pr\left({\left(D_{l,2,N}\right)}^c\cap D^c_{1,N}\right) \notag\\
    & \quad {}+  \pr\left(r_{N,l}(\al,\beta)\left(\widehat{\Gamma}_{N,l} - \Gamma_N\right)\leq t\mid D_{1,N}\right)\pr\left(D_{1,N}\right)\notag\\
    &\qquad {}+  \pr\left(r_{N,l}(\al,\beta)\left(\widehat{\Gamma}_{N,l} - \Gamma_N\right)\leq t\mid D_{1,N}^c\cap D_{l,2,N}\right)\pr\left(D_{1,N}^c\cap D_{l,2,N}\right)\notag\\
    & \equiv x_{1,N}(t) + x_{2,N}(t) + x_{3,N}(t),\quad\text{(say).}
\label{split-cc-1}    
\end{align}
Consider the second and third terms in the r.h.s. of \eqref{split-cc-1}. Note that $D_{1,N}\subseteq D_{l,2,N}$, for both $l=1,2$, since zero wedges in the population network $G_N$ will imply that there will be zero wedges in the observed  induced (or ego-centric) subgraph. This implies $\pr(D_{1,N}) \leq \pr(D_{l,2,N})$, for $l=1,2$. Note that for $l=1,2$, we can write
\begin{align*}
& \E\left(\widehat{S}_{N,1}(\kl_{1,2})\right) = 
        p_N^{3}\sum_{\sbb\in [N]_3}  \pi_{N,\al_{s_1},\al_{s_2}}\pi_{N,\al_{s_1},\al_{s_3}}\notag\\
  &=    N^{3-3\al-2\beta} c^3 \sum_{\ub\in [K]^3}N^{2\beta}\pi_{N,u_1,u_2}\pi_{N,u_1,u_3}\left(\la_{N,u_1}\la_{N,u_2}\la_{N,u_3} + O\left(\frac{1}{N}\right)\right),\quad\text{and}\notag\\
 & \E\left(\widehat{S}_{N,2}(\kl_{1,2})\right) = 
        \left(p_N + (1-p_N)p_N^2\right)\sum_{\sbb\in [N]_3}  \pi_{N,\al_{s_1},\al_{s_2}}\pi_{N,\al_{s_1},\al_{s_3}}\notag\\
  &=  N^{3-\al-2\beta}   c\left(1 + \left(1-\frac{c}{N^{\al}}\right)\frac{c}{N^{\al}}\right) \sum_{\ub\in [K]^3}N^{2\beta}\pi_{N,u_1,u_2}\pi_{N,u_1,u_3}\left(\la_{N,u_1}\la_{N,u_2}\la_{N,u_3} + O\left(\frac{1}{N}\right)\right).
\end{align*}
Also, from the definition of $g_{l,\al,\beta,H}(\cdot)$ in \eqref{delta-bdd-temp}, $g_{1,\al,\beta,\kl_{1,2}}(3) = -3 +3\al + 2\beta$, and $g_{2,\al,\beta,\kl_{1,2}}(3) = -3 +\al+2\beta
$. Using this, we can say that 
\begin{align*}
    &\frac{\E\left(\widehat{S}_{N,1}(\kl_{1,2})\right)}{N^{g_{1,\al,\beta,\kl_{1,2}}(3)}} = c^3\sum_{\ub\in [K]^3}c_{u_1,u_2}c_{u_1,u_3}\cdot\la_{u_1}\la_{u_2}\la_{u_3} + R_{N,1}(\kl_{1,2}) =\mu_1(\kl_{1,2}) + R_{N,1}(\kl_{1,2}) ,~\text{and},\notag\\
    &\frac{\E\left(\widehat{S}_{N,2}(\kl_{1,2})\right)}{N^{g_{2,\al,\beta,\kl_{1,2}}(3)}} =  c\sum_{\ub\in [K]^3}c_{u_1,u_2}c_{u_1,u_3}\cdot\la_{u_1}\la_{u_2}\la_{u_3}+ R_{N,2}(\kl_{1,2}) =\mu_2(\kl_{1,2}) + R_{N,2}(\kl_{1,2}), \quad\text{(say),}
\end{align*}
where,
\begin{align*}
    R_{N,1}(\kl_{1,2}) &= \left\{c^3 \sum_{\ub\in [K]^3}N^{2\beta}\pi_{N,u_1,u_2}\pi_{N,u_1,u_3}\left(\la_{N,u_1}\la_{N,u_2}\la_{N,u_3} + O\left(\frac{1}{N}\right)\right)\right.\notag\\
    &\qquad\qquad{}\left.{}- c^3\sum_{\ub\in [K]^3}c_{u_1,u_2}c_{u_1,u_3}\cdot\la_{u_1}\la_{u_2}\la_{u_3} \right\},~\text{and,}\notag\\
    R_{N,2}(\kl_{1,2}) &= \left\{c\left(1 + \left(1-\frac{c}{N^{\al}}\right)\frac{c}{N^{\al}}\right) \sum_{\ub\in [K]^3}N^{2\beta}\pi_{N,u_1,u_2}\pi_{N,u_1,u_3}\left(\la_{N,u_1}\la_{N,u_2}\la_{N,u_3} + O\left(\frac{1}{N}\right)\right)\right.\notag\\
    &\qquad\qquad{}\left.{}-c\sum_{\ub\in [K]^3}c_{u_1,u_2}c_{u_1,u_3}\cdot\la_{u_1}\la_{u_2}\la_{u_3}\right\}.
\end{align*}
Using Assumptions \ref{assump-2} and \ref{assump-3}, we can say that $|R_{N,l}(\kl_{1,2})|\to 0$ as $N\rai$ for $l=1,2$. Due to Lemma \ref{VC-lem-3}, the triangle and wedge satisfy the Assumption \ref{assump-5}. Now, using Proposition \ref{samp-CLT}, we can write,
\begin{align}
    b_{N,l}(\al,\beta)\left(\frac{\widehat{S}_{N,l}(\kl_{1,2}) - \E\left(\widehat{S}_{N,l}(\kl_{1,2})\right)}{N^{\frac{1}{2}g_{l,\al,\beta,\kl_{1,2}}(3)}}\right) \darw N(0,\psi^2_l(\kl_{1,2})),\quad\text{for $l=1,2$,}
\end{align}
where,
\begin{align*}
    b_{N,1}(\al,\beta) =\begin{cases}
        N^{\frac{1}{2}(1-\al)}&~\text{if $(\al,\beta)\in C_{1,1}(\kl_{1,2})$,}\\
        N^{\frac{3}{2}(1-\al) -\beta}&~\text{if $(\al,\beta)\in C_{1,3}(\kl_{1,2})$,}
    \end{cases}~\text{and}~
        b_{N,2}(\al,\beta) =\begin{cases}
        N^{\frac{1}{2}(1-\al)}&~\text{if $(\al,\beta)\in C_{2,1}(\kl_{1,2})$,}\\
        N^{\frac{1}{2}(3-\al-2\beta)}&~\text{if $(\al,\beta)\in C_{2,3}(\kl_{1,2})$.}
    \end{cases}
\end{align*}
Then from the expression \eqref{split-cc-1}, $\pr\left(D_{l,2,N}\right)$ has the following expression,
\begin{align*}
&\pr\left(D_{l,2,N}\right) = \pr\left(b_{N,l}(\al,\beta)\cdot\frac{\widehat{S}_{N,l}(\kl_{1,2})-\E\left(\widehat{S}_{N,l}(\kl_{1,2})\right)}{N^{\frac{1}{2}g_{l,\al,\beta,\kl_{1,2}}(3)}} = {}-b_{N,l}(\al,\beta)\cdot \frac{\E\left(\widehat{S}_{N,l}(\kl_{1,2})\right)}{N^{\frac{1}{2}g_{l,\al,\beta,\kl_{1,2}}(3)}}\right)\notag\\
& = \pr\left(b_{N,l}(\al,\beta)\cdot\frac{\widehat{S}_{N,l}(\kl_{1,2})-\E\left(\widehat{S}_{N,l}(\kl_{1,2})\right)}{N^{\frac{1}{2}g_{l,\al,\beta,\kl_{1,2}}(3)}} = {}-b_{N,l}(\al,\beta)\cdot\left(\mu_{l}(\kl_{1,2}) + R_{N,l}(\kl_{1,2})\right)\right)\\
& = \left[\pr\left(b_{N,l}(\al,\beta)\cdot\frac{\widehat{S}_{N,l}(\kl_{1,2})-\E\left(\widehat{S}_{N,l}(\kl_{1,2})\right)}{N^{\frac{1}{2}g_{l,\al,\beta,\kl_{1,2}}(3)}} \leq {}-b_{N,l}(\al,\beta)\cdot\left(\mu_{l}(\kl_{1,2}) + R_{N,l}(\kl_{1,2})\right)\right)\right.\notag\\
&\quad \left. {}- \Phi\left({}-\frac{b_{N,l}(\al,\beta)}{\psi_l(\kl_{1,2})}\cdot\left(\mu_{l}(\kl_{1,2}) + R_{N,l}(\kl_{1,2})\right)\right)\right]\\
&\quad {}-\left[ \pr\left(b_{N,l}(\al,\beta)\cdot\frac{\widehat{S}_{N,l}(\kl_{1,2})-\E\left(\widehat{S}_{N,l}(\kl_{1,2})\right)}{N^{\frac{1}{2}g_{l,\al,\beta,\kl_{1,2}}(3)}} < {}-b_{N,l}(\al,\beta)\cdot\left(\mu_{l}(\kl_{1,2}) + R_{N,l}(\kl_{1,2})\right)\right)\right.\\
&\quad \left. {}-\Phi\left({}-\frac{b_{N,l}(\al,\beta)}{\psi_l(\kl_{1,2})}\cdot\left(\mu_{l}(\kl_{1,2}) + R_{N,l}(\kl_{1,2})\right)\right)\right]\\
&\equiv e_{l,1,N}-e_{l,2,N},\quad\text{(say), for $l=1,2$.}
\end{align*}
Following the CLT result in Proposition \ref{samp-CLT}, we can use Polya's theorem (see Lemma 8.2.6 of \cite{kba-snl}) to write
$$
|e_{l,1,N}|\leq \sup_{t\in\rl}\left|\pr\left(b_{N,l}(\al,\beta)\cdot\frac{\widehat{S}_{N,l}(\kl_{1,2})-\E\left(\widehat{S}_{N,l}(\kl_{1,2})\right)}{N^{\frac{1}{2}g_{l,\al,\beta,\kl_{1,2}}(3)}}\leq t\right)-\Phi\left(\frac{t}{\psi_l(\kl_{1,2})}\right)\right|\rightarrow 0,\quad\text{as $N\rai$,}
$$
for $l=1,2$. In order to handle $e_{l,2,N}$, we use Lemma \ref{lem-sup-eq} (with $G$ being the standard normal cdf). This implies 
\begin{align*}
|e_{l,2,N}|&\leq \sup_{t\in\rl}\left|\pr\left(b_{N,l}(\al,\beta)\cdot\frac{\widehat{S}_{N,l}(\kl_{1,2})-\E\left(\widehat{S}_{N,l}(\kl_{1,2})\right)}{N^{\frac{1}{2}g_{l,\al,\beta,\kl_{1,2}}(3)}}< t\right)-\Phi\left(\frac{t}{\psi_l(\kl_{1,2})}\right)\right|\\
& = \sup_{t\in\rl}\left|\pr\left(b_{N,l}(\al,\beta)\cdot\frac{\widehat{S}_{N,l}(\kl_{1,2})-\E\left(\widehat{S}_{N,l}(\kl_{1,2})\right)}{N^{\frac{1}{2}g_{l,\al,\beta,\kl_{1,2}}(3)}}\leq t\right)-\Phi\left(\frac{t}{\psi_l(\kl_{1,2})}\right)\right| \rightarrow 0,\quad\text{as $N\rai$,}
\end{align*}
for $l=1,2$. This shows that $\pr(D_{1,N})\leq \pr\left(D_{l,2,N}\right)\rightarrow 0$, as $N\rai$, for each $l=1,2$. This implies $x_{2,N}(t)\rightarrow 0$, as $N\rai$, for each $t\in\rl$ (cf. \eqref{split-cc-1}). It also shows that $\pr\left(D^c_{1,N}\cap D_{l,2,N}\right)\leq \pr\left(D_{l,2,N}\right)\rightarrow 0$, in the third term of \eqref{split-cc-1}, thus ensuring that $x_{3,N}(t)\rightarrow 0$, for each $t\in\rl$. Continuing from \eqref{split-cc-1}, now we can write (using previous arguments)
\begin{align}
\label{r1N}
&\pr\left(r_{N,l}(\al,\beta)\left(\widehat{\Gamma}_{N,l}-\Gamma_N\right)\leq t\right) =  r_{1,N}(t) + o(1) \notag\\ & =\pr\left(r_{N,l}(\al,\beta)\left(\widehat{\Gamma}_{N,l}-\Gamma_N\right)\leq t, \widehat{S}_{N,l}(\kl_{1,2})>0,S_N(\kl_{1,2})>0\right)+o(1).
\end{align}
Now, define the following random vectors for all $N\geq 1$, 
\begin{equation}
\begin{aligned}
    &\widehat{\Xb}_{N,l} =\left(\widehat{X}_{N,l,1},\widehat{X}_{N,l,2}\right)^\primet= \left(\frac{\widehat{S}_{N,l}(\kl_{1,2})}{N^{3-2\beta}f_l(p_N:\kl_{1,2})}, \frac{\widehat{S}_{N,l}(\kl_3)}{N^{3-3\beta}f_l(p_N:\kl_3)}\right)^\primet,\quad\text{for $l=1,2$, and}\\
    &\Xb_{N} =(X_{N,1},X_{N,2})^\primet= \left(\frac{S_N(\kl_{1,2})}{N^{3-2\beta}}, \frac{S_{N}(\kl_3)}{N^{3-3\beta}}\right)^\primet.%,\notag\\
  %  &\thb_N= \E\left(\widehat{\Xb}_{N,l}\right) =\E\left(\Xb_N\right) = \left(\theta_{1,N}, \theta_{2,N}\right)^\primet \to \left(\theta_1,\theta_2\right)=\thb \in (0,\infty)^2,
\end{aligned}
\label{Xhat-Nldef}
\end{equation}
Note that, from Assumptions \ref{assump-2} and \ref{assump-3} we can write 
$\thb_N \equiv  \E\left(\widehat{\Xb}_{N,l}\right) =\E\left(\Xb_N\right) = \left(\theta_{1,N}, \theta_{2,N}\right)^\primet$, where,
\begin{equation}
\label{theta-def}
\begin{aligned}
&\theta_{1,N} = \frac{1}{N^{3-2\beta}}\sum_{i\neq j\neq k}\pi_{\al_i,\al_j}\pi_{\al_i,\al_k} \to \theta_1\equiv \sum_{u,v,w=1}^Kc_{u,v}c_{u,w}\cdot \la_u\la_v\la_w \in (0,\infty), \quad\text{and}\\
&\theta_{2,N} = \frac{1}{N^{3-3\beta}}\sum_{i\neq j\neq k}\pi_{\al_i,\al_j}\pi_{\al_i,\al_k}\pi_{\al_j,\al_k} \to \theta_2\equiv \sum_{u,v,w=1}^Kc_{u,v}c_{u,w}c_{v,w}\cdot \la_u\la_v\la_w\in (0,\infty).
\end{aligned}
\end{equation}
Define the map, 
$$
f(x,y) = \begin{cases}
        \frac{y}{x} & \text{for all $x\in \mathbb{R}\setminus\{0\}$ and $y\in\rl$,}\\
        0 &  \text{otherwise.}
        \end{cases}
$$
Note that $f(x,y)$ is continuous and differentiable in $\mathcal{D} = \{(x,y) : x\in \mathbb{R}\setminus\{0\},~y\in\mathbb{R}\}$. 
%For a Borel set $A$, and 
From \eqref{ll-cc-ind} and \eqref{ll-cc-ego}, write, 
$$
q_{N,l}(\al,\beta) = \frac{r_{N,l}(\al,\beta)}{N^\beta},\quad\text{for $l=1,2$, for all $N\geq 1$.}
$$
Note that using the above relation and \eqref{Xhat-Nldef}, we can write
\begin{align*}
    &r_{N,l}(\al,\beta)\left(\widehat{\Gamma}_{N,l}-\Gamma_N\right) = q_{N,l}(\al,\beta)\cdot N^{\beta}\cdot \left(\widehat{\Gamma}_{N,l}-\Gamma_N\right)=q_{N,l}(\al,\beta)N^{\beta}\left(\displaystyle\frac{\widehat{S}_{N,l}(\kl_3)}{\widehat{S}_{N,l}(\kl_{1,2})}\frac{1}{b_l(p_N)} - \frac{S_N(\mathbb{K}_{3})}{S_N(\mathbb{K}_{1,2})} \displaystyle\right)\notag\\
    &= q_{N,l}(\al,\beta)\left(\displaystyle\frac{\widehat{S}_{N,l}(\kl_3)/\{N^{3-3\beta}f_{l}(p_N:\kl_3)\}}{\widehat{S}_{N,l}(\kl_{1,2})/\{N^{3-2\beta}f_l(p_N:\kl_{1,2})\}} - \frac{S_N(\mathbb{K}_{3})/N^{3-3\beta}}{S_N(\mathbb{K}_{1,2})/N^{3-2\beta}} \displaystyle\right) = q_{N,l}(\al,\beta)\left[f\left(\widehat{\Xb}_{N,l}\right)  -f(\Xb_N)\right]. 
\end{align*}
From the expression \eqref{r1N}, using \eqref{Xhat-Nldef} and \eqref{theta-def}, we can write the following for any $t\in \mathbb{R}$,
\begin{align}\label{p-1}
     & \pr\left(r_{N,l}(\al,\beta)\left(\widehat{\Gamma}_{N,l}-\Gamma_N\right)\leq t,~\widehat{S}_{N,l}(\kl_{1,2})>0,~S_N(\kl_{1,2})>0\right)\notag\\
     &=\pr \left(q_{N,l}(\al,\beta)\left[f\left(\widehat{\Xb}_{N,l}\right)  -f(\Xb_N) \right]\leq t,~ \widehat{S}_{N,l}(\kl_{1,2})>0,S_N(\kl_2)>0\right)\notag\\
    &=\pr \left(q_{N,l}(\al,\beta)\left[f\left(\widehat{\Xb}_{N,l}\right) - f(\thb_N) -\left\{f(\Xb_N) -f(\thb_N)\right\}\right]\leq t,~ \widehat{S}_{N,l}(\kl_{1,2})>0,S_N(\kl_2)>0\right)\notag\\
    &= \pr\left(q_{N,l}(\al,\beta)\left[-\frac{\theta_{2,N}}{(\theta_{1,N})^2}\left(\widehat{X}_{N,l,1} - X_{N,1}\right)+\frac{1}{\theta_{1,N}}\left(\widehat{X}_{N,l,2} - X_{N,2}\right) + \frac{1}{2}\widehat{\Upsilon}_{N,l} +\frac{1}{2}\Upsilon_N\right]\leq t, \right.\notag\\
    &\qquad \qquad \left. ~\widehat{X}_{N,l,1}>0,~X_{N,1}>0\right),
\end{align}
where,
\begin{equation}
\begin{aligned}
\label{rem-term}
    &\widehat{\Upsilon}_{N,l} =\widehat{\mathbf{Z}}^\primet_{N,l}\mathbf{H}_f\left(\widehat{\xi}_{N,l},\widehat{\nu}_{N,l}\right)\widehat{\mathbf{Z}}_{N,l},\quad \Upsilon_N =\mathbf{Z}^\primet_{N}\mathbf{H}_f\left({\xi}_{N},{\nu}_{N}\right)\mathbf{Z}_{N},\\
    &\widehat{\mathbf{Z}}_{N,l} = {\left(\widehat{X}_{N,l,1}-\theta_{1,N},\widehat{X}_{N,l,2}-\theta_{2,N}\right)}^\primet,\quad \mathbf{Z}_{N,l} = {\left(X_{N,1} -\theta_{1,N},X_{N,2} -\theta_{2,N}\right)}^\primet,\\
    &{\left(\widehat{\xi}_{N,1}, \widehat{\nu}_{N,1}\right)}^\primet = \left(\theta_{1,N} +\delta_1 \left(\widehat{X}_{N,l,1} - \theta_{1,N}\right), \theta_{2,N} + \delta_1\left(\widehat{X}_{N,l,2}-\theta_{2,N}\right)\right)^\primet,\ \text{for some $\delta_1\in (0,1)$,}\\
    &{\left({\xi}_{N}, {\nu}_{N}\right)}^\primet = {\left(\theta_{1,N} +\delta_2 \left({X}_{N,1} - \theta_{1,N}\right), \theta_{2,N} + \delta_2\left({X}_{N,2}-\theta_{2,N}\right)\right)}^\primet,\quad\text{for some $\delta_2\in (0,1)$, and}\\
    & \mathbf{H}_f(x,y) = \begin{pmatrix}
        -\frac{2y}{x^3}&\frac{1}{x^2}\\
        \frac{1}{x^2} & 0
    \end{pmatrix}.
\end{aligned}
\end{equation}
Next, let us define the random variables (cf. \eqref{Xhat-Nldef}),
\begin{align}\label{piv-w-t}
    T_{N,l}\left(\kl_{1,2},\kl_3\right) = -\frac{\theta_{2,N}}{(\theta_{1,N})^2}\left(\widehat{X}_{N,l,1} - X_{N,1}\right)+\frac{1}{\theta_{1,N}}\left(\widehat{X}_{N,l,2} - X_{N,2}\right),\quad\text{for all $N\geq 1$, and $l=1,2$.}
\end{align}
For any $\epsilon>0$, consider events, $A_{l,1,N} = \left\{\left|\widehat{X}_{N,l,1} - \theta_{1,N}\right|\leq \epsilon\right\}$, $1,2$, and $B_{1,N} = \left\{|X_{N,1} - \theta_{1,N}|\leq  \epsilon\right\}$, and $C_{1,N} = \left\{\widehat{X}_{N,l,1}>0,X_{N,1}>0\right\}$. Thus, by construction $A_{l,1,N}\cap B_{1,N}\subseteq C_{1,N}$. Note that, using Lemma \ref{SN-hat-tght}, we can say that for $l=1,2$,
\begin{align*}
    \pr\left(A^c_{l,1,N}\right) = \pr\left(\left|\widehat{X}_{N,l,1} - \theta_{1,N}\right|> \epsilon\right) = o(1)\quad\text{and}\quad\pr\left(B^c_{N,1}\right) = \pr\left(|{X}_{N,1} - \theta_{1,N}|> \epsilon\right) = o(1).
\end{align*}
Then, from the expression \eqref{p-1}, we can write
\begin{align}\label{piv-cc-1}
    &\pr\left(q_{N,l}(\al,\beta)\left[T_{N,l}\left(\kl_{1,2},\kl_3\right) + \frac{1}{2}\widehat{\Upsilon}_{N,l} +\frac{1}{2}\Upsilon_N\right]\leq t, \widehat{X}_{N,l,1}>0,X_{N,1}>0\right)\notag\\
    &=\pr\left(q_{N,l}(\al,\beta)\left[T_{N,l}\left(\kl_{1,2},\kl_3\right) + \frac{1}{2}\widehat{\Upsilon}_{N,l} +\frac{1}{2}\Upsilon_N\right]\leq t, A_{l,1,N}\cap B_{1,N}\right)\notag\\
    & \qquad{}+\pr\left(q_{N,l}(\al,\beta)\left[T_{N,l}\left(\kl_{1,2},\kl_3\right) + \frac{1}{2}\widehat{\Upsilon}_{N,l} +\frac{1}{2}\Upsilon_N\right]\leq t, B_{1,N}\cap C_{1,N}~|~A_{l,1,N}^c\right)\pr(A_{l,1,N}^c)\notag\\
    &\qquad{}\qquad{}+ \pr\left(q_{N,l}(\al,\beta)\left[T_{N,l}\left(\kl_{1,2},\kl_3\right) + \frac{1}{2}\widehat{\Upsilon}_{N,l} +\frac{1}{2}\Upsilon_N\right]\leq t, A_{l,1,N}\cap C_{1,N}~|~B_{1,N}^c\right)\pr(B_{1,N}^c)\notag\\
    &\qquad{}\qquad{}\qquad{}+ \pr\left(q_{N,l}(\al,\beta)\left[T_{N,l}\left(\kl_{1,2},\kl_3\right) + \frac{1}{2}\widehat{\Upsilon}_{N,l} +\frac{1}{2}\Upsilon_N\right]\leq t, A^c_{l,1,N}\cap C_{1,N}~|~B_{1,N}^c\right)\pr(B_{1,N}^c)\notag\\
    %&=\pr\left(q_{N,l}(\al,\beta)\left[T_{N,l}\left(\kl_{1,2},\kl_3\right) + \frac{1}{2}\widehat{\Delta}_{N,l} +\frac{1}{2}\Delta_N\right]\in A, A_{1,N}\cap B_{1,N}\right)+ m_{1,N}+m_{2,N}\notag\\
    %&\qquad{}+ \notag\\
    &=\pr\left(q_{N,l}(\al,\beta)\cdot T_{N,l}\left(\kl_{1,2},\kl_3\right)  + \frac{q_{N,l}(\al,\beta)}{2}\left\{\widehat{\Upsilon}_{N,l} +\Upsilon_N\right\}\leq t,~ A_{l,1,N}\cap B_{1,N}\right) + o(1). 
\end{align}
From \eqref{theta-def}, we know $\theta_{1,N} \to \theta_1$. Fix $\eps = \theta_1/2$. Then there exists a $M\geq 1$, such that for all $N>M$, when the event $A_{l,1,N} \cap B_{1,N}$ happens, then 
%$$
%\theta_{1,N} + \delta_1\bigl(\widehat{X}_{N,l,1} - \theta_{1,N}\bigr)
%\quad \text{and} \quad \theta_{1,N} + \delta_2\bigl({X}_{N,1} - \theta_{1,N}\bigr),
%$$ 
%are bounded. Since from \eqref{theta-def}, we know $\theta_{1,N} \to \theta_1$. We also know from Lemma \ref{SN-hat-tght}, $\widehat{X}_{N,l,1} - \theta_{1,N} = o_P(1)$, and $X_{N,1} - \theta_{1,N} =o_P(1)$. Thus, we can say, $\theta_{1,N} + \delta_1\bigl(\widehat{X}_{N,l,1} - \theta_{1,N}\bigr) \parw \theta_1$, and $ \theta_{1,N} + \delta_2\bigl({X}_{N,1} - \theta_{1,N}\bigr)\parw \theta_1$, for $N\rai$, for all $\delta_1,\delta_2\in (0,1)$. Fix $\epsilon = \theta_1/2$. Then there exists an $M\geq 1$, such that for all $N>M$, we can say that 
\begin{align}\label{d-bdd}
  \frac{\theta_1}{2} < \theta_{1,N} + \delta_1\bigl(\widehat{X}_{N,l,1} - \theta_{1,N}\bigr) <\frac{3\theta_1}{2} \quad \text{and} \quad  \frac{\theta_1}{2} <\theta_{1,N} + \delta_2\bigl({X}_{N,1} - \theta_{1,N}\bigr) <\frac{3\theta_1}{2}. 
\end{align}
%for all $\epsilon_1$ there exists a $M$ such that $|\theta_{1,N} -\theta_1|<\epsilon_1$, for  $N\geq M$. Let $\epsilon_1 = \frac{\theta_1}{4}$, then $\theta_{1,N} \in \left(\frac{3\theta_1}{4},\frac{5\theta_1}{4}\right)$. Next, for all $\epsilon_2>0$ there exists a random $M(\omega) >0$ such that, $|\widehat{X}_{N,l,1}(\omega) - \theta_{1,N}|< \epsilon_2$ for $N > M(\omega)$. Take $\epsilon_2 = \theta_1/2$, then for $N>\max(M, M(\omega))$, and $\delta\in (0,1)$,
%\begin{equation}
%\begin{aligned}
%\label{d-bdd}
%    &-\frac{\theta_1}{2}< X_{N,l,1} - \theta_{1,N} < \frac{\theta_{1}}{2}~\Rightarrow -\frac{\delta\theta_1}{2}< \delta(X_{N,l,1} - \theta_{1,N}) < \frac{\delta\theta_{1}}{2}\\
%    &\Rightarrow \theta_{1,N}-\frac{\delta\theta_1}{2}<\theta_{1,N}+ \delta(X_{N,l,1} - \theta_{1,N}) < \frac{\delta\theta_{1}}{2} + \theta_{1,N}~\Rightarrow \frac{3\theta_1}{4}-\frac{\delta\theta_1}{2}<\theta_{1,N}+ \delta(X_{N,l,1} - \theta_{1,N}) < \frac{\delta\theta_{1}}{2} + \frac{5\theta_1}{4}\\
%    &\Rightarrow \frac{\theta_1}{4} < \theta_{1,N}+ \delta(X_{N,l,1} - \theta_{1,N}) < \frac{7\theta_1}{4}. 
%\end{aligned}
%\end{equation}
%Similarly, one can show that $    m_1 \theta_1 < \theta_{1,N} + \delta(X_{1,N} - \theta_{1,N}) < m_2\theta_1$, for some positive finite constant $m_1,m_2$.
Now consider the case of induced network formation ($l=1$).  
%First, 
%define the following sets
%\begin{align}\label{rsets-def}
%  &R_{1,1} = \{(\al,\beta):\al=0,\beta=0\},~R_{1,2} = \left\{(\al,\beta): \al\in (0,1),\al +\frac{\beta}{2/3}\leq 1\right\},~R_{1,5} = \left\{(\al,\beta):\al=0,\beta=1\right\},\notag\\
%  &~{R}_{1,3} =\left\{(\al,\beta): \al\in (0,1), \al+\frac{\beta}{2/3}>1, \al+\beta< 1\right\},~{R}_{1,4} = \{(\al,\beta): \al\in (0,1),\al+\beta=1\}.
%\end{align}
%We need the growth rates of the following variance terms in the set defined in \eqref{rsets-def}. 
Recall the definition of $R_{1,j}$, $j=1,\ldots,5$, from \eqref{rsets-def}. Using Lemma \ref{lem-var}, there exists an $M_5$ such that for all $N>M_5$, we can write
\begin{align}\label{var-rate-ind}
    &a_4\cdot N^{5(1-\al)-4\beta}\leq \sigma^2_{N,1}(\kl_{1,2}) \leq a_5\cdot N^{5(1-\al)-4\beta}\quad\text{when $(\al,\beta)\in \bigcup_{i=1}^5R_{1,i}$}\quad~\text{and}\notag\\
    & a_6\cdot N^{5(1-\al)-6\beta}\leq \sigma^2_{N,1}(\kl_{3}) \leq a_7\cdot N^{5(1-\al)-6\beta}\quad\text{when $(\al,\beta)\in R_{1,1}\cup R_{1,2}$, and}\\
    &\sigma^2_{N,1}(\kl_{3}) \geq \begin{cases}
        a_8\cdot N^{3(1-\al) - 3\beta}\quad~&\text{when $(\al,\beta)\in R_{1,3}$},\\
        a_9\quad~&\text{when $(\al,\beta)\in R_{1,4}\cup R_{1,5}$},
    \end{cases}\notag
\end{align}
where, $a_4,a_5,a_6,a_7,a_8,a_9$, are finite positive constants arising from Lemma \ref{lem-var}. Therefore, using the definition of $f_1(p_N:\kl_{1,2})$ (cf. \eqref{fl-wedge-tri-def}), we can write
\begin{align}\label{var-Xhat}
    \widetilde{\sigma}^2_{N,1}(\kl_{1,2}) &\equiv  \Var\left(\widehat{X}_{N,1,1}-X_{N,1}\right) = \frac{\Var\left(\widehat{S}_{N,1}(\kl_{1,2}) - f_1(p_N:\kl_{1,2})S_N(\kl_{1,2})\right)}{N^{6-4\beta}f^2_1(p_N:\kl_{1,2})} = \frac{\Var\left(T_{N,1}(\kl_{1,2})\right)}{N^{6(1-\al)-4\beta}c^6}\notag\\
    &=\frac{\sigma^2_{N,1}(\kl_{1,2})}{c^6\cdot N^{6(1-\al)-4\beta}},\quad\text{and}\notag\\
    \widetilde{\sigma}^2_{N,1}(\kl_{3})&\equiv \Var\left(\widehat{X}_{N,1,2}-X_{N,2}\right) = \frac{\Var\left(\widehat{S}_{N,1}(\kl_{2}) - f_1(p_N:\kl_{3})S_N(\kl_{3})\right)}{N^{6-6\beta}f^2_1(p_N:\kl_{3})} = \frac{\Var\left(T_{N,1}(\kl_{3})\right)}{N^{6(1-\al)-6\beta}c^6}\notag\\
    &=\frac{\sigma^2_{N,1}(\kl_{3})}{c^6\cdot N^{6(1-\al)-6\beta}}.
\end{align}
Using \eqref{var-rate-ind} and \eqref{var-Xhat}, we can claim that for all $N>M_5$,
\begin{align}\label{var-ratio-ind}
& \frac{a_4}{a_7}\leq \frac{\widetilde{\sigma}^2_{N,1}(\kl_{1,2})}{\widetilde{\sigma}^2_{N,1}(\kl_{3})}\leq \frac{a_5}{a_6},\quad\text{if $(\al,\beta)\in R_{1,1}\cup R_{1,2}$, and}\notag\\
& \frac{\widetilde{\sigma}^2_{N,1}(\kl_{1,2})}{\widetilde{\sigma}^2_{N,1}(\kl_{3})} \leq \begin{cases}
         \frac{a_5}{a_8}\cdot N^{2\left(1-\al -\frac{\beta}{2/3}\right)}~&\text{when $(\al,\beta) \in {R}_{1,3}$,}\\
        \frac{a_5}{a_9}\frac{1}{N^{\beta}}~&\text{when $(\al,\beta) \in {R}_{1,4}$,}\\
        \frac{a_5}{a_9}\frac{1}{N}&\text{when $(\al,\beta) \in {R}_{1,5}$.}
    \end{cases}
\end{align}
This implies that the limiting behavior of $T_{N,1}(\kl_{1,2},\kl_{1,3})$ (see \eqref{piv-w-t}) is jointly influenced by the wedge- and triangle count based terms $(\widehat{X}_{N,1,1}-X_{N,1})$ and $(\widehat{X}_{N,1,2}-X_{N,2})$, when $(\al,\beta)\in R_{1,1}\cup R_{1,2}$, and is influenced only by the triangle based term $(\widehat{X}_{N,1,2}-X_{N,2})$, when $(\al,\beta)\in R_{1,3}\cup R_{1,4}\cup R_{1,5}$. Thus, we need to analyze separately the limiting behavior of $T_{N,1}(\kl_{1,2},\kl_3)$ in these regions. Due to \eqref{piv-cc-1}, firstly we obtain the asymptotic distribution of
\begin{align}\label{piv-TN-ind}
   q_{N,1}(\al,\beta)(\al,\beta)\cdot T_{N,1}(\kl_{1,2},\kl_3) = q_{N,1}(\al,\beta)\left[-\frac{\theta_{2,N}}{(\theta_{1,N})^2}\left(\widehat{X}_{N,1,1} - X_{N,1}\right)+\frac{1}{\theta_{1,N}}\left(\widehat{X}_{N,1,2} - X_{N,2}\right)\right]. 
\end{align}
In case of $(\al,\beta)\in R_{1,1}\cup R_{1,2}$, using Proposition \ref{prop-joint}, and using \eqref{theta-def}, we can write,
 \begin{align*}
N^{\gamma(\al,\beta)}
\Var\!\bigl(T_{N,1}(\kl_{1,2},\kl_3)\bigr)
\;\to\;\rho^2_{1,j}(\kl_{1,2},\kl_3),
\quad \text{if $(\al,\beta)\in R_{1,j}$, for $j=1,2$,}
\end{align*}
where
$$
\gamma(\al,\beta)= \begin{cases}
1 & \text{if $(\al,\beta)\in R_{1,1}$,}\\
(1-\al) & \text{if $(\al,\beta)\in R_{1,2}$,}
\end{cases}
$$
and $\rho^2_{1,j}(\kl_{1,2},\kl_3)$, $j=1,2$, are defined in \eqref{var-rho-1-1} and \eqref{var-rho-1-2} respectively. In addition, using Proposition \ref{prop-joint} we can say that 
$$
N^{\gamma(\al,\beta)/2}\, T_{N,1}(\kl_{1,2},\kl_3) \darw N\left(0,\rho^2_{1,j}(\kl_{1,2},\kl_3)\right),
\qquad \text{if $(\al,\beta)\in R_{1,j}$, for $j=1,2$.}
$$
Thus, when $(\al,\beta)\in R_{1,1}\cup R_{1,2}$, we will have $q_{N,1}(\al,\beta) = N^{(1-\al)/2}$, for all $N\geq 1$, for $\al\in [0,1)$. Now consider the case where $(\al,\beta)\in R_{1,3}\cup R_{1,4}\cup R_{1,5}$. From \eqref{piv-TN-ind} we can write
\begin{align*}
    \frac{T_{N,1}(\kl_{1,2},\kl_3)}{\widetilde{\sigma}_{N,1}(\kl_3)} &= -\frac{\theta_{2,N}}{(\theta_{1,N})^2}\frac{\widetilde{\sigma}_{N,1}(\kl_{1,2})}{\widetilde{\sigma}_{N,1}(\kl_3)}\frac{\left(\widehat{X}_{N,1,1} - X_{N,1}\right)}{\widetilde{\sigma}_{N,1}(\kl_{1,2})}+\frac{1}{\theta_{1,N}}\frac{1}{\widetilde{\sigma}_{N,1}(\kl_3)}\left(\widehat{X}_{N,1,2} - X_{N,2}\right)\notag\\
    &= -\frac{\theta_{2,N}}{(\theta_{1,N})^2}\frac{\widetilde{\sigma}_{N,1}(\kl_{1,2})}{\widetilde{\sigma}_{N,1}(\kl_3)}T_{N,1}(\kl_{1,2})+\frac{1}{\theta_{1,N}}T_{N,1}(\kl_3).
\end{align*}
Now using \eqref{var-ratio-ind} and Corollary \ref{cor-clt-all}, we can write 
\begin{align}\label{piv-TN-imp}
    \frac{T_{N,1}(\kl_{1,2},\kl_3)}{\widetilde{\sigma}_{N,1}(\kl_3)} &= -\frac{\theta_{2,N}}{(\theta_{1,N})^2}\cdot O\left(\frac{1}{N}\right)\cdot O_P(1)+\frac{1}{\theta_{1,N}}T_{N,1}(\kl_3)\notag\\
    \Rightarrow ~q_{N,1}(\al,\beta)\cdot T_{N,1}(\kl_{1,2},\kl_3) &= -\frac{\theta_{2,N}}{(\theta_{1,N})^2}\cdot O\left(\frac{1}{N}\right)\cdot O_P(1)\cdot \{q_{N,1}(\al,\beta)\cdot\widetilde{\sigma}_{N,1}(\kl_3)\}\notag\\
    &\qquad{}\qquad{}+\frac{q_{N,1}(\al,\beta)\cdot\widetilde{\sigma}_{N,1}(\kl_3)}{\theta_{1,N}}\cdot T_{N,1}(\kl_3).
\end{align}
From Corollary \eqref{cor-clt-all}, $T_{N,1}(\kl_3)\darw N(0,1)$. Using Lemma \ref{lem-var} and for $(\al,\beta)\in R_{1,3}$, taking $q_{N,1}(\al,\beta) = N^{3(1-\al-\beta)/2}$, we can say that
\begin{align*}
    N^{3(1-\al-\beta)/2}\cdot \widetilde{\sigma}_{N,1}(\kl_3) = \frac{\sigma_{N,1}(\kl_3)}{N^{3(1-\al-\beta)/2}}  = O(1).
\end{align*}
Thus, using the above arguments for $(\al,\beta)\in R_{1,3}$ and using Slutsky's Theorem, we can say that
\begin{align*}
    N^{3(1-\al-\beta)/2}\cdot T_{N,1}(\kl_{1,2},\kl_3)\darw N(0,~\rho^2_{1,3}(\kl_3)),\quad\text{as $N\to\infty$,}
\end{align*}
where 
\begin{align}\label{v-1}
    \frac{N^{3(1-\al-\beta)/2}\cdot\widetilde{\sigma}_{N,1}(\kl_3)}{\theta_{1,N}} \to \rho_{1,3}(\kl_3),
\end{align}
and $\rho_{1,3}(\kl_3)$ defined in \eqref{var-rho-1-3-kap-1-4}. In case of $(\al,\beta)\in R_{1,4}\cup R_{1,5}$, using Theorem \ref{thm-2-main-temp}, we can write
\begin{align*}
    &T_{N,1}(\kl_3) \darw \frac{V_{1,\kl_3} - c^3\cdot \kappa(\Cb,\lamb:\kl_3)}{[c^3\cdot \kappa(\Cb,\lamb:\kl_3)]^{1/2}},\qquad{}\text{for $(\al,\beta)\in R_{1,4}$, and}\notag\\
    &T_{N,1}(\kl_3) \darw \frac{\left(1-c^3\right)\cdot U_{1,1,\kl_3} - c^3\cdot U_{2,1,\kl_3}}{[c^3\cdot\{1-c^3\}\cdot \kappa(\Cb,\lamb: \kl_3)]^{1/2}},\qquad{}\text{when $(\al,\beta)\in R_{1,5}$,}
\end{align*}
where, $V_{1,\kl_3}$, $U_{1,1,\kl_3}$ and $U_{2,1,\kl_3}$ are defined in Theorem \ref{thm-2-main-temp}, and $\kappa(\Cb,\lamb,\kl_3)$ is defined in \eqref{kap-def}. Note that, in this case, $q_{N,1}(\al,\beta) =1$ for $(\al,\beta)\in R_{1,4}\cup R_{1,5}$. Thus, from \eqref{piv-TN-imp} and Slutsky's Theorem, we can write, 
\begin{align*}
    T_{N,1}(\kl_{1,2},\kl_3) \darw V_{1,\al,\beta},~\text{for $(\al,\beta)\in R_{1,4}\cup R_{1,5}$.}
\end{align*}
where $V_{1,\al,\beta}$ is defined in \eqref{ll-cc-ind} and note that as $N\to\infty$, we can write,
\begin{align}\label{v-2}
    \widetilde{\sigma}^2_{N,1}(\kl_3) = \sigma^2_{N,1}(\kl_3) \to \begin{cases}
        c^3\cdot \kappa(\Cb,\lamb:\kl_3),&~\text{if $(\al,\beta)\in R_{1,4}$,}\\
        c^3\cdot\{1-c^3\}\cdot \kappa(\Cb,\lamb: \kl_3),&~\text{if $(\al,\beta)\in R_{1,5}$.}
    \end{cases}
\end{align}
Next we will show that for $(\al,\beta)\in \bigcup_{i=1}^5R_{1,i}$ the remainder terms $\widehat{\Upsilon}_{N,1}$ and $\Upsilon_{N,1}$ in \eqref{piv-cc-1} converge to zero in probability. Firstly, note that, from \eqref{rem-term} and Lemma \ref{SN-hat-tght}, we can write
\begin{align}
\label{rem-ind-hat}
\widehat{\Upsilon}_{N,1} &= \frac{2(\theta_{2,N} + \delta(\widehat{X}_{N,1,2} -\theta_{2,N}))}{(\theta_{1,N} + \delta(\widehat{X}_{N,1,1} - \theta_{1,N}))^3} \left\{\left(\widehat{X}_{N,1,1} - \theta_{1,N}\right)\right\}\cdot \left(\widehat{X}_{N,1,1} - \theta_{1,N}\right) \notag\\
&\quad {}+\frac{2}{(\theta_{1,N} + \delta(\widehat{X}_{N,1,1} - \theta_{1,N}))^2}\left\{\left(\widehat{X}_{N,1,1} - \theta_{1,N}\right)\right\}\left(\widehat{X}_{N,1,2} - \theta_{2,N}\right)\notag\\
&= \frac{2(\theta_{2,N} + \delta\cdot o_P(1))}{(\theta_{1,N} + \delta(\widehat{X}_{N,1,1} - \theta_{1,N}))^3} \left\{\left(\widehat{X}_{N,1,1} - \theta_{1,N}\right)\right\}\cdot o_P(1) \notag\\
&\quad{}+\frac{2}{(\theta_{1,N} + \delta(\widehat{X}_{N,1,1} - \theta_{1,N}))^2}\left\{\left(\widehat{X}_{N,1,1} - \theta_{1,N}\right)\right\}\cdot o_P(1).
\end{align}
Similarly, using Lemma \ref{SN-hat-tght}, we can say that
\begin{align}\label{rem-ind-pop}
    {\Upsilon}_{N,1} &= \frac{2(\theta_{2,N} + \delta\cdot o_P(1))}{(\theta_{1,N} + \delta({X}_{N,1} - \theta_{1,N}))^3} \left\{({X}_{N,1} - \theta_{1,N})\right\}\cdot o_P(1) \notag\\
    &\quad{}+\frac{2}{(\theta_{1,N} + \delta({X}_{N,1} - \theta_{1,N}))^2}\left\{({X}_{N,1} - \theta_{1,N})\right\}\cdot o_P(1).
\end{align}
A direct computation of the variances of $S_N(\kl_{1,2})$ and $S_N(\kl_3)$ shows that there exists an $M_6\geq 1$ such that for all $N>M_6$, we can write
\begin{align}\label{var-ext}
    &\omega^2_{N}(\kl_{1,2}) = \Var(X_{N,1}) = \frac{\Var(S_N(\kl_{1,2}))}{N^{6-4\beta}}\leq a_{10} \cdot\frac{N^{4-3\beta}}{N^{6-4\beta}} = \frac{a_{10}}{N^{2-\beta}}\quad\text{if $\beta\in [0,1]$, and}\notag\\
    &\omega^2_{N}(\kl_3)=\Var(X_{N,2}) =\frac{\Var(S_N(\kl_{3}))}{N^{6-6\beta}}\leq
    \begin{cases}
    &a_{11}\cdot\frac{N^{4-5\beta}}{N^{6-6\beta}}=\frac{a_{10}}{N^{2-\beta}}\quad\text{if  $\beta\in [0,1/2)$},\\
    &a_{12}\cdot\frac{N^{3-3\beta}}{N^{6-6\beta}}=\frac{a_{11}}{N^{3(1-\beta)}}\quad\text{if $\beta\in [1/2,1)$},\\
    &a_{13}\quad\text{if $\beta=1$,}
    \end{cases}
\end{align}
where, $a_{10} = a_{10}(\beta, \kl_{1,2}, \lamb, \Cb)$ and $a_i = a_i(\beta, \kl_{3}, \lamb, \Cb)$, $i=11,12,13$, are finite positive constants depending on the graph $H$ and the model parameters. When $(\al,\beta)\in R_{1,1}\cup R_{1,2}$, then $q_{N,1}(\al,\beta) = N^{\frac{1}{2}(1-\al)}$. Then from the expression \eqref{rem-ind-hat}, using \eqref{var-rate-ind},\eqref{var-Xhat}, \eqref{var-ext} and Lemma \ref{SN-hat-tght}, we can write
\begin{align*}
    N^{\frac{1}{2}(1-\al)}(\widehat{X}_{N,1,1} - \theta_{1,N})&= \frac{\widetilde{\sigma}_{N,1}(\kl_{1,2})}{N^{-\frac{1}{2}(1-\al)}}\cdot T_{N,1}(\kl_{1,2}) + \frac{\omega_{N}(\kl_{1,2})}{N^{-\frac{1}{2}(1-\al)}} \cdot\frac{(X_{1,N} - \theta_{1,N})}{\omega(\kl_{1,2})} \notag\\
    &=O\left(\frac{1}{N}\right) O_P(1) + \frac{O_P(1)}{N^{\frac{1}{2}(1-\beta+\al)}}= O_P(1).
\end{align*}
From \eqref{d-bdd}, on the event $A_{1,1,N}\cap B_{1,N}$, the denominator terms within $\widehat{\Upsilon}_{N,1}$ and $\Upsilon_N$ remain bounded away from zero. Hence, we can say that $N^{\frac{1}{2}(1-\al)}\widehat{\Upsilon}_{N,1} = o_P(1)$, when $(\al,\beta)\in R_{1,1}\cup R_{1,2}$. Similarly, using \eqref{var-ext} and Lemma \ref{SN-hat-tght}, from \eqref{rem-ind-pop}, we can say that 
\begin{align*}
    &N^{\frac{1}{2}(1-\al)}(X_{N,1} - \theta_{1,N})= \frac{\omega_{N}(\kl_{1,2})}{N^{-\frac{1}{2}(1-\al)}}\frac{X_{N,1} - \theta_{1,N}}{\omega_N(\kl_{1,2})} = \frac{1}{N^{\frac{1}{2}(1-\beta+\al)}}\cdot O_P(1).
\end{align*}
Similarly, from \eqref{d-bdd} we can say that $N^{\frac{1}{2}(1-\al)}{\Upsilon}_{N,1} = o_P(1)$, in the region $R_{1,1}\cup R_{1,2}$. It remains to show that $q_{N,1}(\al,\beta)(\widehat{\Upsilon}_{N,1} + \Upsilon_{N,1}) = o_P(1)$, when $(\al,\beta)\in R_{1,3}\cup R_{1,4}\cup R_{1,5}$. First note that, using \eqref{var-Xhat} and \eqref{var-ext}, for $N>\max\{M_5,M_6\}$, we can say the following 
\begin{align}\label{pop-samp-rat}
    \frac{\omega^2_{N}(\kl_{1,2})}{\widetilde{\sigma}^2_{N,1}(\kl_{3})}\leq\begin{cases}
        a_{14}\cdot N^{(2-3\beta-2\al) - (1-\beta) -\al}&\text{when $(\al,\beta)\in R_{1,3}$,}\\
        a_{15}\cdot N^{-(2-\beta)}&\text{when $(\al,\beta)\in R_{1,4}\cup R_{1,5}$.}
    \end{cases}
\end{align}
Then, from \eqref{rem-ind-hat} and using \eqref{var-ratio-ind}, we can write
\begin{align*}
    \frac{\widehat{\Upsilon}_{N,1}}{\widetilde{\sigma}_{N,1}(\kl_3)} &= \frac{2(\theta_{2,N} + \delta(\widehat{X}_{N,1,2} -\theta_{2,N}))}{(\theta_{1,N} + \delta(\widehat{X}_{N,2,1} - \theta_{1,N}))^3} \left\{\frac{\widetilde{\sigma}_{N,1}(\kl_{1,2})}{\widetilde{\sigma}_{N,1}(\kl_3)}\left(\frac{\widehat{X}_{N,1,1} - \theta_{1,N}}{\widetilde{\sigma}_{N,1}(\kl_{1,2})}\right)\right\}\cdot o_P(1) \notag\\
    &\quad {}+\frac{2}{(\theta_{1,N} + \delta(\widehat{X}_{N,1,1} - \theta_{1,N}))^2} \left\{\frac{\widetilde{\sigma}_{N,1}(\kl_{1,2})}{\widetilde{\sigma}_{N,1}(\kl_3)}\left(\frac{\widehat{X}_{N,1,1} - \theta_{1,N}}{\widetilde{\sigma}_{N,1}(\kl_{1,2})}\right)\right\}\cdot o_P(1)\notag\\
    & = \frac{2(\theta_{2,N} + \delta\cdot o_P(1))}{(\theta_{1,N} + \delta(\widehat{X}_{N,1,1} - \theta_{1,N}))^3} \left\{O\left(\frac{1}{N}\right)\cdot\left(\frac{\widehat{X}_{N,1,1} - \theta_{1,N}}{\widetilde{\sigma}_{N,1}(\kl_{1,2})}\right)\right\}\cdot o_P(1) \notag\\
    &\quad {}+\frac{2}{(\theta_{1,N} + \delta(\widehat{X}_{N,1,1} - \theta_{1,N}))^2} \left\{O\left(\frac{1}{N}\right)\cdot\left(\frac{\widehat{X}_{N,1,1} - \theta_{1,N}}{\widetilde{\sigma}_{N,1}(\kl_{1,2})}\right)\right\}o_P(1).
\end{align*}
In the above expression and on the event $A_{1,1,N}\cap B_{1,N}$ (cf. \eqref{d-bdd}), the denominator terms are bounded away from zero. Using Corollary \ref{cor-clt-all}, Lemma \ref{SN-hat-tght}, \eqref{var-ext} and \eqref{var-Xhat}, we can say that, for $(\al,\beta)\in R_{1,3}\cup R_{1,4}\cup R_{1,5}$,
\begin{align*}
    &\frac{\widehat{X}_{N,1,1} - \theta_{1,N}}{\widetilde{\sigma}_{N,1}(\kl_{1,2})} = T_{N,1}(\kl_{1,2}) + \frac{\omega_{N}(\kl_{1,2})}{\widetilde{\sigma}_{N,1}(\kl_{1,2})}\left(\frac{X_{N,1} - \theta_{1,N}}{\omega_N(\kl_{1,2})}\right) = O_P(1) + \frac{1}{N^{\frac{1}{2}(1-\beta+\al)}}\cdot O_P(1) = O_P(1).
    %&\frac{\widehat{X}_{N,1,2} - \theta_{2,N}}{\widetilde{\sigma}_{N,1}(\kl_{3})} = T_{N,1}(\kl_{3}) + \frac{\omega_{N}(\kl_{3})}{\widetilde{\sigma}_{N,1}(\kl_{3})}\left(\frac{X_{N,2} - \theta_{2,N}}{\omega_N(\kl_{3})}\right) = O_P(1) + \frac{\omega_{N}(\kl_{3})}{\widetilde{\sigma}_{N,1}(\kl_{3})}O_P(1) = O_P(1)
\end{align*}
Hence, we can say that in the region $R_{1,3}\cup R_{1,4}\cup R_{1,5}$, $\frac{\widehat{\Upsilon}_{N,1}}{\widetilde{\sigma}_{N,1}(\kl_3)} = o_P(1)$. From \eqref{v-1} and  \eqref{v-2} we know that $q_{N,1}(\al,\beta)\cdot \widetilde{\sigma}_{N,1}(\kl_3) = O(1)$ when $(\al,\beta)\in R_{1,3}\cup R_{1,4}\cup R_{1,5}$. Thus, in this region $q_{N,1}(\al,\beta)\cdot \widehat{\Upsilon}_{N,1}=o_P(1)$. Similarly, from \eqref{rem-ind-pop}, using \eqref{pop-samp-rat} and Lemma \ref{SN-hat-tght}, we can write
\begin{align*}
    \frac{{\Upsilon}_{N,1}}{\widetilde{\sigma}_{N,1}(\kl_3)} &= \frac{2(\theta_{2,N} + \delta\cdot o_P(1))}{(\theta_{1,N} + \delta({X}_{N,1} - \theta_{1,N}))^3} \left\{\frac{\omega_N(\kl_{1,2})}{\widetilde{\sigma}_{N,1}(\kl_3)}\left(\frac{{X}_{N,1} - \theta_{1,N}}{\omega_N(\kl_{1,2})}\right)\right\}\cdot o_P(1) \notag\\
    &\quad {}+\frac{2}{(\theta_{1,N} + \delta({X}_{N,1} - \theta_{1,N}))^2}\left\{\frac{\omega_N(\kl_{1,2})}{\widetilde{\sigma}_{N,1}(\kl_3)}\left(\frac{{X}_{N,1} - \theta_{1,N}}{\omega_N(\kl_{1,2})}\right)\right\}\cdot o_P(1)\notag\\
    &= \frac{2(\theta_{2,N} + \delta\cdot o_P(1))}{(\theta_{1,N} + \delta({X}_{N,1} - \theta_{1,N}))^3} \left\{O\left(\frac{1}{N}\right)O_P(1)\right\}\cdot o_P(1) \notag\\
    &\quad {}+\frac{2}{(\theta_{1,N} + \delta({X}_{N,1} - \theta_{1,N}))^2}\left\{O\left(\frac{1}{N}\right)O_P(1)\right\}\cdot o_P(1) = o_P(1).
\end{align*}
Therefore, using \eqref{d-bdd}, we can say that $q_{N,1}(\al,\beta)\cdot \Upsilon_{N,1} = o_P(1)$, when $(\al,\beta)\in R_{1,3}\cup R_{1,4}\cup R_{1,5}$. Hence, we show that for $(\al,\beta)\in \bigcup_{i=1}^5 R_{1,i}$, the remainder terms in the expression \eqref{piv-cc-1} converge to zero in probability. Therefore, from the expression \eqref{piv-cc-1}, and using Slutsky's theorem, we can write, 
\begin{align*}
    &\pr\left(q_{N,1}(\al,\beta)\cdot T_{N,1}\left(\kl_{1,2},\kl_3\right)  + \frac{q_{N,1}(\al,\beta)}{2}\left\{\widehat{\Delta}_{N,1} +\Delta_N\right\}\leq t,~ A_{1,1,N}\cap B_{1,N}\right) + o(1)\notag\\
    &= \pr\left(q_{N,1}(\al,\beta)\cdot T_{N,1}\left(\kl_{1,2},\kl_3\right)  \leq t,~ A_{1,1,N}\cap B_{1,N}\right) + o(1)= \pr\left(q_{N,1}(\al,\beta)\cdot T_{N,1}\left(\kl_{1,2},\kl_3\right)  \leq t\right)+o(1).
\end{align*}
This completes the proof of Theorem \eqref{thm-cc-ind}.
\end{proof}

\begin{proof}[Proof of Theorem \ref{thm-cc-ego}]
We consider ego-centric network formation ($l=2$). Recall the definition of $R_{2,j},~j=1,\ldots,6$, from \eqref{rsets-def}. 
%First, 
%define the following sets
%\begin{align}\label{rsets-def}
%  &R_{1,1} = \{(\al,\beta):\al=0,\beta=0\},~R_{1,2} = \left\{(\al,\beta): \al\in (0,1),\al +\frac{\beta}{2/3}\leq 1\right\},~R_{1,5} = \left\{(\al,\beta):\al=0,\beta=1\right\},\notag\\
%  &~{R}_{1,3} =\left\{(\al,\beta): \al\in (0,1), \al+\frac{\beta}{2/3}>1, \al+\beta< 1\right\},~{R}_{1,4} = \{(\al,\beta): \al\in (0,1),\al+\beta=1\}.
%\end{align}
%First, we need the growth rate of the following variance in the set defined in \eqref{rsets-def}.
Using Lemma \ref{lem-var}, there exists an $M_7$ such that for all $N>M_7$, we can write
\begin{align}\label{var-rate-ego}
    &a_{16}\cdot N^{5-\al-4\beta}\leq \sigma^2_{N,2}(\kl_{1,2}) \leq a_{17}\cdot N^{5-\al-4\beta},~\text{when $(\al,\beta)\in \bigcup_{i=1}^6R_{2,i}$,}~\text{and}\notag\\
    &a_{18}\cdot N^{5-3\al-6\beta}\sigma^2_{N,2}(\kl_{3}) \leq a_{19}\cdot N^{5-3\al-6\beta},~\text{when $(\al,\beta)\in R_{2,1}\cup R_{2,2}$}\\
    &\sigma^2_{N,2}(\kl_{3}) \geq \begin{cases}
        a_{20}\cdot N^{4-2\al - 5\beta},~&\text{when $(\al,\beta)\in R_{2,3}$},\\
        a_{21}\cdot N^{3-2\al - 3\beta},~&\text{when $(\al,\beta)\in R_{2,4}$},\\
        a_{22},~&\text{when $(\al,\beta)\in R_{2,5}\cup R_{2,6}$},
    \end{cases}\notag
\end{align}
where $a_{16},a_{17},a_{18},a_{19},a_{20},a_{21},a_{22}$, are finite positive constants arising from Lemma \ref{lem-var}). Therefore, using the definition of $f_2(p_N:\kl_{1,2})$ and $f_2(p_N:\kl_3)$ (cf. \eqref{fl-wedge-tri-def}), we can write
\begin{align}\label{var-Xhat-1}
    &\widetilde{\sigma}^2_{N,2}(\kl_{1,2}) =  \Var\left(\widehat{X}_{N,2,1}-X_{N,1}\right) = \frac{\Var\left(\widehat{S}_{N,2}(\kl_{1,2}) - f_2(p_N:\kl_{1,2})S_N(\kl_{1,2})\right)}{N^{6-4\beta}f^2_2(p_N:\kl_{1,2})} \notag\\
    &=\frac{\Var\left(T_{N,2}(\kl_{1,2})\right)}{N^{6-2\al-4\beta}c^2(1+(1-c/N^{\al})(c/N^{\al}))}=\frac{\sigma^2_{N,2}(\kl_{1,2})}{N^{6-2\al-4\beta}c^2(1+(1-c/N^{\al})(c/N^{\al}))}\quad\text{and,}\notag\\
    &~\widetilde{\sigma}^2_{N,2}(\kl_{3})=\Var\left(\widehat{X}_{N,2,2}-X_{N,2}\right) = \frac{\Var\left(\widehat{S}_{N,2}(\kl_{3}) - f_2(p_N:\kl_{3})S_N(\kl_{3})\right)}{N^{6-6\beta}f^2_2(p_N:\kl_{3})} \notag\\
    &= \frac{\Var\left(T_{N,2}(\kl_{3})\right)}{N^{6-4\al-6\beta}c^4(3+2(c/N^{\al}))}=\frac{\sigma^2_{N,1}(\kl_{3})}{c^4\cdot N^{6-4\al-6\beta}(3+2(c/N^{\al}))}.
\end{align}
Using \eqref{var-rate-ego} and \eqref{var-Xhat-1}, we can claim that for all $N>M_7$,  
%the ratio between $\widetilde{\sigma}^2_{N,2}(\kl_{1,2})$ and $\widetilde{\sigma}^2_{N,2}(\kl_{3})$ has the following growth rate for all $N>M_7$.
\begin{align}\label{var-ratio-ego}
   & \frac{a_{16}}{a_{19}}\leq \frac{\widetilde{\sigma}^2_{N,2}(\kl_{1,2})}{\widetilde{\sigma}^2_{N,2}(\kl_{3})} \leq \frac{a_{17}}{a_{18}},\quad \text{if $(\al,\beta)\in R_{2,1}\cup R_{2,2}$,~~  and,}\notag\\
    &\frac{\widetilde{\sigma}^2_{N,2}(\kl_{1,2})}{\widetilde{\sigma}^2_{N,2}(\kl_{3})} \leq \begin{cases}
        \frac{a_{17}}{a_{20}}N^{-\left(1-\al -\beta\right)},~&\text{when $(\al,\beta) \in {R}_{2,3}$}\\
        \frac{a_{17}}{a_{21}}N^{2(1-\frac{\al}{2} -\frac{3}{2}\beta)},~&\text{when $(\al,\beta) \in {R}_{2,4}$}\\
        \frac{a_{17}}{a_{22}}\frac{1}{N^{1-\al}},~&\text{when $(\al,\beta) \in {R}_{2,5}$.}\\
        \frac{a_{17}}{a_{22}}\frac{1}{N},&\text{when $(\al,\beta) \in {R}_{2,6}$}.
    \end{cases}
\end{align}
This implies that the limiting behavior of $T_{N,2}(\kl_{1,2},\kl_3)$ (see \eqref{piv-w-t}) is jointly influenced by the wedge and triangle count based terms $(\widehat{X}_{N,2,1} - X_{N,1})$ and $(\widehat{X}_{N,2,2} - X_{N,2})$, when $(\al,\beta)\in R_{2,1}\cup R_{2,2}$, and is influence only by triangle based term $(\widehat{X}_{N,2,1}-X_{N,2})$ when $(\al,\beta)\in R_{2,3}\cup R_{2,4}\cup R_{2,5}\cup R_{2,6}$. Thus, we separately analyze the limiting behavior of $T_{N,2}(\kl_{1,2},\kl_3)$ in these regions. Due to \eqref{piv-cc-1}, firstly we obtain the asymptotic distribution of 
%where $R_{2,1}\ldots,R_{2,6}$ defined in \eqref{rsets-def}.
%Since the variance ratio above yields four distinct rates, we analyze each region separately. From \eqref{piv-cc-1}, first, we determine the asymptotic distribution of
\begin{align}\label{piv-TN-ego}
   q_{N,2}(\al,\beta)\cdot T_{N,2}(\kl_{1,2},\kl_3) = q_{N,2}\left[-\frac{\theta_{2,N}}{(\theta_{1,N})^2}\left(\widehat{X}_{N,2,1} - X_{N,1}\right)+\frac{1}{\theta_{1,N}}\left(\widehat{X}_{N,2,2} - X_{N,2}\right)\right]. 
\end{align}
In case of $(\al,\beta)\in R_{2,1}\cup R_{2,2}$, using Proposition \ref{prop-joint}, and using the fact \eqref{theta-def}, we can write,
 \begin{align*}
N^{\zeta(\al,\beta)}
\Var\!\bigl(T_{N,2}(\kl_{1,2},\kl_3)\bigr)
\;\to\;\rho^2_{2,j}(\kl_{1,2},\kl_3),
\qquad (\al,\beta)\in R_{2,j},\ j=1,2,
\end{align*}
where
\begin{align*}
    \zeta(\al,\beta) =\begin{cases}
        1,&\quad\text{if $(\al,\beta)\in R_{2,1}$,}\\
        (1-\al),&\quad\text{if $(\al,\beta)\in R_{2,2}$,}
    \end{cases}
\end{align*}
and $\rho^2_{2,j}(\kl_{1,2},\kl_3)$ defined in \eqref{var-rho-2-1} and \eqref{var-rho-2-2} respectively. In addition, using Proposition \eqref{prop-joint} we can say that 
$$
N^{\zeta(\al,\beta)/2}\, T_{N,2}(\kl_{1,2},\kl_3) \darw N\left(0,\rho^2_{2,j}(\kl_{1,2},\kl_3)\right),
\qquad \text{if}~~(\al,\beta)\in R_{2,j},\ j=1,2.
$$
Thus, when $(\al,\beta)\in R_{2,1}\cup R_{2,2}$, we will have $q_{N,2}(\al,\beta) = N^{\frac{1}{2}(1-\al)}$, for all $N\geq 1$, for $\al\in [0,1)$. Now consider the case where $(\al,\beta)\in R_{2,3}\cup R_{2,4}\cup R_{2,5}\cup R_{2,6}$. From \eqref{piv-TN-ego}, we can write
\begin{align*}
    \frac{T_{N,2}(\kl_{1,2},\kl_3)}{\widetilde{\sigma}_{N,1}(\kl_3)} &= -\frac{\theta_{2,N}}{(\theta_{1,N})^2}\frac{\widetilde{\sigma}_{N,1}(\kl_{1,2})}{\widetilde{\sigma}_{N,2}(\kl_3)}\frac{\left(\widehat{X}_{N,2,1} - X_{N,1}\right)}{\widetilde{\sigma}_{N,1}(\kl_{1,2})}+\frac{1}{\theta_{1,N}}\frac{1}{\widetilde{\sigma}_{N,2}(\kl_3)}\left(\widehat{X}_{N,2,2} - X_{N,2}\right)\notag\\
    &= -\frac{\theta_{2,N}}{(\theta_{1,N})^2}\frac{\widetilde{\sigma}_{N,2}(\kl_{1,2})}{\widetilde{\sigma}_{N,2}(\kl_3)}\cdot T_{N,2}(\kl_{1,2})+\frac{1}{\theta_{1,N}}\cdot T_{N,2}(\kl_3).
\end{align*}
Now using \eqref{var-ratio-ego} and Corollary \ref{cor-clt-all}, we can write 
\begin{align}\label{piv-TN-imp-1}
    \frac{T_{N,2}(\kl_{1,2},\kl_3)}{\widetilde{\sigma}_{N,2}(\kl_3)} &= -\frac{\theta_{2,N}}{(\theta_{1,N})^2}\cdot O\left(\frac{1}{N}\right)\cdot O_P(1)+\frac{1}{\theta_{1,N}}\cdot T_{N,2}(\kl_3)\notag\\
    \Rightarrow ~q_{N,2}(\al,\beta)\cdot T_{N,2}(\kl_{1,2},\kl_3) &= -\frac{\theta_{2,N}}{(\theta_{2,N})^2}\cdot O\left(\frac{1}{N}\right)\cdot O_P(1)\cdot \{q_{N,2}(\al,\beta)\cdot\widetilde{\sigma}_{N,2}(\kl_3)\}\notag\\
    &\qquad{}\qquad{}+\frac{q_{N,2}(\al,\beta)\cdot\widetilde{\sigma}_{N,2}(\kl_3)}{\theta_{1,N}}\cdot T_{N,2}(\kl_3).
\end{align}
From Corollary \eqref{cor-clt-all}, $T_{N,2}(\kl_3)\darw N(0,1)$. Using Lemma \ref{lem-var} and for
\begin{align*}
    q_{N,2}(\al,\beta) = \begin{cases}
        N^{(4-2\al-5\beta)/2},&~\text{if $(\al,\beta)\in R_{2,3}$}\\
        N^{(3-2\al -3\beta)/2},&~\text{if $(\al,\beta)\in R_{2,4}$,}
    \end{cases}
\end{align*}
we can say that
\begin{align*}
    N^{(4-2\al-5\beta)/2}\cdot \widetilde{\sigma}_{N,2}(\kl_3) = \frac{\sigma_{N,2}(\kl_3)}{N^{(2-2\al-\beta)/2}}  = O(1),~\text{and} ~N^{(3-2\al-3\beta)/2}\cdot \widetilde{\sigma}_{N,2}(\kl_3) = \frac{\sigma_{N,2}(\kl_3)}{N^{(3-2\al-3\beta)/2}}  = O(1).
\end{align*}
Thus, using above arguments for $(\al,\beta)\in R_{2,3}\cup R_{2,4}$ and using Slutsky's Theorem, we can say that
\begin{align*}
     &N^{(4-2\al-5\beta)/2}\cdot T_{N,1}(\kl_{1,2},\kl_3)\darw N(0,~\rho^2_{2,3}(\kl_3)),~\text{and}\\
    &N^{3-2\al-3\beta)/2}\cdot T_{N,1}(\kl_{1,2},\kl_3)\darw N(0,~\rho^2_{2,4}(\kl_3))~\text{as $N\to\infty$,}
\end{align*}
where, 
\begin{align}\label{v-11}
    \frac{N^{(4-2\al-5\beta)/2}\cdot\widetilde{\sigma}_{N,1}(\kl_3)}{\theta_{1,N}} \to \rho_{1,3}(\kl_3)\quad\text{and}\quad\frac{N^{3-2\al-3\beta)/2}\cdot\widetilde{\sigma}_{N,1}(\kl_3)}{\theta_{1,N}} \to \rho_{1,3}(\kl_3),
\end{align}
where $\rho_{2,3}(\kl_3)$ and $\rho_{2,4}(\kl_3)$ are defined in \eqref{var-rho-2-3-2-4}. In case of $(\al,\beta)\in R_{2,5}\cup R_{2,6}$, using the Theorem \ref{thm-2-main-temp}, we can write
\begin{align*}
    &T_{N,2}(\kl_3) \darw \frac{V_{2,\kl_3} - 3c^2\cdot \kappa(\Cb,\lamb:\kl_3)}{[3c^2\cdot \kappa(\Cb,\lamb:\kl_3)]^{1/2}}\qquad{}\text{for $(\al,\beta)\in R_{2,5}$, and,}\notag\\
    &T_{N,2}(\kl_3) \darw \frac{\left(1-(3c^2-2c^3)\right)\cdot U_{1,2,\kl_3} - (3c^2-2p^3)\cdot U_{2,2,\kl_3}}{[(3c^2-2c^3)\cdot\{1-c^3\}\cdot \kappa(\Cb,\lamb: \kl_3)]^{1/2}},\qquad{}\text{when $(\al,\beta)\in R_{2,6}$,}
\end{align*}
where $V_{2,\kl_3}$, $U_{1,2,\kl_3}$ and $U_{2,2,\kl_3}$ is defined in Theorem \ref{thm-2-main-temp}, and $\kappa(\Cb,\lamb,\kl_3)$ is defined in \eqref{kap-def}. Note that in this case, $q_{N,2}(\al,\beta) =1$ for $(\al,\beta)\in R_{2,5}\cup R_{2,6}$. Thus, from \eqref{piv-TN-imp} and Slutsky's Theorem, we can write, 
\begin{align*}
    T_{N,2}(\kl_{1,2},\kl_3) \darw V_{2,\al,\beta},~\text{for $(\al,\beta)\in R_{2,5}\cup R_{2,6}$.}
\end{align*}
where $V_{2,\al,\beta}$ is defined in \eqref{ll-cc-ego} and note that as $N\to\infty$, we can write,
\begin{align}\label{v-21}
    \widetilde{\sigma}^2_{N,2}(\kl_3) = \sigma^2_{N,2}(\kl_3) \to \begin{cases}
        3c^2\cdot \kappa(\Cb,\lamb:\kl_3),&~\text{if $(\al,\beta)\in R_{2,5}$,}\\
        (3c^2-2c^3)\cdot\{1-(3c^2-2c^3)\}\cdot \kappa(\Cb,\lamb: \kl_3),&~\text{if $(\al,\beta)\in R_{2,6}$.}
    \end{cases}
\end{align}
Next, we will show that for $(\al,\beta)\in \bigcup_{i=1}^6R_{2,i}$ the remainder term $\widehat{\Upsilon}_{N,2}$ in \eqref{piv-cc-1} converges to zero in probability. First, note that from \eqref{rem-term} and Lemma \ref{SN-hat-tght}, we can write
\begin{align}\label{rem-ego-hat}
    &\widehat{\Upsilon}_{N,2} = \frac{2(\theta_{2,N} + \delta(\widehat{X}_{N,2,2} -\theta_{2,N}))}{(\theta_{1,N} + \delta(\widehat{X}_{N,2,1} - \theta_{1,N}))^3} \left\{\left(\widehat{X}_{N,2,1} - \theta_{1,N}\right)\right\}\cdot \left(\widehat{X}_{N,2,1} - \theta_{1,N}\right) \notag\\
    &\quad {}+\frac{2}{(\theta_{1,N} + \delta(\widehat{X}_{N,2,1} - \theta_{1,N}))^2}\left\{\left(\widehat{X}_{N,2,1} - \theta_{1,N}\right)\right\}\left(\widehat{X}_{N,2,2} - \theta_{2,N}\right)\notag\\
    &= \frac{2(\theta_{2,N} + \delta\cdot o_P(1))}{(\theta_{1,N} + \delta(\widehat{X}_{N,2,1} - \theta_{1,N}))^3} \left\{\left(\widehat{X}_{N,2,1} - \theta_{1,N}\right)\right\}\cdot o_P(1) \notag\\
    &\quad {}+\frac{2}{(\theta_{1,N} + \delta(\widehat{X}_{N,2,1} - \theta_{1,N}))^2}\left\{\left(\widehat{X}_{N,2,1} - \theta_{1,N}\right)\right\}\cdot o_P(1).
\end{align}
%Similarly, using Lemma xx, we can say that
%\begin{align}\label{rem-ind-pop}
%    &{\Delta}_{N,1} = \frac{2(\theta_{2,N} + \delta\cdot o_P(1))}{(\theta_{1,N} + \delta({X}_{N,1} - \theta_{1,N}))^3} \left\{({X}_{N,1} - \theta_{1,N})\right\}\cdot o_P(1) +\notag\\
%    &+\frac{2}{(\theta_{1,N} + \delta({X}_{N,1} - \theta_{1,N}))^2}\left\{({X}_{N,1} - \theta_{1,N})\right\}\cdot o_P(1).
%\end{align}
%In addition, we need the growth rate of the variance of $S_N(\kl_{1,2})$ and $S_{N}(\kl_3)$. There exists an $M_6$ such that for all $N>M_6$, we can write
%\begin{align}\label{var-ext}
%    &\omega^2_{N}(\kl_{1,2}) = \Var(X_{N,1}) = \frac{\sigma^2_N(\kl_{1,2})}{N^{6-4\beta}} \leq a_9 \frac{N^{4-3\beta}}{N^{6-4\beta}} = \frac{a_9}{N^{2-\beta}},~\text{if $\beta\in [0,1]$ and,}\notag\\
%    &\omega^2_{N}(\kl_3)=\Var(X_{N,2}) \leq
%    \begin{cases}
%    &a_{10}\frac{N^{4-5\beta}}{N^{6-6\beta}}=\frac{a_{10}}{N^{2-\beta}},~\text{if  $\beta\in [0,1/2)$},\\
%    &a_{11}\frac{N^{3-3\beta}}{N^{6-6\beta}}=\frac{a_{11}}{N^{3(1-\beta)}},~\text{if $\beta\in [1/2,1)$},\\
%    &a_{12},~\text{if $\beta=1$},
%    \end{cases}
%\end{align}
%where $a_9 = a_9(\beta, \kl_{1,2}, \lamb, \Cb)$ and $a_i = a_i(\beta, \kl_{3}, \lamb, \Cb),~i=10,11,12$, are the finite positive constants depending on the graph $H$ and the model parameters.
When $(\al,\beta)\in R_{2,1}\cup R_{2,2}$, then $q_{N,2}(\al,\beta) = N^{\frac{1}{2}(1-\al)}$. Then from the expression \eqref{rem-ego-hat}, using \eqref{var-rate-ego},\eqref{var-Xhat-1}, \eqref{var-ext} and Lemma \ref{SN-hat-tght}, we can write
\begin{align*}
    N^{\frac{1}{2}(1-\al)}(\widehat{X}_{N,2,1} - \theta_{1,N})&= \frac{\widetilde{\sigma}_{N,2}(\kl_{1,2})}{N^{-\frac{1}{2}(1-\al)}}T_{N,2}(\kl_{1,2}) + \frac{\omega_{N}(\kl_{1,2})}{N^{-\frac{1}{2}(1-\al)}} \frac{X_{1,N} - \theta_{1,N}}{\omega(\kl_{1,2})} =O\left(\frac{1}{N}\right) O_P(1) + \frac{O_P(1)}{N^{\frac{1}{2}(1-\beta+\al)}}\notag\\
    &= O_P(1).
\end{align*}
From \eqref{d-bdd}, on the event $A_{1,2,N}\cap B_{1,N}$, the denominator term with in $\widehat{\Upsilon}_{N,2}$ remains bounded away from zero. Hence, we can say that $N^{\frac{1}{2}(1-\al)}\widehat{\Upsilon}_{N,2} = o_P(1)$, when $(\al,\beta)\in R_{2,1}\cup R_{2,2}$.
%Similarly, using \eqref{var-ext} and Lemma xx, from \eqref{rem-ind-pop}, we can say that 
%\begin{align*}
%    &N^{\frac{1}{2}(1-\al)}(X_{N,1} - \theta_{1,N})= \frac{\omega_{N}(\kl_{1,2})}{N^{-\frac{1}{2}(1-\al)}}\frac{X_{N,1} - \theta_{1,N}}{\omega_N(\kl_{1,2})} = \frac{1}{N^{\frac{1}{2}(1-\beta+\al)}}O_P(1).
%\end{align*}
%The denominator of $N^{\frac{1}{2}(1-\al)}{\Delta}_{N,1}$ is a bounded quantity from \eqref{d-bdd}. Hence, we can say that $N^{\frac{1}{2}(1-\al)}{\Delta}_{N,1} = o_P(1)$ in the region $R_{1,1}\cup R_{1,2}$.
It remains to show that $q_{N,2}(\al,\beta)\widehat{\Upsilon}_{N,2} = o_P(1)$, when $(\al,\beta)\in R_{2,3}\cup R_{2,4}\cup R_{2,5}\cup R_{2,6}$. First note that, using \eqref{var-Xhat} and \eqref{var-ext}, for $N>\max\{M_6,M_7\}$, we can say the following 
\begin{align}\label{pop-samp-rat-1}
    \frac{\omega^2_{N}(\kl_{1,2})}{\widetilde{\sigma}^2_{N,2}(\kl_{3})}\leq\begin{cases}
        a_{23}\cdot N^{2(1-\al-2\beta)},~&\text{when $(\al,\beta)\in R_{2,3}$},\\
        a_{24}\cdot N^{1-2\al-2\beta},~&\text{when $(\al,\beta)\in R_{2,4}$},\\
        a_{25}\cdot N^{-(2-\beta)},~&\text{when $(\al,\beta)\in R_{2,5}\cup R_{2,6}$}.
    \end{cases}
\end{align}
Then, from \eqref{rem-ego-hat} and using \eqref{var-ratio-ego} and \eqref{pop-samp-rat-1}, we can write
\begin{align*}
    &\frac{\widehat{\Upsilon}_{N,2}}{\widetilde{\sigma}_{N,2}(\kl_3)} = \frac{2(\theta_{2,N} + \delta o_P(1))}{(\theta_{1,N} + \delta(\widehat{X}_{N,2,1} - \theta_{1,N}))^3} \left\{\frac{\widetilde{\sigma}_{N,1}(\kl_{1,2})}{\widetilde{\sigma}_{N,2}(\kl_3)}\left(\frac{\widehat{X}_{N,2,1} - \theta_{1,N}}{\widetilde{\sigma}_{N,2}(\kl_{1,2})}\right)\right\}\cdot o_P(1) \notag\\
    &\quad{}+\frac{2}{(\theta_{1,N} + \delta(\widehat{X}_{N,2,1} - \theta_{1,N}))^2} \left\{\frac{\widetilde{\sigma}_{N,2}(\kl_{1,2})}{\widetilde{\sigma}_{N,2}(\kl_3)}\left(\frac{\widehat{X}_{N,2,1} - \theta_{1,N}}{\widetilde{\sigma}_{N,2}(\kl_{1,2})}\right)\right\}\cdot o_P(1)\notag\\
    & = \frac{2(\theta_{2,N} + \delta\cdot o_P(1))}{(\theta_{1,N} + \delta(\widehat{X}_{N,2,1} - \theta_{1,N}))^3} \left\{O\left(\frac{1}{N}\right)\cdot\left(\frac{\widehat{X}_{N,2,1} - \theta_{1,N}}{\widetilde{\sigma}_{N,2}(\kl_{1,2})}\right)\right\}\cdot o_P(1) \notag\\
    &\quad{}+\frac{2}{(\theta_{1,N} + \delta(\widehat{X}_{N,2,1} - \theta_{1,N}))^2} \left\{O\left(\frac{1}{N}\right)\cdot\left(\frac{\widehat{X}_{N,2,1} - \theta_{1,N}}{\widetilde{\sigma}_{N,2}(\kl_{1,2})}\right)\right\}o_P(1).
\end{align*}
In the above expression and on the event $A_{1,2,N}\cap B_{1,N}$,  the denominator terms are bounded away from zero. Using Corollary \ref{cor-clt-all}, Lemma \ref{SN-hat-tght}, \eqref{var-ext} and \eqref{var-Xhat}, we can say that, for $(\al,\beta)\in R_{2,3}\cup R_{2,4}\cup R_{2,5}\cup R_{2,6}$,
\begin{align*}
    &\frac{\widehat{X}_{N,2,1} - \theta_{1,N}}{\widetilde{\sigma}_{N,2}(\kl_{1,2})} = T_{N,2}(\kl_{1,2}) + \frac{\omega_{N}(\kl_{1,2})}{\widetilde{\sigma}_{N,2}(\kl_{1,2})}\left(\frac{X_{N,1} - \theta_{1,N}}{\omega_N(\kl_{1,2})}\right) = O_P(1) + \frac{1}{N^{\frac{1}{2}(1-\beta+\al)}}\cdot O_P(1) = O_P(1).
    %&\frac{\widehat{X}_{N,1,2} - \theta_{2,N}}{\widetilde{\sigma}_{N,1}(\kl_{3})} = T_{N,1}(\kl_{3}) + \frac{\omega_{N}(\kl_{3})}{\widetilde{\sigma}_{N,1}(\kl_{3})}\left(\frac{X_{N,2} - \theta_{2,N}}{\omega_N(\kl_{3})}\right) = O_P(1) + \frac{\omega_{N}(\kl_{3})}{\widetilde{\sigma}_{N,1}(\kl_{3})}O_P(1) = O_P(1)
\end{align*}
Hence, we can say that in the region $R_{2,3}\cup R_{2,4}\cup R_{2,5}\cup R_{2,6}$, $\frac{\widehat{\Upsilon}_{N,2}}{\widetilde{\sigma}_{N,2}(\kl_3)} = o_P(1)$. From \eqref{v-11} and  \eqref{v-21} we know that $q_{N,2}(\al,\beta)\cdot \widetilde{\sigma}_{N,2}(\kl_3) = O(1)$, when $(\al,\beta)\in R_{2,3}\cup R_{2,4}\cup R_{2,5}\cup R_{2,6}$. Thus, in this region $q_{N,2}(\al,\beta)\cdot \widehat{\Upsilon}_{N,2}=o_P(1)$.
%Similarly, from \eqref{rem-ind-pop}, using \eqref{pop-samp-rat} and Lemma xx, we can write
%\begin{align*}
%    &\frac{{\Delta}_{N,1}}{\widetilde{\sigma}_{N,1}(\kl_3)} = \frac{2(\theta_{2,N} + \delta\cdot o_P(1))}{(\theta_{1,N} + \delta({X}_{N,1} - \theta_{1,N}))^3} \left\{\frac{\omega_N(\kl_{1,2})}{\widetilde{\sigma}_{N,1}(\kl_3)}\left(\frac{{X}_{N,1} - \theta_{1,N}}{\omega_N(\kl_{1,2})}\right)\right\}\cdot o_P(1) +\notag\\
%    &+\frac{2}{(\theta_{1,N} + \delta({X}_{N,1} - \theta_{1,N}))^2}\left\{\frac{\omega_N(\kl_{1,2})}{\widetilde{\sigma}_{N,1}(\kl_3)}\left(\frac{{X}_{N,1} - \theta_{1,N}}{\omega_N(\kl_{1,2})}\right)\right\}\cdot o_P(1)\notag\\
%    &= \frac{2(\theta_{2,N} + \delta\cdot o_P(1))}{(\theta_{1,N} + \delta({X}_{N,1} - \theta_{1,N}))^3} \left\{O\left(\frac{1}{N}\right)O_P(1)\right\}\cdot o_P(1) +\notag\\
%    &+\frac{2}{(\theta_{1,N} + \delta({X}_{N,1} - \theta_{1,N}))^2}\left\{O\left(\frac{1}{N}\right)O_P(1)\right\}\cdot o_P(1) = o_P(1).
%\end{align*}
%Therefore, using \eqref{d-bdd}, we can say that $d_{N,a}(\al,\beta)\cdot \Delta_{N,1} = o_P(1)$ when $(\al,\beta)\in R_{1,3}\cup R_{1,4}\cup R_{1,5}$. 
Hence, for $(\al,\beta)\in \bigcup_{i=1}^6R_{2,i}$, the remainder term in the expression \eqref{piv-cc-1} converges to zero in probability. Therefore, from the expression \eqref{piv-cc-1}, using Slutsky's theorem, we can write, 
\begin{align*}
    &\pr\left(q_{N,2}(\al,\beta)\cdot T_{N,2}\left(\kl_{1,2},\kl_3\right)  + \frac{q_{N,2}(\al,\beta)}{2}\left\{\widehat{\Upsilon}_{N,2} +\Upsilon_N\right\}\leq t,~ A_{1,2,N}\cap B_{1,N}\right) + o(1)\notag\\
    &= \pr\left(q_{N,2}(\al,\beta)\cdot T_{N,2}\left(\kl_{1,2},\kl_3\right)  \leq t,~ A_{1,2,N}\cap B_{1,N}\right) + o(1)= \pr\left(q_{N,2}(\al,\beta)\cdot T_{N,2}\left(\kl_{1,2},\kl_3\right)  \leq t\right) + o(1).
\end{align*}
This completes the proof of Theorem \ref{thm-cc-ego}.
\end{proof}

\setappendix{C}
\refstepcounter{subsection}
%\appsubsection{C}{Results from other sources}
\subsection*{Appendix C: Results from other sources}
\label{sec:app-C}

For the sake of completeness, we state certain results from other sources that have been widely used in our proofs. The first result (Theorem \ref{thm-ruc}) provides a non-asymptotic bound on the $d_1$ distance (see \eqref{d1-def}) between a scaled sum of dependent random variables and a standard normal random variable. The proof of Theorem \ref{thm-ruc} is available in \cite{bkr-89}.

\begin{theorem}[Theorem 6.32. in \cite{janson-rucinski-randomgraphs}]
\label{thm-ruc}
    Suppose that $S = \sum_{\al\in A}X_{\al}$, where $\{X_{\al}\}_{\al\in A}$ is a family of random variables with dependency graph $L$ and assume that, $\E(X_{\al}) =0$, for all $\al\in A$. Let $\sigma^2 = \Var(S)$, and assume that $0<\sigma^2<\infty$. Then, for some universal constant $C$ 
    \begin{align*}
        d_1\left(\sigma^{-1}S,Z\right) \leq \frac{C}{\sigma^3}\sum_{\al\in A}\sum_{\beta,\gamma\in N(\al)}\left[\E|X_{\al}X_{\beta}X_{\gamma}| +\E|X_{\al}X_{\beta}|\E|X_{\gamma}|\right],
    \end{align*}
where, $d_1$ is defined in \eqref{d1-def}, $N(\al)$ is the closed dependency neighborhood of $\al$ for any $\al\in A$, and $Z\sim N(0,1)$.    
\end{theorem}
\vspace{2mm}

The next result (Theorem \ref{thm-ruc-1}) provides a non-asymptotic bound on the total variation distance between a sum of dependent Bernoulli random variables and another Poisson random variable. %The proof of Theorem A.2 is available in \cite{janson-rucinski-randomgraphs}.

\begin{theorem}[Theorem 6.23 form \cite{janson-rucinski-randomgraphs}]\label{thm-ruc-1}
    Suppose that $X = \sum_{\al\in A}\mathbf{1}_{\al}$, where the $\mathbf{1}_{\al}$ are random indicator variable with a dependency graph $L$. Then, with $\pi_{\al} = \E(\mathbf{1}_{\al})$ and $\lambda = \E(X) = \sum_{\al\in A}\pi_{\al}$ (and with the summation over ordered pairs $(\al,\beta)$),
    \begin{align*}
        d_{{TV}}(X,U) \leq \min(\lambda^{-1},1)\left(\Var(X) - \E(X) + 2\sum_{\al\in A}\sum_{\beta\in N(\al)}\pi_{\al}\pi_{\al} + 2 \sum_{\al\in A}\pi_{\al}^2\right),
    \end{align*}
where, $d_{TV}$ denotes the total variation distance (cf. \eqref{TV-dist}), $N(\al)$ denotes the closed dependency neighborhood of any $\al\in A$ and $U\sim$ Poisson $(\lambda)$.    
\end{theorem}
\vspace{2mm}
%\begin{proof}
%The proof is given in \cite{bhj-92}.    
%\end{proof}

Theorem \ref{lem-mult-SB} provides a construction of the multivariate size biased coupling, which is used to obtain a multivariate Poisson approximation result for a random vector, where the components are sums of dependent Bernoulli random variables, and the components are themselves dependent. We state the result for a two-dimensional random vector. The proof is available in \cite{nualart2025}.

\begin{theorem}[Lemma 2.4 in \cite{nualart2025} for $d=2$]\label{lem-mult-SB}
Let $\mathbf{S} = {(S_1,S_2)}^\primet$, where, 
$$
S_1 = \sum_{j=1}^{n_1}X_j^1,\quad\text{and}\quad S_2 = \sum_{j=1}^{n_2} X_j^2,
$$
where, we assume that $\{X^{i}_j:1\leq j\leq n_i\}$, $i=1,2$, are Bernoulli random variables with $\pr(X_j^i = 1) =p_{i,j}\in(0,1)$. For all $i=1,2$ and $l=1,\ldots, n_i$, consider the following random vectors 
\begin{align*}
&\mathbf{S}^{1,(1,l)} = \left(\left(X_j^{1,(1,l)}\right)_{j=1,j\neq l}^{n_1}\right)^\primet\quad \text{and}\quad
 \mathbf{S}^{2,(2,l)} = \left(\left(X_j^{1,(2,l)}\right)_{j=1}^{n_1}, \left(X_j^{2,(2,l)}\right)_{j=1,j\neq l}^{n_2}\right)^\primet,
\end{align*}
which are defined on the same probability space as the random vectors 
    \begin{align*}
        &\mathbf{S}^{1,l} = \left((X_j^1)_{j=1,j\neq l}^{n_1}\right)^\primet\quad\text{and}\quad
     \mathbf{S}^{2,l} = \left(\left(X_j^1\right)_{j=1}^{n_1}, (X_j^2)_{j=1,j\neq l}^{n_2}\right)^\primet.
    \end{align*}
Assume that  
    \begin{align*}
        \mathbf{S}^{1,(1,l)} \eqd [\mathbf{S}^{1,l} \, |\, X^1_{l} = 1]\quad\text{and}\quad\mathbf{S}^{2,(2,l)} \eqd [\mathbf{S}^{2,l} \, |\, X^2_{l} = 1].
    \end{align*}
Let $I_1$ and $I_2$ be two independent random variables, which are also independent of all other random variables, such that for all $i=1,2$, and $j=1,\ldots,n_i$,
    \begin{align*}
        \pr(I_i = j) = \frac{p_{i,j}}{\lambda_i},~\text{where} ~\lambda_i=\E(S_i) = \sum_{j=1}^{n_i}p_{i,j}.
    \end{align*}
Define the two-dimensional triangular array, $\widetilde{\mathbf{S}}= \left(\widetilde{\mathbf{S}}^1,\widetilde{\mathbf{S}}^2\right)$, where 
    \begin{align*}
        &\widetilde{\mathbf{S}}^1 = \left(\sum_{j=1,j\neq I_1}^{n_1}X_j^{1,(1,I_1)}+1\right)\quad\text{and}\quad\widetilde{\mathbf{S}}^2 = \left(\sum_{j=1}^{n_1}X_j^{1,(2,I_2)}, \sum_{j=1,j\neq I_2}^{n_2}X_j^{2,(2,I_2)}+1\right)^\primet.
    \end{align*}
    Then, $\widetilde{\mathbf{S}}$ is a size-biased coupling of $\mathbf{S}$.
\end{theorem}
\vspace{2mm}

The next result (Theorem \ref{thm-mult-SB}) provides a bound on the Wasserstein distance $d_W$ (see \eqref{wass-dist}) between a bivariate random vector of the form $\mathbf{S}={(S_1,S_2)}^\primet$, considered in Theorem A.3 above, and a bivariate random vector $\mathbf{Z} = {(Z_1,Z_2)}^\primet$, where $Z_1$ and $Z_2$ are independent and each $Z_i$, $i=1,2$, is a Poisson random variable. A proof is available in \cite{nualart2025} (see Theorem 2.6) or  \cite{pianoforte2023} (see Theorem 1.1).

\begin{theorem}[Theorem 2.6 in \cite{nualart2025} for $d=2$]\label{thm-mult-SB}
    Let $\mathbf{S} = {(S_1,S_2)}^\primet$ be a two-dimensional random vector taking values in $\mathbb{N}^2_{0}$, with $\E(S_i) =\kappa_i >0$, for $i=1,2$. Let $\widetilde{\mathbf{S}}$ be a size-biased coupling of $\mathbf{S}$ (see Theorem \ref{assump-3} above). Then,
    \begin{equation*}
        \begin{aligned}
            d_{{W}}\left(\mathbf{S}, \mathbf{Z}\right) \leq \min\{1,\kappa_1\}\cdot\E\left|\widetilde{S}_1^1-1-S_1\right| + \min\{1,\kappa_2\}\cdot\E\left|\widetilde{S}_2^2-1-S_2\right| + 2\kappa_2\cdot\E\left|\widetilde{S}_1^2-S_1\right|, 
        \end{aligned}
    \end{equation*}
    where, $\mathbf{Z} = {(Z_1,Z_2)}^\primet$, $d_W$ is defined in \eqref{wass-dist}, and $Z_1$ and $Z_2$ are independent Poisson random variables with means $\kappa_1$ and $\kappa_2$ respectively.
\end{theorem}

%\appsubsection{D}{Supplementary lemmas}
\setappendix{D}
\refstepcounter{subsection}
\subsection*{Appendix D: Supplementary lemmas}
\label{sec:app-D}

%\subsubsection*{Proof of Lemma \ref{lem-1}}
\begin{lemma}\label{lem-1}
  Let $H$ be a fixed, undirected, simple and connected graph with $V(H)=\{1,\ldots,R\}$ and  edge set $E(H)\subseteq {[V(H)]}^2$, where $R\geq 2$, is a fixed positive integer. For each $t\in \{0,1,\ldots,R\}$, define the collection of ordered pairs of vectors
\begin{align}
\Mc_t = \left\{(\sbb_1,\sbb_2):\sbb_1,\sbb_2\in {[N]}_R,~ |A(\sbb_1)\cap A(\sbb_2)| = t\right\}.
\label{Etset}
\end{align}
Then, for $t\in \{2,\ldots, R\}$, there exists an $M\in \mathbb{N}$ such that  for all $N\geq M$,
\begin{align}
& L_t(\beta,H,\lamb,\Cb)\leq \frac{1}{N^{2R-t-\beta(2T - m(t:H))}}\sum_{(\sbb_1,\sbb_2)\in \Mc_t} \E\left(Y_{N}(\sbb_1:H)\cdot Y_{N}(\sbb_2:H)\right)\leq U_t(\beta,H,\lamb,\Cb).
%& \sum_{(\sbb_1,\sbb_2)\in \Mc_t} \E\left(Y_{N}(\sbb_1:H)\cdot Y_{N}(\sbb_2:H)\right)\asymp N^{2R-t-\beta(2T - m(t:H))},
\label{res-lem-1} 
\end{align}
where, $L_t(\beta,H,\lamb,\Cb)$ and $U_t(\beta,H,\lamb,\Cb)$ are two positive constants depending on $\beta$, $H$ and the model parameters $\lamb$ and $\Cb$.
\end{lemma}
\begin{proof}[Proof of Lemma \ref{lem-1}]
Since $|A(\sbb_1)\cap A(\sbb_2)| = t \in \{2,\ldots,R\}$, thus there exists $t$ common elements between $\sbb_1$ and $\sbb_2$. Fix any $\jbb=(j_1,\ldots,j_t)^\primet\in [R]_{<,t}$ and $\kbb=(k_1,\ldots,k_t)^\primet\in [R]_{<,t}$ (see \eqref{n-sub-r-2}), where $\jbb$ and $\kbb$ denote the positions at which the common elements appear within $\sbb_1$ and $\sbb_2$ respectively.
%We assume $1\leq j_1<\cdots <j_t\leq R$ and $1\leq k_1<\cdots<k_t\leq R$.
Consider the set $[t]=\{1,\ldots,t\}$, and define
\begin{align}
\mathcal{S}_t = \left\{\xi:[t]\mapsto (\xi(1),\ldots,\xi(t))\in {[t]}_t~\big|~\text{$\xi$ is a permutation of $\{1,\ldots,t\}$}\right\},
\label{scal-t}
\end{align}
as the collection of all possible permutations of $\{1,\ldots,t\}$. Note $|\mathcal{S}_t|=t!$. For each such $\jbb,\kbb\in {[R]}_{<,t}$ and any permutation $\xi\in \mathcal{S}_t$, define the following subsets of $\Mc_t$ (see \eqref{Etset}),
\begin{equation}
\label{Etset-1}
\begin{aligned}
\Mc_t(\jbb, \kbb) & = \left\{(\sbb_1,\sbb_2): \sbb_1,\sbb_2\in \Mc_t,\ A\left({(\sbb_1)}_{\jbb}\right) = A\left({(\sbb_2)}_{\kbb}\right)\right\},\quad\text{and}\\
\Mc_t(\jbb, \kbb, \xi) & = \left\{(\sbb_1,\sbb_2): \sbb_1,\sbb_2\in \Mc_t,\ s_{1,j_1}=s_{2,k_{\xi(1)}},\ldots,s_{1,j_t}=s_{2,k_{\xi(t)}}\right\}.
%    s_{1,j_1} = s_{2,k_1},\ldots, s_{1,j_t} = s_{2,k_t}\right\}.
\end{aligned}
\end{equation}
For a fixed $t\in \{2,\ldots,R\}$, the sets $\Mc_t(\jbb,\kbb)$ and $\Mc_t(\jbb',\kbb')$ are disjoint if, $\jbb\neq \jbb'$ or $\kbb\neq \kbb'$.
Also note that, 
\begin{align*}
   \Mc_t= \bigcup_{\jbb,\kbb\in [R]_{<,t}}\Mc_t(\jbb,\kbb)\quad\text{and}\quad \Mc_t(\jbb,\kbb) = \bigcup_{\xi\in\mathcal{S}_t} \Mc_t(\jbb,\kbb,\xi).
\end{align*}
Fix any $(\sbb_1,\sbb_2)\in \Mc_t(\jbb,\kbb,\xi)$ and write, $\sbb_i = \left(s_{i,1},\ldots, s_{i,R}\right)^\primet\in [N]_R$, for $i=1,2$. Let $A(\sbb_1)\cap A(\sbb_2) = \{a_1,\ldots,a_t\}$, where $a_i$'s are distinct and $a_i\in [N]$ for each $i=1,\ldots,t$. For any $\jbb = (j_1,\ldots,j_t)^\primet\in {[R]}_{<,t}$ and $\kbb = (k_1,\ldots,k_t)^\primet\in {[R]}_{<,t}$, we write $A(\jbb) = \{j_1,\ldots,j_t\}$, and $A(\kbb) = \{k_1,\ldots,k_t\}$. For simplicity of notation, we will write the product, 
$$
\prod_{j\in {[R]}\setminus \{j_1,\ldots,j_t\}} \equiv \prod_{j\in {[R]}\setminus A(\jbb)},
$$
and similarly for the case of the product over $k\in {[R]}\setminus \{k_1,\ldots,k_t\}$.
Next, we use the sets $A(\jbb)$ and $A(\kbb)$ as vertex sets. 
From each of these sets, we construct induced subgraphs of $H$ that contain exactly $t$ vertices. 
We denote these induced subgraphs by $B(\jbb,t)$ and $B(\kbb,t)$
corresponding to vertex set $A(\jbb)$ and $A(\kbb)$, respectively.
With these definitions in place, l.h.s. of \eqref{res-lem-1} has the following expression.
\begin{align}\label{E-1}
    &\sum_{(\sbb_1,\sbb_2)\in \Mc_t}\E\left(Y_{N}(\sbb_1:H)Y_N(\sbb_2:H)\right)= \sum_{\jbb,\kbb\in [R]_{<,t}}\sum_{\xi\in \mathcal{S}_t}\sum_{(\sbb_1,\sbb_2)\in \Mc_t(\jbb,\kbb,\xi)}\E\left(\prod_{\{i,j\}\in E(H)}Y_{s_{1,i},s_{1,j}}\prod_{\{i,j\}\in E(H)}Y_{s_{2,i},s_{2,j}}\right)\notag\\
    &= \sum_{\jbb,\kbb\in [R]_{<,t}}\sum_{\xi\in \mathcal{S}_t}\sum_{(\sbb_1,\sbb_2)\in \Mc_t(\jbb,\kbb,\xi)}\E\left(\prod_{\{i,j\}\in E(B(\jbb,t))}Y_{s_{1,i},s_{1,j}}\prod_{\{i,j\}\in E(H)\setminus E(B(\jbb,t))}Y_{s_{1,i},s_{1,j}}\right.\notag\\
    &\left.\qquad{}\qquad{}\times\prod_{\{i,j\}\in E(B(\kbb,t))}Y_{s_{2,i},s_{2,j}}\prod_{\{i,j\}\in E(H)\setminus E(B(\kbb,t))}Y_{s_{2,i},s_{2,j}}\right)\notag\\
    &= \sum_{\jbb,\kbb\in [R]_{<,t}}\sum_{\xi\in \mathcal{S}_t}\sum_{(\sbb_1,\sbb_2)\in \Mc_t(\jbb,\kbb,\xi)}\E\left(\prod_{\{i,j\}\in E(B(\jbb,t))}Y_{s_{1,i},s_{1,j}}\prod_{\{i,j\}\in E(B(\kbb,t))}Y_{s_{2,i},s_{2,j}}\right)\notag\\
    &\qquad{}\qquad{}\times\E\left(\prod_{\{i,j\}\in E(H)\setminus E(B(\jbb,t))}Y_{s_{1,i},s_{1,j}}\right)\E\left(\prod_{\{i,j\}\in E(H)\setminus E(B(\kbb,t))}Y_{s_{2,i},s_{2,j}}\right).
\end{align}
In expression \eqref{E-1}, the product $\left(\prod_{{i,j}\in E(H)\setminus E(B(\jbb,t))}Y_{s_{1,i},s_{1,j}}\right)$ consists of random variables from the set $A_1 = {Y_{s_{1,i},s_{1,j}}: {i,j}\in E(H)\setminus E(B(\jbb,t))}$, and the product $\left(\prod_{{i,j}\in E(H)\setminus E(B(\kbb,t))}Y_{s_{2,i},s_{2,j}}\right)$ consists of random variables from the set $A_2={Y_{s_{2,i},s_{2,j}}: {i,j}\in E(H)\setminus E(B(\kbb,t))}$. For any edge ${i,j} \in E(H) \setminus E(B(\mathbf{j},t))$, two cases arise: either $i \in V(B(\mathbf{j},t))$ and $j \notin V(B(\mathbf{j},t))$, or both $i,j \notin V(B(\mathbf{j},t))$. Similarly, for the set $A_2$, for any edge ${a,b} \in E(H) \setminus E(B(\mathbf{k},t))$, two cases arise: either $a \in V(B(\mathbf{k},t))$ and $b \notin V(B(\mathbf{k},t))$, or both $a,b \notin V(B(\mathbf{k},t))$. Then, for all $(\sbb_1,\sbb_2)\in \Mc(\jbb,\kbb,\xi)$, the index sets of the random variables from $A_1$ and $A_2$ satisfy the following relation: $|{s_{1,i},s_{1,j}}\cap{s_{2,a},s_{2,b}}|$ is either $0$ or $1$, depending on the positions of the vertices $i,j$ and $a,b$. But we know that the random variables $Y_{s_{1,i},s_{1,j}}$ and $Y_{s_{2,a},s_{2,b}}$ are dependent if and only if $|{s_{1,i},s_{1,j}}\cap{s_{2,a},s_{2,b}}|=2$.
%Note that the terms $\left(\prod_{\{i,j\}\in E(H)\setminus E(B(\jbb,t))}Y_{s_{1,i},s_{1,j}}\right)$ and $\left(\prod_{\{i,j\}\in E(H)\setminus E(B(\kbb,t))}Y_{s_{2,i},s_{2,j}}\right)$ are independent. Since
%$|\{s_{1,i},s_{1,j}\}\cap\{s_{2,r},s_{2,s}\}|\in \{0,1\}$.
%These two products are dependent only if $Y_{s_{1,i},s_{1,j}} = Y_{s_{2,r},s_{2,s}}$ for some $i,j,r,s\in [R]$, that is, when . Here 
%\begin{align*}
 %   |\{s_{1,i},s_{1,j}\}\cap \{s_{2,r},s_{2,s}\}| \in \begin{cases}
 %       \{0\},&~\text{if $\{i,j\}\in A(\jbb)^c$ and $\{r,s\}\in A(\kbb)^c$},\\
 %       \{1,0\},&~\text{if $i\in A(\jbb)^c,~j\in A(\jbb)$ and $r\in A(\kbb)^c,~s\in A(\kbb)$},\\
 %       \{1,0\},&~\text{if $i\in A(\jbb),~j\in A(\jbb)^c$ and $r\in A(\kbb),~s\in A(\kbb)^c$}.
%    \end{cases}
%\end{align*}
%Therefore those two products are independent. But we cannot split the following expectation
%\begin{align*}
 %   \E\left(\prod_{\{i,j\}\in E(B(\jbb,t))}Y_{s_{1,i},s_{1,j}}\prod_{\{i,j\}\in E(B(\kbb,t))}Y_{s_{2,i},s_{2,j}}\right). 
%\end{align*}
%Since, for some $\{i,j\}\in A(\jbb)$ and $\{k,l\}\in A(\kbb)$, there exists some $\xi\in \mathcal{S}_t$ such that $\{s_{1,i},s_{1,j}\} = \{s_{2,k},s_{2,l}\}$.  
%Let $\jbb,\kbb\in [R]_r$ such that $E(B(\jbb,t))$ and $E(B(\kbb,t))$ are maximum. 
Next, we define two sets for fixed $t\in \{1,\ldots,R\}$,
\begin{equation}\label{def-Jt-Mt}
\begin{aligned}
    &J_t = \{(\jbb,\kbb):~\jbb,\kbb\in [R]_{<,t},~B(\jbb,t)\simeq B(\kbb,t)\}~\text{and},\\
    &M_t = \{(\jbb,\kbb):~\jbb,\kbb\in [R]_{<,t},~|E(B(\jbb,t))|=|E(B(\kbb,t))|=m(t:H)\}.
\end{aligned}
\end{equation}
where $m(t:H)$ defined in \eqref{mtH-def}.
Therefore, when $(\jbb,\kbb)\in J_t$, then for a fixed $\xi\in \mathcal{S}_t$, 
\begin{align*}
    \prod_{\{i,j\}\in E(B(\jbb,t))}Y_{s_{1,i},s_{1,j}}\prod_{\{i,j\}\in E(B(\kbb,t))}Y_{s_{2,i},s_{2,j}} = \prod_{\{i,j\}\in E(B(\jbb,t))}Y_{s_{1,i},s_{1,j}}.
\end{align*}
Therefore, the sum in \eqref{E-1} can be written as follows:
\begin{align}\label{E-2}
    &\sum_{(\sbb_1,\sbb_2)\in \Mc_t}\E\left(Y_N(\sbb_1:H)Y_N(\sbb_2:H)\right)\notag\\
    &= \sum_{\jbb,\kbb\in [R]_{<,t}}\sum_{\xi\in \mathcal{S}_t}\sum_{(\sbb_1,\sbb_2)\in \Mc_t(\jbb,\kbb,\xi)}\E\left(\prod_{\{i,j\}\in E(B(\jbb,t))}Y_{s_{1,i},s_{1,j}}\prod_{\{i,j\}\in E(B(\kbb,t))}Y_{s_{2,i},s_{2,j}}\right)\notag\\
    &\qquad{}\qquad{}\times\E\left(\prod_{\{i,j\}\in E(H)\setminus E(B(\jbb,t))}Y_{s_{1,i},s_{1,j}}\right)\E\left(\prod_{\{i,j\}\in E(H)\setminus E(B(\kbb,t))}Y_{s_{2,i},s_{2,j}}\right)\notag\\
    &= \sum_{(\jbb,\kbb)\in J_t}\sum_{\xi\in \mathcal{S}_t}\sum_{(\sbb_1,\sbb_2)\in \Mc_t(\jbb,\kbb,\xi)}\E\left(\prod_{\{i,j\}\in E(B(\jbb,t))}Y_{s_{1,i},s_{1,j}}\prod_{\{i,j\}\in E(B(\kbb,t))}Y_{s_{2,i},s_{2,j}}\right)\notag\\
    &\qquad{}\qquad{}\times\E\left(\prod_{\{i,j\}\in E(H)\setminus E(B(\jbb,t))}Y_{s_{1,i},s_{1,j}}\right)\E\left(\prod_{\{i,j\}\in E(H)\setminus E(B(\kbb,t))}Y_{s_{2,i},s_{2,j}}\right)\notag\\
    &\qquad{}+ \sum_{(\jbb,\kbb)\in J_t^c}\sum_{\xi\in \mathcal{S}_t}\sum_{(\sbb_1,\sbb_2)\in \Mc_t(\jbb,\kbb,\xi)}\E\left(\prod_{\{i,j\}\in E(B(\jbb,t))}Y_{s_{1,i},s_{1,j}}\prod_{\{i,j\}\in E(B(\kbb,t))}Y_{s_{2,i},s_{2,j}}\right)\notag\\
    &\qquad{}\qquad{}\times\E\left(\prod_{\{i,j\}\in E(H)\setminus E(B(\jbb,t))}Y_{s_{1,i},s_{1,j}}\right)\E\left(\prod_{\{i,j\}\in E(H)\setminus E(B(\kbb,t))}Y_{s_{2,i},s_{2,j}}\right)\notag\\
    &= \sum_{(\jbb,\kbb)\in J_t\cap M_t}\sum_{\xi\in \mathcal{S}_t}\sum_{(\sbb_1,\sbb_2)\in \Mc_t(\jbb,\kbb,\xi)}\E\left(\prod_{\{i,j\}\in E(B(\jbb,t))}Y_{s_{1,i},s_{1,j}}\prod_{\{i,j\}\in E(B(\kbb,t))}Y_{s_{2,i},s_{2,j}}\right)\notag\\
    &\qquad{}\qquad{}\times\E\left(\prod_{\{i,j\}\in E(H)\setminus E(B(\jbb,t))}Y_{s_{1,i},s_{1,j}}\right)\E\left(\prod_{\{i,j\}\in E(H)\setminus E(B(\kbb,t))}Y_{s_{2,i},s_{2,j}}\right)\notag\\
    &\quad{}+ \sum_{(\jbb,\kbb)\in J_t\cap M_t^c}\sum_{\xi\in \mathcal{S}_t}\sum_{(\sbb_1,\sbb_2)\in \Mc_t(\jbb,\kbb,\xi)}\E\left(\prod_{\{i,j\}\in E(B(\jbb,t))}Y_{s_{1,i},s_{1,j}}\prod_{\{i,j\}\in E(B(\kbb,t))}Y_{s_{2,i},s_{2,j}}\right)\notag\\
    &\qquad{}\qquad{}\times\E\left(\prod_{\{i,j\}\in E(H)\setminus E(B(\jbb,t))}Y_{s_{1,i},s_{1,j}}\right)\E\left(\prod_{\{i,j\}\in E(H)\setminus E(B(\kbb,t))}Y_{s_{2,i},s_{2,j}}\right)\notag\\
    &\qquad{}+ \sum_{(\jbb,\kbb)\in J_t^c}\sum_{\xi\in \mathcal{S}_t}\sum_{(\sbb_1,\sbb_2)\in \Mc_t(\jbb,\kbb,\xi)}\E\left(\prod_{\{i,j\}\in E(B(\jbb,t))}Y_{s_{1,i},s_{1,j}}\prod_{\{i,j\}\in E(B(\kbb,t))}Y_{s_{2,i},s_{2,j}}\right)\notag\\
    &\qquad{}\qquad{}\times\E\left(\prod_{\{i,j\}\in E(H)\setminus E(B(\jbb,t))}Y_{s_{1,i},s_{1,j}}\right)\E\left(\prod_{\{i,j\}\in E(H)\setminus E(B(\kbb,t))}Y_{s_{2,i},s_{2,j}}\right)\notag\\
    &= V_{1,N} + V_{2,N} + V_{3,N},~\text{(say).}
\end{align}
First, we analyze the $V_{1,N}$ form \ref{E-2}, that is, 
\begin{align}\label{V-1}
    &V_{1,N} = \sum_{(\jbb,\kbb)\in J_t\cap M_t}\sum_{\xi\in \mathcal{S}_t}\sum_{(\sbb_1,\sbb_2)\in \Mc_t(\jbb,\kbb,\xi)}\E\left(\prod_{\{i,j\}\in E(B(\jbb,t))}Y_{s_{1,i},s_{1,j}}\prod_{\{i,j\}\in E(B(\kbb,t))}Y_{s_{2,i},s_{2,j}}\right)\notag\\
    &\qquad{}\qquad{}\times\E\left(\prod_{\{i,j\}\in E(H)\setminus E(B(\jbb,t))}Y_{s_{1,i},s_{1,j}}\right)\E\left(\prod_{\{i,j\}\in E(H)\setminus E(B(\kbb,t))}Y_{s_{2,i},s_{2,j}}\right)\notag\\
    &= \sum_{(\jbb,\kbb)\in J_t\cap M_t}\sum_{\xi\in \mathcal{S}_t}\sum_{(\sbb_1,\sbb_2)\in \Mc_t(\jbb,\kbb,\xi)}\E\left(\prod_{\{i,j\}\in E(B(\jbb,t))}Y_{s_{1,i},s_{1,j}}\prod_{\{i,j\}\in E(H)\setminus E(B(\jbb,t))}Y_{s_{1,i},s_{1,j}}\right)\notag\\
    &\quad\qquad{}\times\E\left(\prod_{\{i,j\}\in E(H)\setminus E(B(\kbb,t))}Y_{s_{2,i},s_{2,j}}\right)\notag\\
    &= \sum_{(\jbb,\kbb)\in J_t\cap M_t}\sum_{\xi\in \mathcal{S}_t}\sum_{(\sbb_1,\sbb_2)\in \Mc_t(\jbb,\kbb,\xi)}\prod_{\{i,j\}\in E(B(\jbb,t))}\pi_{N,\al_{s_{1,i}},\al_{s_{1,j}}}\prod_{\{i,j\}\in E(H)\setminus E(B(\jbb,t))}\pi_{N,\al_{s_{1,i}},\al_{s_{1,j}}}\notag\\
    &\qquad{}\prod_{\{i,j\}\in E(H)\setminus E(B(\kbb,t))}\pi_{N,\al_{s_{2,i}},\al_{s_{2,j}}}\notag\\
    &= \sum_{(\jbb,\kbb)\in J_t\cap M_t}\sum_{\xi\in \mathcal{S}_t}\sum_{(\sbb_1,\sbb_2)\in \Mc_t(\jbb,\kbb,\xi)}\prod_{\{i,j\}\in E(H)}\pi_{N,\al_{s_{1,i}},\al_{s_{1,j}}}\prod_{\{i,j\}\in E(H)\setminus E(B(\kbb,t))}\pi_{N,\al_{s_{2,i}},\al_{s_{2,j}}}\notag\\
    &= \sum_{(\jbb,\kbb)\in J_t\cap M_t}\sum_{\xi\in \mathcal{S}_t}\sum_{(\sbb_1,\sbb_2)\in \Mc_t(\jbb,\kbb,\xi)}\sum_{\ub,\vb\in [K]^R}\prod_{\{i,j\}\in E(H)}\pi_{N,u_i,u_j}\prod_{\{i,j\}\in E(H)\setminus E(B(\kbb,t))}\pi_{N,v_i,v_j}\notag\\
    &\qquad{}\times  \left\{\prod_{r=1}^t\mathbf{1}(\al_{a_r}=u_{j_r})\prod_{j\in [R]\setminus A(\jbb)}\mathbf{1}(\al_{s_{1,j}}=u_j)\prod_{r=1}^t\mathbf{1}(\al_{a_r}=v_{k_{\xi(r)}})\prod_{k\in [R]\setminus A(\kbb)}\mathbf{1}(\al_{s_{2,k}}= v_k)\right\}\notag\\
    &= \sum_{(\jbb,\kbb)\in J_t\cap M_t}\sum_{\xi\in \mathcal{S}_t}\sum_{(\sbb_1,\sbb_2)\in \Mc_t(\jbb,\kbb,\xi)}\sum_{\ub,\vb\in [K]^R}\prod_{\{i,j\}\in E(H)}\pi_{N,u_i,u_j}\prod_{\{i,j\}\in E(H)\setminus E(B(\kbb,t))}\pi_{N,v_i,v_j}\prod_{r=1}^t\mathbf{1}(u_{j_r}=v_{k_{\xi(r)}})\notag\\
    &\qquad{}\times  \left\{\prod_{j\in [R]}\mathbf{1}(\al_{s_{1,j}}=u_j)\prod_{k\in [R]\setminus A(\kbb)}\mathbf{1}(\al_{s_{2,k}}= v_k)\right\}\notag\\
    &= \sum_{(\jbb,\kbb)\in J_t\cap M_t}\sum_{\xi\in \mathcal{S}_t}\sum_{\ub,\vb\in [K]^R}\prod_{\{i,j\}\in E(H)}\pi_{N,u_i,u_j}\prod_{\{i,j\}\in E(H)\setminus E(B(\kbb,t))}\pi_{N,v_i,v_j}\prod_{r=1}^t\mathbf{1}(u_{j_r}=v_{k_{\xi(r)}})\notag\\
    &\qquad{}\times  \sum_{(\sbb_1,\sbb_2)\in \Mc_t(\jbb,\kbb,\xi)}\left\{\prod_{j\in [R]}\mathbf{1}(\al_{s_{1,j}}=u_j)\prod_{k\in [R]\setminus A(\kbb)}\mathbf{1}(\al_{s_{2,k}}= v_k)\right\}\notag\\
    &= N^{2R-t}\sum_{(\jbb,\kbb)\in J_t\cap M_t}\sum_{\xi\in \mathcal{S}_t}\sum_{\ub,\vb\in [K]^R}N^{-\beta(2T -|E(B(\kbb,t))|)}\prod_{\{i,j\}\in E(H)}N^{\beta}\pi_{N,u_i,u_j}\prod_{\{i,j\}\in E(H)\setminus E(B(\kbb,t))}N^{\beta}\pi_{N,v_i,v_j}\notag\\
    &\qquad{}\times  \prod_{r=1}^t\mathbf{1}(u_{j_r}=v_{k_{\xi(r)}})\left\{\prod_{j\in [R]}\la_{N,u_j}\prod_{k\in [R]\setminus A(\kbb)}\la_{N,v_k} + O\left(\frac{1}{N}\right)\right\}\notag\\
    &= N^{2R-t-\beta(2T -m(t:H))}\sum_{(\jbb,\kbb)\in J_t\cap M_t}\sum_{\xi\in \mathcal{S}_t}\sum_{\ub,\vb\in [K]^R}\prod_{\{i,j\}\in E(H)}N^{\beta}\pi_{N,u_i,u_j}\prod_{\{i,j\}\in E(H)\setminus E(B(\kbb,t))}N^{\beta}\pi_{N,v_i,v_j}\notag\\
    &\qquad{}\times  \prod_{r=1}^t\mathbf{1}(u_{j_r}=v_{k_{\xi(r)}})\left\{\prod_{j\in [R]}\la_{N,u_j}\prod_{k\in [R]\setminus A(\kbb)}\la_{N,v_k} + O\left(\frac{1}{N}\right)\right\}.
\end{align}
Using Assumptions \ref{assump-2} and \ref{assump-3}, for all $\varepsilon>0$ there exists an $M_{1,t}$ such that for all $N>M_{1,t}$,
\begin{align*}
    &L_{t,1}(\beta,H,\lamb,\Cb) -\frac{\varepsilon}{3}
    < \sum_{(\jbb,\kbb)\in J_t\cap M_t}\sum_{\xi\in \mathcal{S}_t}\sum_{\ub,\vb\in [K]^R}\prod_{\{i,j\}\in E(H)}N^{\beta}\pi_{N,u_i,u_j}\prod_{\{i,j\}\in E(H)\setminus E(B(\kbb,t))}N^{\beta}\pi_{N,v_i,v_j}\notag\\
    &\qquad{}\times  \prod_{r=1}^t\mathbf{1}(u_{j_r}=v_{k_{\xi(r)}})\left\{\prod_{j\in [R]}\la_{N,u_j}\prod_{k\in [R]\setminus A(\kbb)}\la_{N,v_k} + O\left(\frac{1}{N}\right)\right\} < L_{t,1}(\beta,H,\lamb,\Cb) +\frac{\varepsilon}{3}.
\end{align*}
where, the constant $L_{t,1}(\beta,H,\lamb,\Cb)$ has the following form,
\begin{align*}
   L_{t,1}(\beta,H,\lamb,\Cb)= &\sum_{(\jbb,\kbb)\in J_t\cap M_t}\sum_{\xi\in \mathcal{S}_t}\sum_{\ub,\vb\in [K]^R}\prod_{\{i,j\}\in E(H)}c_{u_i,u_j}\prod_{\{i,j\}\in E(H)\setminus E(B(\kbb,t))}c_{v_i,v_j}\notag\\
    &\qquad{}\times  \prod_{r=1}^t\mathbf{1}(u_{j_r}=v_{k_{\xi(r)}})\cdot\prod_{j\in [R]}\la_{u_j}\prod_{k\in [R]\setminus A(\kbb)}\la_{v_k}.
\end{align*}
Thus, from \eqref{V-1}, we can write
\begin{align}\label{V-1-imp}
    L_{t,1}(\beta,H,\lamb,\Cb) -\frac{\varepsilon}{3} <\frac{V_{1,N}}{N^{2R-t-\beta(2T -m(t:H))}} < L_{t,1}(\beta,H,\lamb,\Cb) +\frac{\varepsilon}{3}
\end{align}
Next, from \eqref{E-2}, we analyze the explicit expression of $V_{2,N}$, that is,
\begin{align}\label{V-2-1}
    &V_{2,N} = \sum_{(\jbb,\kbb)\in J_t\cap M_t^c}\sum_{\xi\in \mathcal{S}_t}\sum_{(\sbb_1,\sbb_2)\in \Mc_t(\jbb,\kbb,\xi)}\E\left(\prod_{\{i,j\}\in E(B(\jbb,t))}Y_{s_{1,i},s_{1,j}}\prod_{\{i,j\}\in E(B(\kbb,t))}Y_{s_{2,i},s_{2,j}}\right)\notag\\
    &\qquad{}\qquad{}\times\E\left(\prod_{\{i,j\}\in E(H)\setminus E(B(\jbb,t))}Y_{s_{1,i},s_{1,j}}\right)\E\left(\prod_{\{i,j\}\in E(H)\setminus E(B(\kbb,t))}Y_{s_{2,i},s_{2,j}}\right)\notag\\
    &= \sum_{(\jbb,\kbb)\in J_t\cap M_t^c}\sum_{\xi\in \mathcal{S}_t}\sum_{(\sbb_1,\sbb_2)\in \Mc_t(\jbb,\kbb,\xi)}\E\left(\prod_{\{i,j\}\in E(H)}Y_{s_{1,i},s_{1,j}}\right)\E\left(\prod_{\{i,j\}\in E(H)\setminus E(B(\kbb,t))}Y_{s_{2,i},s_{2,j}}\right)\notag\\
    &= \sum_{(\jbb,\kbb)\in J_t\cap M_t^c}\sum_{\xi\in \mathcal{S}_t}\sum_{\ub,\vb\in [K]^R}\prod_{\{i,j\}\in E(H)}\pi_{N,u_i,u_j}\prod_{\{i,j\}\in E(H)\setminus E(B(\kbb,t))}\pi_{N,v_i,v_j}\prod_{r=1}^t\mathbf{1}(u_{j_r}=v_{k_{\xi(r)}})\notag\\
    &\qquad{}\times  \sum_{(\sbb_1,\sbb_2)\in \Mc_t(\jbb,\kbb,\xi)}\left\{\prod_{j\in [R]}\mathbf{1}(\al_{s_{1,j}}=u_j)\prod_{k\in [R]\setminus A(\kbb)}\mathbf{1}(\al_{s_{2,k}}= v_k)\right\}\notag\\
    &= \sum_{(\jbb,\kbb)\in J_t\cap M_t^c}\sum_{\xi\in \mathcal{S}_t}\sum_{\ub,\vb\in [K]^R}N^{2R-t-\beta(2T -|E(B(\kbb,t))|)}\prod_{\{i,j\}\in E(H)}N^{\beta}\pi_{N,u_i,u_j}\prod_{\{i,j\}\in E(H)\setminus E(B(\kbb,t))}N^{\beta}\pi_{N,v_i,v_j}\notag\\
    &\qquad{}\times \prod_{r=1}^t\mathbf{1}(u_{j_r}=v_{k_{\xi(r)}}) \left\{\prod_{j\in [R]}\la_{N,u_j}\prod_{k\in [R]\setminus A(\kbb)}\la_{N,v_k} + O\left(\frac{1}{N}\right)\right\}.
    %&= N^{2R-t-\beta(2T -g_1(t,H))}\sum_{(\jbb,\kbb)\in J_t\cap M_t^c}\sum_{\xi\in \mathcal{S}_t}\sum_{\ub,\vb\in [K]^R}\prod_{\{i,j\}\in E(H)}N^{\beta}\pi_{u_i,u_j}\prod_{\{i,j\}\in E(H)\setminus E(B(\kbb,t))}N^{\beta}\pi_{v_i,v_j}\notag\\
    %&\qquad{}\times  \prod_{r=1}^t\mathbf{1}(u_{j_r}=v_{k_{\xi(r)}})\left\{\prod_{j\in [R]}\la_{u_j}\prod_{k\in [R]\setminus A(\kbb)}\la_{v_k} + O\left(\frac{1}{N}\right)\right\}.
\end{align}
 For all $(\jbb,\kbb) \in J_t\cap M_t^c$, $|E(B(\jbb, t))|,|E(B(\kbb,t))| < m(t:H)$. Hence, using Assumption \ref{assump-2} and \ref{assump-3}, for all $\varepsilon>0$ there exists an $M_{2,t}$ such that for all $N>M_{2,t}$,
 \begin{align*}
     -\frac{\varepsilon}{3}<&\sum_{(\jbb,\kbb)\in J_t\cap M_t^c}\sum_{\xi\in \mathcal{S}_t}\sum_{\ub,\vb\in [K]^R}\frac{1}{N^{\beta(m(t:H) - |E(B(\kbb,t))|)}}\prod_{\{i,j\}\in E(H)}N^{\beta}\pi_{N,u_i,u_j}\prod_{\{i,j\}\in E(H)\setminus E(B(\kbb,t))}N^{\beta}\pi_{N,v_i,v_j}\notag\\
    &\qquad{}\times \prod_{r=1}^t\mathbf{1}(u_{j_r}=v_{k_{\xi(r)}}) \left\{\prod_{j\in [R]}\la_{N,u_j}\prod_{k\in [R]\setminus A(\kbb)}\la_{N,v_k} + O\left(\frac{1}{N}\right)\right\}<\frac{\varepsilon}{3}.
\end{align*}
Thus, from the expression \eqref{V-2-1}, we can write
\begin{align}\label{v-2-imp}
    -\frac{\varepsilon}{3} < \frac{V_{2,N}}{N^{2R-t-\beta(2T -m(t:H))}} < \frac{\varepsilon}{3}.
\end{align}
%\begin{align}
%    \frac{V_{2,N}}{N^{2R-t - \beta(2T - m(t:H))}} \asymp \sum_{(\jbb,\kbb)\in J_t\cap M_t^c}\frac{1}{N^{\beta(m(t:H) - |E(B(\kbb,t))|)}} = O\left(\frac{1}{N}\right)
%\end{align}
Now consider the term $V_{3,N}$ in \eqref{E-2}. In this case, $(\jbb,\kbb)\in J_t^c$. Here we can decompose $J_t^c$ into $j_t^\cap M_t$ and $J_t^c\cap M^c_t$. We focus only on the part $J_t^c\cap M_t$, because the effect of the part $J_t^c\cap M_t^c$ in the summation is negligible trivially. Since $B(\jbb,t)$ and $B(\kbb,t)$ are not isomorphic for all $\jbb,\kbb\in J_t^c\cap M_t$, then there exists at least one edge, say $\{r_1,s_1\}$ from $E(B(\jbb,t))$ and $\{r_2,s_2\}$ from $E(B(\kbb,t))$, for which if we define a bijection $\phi:V(B(\jbb,t))\mapsto V(B(\kbb,t))$, then $\{\phi(r_1),\phi(s_1)\}\neq \{r_2,s_2\}$; that is, some subgraph of $B(\kbb,t)$ is isomorphic to some subgraph of $B(\jbb,t)$. That implies, for $(\jbb,\kbb)\in J_t^c\cap M_t$
\begin{align*}
    &\E\left(\prod_{\{i,j\}\in E(B(\jbb,t))}Y_{s_{1,i},s_{1,j}}\prod_{\{i,j\}\in E(B(\kbb,t))}Y_{s_{2,i},s_{2,j}}\right)\notag\\
    &= \E\left(Y_{s_{1,r},s_{1,s}}\right)\E\left(Y_{s_{2,r_2},s_{2,s_2}}\right)\E\left(\prod_{\{i,j\}\in E(B(\jbb,t))\setminus \{r_1,s_1\}}Y_{s_{1,i},s_{1,j}}\prod_{\{i,j\}\in E(B(\kbb,t))\setminus\{r_2,s_2\}}Y_{s_{2,i},s_{2,j}}\right).
\end{align*}
Without loss of generality, if we assume that after removing the edges $\{r_1,s_1\}$ from the graph $B(\jbb,t)$ and $\{r_2,s_2\}$ from the graph $B(\kbb,t)$, the remaining graph becomes isomorphic, and $|E(B(\jbb,t))| =|E(B(\kbb,t))|= m(t:H)$ then we can write the following, 
%In this case $|E(B(\jbb,t))|\neq |E(B(\kbb,t))|$. Without loss of generality, assume that$|E(B(\jbb,t))| = m(t:H) -r$ and $|E(B(\kbb,t))| = m(t:H) - s$, where $r\neq s\geq 0$, constant. 
%Hence there exist one $\{i,j\}\in E(B(\jbb,t))$ and $\{l,k\}\in E(B(\kbb,t))$ such that $Y_{s_{1,i},s_{1,j}} \neq Y_{s_{2,k},s_{2,l}}$. Hence,
%To prove this consider $\{j_a,j_b\}\in E(B(\jbb,t))$ such that there exist an  bijection $\phi:[\jbb]\to [\kbb]$ for which $\{\phi(j_a),\phi(j_b)\}\notin E(B(\kbb,t))$.
%If 
%\begin{align*}
 %   \prod_{\{i,j\}\in E(B(\jbb,t))}Y_{s_{1,i},s_{1,j}}\prod_{\{i,j\}\in E(B(\kbb,t))}Y_{s_{2,i},s_{2,j}} = \prod_{\{i,j\}\in E(B(\jbb,t))}Y_{s_{1,i},s_{1,j}}.
%\end{align*}
%That implies $Y_{s_{1,j_a},s_{1,j_b}} = Y_{s_{2,k_{\xi(a)}},s_{2,k_{\xi(b)}}}$. Let $\phi(j_a) = k_{\xi(a)}$ and $\phi(j_b) = k_{\xi(b)}$, hence $\{\phi(j_a),\phi(j_b)\}\in E(B(\kbb,t))$, contradiction.
\begin{align}\label{V-3}
&\sum_{(\jbb,\kbb)\in J_t^c\cap M_t}\sum_{\xi\in \mathcal{S}_t}\sum_{(\sbb_1,\sbb_2)\in \Mc_t(\jbb,\kbb,\xi)}\E\left(\prod_{\{i,j\}\in E(B(\jbb,t))}Y_{s_{1,i},s_{1,j}}\prod_{\{i,j\}\in E(B(\kbb,t))}Y_{s_{2,i},s_{2,j}}\right)\notag\\
    &\qquad{}\qquad{}\times\E\left(\prod_{\{i,j\}\in E(H)\setminus E(B(\jbb,t))}Y_{s_{1,i},s_{1,j}}\right)\E\left(\prod_{\{i,j\}\in E(H)\setminus E(B(\kbb,t))}Y_{s_{2,i},s_{2,j}}\right)\notag\\
    &=\sum_{(\jbb,\kbb)\in J_t^c\cap M_t}\sum_{\xi\in \mathcal{S}_t}\sum_{(\sbb_1,\sbb_2)\in \Mc_t(\jbb,\kbb,\xi)}\E\left(Y_{s_{1,r},s_{1,s}}\right)\E\left(Y_{s_{2,r_2},s_{2,s_2}}\right)\E\left(\prod_{\{i,j\}\in E(B(\jbb,t))\setminus \{r_1,s_1\}}Y_{s_{1,i},s_{1,j}}\right)\notag\\
    &\qquad{}\qquad{}\times\E\left(\prod_{\{i,j\}\in E(H)\setminus E(B(\jbb,t))}Y_{s_{1,i},s_{1,j}}\right)\E\left(\prod_{\{i,j\}\in E(H)\setminus E(B(\kbb,t))}Y_{s_{2,i},s_{2,j}}\right)\notag\\
    &= \sum_{(\jbb,\kbb)\in J_t^c\cap M_t}\sum_{\xi\in \mathcal{S}_t}\sum_{(\sbb_1,\sbb_2)\in \Mc_t(\jbb,\kbb,\xi)}\pi_{\al_{s_{1,r}},\al_{s_{1,s}}}\cdot \pi_{\al_{s_{2,r_2}},\al_{s_{2,s_2}}}\cdot\prod_{\{i,j\}\in E(B(\jbb,t))\setminus \{r_1,s_1\}}\pi_{\al_{s_{1,i}},\al_{s_{1,j}}}\notag\\
    &\qquad{}\qquad{}\times\prod_{\{i,j\}\in E(H)\setminus E(B(\jbb,t))}\pi_{\al_{s_{1,i}},\al_{s_{1,j}}}\prod_{\{i,j\}\in E(H)\setminus E(B(\kbb,t))}\pi_{\al_{s_{2,i}},\al_{s_{2,j}}}\notag\\
    &= N^{-2\beta}N^{-\beta(m(t:H) - 1)}N^{-2\beta(T-m(t:H))}\sum_{(\jbb,\kbb)\in J_t^c\cap M_t}\sum_{\xi\in \mathcal{S}_t}\sum_{(\sbb_1,\sbb_2)\in \Mc_t(\jbb,\kbb,\xi)}N^{2\beta}\pi_{\al_{s_{1,r}},\al_{s_{1,s}}}\cdot \pi_{\al_{s_{2,r_2}},\al_{s_{2,s_2}}}\notag\\
    &\qquad{}\qquad{}\times\prod_{\{i,j\}\in E(B(\jbb,t))\setminus \{r_1,s_1\}}N^{\beta}\pi_{\al_{s_{1,i}},\al_{s_{1,j}}}\prod_{\{i,j\}\in E(H)\setminus E(B(\jbb,t))}N^{\beta}\pi_{\al_{s_{1,i}},\al_{s_{1,j}}}\prod_{\{i,j\}\in E(H)\setminus E(B(\kbb,t))}N^{\beta}\pi_{\al_{s_{2,i}},\al_{s_{2,j}}}\notag\\
    &\leq \left(N^{\beta}\max_{i,j\in [K]}\pi_{N,i,j}\right)^{2T}\sum_{(\jbb,\kbb)\in J_t^c\cap M_t}\sum_{\xi\in \mathcal{S}_t}N^{2R-t-\beta(2T-m(t:H) +1)}.
\end{align}
Using \ref{assump-2} and \ref{assump-3}, for all $\varepsilon_t>0$ there exists an $M_3$ such that for all $N>M_3$,
\begin{align*}
   -\frac{\varepsilon_t}{3}< \left(N^{\beta}\max_{i,j\in [K]}\pi_{N,i,j}\right)^{2T}\sum_{(\jbb,\kbb)\in J_t^c\cap M_t}\sum_{\xi\in \mathcal{S}_t}\frac{N^{2R-t-\beta(2T-m(t:H) +1)}}{N^{2R-t-\beta(2T-m(t:H))}} <\frac{\varepsilon_t}{3}.
\end{align*}
The upper bound is applied in \eqref{V-3}, is applied over all the terms of $V_{3,N}$ and there are finite number of terms in $V_{3,N}$. Thus, from the expression in \eqref{V-3}, we can write,
\begin{align}\label{V-3-imp}
    -\frac{\varepsilon_t}{3}<0<\frac{V_{3,N}}{N^{2R-t-\beta(2T- m(t:H))}} <\frac{\varepsilon_t}{3}.
\end{align}
%where $c_H^{\star\star}$ is a positive constant. Hence, 
%\begin{align}
%    \frac{V_{3,N}}{N^{2R-t-\beta(2T- m(t:H))}} = O\left(\frac{1}{N^{\beta}}\right)
%\end{align}
Therefore, combining  \eqref{V-1-imp}, \eqref{v-2-imp} and \eqref{V-3-imp}, and taking $N>\max\{M_1,M_2,M_3\}$, using Lemma \ref{lem-1} we can write
\begin{align*}
    L_{t,1}(\beta,H,\lamb,\Cb) -\varepsilon_t <\frac{\sum_{(\sbb_1,\sbb_2)\in \Mc_t} \E\left(Y_{N}(\sbb_1:H)\cdot Y_{N}(\sbb_2:H)\right)}{N^{2R-t-\beta(2T - m(t:H))}}< L_{t,1}(\beta,H,\lamb,\Cb) +\varepsilon_t.
\end{align*}
This completes the proof.
\end{proof}

\begin{lemma}\label{maxEdge-lem}
Let $H(\sbb_i)$ denote a graph with vertex set $V(H(\sbb_i)) = \{s_{i,1},\ldots,s_{i,R}\}$ and edge set $E(H(\sbb_i)) = \bigl\{\{s_{i,a},s_{i,b}\} : \{a,b\} \in E(H)\bigr\}$, for $i=1,2,3$. Assume that $|V(H(\sbb_1)) \cap V(H(\sbb_2))| = k_1$ and $|V(H(\sbb_3)) \cap (V(H(\sbb_1)) \cup V(H(\sbb_2)))| =k_2$, where $k_1,k_2 \in [R]$. Then
$$
|E(H(\sbb_1)) \cap E(H(\sbb_2))| \le m(k_1:H)\quad\text{and}\quad
|E(H(\sbb_3)) \cap (E(H(\sbb_1)) \cup E(H(\sbb_2)))| \le m(k_2:H),
$$
where $m(\cdot:H)$ is defined in~\eqref{mtH-def}.
\end{lemma}

\begin{proof}[Proof of Lemma \ref{maxEdge-lem}]
Let $B_1,\ldots, B_{\binom{R}{k_1}}$ denote all possible $k_1$ vertex subsets from $V(H)$. Let $H_r$ denote the corresponding induced subgraph formed by the vertices in $B_r$, for $r=1,\ldots,\binom{R}{k_1}$. 
%There must exist some $r_0\in\{1,\ldots,\binom{R}{k_1}\}$, such that $H_{r_0}$ is isomorphic to $H(\sbb_1)\cap H(\sbb_2)$ and both graphs will have the same number of edges. Since $\max_{r}|E(H_r)|\leq m(k_1:H)$, it implies the first bound.
There must exist some $r_0\in \{1,\ldots,\binom{R}{k_1}\}$, such that $H_{r_0}$ is isomorphic to $H(\sbb_1)$. Then $$|E(H(\sbb_1)) \cap E(H(\sbb_2))| \leq |E(H(\sbb_1))| =|E(H_{r_0})|\leq \max_{r}|E(H_r)|\leq m(k_1:H).$$
Similarly, There must exist some $r_1\in \{1,\ldots,\binom{R}{k_2}\}$, such that $H_{r_1}$ is isomorphic to $H(\sbb_3)$. Then
$$
|E(H(\sbb_3)) \cap (E(H(\sbb_1)) \cup E(H(\sbb_2)))| \leq |E(H(\sbb_3))| =|E(H_{r_1})|\leq \max_{r}|E(H_r)|\leq m(k_2:H),
$$
%$H(\sbb_3)\cap \left(H(\sbb_1)\cup H(\sbb_2)\right)$ is an induced subgraph of $H(\sbb_3)$, formed by a collection of $k_2$ (common) vertices in $V(H(\sbb_3))$. The same argument now can applied in this scenario
and it will lead to the second bound.
\end{proof}

Before stating the next result, we provide some needed definitions. Let $D$ be a vertex cover of a graph $H$, and assume $u\in D$. An edge $\{u,v\}\in E(H)$ is called a {\it private edge} of $u$ if $v\not\in D$ and $u$ is the only vertex in $D$ that covers the edge $\{u,v\}$. If $u\in D$ and $u$ does not have a private edge, then $u$ is redundant and $D\setminus\{u\}$ will be a vertex cover of $H$. A vertex cover $D$ of $H$ is called {\it minimal} if no proper subset of $D$ is a vertex cover of $H$. Any minimum vertex cover is minimal, but the converse is not true. Lemma \ref{VC-lem-0} deals with minimal vertex covers, but the statement is also true for any minimum vertex cover.
 
\begin{lemma}\label{VC-lem-0}
Let $H$ be a graph with a vertex set $V(H)$. A vertex cover $D\subseteq V(H)$ of graph $H$ is minimal if and only if each vertex $v\in D$ has a private edge in $H$.
\end{lemma}
\begin{proof}[Proof of Lemma \ref{VC-lem-0}]
    Suppose $D$ is an minimal vertex cover of $H$. If possible, assume that there exists a $v\in D$, such that $v$ has no private edge. This implies, all edges which are incident on $v$ must be covered by some other vertex is $D\setminus \{v\}$. Any other edge, which was not incident on $v$ was already covered by $D\setminus \{v\}$. Thus $D\setminus \{v\}$ is a vertex cover of $H$, and it is proper subset of $D$, which contradicts the minimality of $D$. Hence, there can not exist any such $v\in D$ and every vertex in $D$ must have a private edge.

    Conversely, assume that every $v\in D$ has a private edge. We will show that $D$ is minimal vertex cover of $H$. Suppose $D$ is not a minimal vertex cover of $H$, then there exists $v\in D$ such that $D_1= D\setminus \{v\}$ is still a vertex cover. But since $v\in D$ has a private edge $\{u,v\}$ with $u\notin D$. Therefore the edge $\{u,v\}$ will not be covered by $D_1$, contradicting the fact that $D_1$ is a vertex cover of $H$. Hence, no vertex can be removed, so $D$ must be minimal. This completes the proof. 
\end{proof}

\begin{lemma}\label{VC-lem-1}
Let $H$ be a connected graph with a minimum vertex cover size $\tau(H)$. Consider any $\sbb_1,\sbb_2,\sbb_3\in \nsr$. Let $H(\sbb_1)$, $H(\sbb_2)$ and $H(\sbb_3)$ be three isomorphic copies of $H$, with vertex sets $V(H(\sbb_1))=A(\sbb_1)$, $V(H(\sbb_2))=A(\sbb_2)$ and $V(H(\sbb_3))=A(\sbb_3)$, respectively. Suppose
\begin{equation*}
\left.\begin{aligned}
&\left|V(H(\sbb_1)) \cap V(H(\sbb_2))\right| = k_1,\quad\text{and}\\
&\left| V(H(\sbb_3)) \cap \left[V(H(\sbb_1)) \cup V(H(\sbb_2))\right] \right| = k_2, 
\end{aligned}
\right\}\quad \text{for some $k_1,k_2\in [R]$.}
\end{equation*}
Define the union graphs, 
%Define the union graph,
$G_1 = \bigcup_{i=1}^2 H(\sbb_i)$ and $G_2 = \bigcup_{i=1}^3 H(\sbb_i)$.  

\begin{enumerate}
    \item[(a)] Let $D_1$ be any arbitrary minimum vertex cover of $G_1$. Then,
    \begin{align}
    |D_1| \geq 2\cdot\tau(H) -\min\{\tau(H),k_1\}.
%    \tau(D_1) \geq 3\cdot\tau(H) -\min\{\tau(H),k_1\} -\min\{\tau(H),k_2\}.
%&\left|D_1 \cap \left(V(H(\sbb_1)) \cap V(H(\sbb_2))\right) \right| \le \min\{\tau(H), k_1\},
%\quad \text{and}\notag\\
%&\left| D_1 \cap \left[V(H(\sbb_3)) \cap \left(V(H(\sbb_1)) \cup V(H(\sbb_2))\right)\right] \right| \le \min\{\tau(H), k_2\}.
\label{lowbdd-1}
\end{align}
\item[(b)] There exists a minimum vertex cover $D_2$ of $G_2$, such that the following holds
    \begin{align}
    |D_2| \geq 3\cdot\tau(H) -\min\{\tau(H),k_1\} -\min\{\tau(H),k_2\}.
%&\left|D_1 \cap \left(V(H(\sbb_1)) \cap V(H(\sbb_2))\right) \right| \le \min\{\tau(H), k_1\},
%\quad \text{and}\notag\\
%&\left| D_1 \cap \left[V(H(\sbb_3)) \cap \left(V(H(\sbb_1)) \cup V(H(\sbb_2))\right)\right] \right| \le \min\{\tau(H), k_2\}.
\label{lowbdd-2}
\end{align}
\end{enumerate}
\end{lemma}

\begin{proof}[Proof of Lemma \eqref{VC-lem-1}] 
Consider part (a) of the Lemma. Let $D_1$ be any minimum vertex cover of $G_1 =H(\sbb_1)\cup H(\sbb_2)$. Then, firstly note that $|V(H(\sbb_i))\cap D_1|$
is a vertex cover of $H(\sbb_i)$ for both $i=1,2$. Thus we can write,
\begin{align*}
    |V(H(\sbb_i))\cap D_1| \geq \tau(H),~\text{for $i=1,2$,}\quad\text{and}\quad \left|\left(V(H(\sbb_1))\cup V(H(\sbb_1))\right)\cap D_1\right| =|D_1|\geq \tau(H).
\end{align*}
Also we know that, $|V(H(\sbb_1))\cap V(H(\sbb_2))| = k_1$ for some $k_1\in [R]$. Thus 
\begin{align*}
    |(V(H(\sbb_1))\cap V(H(\sbb_1)))\cap D_1| \leq k_1.
\end{align*}
Thus, we can write,
\begin{align*}
     &|D_1| = |D_1\cap (V(H(\sbb_1))\cup V(H(\sbb_2)))|\notag\\
     &= |D_1\cap V(H(\sbb_1))| + |D_1\cap V(H(\sbb_2))| - |D_1\cap (V(H(\sbb_1))\cap V(H(\sbb_2)))| \geq 2\cdot\tau(H) -k_1.
\end{align*}
Therefore, for any $\sbb_1$ and $\sbb_2$ satisfying $|V(H(\sbb_1))\cap V(H(\sbb_2))|=k_1$, we can write,
\begin{align*}
     |D_1| \geq \max\{\tau(H), 2\cdot\tau(H)-k_1\} = 2\cdot\tau(H) - \min\{\tau(H),k_1\}.
\end{align*}
This completes the proof of part (a). 

To prove part (b), we consider the following. Let $U_1$ be any minimum vertex cover of $H(\sbb_3)$. As $H$ and $H(\sbb_3)$ are isomorphic, we must have $|U_1|=\tau(H)$. Since $U_1$ is a minimum vertex cover for $H(\sbb_3)$, for each $v\in U_1$ there exists a {\it private edge} $e=\{v,a\}\in E(H(\sbb_3))$ (see Lemma \ref{VC-lem-0}). This means that $U_1\setminus \{v\}$ will not cover $e$. Consider the following collection of vertices in $V(H(\sbb_1))\cup V(H(\sbb_2))\cup V(H(\sbb_3))$, 
$$
A_1\equiv U_1\cup \left(\left\{V(H(\sbb_1))\cup V(H(\sbb_2))\right\}\setminus V(H(\sbb_3))\right).
$$ 
Note, $A_1$ is a vertex cover of $\bigcup_{i=1}^3H(\sbb_i)$. We can create a minimum vertex cover of $\bigcup_{i=1}^3H(\sbb_i)$ from $A_1$ by removing any vertex from $A_1$, if it does not have a private edge in $\cup_{i=1}^3E(H(\sbb_i))$. Obviously, any $v\in U_1\cap A_1$ can not be removed for the following reason. As $H(\sbb_3)$ is an induced subgraph of $\bigcup_{i=1}^3H(\sbb_i)$, any edge in $E(H(\sbb_3))$ is also an edge in $\bigcup_{i=1}^3E(H(\sbb_i))$. In addition, there exists a private edge $e=\{v,a\}\in E(H(\sbb_3))$, associated with the vertex $v$, where $a\not\in U_1$ and $a\not\in \{V(H(\sbb_1))\cup V(H(\sbb_2))\}\setminus V(H(\sbb_3))$. Thus, if $v$ is removed from $A_1$, the edge $e=\{v,a\}$ cannot be covered. Thus, the resulting minimum vertex cover can be constructed by removing redundant vertices from $A_1\setminus U_1$. This involves checking each $v\in A_1\setminus U_1 = \left\{V(H(\sbb_1))\cup V(H(\sbb_2))\right\}\setminus V(H(\sbb_3))$, and removing it if it does not have a private edge in $\bigcup_{i=1}^3 E(H(\sbb_i))$. The resulting minimum vertex cover of $\bigcup_{i=1}^3 H(\sbb_i)$, denoted by $D_2$, will also contain $U_1$. By construction, 
$$
D_2 = U_1 \cup \left\{\text{a collection of vertices from $\left(\left\{V(H(\sbb_1))\cup V(H(\sbb_2))\right\}\setminus V(H(\sbb_3))\right)$}\right\}.
$$
As a result, 
$$
\tau(H)= |U_1| = |D_2\cap V(H(\sbb_3))| \geq \left|D_2\cap V(H(\sbb_3))\cap \left(V(H(\sbb_1))\cup V(H(\sbb_2))\right)\right|.
$$
Obviously, $\left|D_2\cap V(H(\sbb_3))\cap\left(V(H(\sbb_1))\cup V(H(\sbb_2))\right)\right|\leq |V(H(\sbb_3))\cap (V(H(\sbb_2))\cup V(H(\sbb_1)))|\leq k_2$. Then we can write, 
\begin{align}\label{bdd-1}
\left|D_2 \cap V(H(\sbb_3))\cap \left(V(H(\sbb_1))\cup V(H(\sbb_2))\right)\right|\leq \min\{\tau(H),k_2\}.  \end{align}
Next, we can wan write,
\begin{align}\label{bdd-21}
    \left|D_2\cap \left(V(H(\sbb_1)) \cup V(H(\sbb_2))\right)\right| \geq \tau(H).
\end{align}
Also note that, $D_2\cap V(H(\sbb_i))$ is a vertex cover cover of $H(\sbb_i)$ for $i=1,2$. Therefore, we can claim $|D_2\cap V(H(\sbb_i))| \geq \tau(H)$. Thus, using the fact $|(V(\sbb_1)\cap V(\sbb_2))\cap D_2| \leq k_1$, we can write,
\begin{align}\label{bdd-3}
    &\left|D_2\cap \left(V(H(\sbb_1)) \cup V(H(\sbb_2))\right)\right| \geq |V(\sbb_1)\cap D_2| + |V(\sbb_2)\cap D_2| - |(V(\sbb_1)\cap V(\sbb_2))\cap D_2| \geq 2\tau(H) - k_1.
\end{align}
Combining \eqref{bdd-21} and \eqref{bdd-3}, we can write,
\begin{align}\label{bdd-4}
    \left|D_2\cap \left(V(H(\sbb_1)) \cup V(H(\sbb_2))\right)\right| \geq \max\{\tau(H),2\cdot\tau(H) -k_1\} = 2\cdot \tau(H) - \min\{\tau(H),k_1\}.
\end{align}
Therefore, combining \eqref{bdd-1} and \eqref{bdd-4}, we can write,
\begin{align*}
    &\left|D_2\right| = \left|D_2\cap \left(\bigcup_{i=1}^3V(H(\sbb_i))\right)\right|\notag\\
    &=\left|D_2\cap \left(V(H(\sbb_1))\cup V(H(\sbb_2))\right)\right| + \left|D_2\cap V(H(\sbb_3))\right| - \left|D_2\cap V(H(\sbb_3))\cap \left(V(H(\sbb_1))\cup V(H(\sbb_2))\right)\right|\notag\\
    &\geq 3\cdot\tau(H) - \min\{\tau(H),k_1\} - \min\{\tau(H),k_2\}.
\end{align*}
This completes the proof.
%the first claim in \eqref{upbdd-1}. To prove the second claim, let $U_2$ be a minimum vertex cover of $H(\sbb_3)$, as before, since $H(\sbb_3)$ is isomorphic to $H$, we have $|U_2| = \tau(H)$. From the first part of the argument, we already constructed a minimum vertex cover $C_1$ of $G_2$. Consider the set
%$$
%S_2\equiv C_1\cup (V(H(\sbb_3)\setminus (V(H(\sbb_1)) \cup V(\sbb_2)))).
%$$
%Thus, $S_2$ is a vertex cover of $G_1$. 
\end{proof}
%\subsubsection*{Proof of Lemma \ref{lem-var}: Growth rate of variances}
%\begin{align}\label{Gc-def}
%\mathcal{G}_H
%:= \Bigl\{ H \; \Big|\; 
%&\exists\, H_{r_0} \in \mathcal{G}_{t,H},\;
%\exists\, D_0 \in \mathcal{D}_{\min}(H) \text{ such that} ~ t \ge \tau(H) \Rightarrow D_0 \subseteq V(H_{r_0}),~ t < \tau(H) \Rightarrow V(H_{r_0}) \subseteq D_0
%\Bigr\}.
%\end{align}

\begin{lemma}[Attainment of lower bound \eqref{lowbdd-1}]
\label{VC-lem-2} 
Let $H$ be a fixed connected, simple, undirected graph with vertex set $
V(H)=\{1,\dots,R\}$, edge set $E(H)$ and minimum vertex cover size $\tau(H)$. Fix a $t\in[R]$. Consider any $\sbb_1,\sbb_2\in \nsr$ and assume that $|A(\sbb_1)\cap A(\sbb_2)| = t$. For $i=1,2$, define the graph $H(\sbb_i)$, with vertex set $A(\sbb_i)$ and edge set 
$$
E\big(H(\mathbf{s}_i)\big)=\big\{\{s_{i,a},s_{i,b}\} : \{a,b\}\in E(H)\big\}.
$$
Define the union graph $G(\sbb_1,\sbb_2) = H(\sbb_1)\cup H(\sbb_2)$. Let $\xi_0$ denote the identity permutation on $\{1,\ldots,t\}$. There exists an index vector $\mathbf{j}_0\in {[R]}_t$ such that, for any 
\begin{align}
(\mathbf{s}_1,\mathbf{s}_2)\in 
\mathcal{M}_t(\mathbf{j}_0,\mathbf{j}_0,\xi_0) = \left\{(\sbb_1,\sbb_2):~\sbb_1,\sbb_2\in\nsr, |A(\sbb_1)\cap A(\sbb_2)|=t,s_{1,k}=s_{2,k}~\text{for all $k\in A(\jbb_0)$}\right\},
\label{Mtj0-set}
\end{align}
the union graph $G(\sbb_1,\sbb_2)$ satisfies the following property: there exists a minimum vertex cover $D_0(\sbb_1,\sbb_2)$ of $G(\sbb_1,\sbb_2)$ such that
\begin{align}\label{lowbdd-at}
         |D_0(\sbb_1,\sbb_2)| = 2\cdot\tau(H) - \min\{\tau(H),t\}.
\end{align}
\end{lemma}

%\newpage
%\begin{lemma}[Attainment of lower bound \eqref{lowbdd-1}]
%\label{VC-lem-2} 
%Let $H$ be a fixed connected, simple, undirected graph with vertex set $ V(H)=\{1,\dots,R\}$ and edge set $E(H)$. Denote by $\tau(H)$ the size of a minimum vertex cover of $H$. For $i=1,2$, let $\mathbf{s}_i\in\nsr$, and define the graph $H(\mathbf{s}_i)$ with vertex set $V(H(\sbb_i)) = A(\mathbf{s}_i)$ and edge set
%$$
%E\big(H(\mathbf{s}_i)\big)=\big\{\{s_{i,a},s_{i,b}\} : \{a,b\}\in E(H)\big\}.
%$$
%Assume that
%$$
%|A(\mathbf{s}_1)\cap A(\mathbf{s}_2)| = t,\qquad t\in\{1,\dots,R\}.
%$$
%For a fixed $t\in\{1,\dots,R\}$, let $G(\sbb_1,\sbb_2) = H(\sbb_1)\cup H(\sbb_2)$ denotes the union of these two graphs. Let $\mathcal{D}_t(\jbb,\kbb,\xi:G(\sbb_1,\sbb_2))$ denotes the collection of minimum vertex covers of $G(\sbb_1,\sbb_2)$. Then there exists index vectors $\mathbf{j},\mathbf{k}\in [R]_t$ and a permutation $\xi\in\mathcal{S}_t$ (cf. \eqref{scal-t}) such that, for all $(\mathbf{s}_1,\mathbf{s}_2)\in \mathcal{M}_t(\mathbf{j},\mathbf{k},\xi)\subseteq \mathcal{M}_t$ (cf. \eqref{Etset-1}), there exists some $D\in \mathcal{D}_t(\mathbf{j},\mathbf{k},\xi;G(\mathbf{s}_1,\mathbf{s}_2))$ which attains the lower bound provided in \eqref{lowbdd-1}, {\it i.e.}, 
%\begin{align}\label{lowbdd-at}
%         |D| = 2\cdot\tau(H) - \min\{\tau(H),t\}.
%\end{align}
%\end{lemma}

\begin{proof}[Proof of Lemma \ref{VC-lem-2}]
Firstly, consider the case where $t\in \{\tau(H),\ldots,R\}$. Without loss of generality, assume that $D_0 = \{1,\ldots,\tau(H)\}$ is a minimum vertex cover of $H$ (if needed, this can be achieved by relabeling of vertices). Define the vector
$$
\jbb_0 = {(1,\ldots,{\tau(H)}, {\tau(H)+1}, \ldots, t)}^\primet\in{[R]}_t,
$$
and pick any $(\sbb_1,\sbb_2)\in \mathcal{M}_t(\jbb_0,\jbb_0,\xi_0)$ (cf. \eqref{Mtj0-set}) and consider the union graph $G(\sbb_1,\sbb_2)$. Define the collection
$$
D_0(\sbb_1,\sbb_2) = \{s_{1,1},\ldots,s_{1,\tau(H)}\},
$$
consisting of the first $\tau(H)$ components of the vector $\sbb_1 = {(s_{1,1},\ldots,s_{1,R})}^\primet$. Due to the fact that the first $t$ components of $\sbb_1$ and $\sbb_2$ are equal, this implies $D_0(\sbb_1,\sbb_2)$ also consists of the first $\tau(H)$ components of $\sbb_2$. Since we have assumed $t\geq \tau(H)$, we have $|D_0(\sbb_1,\sbb_2)| = \tau(H) = 2\tau(H)-\min\{\tau(H),t\}$. We claim that $D_0(\sbb_1,\sbb_2)$ is a vertex cover of $G(\sbb_1,\sbb_2) = H(\sbb_1)\cup H(\sbb_2)$. This is because $D_0(\sbb_1,\sbb_2)$ is a vertex cover of both $H(\sbb_1)$ and $H(\sbb_2)$. This follows from the following reasoning.

If $\{s_{1,u},s_{1,v}\}\in E(H(\sbb_1))$, then due to the above described construction of $H(\sbb_1)$, $\{u,v\}\in E(H)$. As $D_0 = \{1,\ldots,\tau(H)\}$ is a minimum vertex cover of $H$, it implies that either $u$, or $v$, or both $u$ and $v$ are elements of $D_0=\{1,\ldots,\tau(H)\}$. From the definition of $D_0(\sbb_1,\sbb_2)$, it contains the first $\tau(H)$ components of $\sbb_1 = {(s_{1,1},\ldots,s_{1,R})}^\primet$, and this implies that, either $s_{1,u}$, or $s_{1,v}$, or both $s_{1,u}$ and $s_{1,v}$ are elements of $D_0(\sbb_1,\sbb_2)=\{s_{1,1},\ldots,s_{1,\tau(H)}\}$. This implies, $D_0(\sbb_1,\sbb_2)$ is a vertex cover of $H(\sbb_1)$. Similarly, if $\{s_{2,u},s_{2,v}\}\in E(H(\sbb_2))$, we must have $\{u,v\}\in E(H)$, and this implies that either $s_{2,u}$, or $s_{2,v}$, or both $s_{2,u}$ and $s_{2,v}$ are elements of $D_0(\sbb_1,\sbb_2)=\{s_{1,1},\ldots,s_{1,\tau(H)}\} = \{s_{2,1},\ldots,s_{2,\tau(H)}\}$, where the equality in the last step arises due to the use of the identity mapping $\xi_0$ in the definition of $\mathcal{M}_t(\jbb_0,\jbb_0,\xi_0)$ (cf. \eqref{Mtj0-set}). This implies, $D_0(\sbb_1,\sbb_2)$ is a vertex cover of $H(\sbb_2)$. As a result $D_0(\sbb_1,\sbb_2)$ is a vertex cover of the union graph $G(\sbb_1,\sbb_2)$. Now, using Lemma \ref{VC-lem-1}(a), we can claim that the lowest possible cardinality of any arbitrary minimum vertex cover of $G(\sbb_1,\sbb_2)$ will be $\tau(H)$. Since $D_0(\sbb_1,\sbb_2)$ is a vertex cover of $G(\sbb_1,\sbb_2)$ and $|D_0(\sbb_1,\sbb_2)| = \tau(H)$, it immediately implies that $D_0(\sbb_1,\sbb_2)$ will be a minimum vertex cover of $G(\sbb_1,\sbb_2)$.

Next, consider the case where $t\in \{1,\ldots,\tau(H)-1\}$. Without loss of generality we assume that $D_0 = \{1,\ldots,\tau(H)\}$ is a minimum vertex cover of $H$. Pick 
$$
\jbb_0 = {(1,\ldots,t)}^\primet.
$$
Consider any $(\sbb_1,\sbb_2)\in \mathcal{M}_t(\jbb_0,\jbb_0,\xi_0)$ (cf. \eqref{Mtj0-set}) and construct the union graph $G(\sbb_1,\sbb_2) = H(\sbb_1)\cup H(\sbb_2)$. Using previous arguments, we can claim 
$$
D_i(\sbb_i) = \{s_{i,1},\ldots,s_{i,\tau(H)}\},~i=1,2,
$$
is a minimum vertex cover of $H(\sbb_i)$, for $i=1,2$. According to the definition of $\mathcal{M}_t(\jbb_0,\jbb_0,\xi_0)$ (cf. \eqref{Mtj0-set}), $s_{1,k}=s_{2,k}$, for $k=1,\ldots,t$, and there is no other common component between $\sbb_1$ and $\sbb_2$. Define the following collection of vertices in $G(\sbb_1,\sbb_2)$,
$$
D_0(\sbb_1,\sbb_2) = D_1(\sbb_1)\cup D_2(\sbb_2).
$$
We claim that $D_0(\sbb_1,\sbb_2)$ is a minimal vertex cover of $G(\sbb_1,\sbb_2)$. Firstly, note that $D_0(\sbb_1,\sbb_2)$ is a vertex cover of both $G(\sbb_1,\sbb_2)$. Note that, as per definition of $\mathcal{M}_t(\jbb_0,\jbb_0,\xi_0)$
$$
V(\sbb_1,\sbb_2) = D_1(\sbb_1)\cap D_2(\sbb_2) = \{s_{1,1},\ldots,s_{1,\tau(H)}\}\cap \{s_{2,1},\ldots,s_{2,\tau(H)}\} = \{s_{1,1},\ldots,s_{1,t}\}.
$$
Write,
$$
D_0(\sbb_1,\sbb_2) = \left(D_1(\sbb_1)\setminus V(\sbb_1,\sbb_2)\right)\cup V(\sbb_1,\sbb_2)\cup \left(D_2(\sbb_2)\setminus V(\sbb_1,\sbb_2)\right),
$$
as the union of three disjoint sets. Note that $|D_0(\sbb_1,\sbb_2)| = (\tau(H)-t)+t+(\tau(H)-t) = 2\tau(H)-t = 2\tau(H)-\min\{t,\tau(H)\}$. Pick any $u\in V(\sbb_1,\sbb_2)$. Due to Lemma \ref{VC-lem-0} and due to the fact that $u\in D_1(\sbb_1)$ and $D_1(\sbb_1)$ is a minimum vertex cover of $H(\sbb_1)$, there exists a private edge $\{u,v_1\}\in E(H(\sbb_1))$ for vertex $u$. Similarly, there must exist a private edge $\{u,v_2\}\in E(H(\sbb_2))$. Obviously, $v_1\not\in D_1(\sbb_1)$, otherwise $\{u,v_1\}$ will not be a private edge of $u$. Similarly, $v_2\not\in D_2(\sbb_2)$. Also, we must have $v_1\neq v_2$. As a result, if $u$ is removed from $D_0(\sbb_1,\sbb_2)$, then there will be at least two edges in $E(H(\sbb_1)\cup H(\sbb_2))$ which will not be covered by $D_0(\sbb_1,\sbb_2)\setminus \{u\}$. Similarly, if $u\in D_1(\sbb_1)\setminus V(\sbb_1,\sbb_2)$, there will exist at least one edge in $E(H(\sbb_1)\cup H(\sbb_2))$ that will not be covered by $D_0(\sbb_1,\sbb_2)\setminus \{u\}$, and a similar argument applies for $u\in D_2(\sbb_2)\setminus V(\sbb_1,\sbb_2)$. As a result, we have established the minimality of the vertex cover $D_0(\sbb_1,\sbb_2)$. By Lemma \ref{VC-lem-1}(a), $D_0(\sbb_1,\sbb_2)$ also attains the lowest possible cardinality of any vertex cover of the union graph $G(\sbb_1,\sbb_2)$. Hence, the minimal vertex cover $D_0(\sbb_1,\sbb_2)$ is also a minimum vertex cover of $G(\sbb_1,\sbb_2)$. This completes the proof.

\end{proof}

\begin{lemma}
\label{VC-lem-3}
For any $R\geq 2$, the star graph $\kl_{1,R-1}$ and the complete graph $\kl_{R}$ on $R$ vertices satisfy Assumption \ref{assump-5}.
\end{lemma}

\begin{proof}[Proof of Lemma \ref{VC-lem-3}]
Consider a star graph $H = \kl_{1,R-1}$ with edges $\{\{1,2\},\{1,3\},\ldots,\{1,R\}\}$. Here, $\tau(\kl_{1,R-1}) = 1$, and the minimum vertex cover is $D_0 = \{1\}$. Consider the $t$ vertex induced subgraph $H_{r_0}$ of $H$, with vertex set $V(H_{r_0}) = \{1,i_2,\ldots,i_{t}\}$, for any $\{i_2,\ldots,i_t\}\subseteq \{2,\ldots,R\}$. Note, $H_{r_0}$ is also a star graph on $t$ vertices and has the maximum number of edge $m(t:H) = (t-1)$. Thus, $H_{r_0}\in \Gc_{t,\kl_{1,R-1}}$ and $D_0\subseteq V(H_{r_0}) = \{1,i_1,\ldots,i_{t-1}\}$ trivially. Since, $\tau(H)=1$, the case of $t<\tau(H)$ does not arise. 

When  $H=\kl_R$ (complete graph) then $\tau(\kl_{R}) =R-1$, and we can choose any arbitrary set of $(R-1)$ vertices from $[R]$ to create a minimum vertex cover $D_0 = \{i_1,\ldots,i_{R-1}\}$. First, note that any $H_{r}\in \Gc_{t,\kl_R}$ is a complete graph on $t$ vertices. When $t < R-1$, then vertices of the graph $H_r$ with $V(H_r) = \{i_1,\ldots,i_t\}$ also form a complete graph and $V(H_r)\subseteq D_0$. If $t=R-1$, then any $H_r$ with vertex set $V(H_r) = \{i_1,\ldots,i_{R-1}\}$ is a vertex cover of $\kl_R$. Therefore in this case $D_0 = V(H_r)$. When $t=R$, then $D_0 \subset V(\kl_R)$ trivially.  

%When $H = \mathbb{L}_{2R}$ then $\tau(\mathbb{L}_{2R}) = R$. $H_r\in \Gc_{t,\mathbb{L}_{2R}}$ is a line graph with $t$ vertices and $t-1$ edges. When $t<R$, let $V(H_r) = \{1,\ldots,t\}$ 

\end{proof}

\begin{lemma}\label{lem-var-ego-1}
    Suppose that Assumption \ref{assump-5} holds. Let $H$ be a fixed, undirected, simple, and connected graph with $V(H)=\{1,\ldots,R\}$ and the edge set $E(H)\subseteq [V(H)]^2$, where $R\geq 2$, is a fixed positive integer. Then, for $t\in \{1,\ldots R\}$, for any $(\sbb_1,\sbb_2)\in \Mc_t$, there exists an $M$ such that for all $N\geq M$,
    \begin{align}
L_{t,2}(\al,\beta,H,\lamb,\Cb)
&\le 
\frac{
\displaystyle 
\sum_{(\sbb_1,\sbb_2)\in \Mc_t}
\E\left(W_{N,2}(\sbb_1:H)\cdot W_{N,2}(\sbb_2:H)\right)
\E\left(Y_N(\sbb_1:H)\cdot Y_N(\sbb_2:H)\right)
}{
N^{\displaystyle 
2R - t - \al\left(2\tau(H)-\min\{\tau(H),t\}\right)
-\beta\left(2T-m(t:H)\right)}
}
\notag\\
&\le 
U_{t,2}(\al,\beta,H,\lamb,\Cb).
\end{align}
where, $L_{t,2}(\al,\beta,H,\lamb,\Cb)$ and $U_{t,2}(\al,\beta,H,\lamb,\Cb)$ are two positive constants depending on $\al,\beta,H$ and the model parameters $\lamb$ and $\Cb$. 
 \end{lemma}
\begin{proof}
    Using the same approach and notation as in the proof of Lemma \ref{lem-1} and using the definition \eqref{def-Jt-Mt}, we can write
    \begin{align}\label{ego-A}
        &\sum_{(\sbb_1,\sbb_2)\in \Mc_t}\E\left(W_{N,2}(\sbb_1:H)W_{N,2}(\sbb_2:H)\right)\cdot \E\left(Y_N(\sbb_1:H)Y_{N}(\sbb_2:H)\right)\notag\\
        &= \sum_{\jbb,\kbb\in [R]_{<,t}}\sum_{\xi\in \mathcal{S}_t}\sum_{(\sbb_1,\sbb_2)\in \Mc_t(\jbb,\kbb,\xi)}\E\left(W_{N,2}(\sbb_1:H)W_{N,2}(\sbb_2:H)\right)\cdot \E\left(Y_N(\sbb_1:H)Y_{N}(\sbb_2:H)\right)\notag\\
        &= \sum_{\jbb,\kbb\in [R]_{<,t}}\sum_{\xi\in \mathcal{S}_t}\sum_{(\sbb_1,\sbb_2)\in \Mc_t(\jbb,\kbb,\xi)}\E\left(\prod_{\{i,j\}\in E(H(\sbb_1))\cup E(H(\sbb_2))}\max\{W_{N,i},W_{N,j}\}\right)\cdot \E\left(Y_N(\sbb_1:H)Y_{N}(\sbb_2:H)\right)\notag\\
        &= \sum_{\jbb,\kbb\in [R]_{<,t}}\sum_{\xi\in \mathcal{S}_t}\sum_{(\sbb_1,\sbb_2)\in \Mc_t(\jbb,\kbb,\xi)}\left(\sum_{D\in \Dc(H(\sbb_1)\cup H(\sbb_2))}p_N^{|D|}(1-p_N)^{2R-|D|}\right)\cdot \E\left(Y_N(\sbb_1:H)Y_{N}(\sbb_2:H)\right)\notag\\
        &= \sum_{(\jbb,\kbb)\in J_t\cap M_t}\sum_{\xi\in \mathcal{S}_t}\sum_{(\sbb_1,\sbb_2)\in \Mc_t(\jbb,\kbb,\xi)}\left(\sum_{D\in \Dc(H(\sbb_1)\cup H(\sbb_2))}p_N^{|D|}(1-p_N)^{2R-|D|}\right)\cdot \E\left(Y_N(\sbb_1:H)Y_{N}(\sbb_2:H)\right)\notag\\
        &+\sum_{(\jbb,\kbb)\in (J_t\cap M_t)^c}\sum_{\xi\in \mathcal{S}_t}\sum_{(\sbb_1,\sbb_2)\in \Mc_t(\jbb,\kbb,\xi)}\left(\sum_{D\in \Dc(H(\sbb_1)\cup H(\sbb_2))}p_N^{|D|}(1-p_N)^{2R-|D|}\right)\cdot \E\left(Y_N(\sbb_1:H)Y_{N}(\sbb_2:H)\right)\notag\\
        & = A_{1,N} + A_{2,N},~\text{(say)}.
    \end{align}
First, we analyze the design component of the term $A_{1,N}$ from \eqref{ego-A}. To analyze the term, using Lemma \ref{VC-lem-2} and Assumption \ref{assump-5}, we now show the existence of $(\jbb_0,\kbb_0)\in J_t\cap M_t$ and a permutation map $\xi_0\in \mathcal{S}_t$ for which there is at least one minimum vertex cover of $H(\sbb_1)\cup H(\sbb_2)$ achieving the lower bound provided in \eqref{lowbdd-at}, that is, of size $2\cdot\tau(H) -\min\{\tau(H),t\}$ for $(\sbb_1,\sbb_2)\in \Mc_t(\jbb_0,\kbb_0,\xi_0)$. Fix $t\in \{\tau(H),\ldots,R\}$. Now, since $H$ satisfies Assumption \ref{assump-5}, we can say that there exist a $t$ vertex induced subgraph $H_{r_0}\in \Gc_{t,H}$ (cf. \eqref{Gcal-t}) and a minimum vertex cover $D_0=\{i_1,\ldots,i_{\tau(H)}\}$ of $H$, such that 
$D_0 \subseteq V(H_{r_0})\subseteq [R]$. Then, we can write 
$$
V(H_{r_0})=A(\jbb_0), \quad \text{where}\quad\jbb_0 = {(i_1,\ldots,i_{\tau(H)}, j_{\tau(H)+1}, \ldots, j_t)}^\primet, 
$$
and $j_{\tau(H)+1}, \ldots, j_t \in [R]_t$, denote the remaining vertices of $H_{r_0}$. Take another position vector $\kbb_0 = \jbb_0$ and the identity permutation map $\xi_0\in \mathcal{S}_t$. By construction, $(\jbb_0,\kbb_0)= (\jbb_0,\jbb_0)\in J_t\cap M_t$ (cf. \eqref{def-Jt-Mt}), and by the definition of $J_t$ and $M_t$, a $t$ vertex induced subgraph of $H$, that is, $B(\jbb_0,t) = H_{r_0}$ and $|E(H_{r_0})| = |E(B(\jbb_0,t))| = m(t:H)$. Then for any $(\sbb_1,\sbb_2)\in \Mc_t(\jbb_0,\jbb_0,\xi_0)$, we can construct a union graph $H(\sbb_1)\cup H(\sbb_2)$. Using Lemma \ref{VC-lem-2}, we can claim that there is at least one minimum vertex cover of $H(\sbb_1)\cup H(\sbb_2)$, say $D^\star$, which satisfies \eqref{lowbdd-at}. 

 Next, fix $t\in \{1,\ldots,\tau(H)-1\}$. As $H$ satisfies Assumption \ref{assump-5}, there exist a $H_{r_0}\in \Gc_{t,H}$ and a minimum vertex cover $D_0 = \{i_1,\ldots,i_{\tau(H)}\}$ of $H$ such that
\begin{align*}
        V(H_{r_0})= \{i_1,\ldots,i_{t}\}\subset D_0.
\end{align*}
In this case, take $\jbb_0 = (i_1,\ldots,i_t)^\primet$ and note that $A(\jbb_0) = V(H_{r_0})$. Take $\kbb_1 = (i_1,\ldots,i_t)^\primet = \jbb_0$ and consider the identity permutation  $\xi_0\in \mathcal{S}_t$. By construction, $(\jbb_0,\jbb_0)\in J_t\cap M_t$.  For this $(\sbb_{1,0},\sbb_{2,0})\in \Mc_t(\jbb_0,\jbb_0,\xi_0)$, construct the union graph $H(\sbb_{1,0})\cup H(\sbb_{2,0})$. Using Lemma \ref{VC-lem-2}, we can claim that, there is at least one minimum vertex cover of $H(\sbb_1)\cup H(\sbb_2)$, say $D^\star$, which satisfies \eqref{lowbdd-at}.
%Using Lemma \eqref{VC-lem-2} we can say that there exist a $(\jbb,\kbb)\in J_t\cap M_t$ and $\xi\in \mathcal{S}_t$ such that in the collection of minimum vertex cover of $H(\sbb_1)\cup H(\sbb_2)$, $\Dc_{\min}(H(\sbb_1)\cup H(\sbb_2))$, there is at least one minimum vertex cover $D^\star\in \Dc_{\min}(H(\sbb_1)\cup H(\sbb_2))$ such that
%\begin{align*}
%    |D^\star| = 2\tau(H) - \min\{\tau(H),2\}.
%\end{align*}
Then, for fixed $t\in [R]$, there is at least one $(\jbb_0,\kbb_0)\in J_t\cap M_t$ and $\xi_0\in \mathcal{S}_t$ such that for all $(\sbb_1,\sbb_2)\in \Mc_t(\jbb_0,\kbb_0,\xi_0)$, we can write the following. 
\begin{align}\label{VC-A}
    &\sum_{D\in \Dc(H(\sbb_1)\cup H(\sbb_2))}p_N^{|D|}(1-p_N)^{2R-|D|}\notag\\
    &= \sum_{D\in \Dc_{\min}(H(\sbb_1)\cup H(\sbb_2))}p_N^{|D|}(1-p_N)^{2R-|D|} + \sum_{D\in \Dc(H(\sbb_1)\cup H(\sbb_2))\setminus\Dc_{\min}(H(\sbb_1)\cup H(\sbb_2))}p_N^{|D|}(1-p_N)^{2R-|D|}\notag\\
    &= |\Dc_{\min}(H(\sbb_1)\cup H(\sbb_2))|\cdot p_N^{|D^{\star}|}(1-p_N)^{2R-|D^{\star}|} + \sum_{D\in \Dc(H(\sbb_1)\cup H(\sbb_2))\setminus\Dc_{\min}(H(\sbb_1)\cup H(\sbb_2))}p_N^{|D|}(1-p_N)^{2R-|D|}\notag\\
    &= p_N^{|D^{\star}|}\bigg\{|\Dc_{\min}(H(\sbb_1)\cup H(\sbb_2))|\cdot (1-p_N)^{2R-|D^{\star}|}\notag\\
    &\qquad{}\qquad{}+ \sum_{D\in \Dc(H(\sbb_1)\cup H(\sbb_2))\setminus\Dc_{\min}(H(\sbb_1)\cup H(\sbb_2))}p_N^{|D|-|D^{\star}|}(1-p_N)^{2R-|D|}\bigg\}\notag\\
    &= p_N^{2\tau(H) - \min\{\tau(H),t\}}\cdot |\Dc_{\min}(H(\sbb_1)\cup H(\sbb_2))| \left(1+ O\left(\frac{1}{N^{\al}}\right)\right).
\end{align}
In the above expression $ |\Dc_{\min}(H(\sbb_1)\cup H(\sbb_2))|$ is bounded by a finite quantity $|\Dc(H)|^2$, which is a collection of vertex covers of $H$. For simplicity, we can write $a_{1} = |\Dc_{\min}(H(\sbb_1)\cup H(\sbb_2))|$, and note that the structure of the graph $H(\sbb_1)\cup H(\sbb_2)$ is solely dependent on the choice of the triplet $(\jbb,\kbb,\xi)$ and therefore the value of $a_1$. Without loss of generality, since the sets $J_t\cap M_t$ and $\mathcal{S}_t$ are finite sets, assume that for one choice $(\jbb_0,\kbb_0)\in J_t\cap M_t$ and $\xi_0\in \mathcal{S}_t$ there exists a minimum vertex cover $D^\star$ in the collection $\mathcal{D}_{\min}(H(\sbb_1)\cup H(\sbb_2))$ for $(\sbb_1,\sbb_2)\in \Mc(\jbb_0,\kbb_0,\xi_0)$ such that $|D^\star| = 2\tau(H) - \min\{\tau(H),t\}$. Then, the term $A_{1,N}$ (see \eqref{ego-A}) can be written as follows.
\begin{align}\label{VC-A-1}
    &A_{1,N} = \sum_{(\jbb,\kbb)\in J_t\cap M_t}\sum_{\xi\in \mathcal{S}_t}\sum_{(\sbb_1,\sbb_2)\in \Mc_t(\jbb,\kbb,\xi)}\left(\sum_{D\in \Dc(H(\sbb_1)\cup H(\sbb_2))}p_N^{|D|}(1-p_N)^{|D|}\right)\cdot \E\left(Y_N(\sbb_1:H)Y_{N}(\sbb_2:H)\right)\notag\\
    &= \sum_{(\sbb_1,\sbb_2)\in \Mc_t(\jbb_0,\kbb_0,\xi_0)}\left(\sum_{D\in \Dc(H(\sbb_1)\cup H(\sbb_2))}p_N^{|D|}(1-p_N)^{2R-|D|}\right)\cdot \E\left(Y_N(\sbb_1:H)Y_{N}(\sbb_2:H)\right) + \notag\\
    &+\sum_{\substack{(\jbb,\kbb)\in J_t\cap M_t\\ (\jbb,\kbb)\neq(\jbb_0,\kbb_0)}}\sum_{\substack{\xi\in \mathcal{S}_t\\
    \xi\neq \xi_0}}\sum_{(\sbb_1,\sbb_2)\in \Mc_t(\jbb,\kbb,\xi)}\left(\sum_{D\in \Dc(H(\sbb_1)\cup H(\sbb_2))}p_N^{|D|}(1-p_N)^{2R-|D|}\right)\cdot \E\left(Y_N(\sbb_1:H)Y_{N}(\sbb_2:H)\right)\notag\\
    &= \sum_{(\sbb_1,\sbb_2)\in \Mc_t(\jbb_0,\kbb_0,\xi_0)}\left(\sum_{D\in \Dc(H(\sbb_1)\cup H(\sbb_2))}p_N^{|D|}(1-p_N)^{2R-|D|}\right)\cdot \E\left(Y_N(\sbb_1:H)Y_{N}(\sbb_2:H)\right) + A_{2,1,N},~\text{(say)}\notag\\
    &= a_1\cdot p_N^{2\tau(H) - \min\{\tau(H),t\}}\cdot \left(1+ O\left(\frac{1}{N^{\al}}\right)\right)\sum_{(\sbb_1,\sbb_2)\in \Mc_t(\jbb_0,\kbb_0,\xi_0)} \E\left(Y_N(\sbb_1:H)Y_{N}(\sbb_2:H)\right) + A_{2,1,N}\notag\\
    &= A_{1,1,N} + A_{2,1,N},~\text{(say)}
\end{align}
Firstly, we analyze the term $A_{1,1,N}$ from \eqref{VC-A-1}. From the expression \eqref{V-1-imp} (cf. Lemma \ref{lem-1}), we know that for all $\varepsilon>0$ there exists an $M$ such that for all $N\geq M$, and for all $(\jbb,\kbb)\in J_t\cap M_t$ and $\xi\in \mathcal{S}_t$, we can write,
\begin{align*}
    L_{t,1}(\beta,H,\lamb,\Cb) -\varepsilon < \frac{\sum_{(\jbb,\kbb)\in J_t\cap M_t}\sum_{\xi\in \mathcal{S}_t}\sum_{(\sbb_1,\sbb_2)\in \Mc_t(\jbb,\kbb,\xi)} \E\left(Y_N(\sbb_1:H)Y_{N}(\sbb_2:H)\right)}{N^{2R-t-\beta(2T-m(t:H))}} < U_{t,1}(\beta,H,\lamb,\Cb) -\varepsilon.
\end{align*}
Using this fact, for fixed $t\in [R]$, and for $(\jbb_0,\jbb_0)\in J_t\cap M_t$,  $\xi_0\in \mathcal{S}_t$ and recall the fact $p_N= c\cdot N^{-\al}$, from \eqref{VC-A-1}, for all $\varepsilon_1>0$ there exists an $M_1$ such that for all $N>M_1$, we can write,
\begin{align}\label{VC-bdd-ego-1}
     &L_{t,1}(\beta,H,\lamb,\Cb) -\varepsilon < \frac{\sum_{(\sbb_1,\sbb_2)\in \Mc_t(\jbb_0,\jbb_0,\xi_0)} \E\left(Y_N(\sbb_1:H)Y_{N}(\sbb_2:H)\right)}{N^{2R-t-\beta(2T-m(t:H))}} < U_{t,1}(\beta,H,\lamb,\Cb) -\varepsilon\notag\\
     &\Rightarrow L_{t,1}(\beta,H,\lamb,\Cb) -\varepsilon < \frac{a_1\cdot p_N^{2\tau(H) - \min\{\tau(H),t\}}\sum_{(\sbb_1,\sbb_2)\in \Mc_t(\jbb_0,\jbb_0,\xi_0)} \E\left(Y_N(\sbb_1:H)Y_{N}(\sbb_2:H)\right)}{a_1\cdot p_N^{2\tau(H) - \min\{\tau(H),t\}}\cdot N^{2R-t-\beta(2T-m(t:H))}}\notag\\
     &\qquad{}\qquad{}\qquad{}\qquad{}\qquad{}< U_{t,1}(\beta,H,\lamb,\Cb) -\varepsilon\notag\\
    &\Rightarrow~\frac{L_{t,1}(\beta,H,\lamb,\Cb) -\varepsilon}{[a_1\cdot c^{2\tau(H) -\min\{\tau(H),t\}}]^{-1}} < \frac{A_{1,1,N}}{N^{2R-t - \al(2\tau(H) - \min\{\tau(H),t\}) -\beta(2T-m(t:H))}} < \frac{U_{t,1}(\beta,H,\lamb,\Cb) -\varepsilon}{[a_1\cdot c^{2\tau(H) -\min\{\tau(H),t\}}]^{-1}}\notag\\
    &\Rightarrow~L_{t,2}(\al,\beta,H,\lamb,\Cb) -\varepsilon_1 < \frac{A_{1,1,N}}{N^{2R-t - \al(2\tau(H) - \min\{\tau(H),t\}) - \beta(2T-m(t:H))}} < U_{t,2}(\al,\beta,H,\lamb,\Cb) -\varepsilon_1.
\end{align}
Next we analyze the term $A_{2,1,N}$ from the expression \eqref{VC-A-1}. Since the sets $J_t\cap M_t$ and $\mathcal{S}_t$ are finite and we can assume without loss of generality that the only choice of $(\jbb,\kbb)$ and $\xi$ in $J_t\cap M_t$ and $\mathcal{S}_t$ respectively are $(\jbb_0,\jbb_0)$ and $\xi_0$, for which we can construct a union graph $H(\sbb_1)\cup H(\sbb_2)$ using $(\sbb_1,\sbb_2)\in \Mc_{t}(\jbb_0,\kbb_0,\xi_0)$ and there is at least one minimum vertex cover of $H(\sbb_1)\cup H(\sbb_2)$ of size $2\tau(H) -\min\{\tau(H),t\}$. In the expression of $A_{2,1,N}$, we are summing over $(\jbb,\kbb,\xi)$ triplet which are not equal to the triplet $(\jbb_0,\jbb_0,\xi_0)$, therefore union graphs formed by this triplet has minimum vertex covers which is strictly greater than $2\tau(H) - \min\{\tau(H),t\}$.  %Without loss of generality, if in the sets $J_t\cap M_t$ and $\mathcal{S}_t$, there are no other choices of the pair $(\jbb,\kbb)$ and $\xi$, for which $|D^\star| = 2\tau(H) -\min\{\tau(H),t\}$, then for that choice of $(\jbb,\kbb)\in J_t\cap M_t$ ($\neq (\jbb_0,\kbb_0)$) and $\xi$, we can say that the all minimum vertex covers of the union graph $H(\sbb_1)\cup H(\sbb_2)$ has size greater than $2\tau(H) - \min\{\tau(H),t\}$.
But the exponent of $N$ from the model component in the expression $A_{2,1,N}$ remains the same, that is, $2R-t -\beta(2T-m(t,H))$. Hence, for all $\varepsilon_2>0$, there exists an $M_2$ such that for all $N>M_2$, we can write
\begin{align}\label{VC-bdd-ego-2}
    -\varepsilon_2 < \frac{A_{2,1,N}}{N^{2R-t - \al(2\tau(H) - \min\{\tau(H),t\}) - \beta(2T-m(t:H))}} < \varepsilon_2.
\end{align}
Next, we analyze the term $A_{2,N}$ from \eqref{ego-A}. In this case $(\jbb,\kbb)\in (J_t\cap M_t)^c$. We have already proved in \eqref{v-2-imp} and \eqref{V-3-imp}, that the exponent of $N$ is larger than $2R-t - \beta(2T-m(t:H))$ for all $(\jbb,\kbb)\in (J_t\cap M_t)^c$. Irrespective of the value of the exponent of $N$ in the design based term, which is always greater or equal to $2\tau(H)-\min\{\tau(H),t\}$.  Thus, for all $\varepsilon_3>0$, there exists a $M_3$ such that for all $N>M_3$, we can write,
\begin{align}\label{VC-bdd-ego-3}
    -\varepsilon_3 < \frac{A_{2,N}}{N^{2R-t - \al(2\tau(H) - \min\{\tau(H),t\}) - \beta(2T-m(t:H))}} < \varepsilon_3.
\end{align}
%Since the linear combination $\al\cdot \min\{\tau(H),t\} + \beta\cdot m(t:H)$ is the maximum value attain only (w.l.o.g) at $(\jbb_0,\kbb_0)\in J_t\cap M_t$,
Therefore, combining \eqref{VC-bdd-ego-1}, \eqref{VC-bdd-ego-2} and \eqref{VC-bdd-ego-3}, for all $N>\max\{M_1,M_2,M_3\}$, from the expression \eqref{ego-A}, we can write
\begin{align*}
    L_{t,2}(\al,\beta,H,\lamb,\Cb) -\varepsilon &< \frac{
\displaystyle 
\sum_{(\sbb_1,\sbb_2)\in \Mc_t}
\E\left(W_{N,2}(\sbb_1:H)\cdot W_{N,2}(\sbb_2:H)\right)
\E\left(Y_N(\sbb_1:H)\cdot Y_N(\sbb_2:H)\right)
}{N^{2R - t - \al\left(2\tau(H)-\min\{\tau(H),t\}\right)-\beta\left(2T-m(t:H)\right)}}\notag\\ &< U_{t,2}(\al,\beta,H,\lamb,\Cb) -\varepsilon.
\end{align*}
This completes the proof.
\end{proof}

\begin{lemma}[Growth rate of the $\sigma^2_{N,l}(H)$]\label{lem-var}
    Let $H$ be a fixed, undirected, simple, and connected graph with $V(H)=\{1,\ldots,R\}$ and the edge set $E(H)\subseteq {[V(H)]}^2$, where $R\geq 2$, is a fixed positive integer, and $|E(H)| =T$. Suppose that Assumptions \ref{assump-1}, \ref{assump-2}, \ref{assump-3}, and \ref{assump-4} hold and   recall the definition of $\sigma^2_{N,l}(H)$ (cf. \eqref{sig-temp-0}). Then
    %\begin{align*}
    %    \sigma^2_{N,l}(H) = \Var\left(\widehat{S}_{N,l}(H) - f_l(p:H)\cdot S_N(H)\right),~l=1,2,
    %\end{align*}
    %where, $f_l(p:H)$ for $l=1,2$, defined in \eqref{fl-def}.
\begin{enumerate}
    \item[(a)] In the induced case ($l=1$), there exists an $M$, such that for all $N>M$, 
\begin{align}\label{var-bdd-ind}
    L(1,\al,\beta,H,\lamb,\Cb) 
    < \frac{\sigma^2_{N,1}(H)}
    {N^{\displaystyle -2g_{1,\al,\beta,H}(R) + g_{1,\al,\beta,H}(t_1(\al,\beta;H))}} 
    < U(1,\al,\beta,H,\lamb,\Cb).
\end{align}
\item[(b)] In the ego-centric case ($l=2$), along with the Assumption \ref{assump-5} over graph $H$, there exists an $M$, such that for all $N>M$, 
\begin{align}\label{var-bdd}
    L(2,\al,\beta,H,\lamb,\Cb) 
    < \frac{\sigma^2_{N,2}(H)}
    {N^{\displaystyle -2g_{2,\al,\beta,H}(R) + g_{2,\al,\beta,H}(t_2(\al,\beta;H))}} 
    < U(2,\al,\beta,H,\lamb,\Cb).
\end{align}
\end{enumerate}
where, $L(l,\al,\beta,H,\lamb,\Cb)$ and $U(l,\al,\beta,H,\lamb,\Cb)$ are two positive constants depending on network formation design, sparsity level $\al$ and $\beta$, graph $H$ and the SBM parameters. Note that $g_{l,\al,\beta, H}(\cdot)$ and $t_l(\al,\beta;H)$ for $l=1,2$, are defined in \eqref{delta-bdd-temp}.
\end{lemma}
\begin{proof}[Proof of Lemma \ref{lem-var}]
For $l=1,2$,
\begin{align}\label{var-exp-gen}
    &\sigma^2_{N,l}(H) =\Var\left(\widehat{S}_{N,l}(H) - f_{l}(p_N:H)\cdot S_N(H)\right) = \Var\left(\sum_{\sbb\in \nsr}\widetilde{W}_{N,l}(\sbb:H)\cdot Y_N(\sbb:H)\right)\notag\\
    &= \sum_{t=1}^R\sum_{(\sbb_1,\sbb_2)\in \Mc_t}\cov\left(\widetilde{W}_{N,l}(\sbb_1:H)Y_N(\sbb_1:H),\widetilde{W}_{N,l}(\sbb_2:H)Y_N(\sbb_2:H)\right)\notag\\
    &=\sum_{t=1}^R\sum_{(\sbb_1,\sbb_2)\in \Mc_t}\cov\left(W_{N,l}(\sbb_1:H),W_{N,l}(\sbb_2;H)\right)\E\left(Y_N(\sbb_1:H)Y_{N}(\sbb_2:H)\right),
\end{align}
where, $\Mc_t$ defined in \eqref{Etset}. firstly, we analyze the design component from the expression \eqref{var-exp-gen}. We begin with the induced case ($l=1$), that is for fixed $t\in [R]$ and $(\sbb_1,\sbb_2)\in \Mc_t$, we can write,
\begin{align*}
    \cov\left(W_{N,1}(\sbb_1:H),W_{N,1}(\sbb_2;H)\right) = p_N^{2R-t}(1-p_N^t) = \left(\frac{c}{N^{\al}}\right)^{2R-t}\left(1-\frac{c^t}{N^{t\al}}\right).
\end{align*}
Now, for all $\varepsilon_t>0$, there exists an $M_{1,\varepsilon_t}$ such for all $N>M_{1,\varepsilon_t}$, we can write,
\begin{align}\label{cov-bdd-ind}
  c^{R} -  \varepsilon_t\le c^{2R-t} - \varepsilon_t\leq \frac{\cov\left(W_{N,1}(\sbb_1:H),W_{N,1}(\sbb_2;H)\right)}{N^{-\al(2R-t)}}  \le c^{2R-t} +\varepsilon_t\le c^{2R} +\varepsilon_t,~\text{for all $t\in [R]$}.
\end{align}
Then using \eqref{cov-bdd-ind} and multiplying both side by $N^{-\al(2R-t)}\E(Y_N(\sbb_1:H)Y_N(\sbb_2:H))$ and then taking summation over $t$ and $(\sbb_1,\sbb_2)\in \Mc_t$ and applying Lemma \ref{lem-1}, we can write,
\begin{align*}
    &c^{R} -  \max\{\varepsilon_t:t\in [R]\}\le \frac{\cov\left(W_{N,1}(\sbb_1:H),W_{N,1}(\sbb_2;H)\right)}{N^{-\al(2R-t)}}  \le c^{2R} +\min\{\varepsilon_t:t\in [R]\}\notag\\
    &\Rightarrow\left\{c^{R} -  \max\{\varepsilon_t:t\in [R]\}\right\}\sum_{t=1}^RN^{-\al(2R-t)}\sum_{(\sbb_1,\sbb_2)\in \Mc_t}\E(Y_{N}(\sbb_1:H)Y_{N}(\sbb_2:H))\notag\\
    &\le\sigma^2_{N,1}(H) \le\left\{c^{R} -  \min\{\varepsilon_t:t\in [R]\}\right\}\sum_{t=1}^RN^{-\al(2R-t)}\sum_{(\sbb_1,\sbb_2)\in \Mc_t}\E(Y_{N}(\sbb_1:H)Y_{N}(\sbb_2:H))\notag\\
    &\Rightarrow L(1,\al,\beta,H,\lamb,\Cb) \sum_{t=1}^RN^{\displaystyle -2g_{1,\al,\beta,H}(R) + g_{1,\al,\beta,H}(t_1(\al,\beta;H))}
    \le \sigma^2_{N,1}(H)\notag\\
    &\qquad{}\le U(1,\al,\beta,H,\lamb,\Cb) \sum_{t=1}^RN^{\displaystyle -2g_{1,\al,\beta,H}(R) + g_{1,\al,\beta,H}(t_1(\al,\beta;H))}\notag\\
    &\Rightarrow L(1,\al,\beta,H,\lamb,\Cb) 
    \le \frac{\sigma^2_{N,1}(H)}
    {N^{\displaystyle -2g_{1,\al,\beta,H}(R) + g_{1,\al,\beta,H}(t_1(\al,\beta;H))}} 
    \le U(1,\al,\beta,H,\lamb,\Cb),
\end{align*}
where, $L(1,\al,\beta,H,\lamb,\Cb) = (c^{R} -  \max\{\varepsilon_t:t\in [R]\})\cdot L(\beta,H,\lamb,\Cb)$ and $L(1,\al,\beta,H,\lamb,\Cb) = (c^{2R} -  \min\{\varepsilon_t:t\in [R]\})\cdot U(\beta,H,\lamb,\Cb)$. This completes the proof for the induced case.

In the ego-centric case or for $l=2$ and for $H$ satisfying Assumption \ref{assump-5}, using Lemma \ref{lem-var-ego-1}, there exist an $M_{t,2}$ such that for all $N\geq M_{t,2}$, we can write,
\begin{align}\label{cov-bdd-ego}
L_{t,2}(\al,\beta,H,\lamb,\Cb)
&\le 
\frac{\displaystyle \sum_{(\sbb_1,\sbb_2)\in \Mc_t}
\E\left(W_{N,2}(\sbb_1:H)\cdot W_{N,2}(\sbb_2:H)\right)
\E\left(Y_N(\sbb_1:H)\cdot Y_N(\sbb_2:H)\right)
}{N^{\displaystyle 
2R - t - \al\left(2\tau(H)-\min\{\tau(H),t\}\right)-\beta\left(2T-m(t:H)\right)}
}
\notag\\
&\le 
U_{t,2}(\al,\beta,H,\lamb,\Cb)\notag\\
L(2,\al,\beta,H,\lamb,\Cb)
&\le 
\frac{\displaystyle \sum_{(\sbb_1,\sbb_2)\in \Mc_t}
\E\left(W_{N,2}(\sbb_1:H)\cdot W_{N,2}(\sbb_2:H)\right)
\E\left(Y_N(\sbb_1:H)\cdot Y_N(\sbb_2:H)\right)
}{N^{\displaystyle 
2R - t - \al\left(2\tau(H)-\min\{\tau(H),t\}\right)-\beta\left(2T-m(t:H)\right)}
}
\notag\\
&\le 
U(2,\al,\beta,H,\lamb,\Cb),
\end{align}
where, $L(2,\al,\beta,H,\lamb,\Cb) = \max\{L_{t,2}(\al,\beta,H,\lamb,\Cb):t\in [R]\}$ and $U(2,\al,\beta,H,\lamb,\Cb) = \max\{U_{t,2}(\al,\beta,H,\lamb,\Cb):t\in [R]\}$. Now, multiplying both side by $N^{\displaystyle 
2R - t - \al\left(2\tau(H)-\min\{\tau(H),t\}\right)-\beta\left(2T-m(t:H)\right)}$ in \eqref{cov-bdd-ego}, then subtracting the term $N^{2R-t} f_2^2(p_N:H)\cdot (\E(Y_N(\sbb:H)))^2$ from both side and taking summation over $t\in [R]$, we get the following, for $N\ge \max\{M_{t,2}:t\in [R]\}$
\begin{align*}
 &L(2,\al,\beta,H,\lamb,\Cb) \sum_{t=1}^RN^{\displaystyle -2g_{2,\al,\beta,H}(R) + g_{2,\al,\beta,H}(t_2(\al,\beta;H))}
    \le \sigma^2_{N,2}(H)\notag\\
    &\qquad{}\le U(2,\al,\beta,H,\lamb,\Cb) \sum_{t=1}^RN^{\displaystyle -2g_{2,\al,\beta,H}(R) + g_{2,\al,\beta,H}(t_2(\al,\beta;H))}\notag\\
&\Rightarrow L(2,\al,\beta,H,\lamb,\Cb) 
    \le \frac{\sigma^2_{N,2}(H)}
    {N^{\displaystyle -2g_{2,\al,\beta,H}(R) + g_{2,\al,\beta,H}(t_2(\al,\beta;H))}} 
    \le U(2,\al,\beta,H,\lamb,\Cb),    
\end{align*}
where, $L(2,\al,\beta,H,\lamb,\Cb)$ and $U(2,\al,\beta,H,\lamb,\Cb)$ are two positive constants depending on network formation design and the SBM parameters. This completes the proof of ego-centric case.
\end{proof}

\begin{lemma}\label{SN-hat-tght}
    Suppose that Assumptions \ref{assump-1},\ref{assump-2}, \ref{assump-3} and \ref{assump-4} hold. Let $H$ be a fixed, undirected, simple graph with $R$ vertices and $T$ edges.  $\widehat{S}_{N,l}(H)$ and $S_N(H)$ denote the estimated and population subgraph $H$ count (cf. \eqref{s-all-main}), respectively. Then 
    \begin{enumerate}
        \item[(a)] for induced ($l=1$) and ego-centric ($l=2$) network formation,
        \begin{align*}
            \frac{\widehat{S}_{N,l}(H) - \E\left(\widehat{S}_{N,l}(H)\right)}{N^{-g_{l,\al,\beta,H}(R)}}   = o_P(1).  
        \end{align*}
        For $l = 2$ case, in addition to all previously stated assumptions, the graph $H$ must satisfy Assumption~\ref{assump-5}. 
        %\item[(b)] In addition to the previous Assumptions, if the graph $H$ satisfies the Assumption \ref{assump-5}, then in the case of ego-centric network formation,
        %\begin{align*}
         %   \frac{\widehat{S}_{N,2}(H) - \E\left(\widehat{S}_{N,2}(H)\right)}{N^{-g_{2,\al,\beta,H}(R)}}   = o_P(1).
        %\end{align*}
        \item[(b)] Moreover, the population subgraph count $S_N(H)$ (cf. \eqref{s-all-main}) satisfies the following,
        \begin{align*}
           \frac{S_N(H) - \E(S_N(H))}{\sqrt{\Var(S_{N}(H))}} = O_P(1)\quad\text{and}\quad  \frac{{S}_{N}(H) - \E\left({S}_{N}(H)\right)}{N^{R-T\beta}} = o_P(1).
        \end{align*}
    \end{enumerate}
%\begin{align*}
%        \frac{\widehat{S}_{N,l}(H) - \E\left(\widehat{S}_{N,l}(H)\right)}{N^{-g_{l,\al,\beta,H}(R)}}   = o_P(1),~~\frac{{S}_{N}(H) - \E\left({S}_{N}(H)\right)}{N^{R-T\beta}} = o_P(1)~\text{and}~ \frac{S_N(H) - \E(S_N(H))}{\sqrt{\Var(S_{N}(H))}} = O_P(1).
%    \end{align*}
\end{lemma}
\begin{proof}[Proof of Lemma \ref{SN-hat-tght}]
    First note that
    \begin{align*}
        g_{l,\al,\beta,H}(R) = \begin{cases}
            -R(1-\al) +T\beta,&~\text{if $l=1$,}\\
            -R +\tau(H)\cdot \al + T\beta ,&~\text{if $l=2$}.
        \end{cases}
    \end{align*}
First we will prove part (a). Using Assumption \ref{assump-2}, \ref{assump-3}, and Lemma \ref{lem-1}, we can write the following
\begin{align*}
    &\Var\left(\widehat{S}_{N,1}(H)\right) \leq \sum_{t=1}^R\sum_{(\sbb_1,\sbb_2)\in \mathcal{M}_t}\E\left(W_{N,1}(\sbb_1:H)W_{N,1}(\sbb_2:H)\right)\E\left(Y_N(\sbb_1:H)Y_N({\sbb_2:H})\right)\notag\\
    &=\sum_{t=1}^Rc^{2R-t}N^{-\al(2R-t)}\sum_{(\sbb_1,\sbb_2)\in \mathcal{M}_t}\E\left(Y_N(\sbb_1:H)Y_N({\sbb_2:H})\right)\notag\\
    &\qquad{}\qquad{}\leq c^{2R}\cdot \left(N^{\beta}\max_{i,j\in [K]}\pi_{N,i,j}\right)^{2T}\sum_{t=1}^R N^{-2g_{1,\al,\beta,H}(R) + g_{1,\al,\beta,H}(t)}\notag\\
    &\Rightarrow~\frac{\Var\left(\widehat{S}_{N,1}(H)\right)}{N^{-2g_{1,\al,\beta,H}(R)}} \leq c^{2R}\left(N^{\beta}\max_{i,j\in [K]}\pi_{N,i,j}\right)^{2T}\sum_{t=1}^R N^{g_{l,\al,\beta,H}(t)}\to 0,~\text{as $N\rai$.}
\end{align*}
Therefore, from the above expression we can write,
\begin{align*}
    \frac{\widehat{S}_{N,1}(H) - \E\left(\widehat{S}_{N,1}(H)\right)}{N^{-g_{1,\al,\beta,H}(R)}}   = o_P(1).
\end{align*}
Next, in the ego-centric case ($l=2$), using Assumption \ref{assump-2}, \ref{assump-3} and Lemma \ref{lem-var-ego-1}, we can write, 
\begin{align*}
    \Var\left(\widehat{S}_{N,2}(H)\right) &\leq \sum_{t=1}^R\sum_{(\sbb_1,\sbb_2)\in \mathcal{M}_t}\E\left(W_{N,2}(\sbb_1:H)W_{N,2}(\sbb_2:H)\right)\E\left(Y_N(\sbb_1:H)Y_N({\sbb_2:H})\right)\notag\\
    &\leq c^{2\tau(H)}|\mathcal{D}(H)|^2\left(N^{\beta}\max_{i,j\in [K]}\pi_{N,i,j}\right)^{2T}\sum_{t=1}^R N^{-2g_{2,\al,\beta,H}(R) + g_{2,\al,\beta,H}(t)}\notag\\
    \Rightarrow~\frac{\Var\left(\widehat{S}_{N,2}(H)\right)}{N^{-2g_{2,\al,\beta,H}(R)}} &\leq c^{2\tau(H)}|\mathcal{D}(H)|^2\left(N^{\beta}\max_{i,j\in [K]}\pi_{N,i,j}\right)^{2T}\sum_{t=1}^R N^{g_{2,\al,\beta,H}(t)}\to 0,~\text{as $N\rai$.}
\end{align*}
Therefore, from the above expression we can write,
\begin{align*}
    \frac{\widehat{S}_{N,2}(H) - \E\left(\widehat{S}_{N,2}(H)\right)}{N^{-g_{2,\al,\beta,H}(R)}}   = o_P(1).
\end{align*}
This completes the proof of part (a). Next, we prove part (b).
Note that,
\begin{align*}
    \Var(S_N(H)) &= \sum_{t=2}^R\sum_{(\sbb_1,\sbb_2)\in \mathcal{M}_t}\cov\left(Y_{N}(\sbb_1:H),Y_N(\sbb_2:H)\right)\notag\\
    &= \sum_{t=2}^R\sum_{(\sbb_1,\sbb_2)\in \mathcal{M}_t}\left\{\E\left(Y_{N}(\sbb_1:H)Y_N(\sbb_2:H)\right) - \left(\E\left(Y_{N}(\sbb_1:H)\right)\right)^2\right\}. 
\end{align*}
Using Assumption \ref{assump-2} and Lemma \ref{lem-1}, there exist and $M$ such that for all $N>M$, we can write,
\begin{align}\label{var-pop}
    L(\beta,H,\lamb,\Cb)\sum_{t=2}^RN^{2R-t-\beta(2T-m(t:H))} \leq \Var(S_N(H)) \leq U(\beta,H,\lamb,\Cb)\sum_{t=2}^RN^{2R-t-\beta(2T-m(t:H))} 
\end{align}
where, $L(\beta,H,\lamb,\Cb) = \min_t\{L_t(\beta,H,\lamb,\Cb)\}$ and $U(\beta,H,\lamb,\Cb) = \max_t\{U_t(\beta,H,\lamb,\Cb)\}$ are positive constants (see Lemma \ref{lem-1})
%\begin{align}
%    \Var(S_N(H)) &= \sum_{t=2}^R\sum_{(\sbb_1,\sbb_2)\in \mathcal{M}_t}\cov\left(Y_{N}(\sbb_1:H),Y_N(\sbb_2:H)\right) \leq  \sum_{t=2}^R\sum_{(\sbb_1,\sbb_2)\in \mathcal{M}_t}\E\left(Y_{N}(\sbb_1:H)\cdot Y_N(\sbb_2:H)\right)\notag\\
%    &\leq \left(N^{\beta}\max_{i,j\in [K]}\pi_{N,i,j}\right)^{2T}\sum_{t=2}^RN^{2R-t-\beta(2T-m(t:H))} 
%\end{align}
%and using Lemma \ref{lem-1}, there exists an $M$ such that for all $N>M$ we write,
%\begin{align}\label{var-pop-low}
%\frac{\Var(S_N(H))}{N^{2R-t-\beta(2T-m(t:H))}} \geq L_t(\beta,H,\lamb,\Cb)  
%\end{align}
Next, we apply Theorem \ref{thm-ruc} over the pivotal quantity,
$$
T_N(H) = \frac{S_N(H) - \E(S_N(H))}{\sqrt{\Var(S_{N}(H))}},
$$
and we get the following,
\begin{align}\label{d1-pop}
    &d_1\left(T_N(H,~Z)\right) \leq \frac{1}{[\Var(S_{N}(H)]^{3/2}}\sum_{\sbb_1\in \nsr}\sum_{\sbb_2,\sbb_3\in N(\sbb_1)}\left(\E\left|\prod_{i=1}^3\widetilde{Y}_N(\sbb_i:H)\right|+\E\left|\prod_{i=1}^2\widetilde{Y}_N(\sbb_i:H)\right|\E|\widetilde{Y}_N(\sbb_3:H)|\right)\notag\\
    &\leq \frac{1}{[\Var(S_{N}(H)]^{3/2}}\sum_{k_1,k_2=2}^R\sum_{(\sbb_1,\sbb_2,\sbb_3)\in \Mc_{k_1,k_2}}\left(\E\left|\prod_{i=1}^3\widetilde{Y}_N(\sbb_i:H)\right|+\E\left|\prod_{i=1}^2\widetilde{Y}_N(\sbb_i:H)\right|\E|\widetilde{Y}_N(\sbb_3:H)|\right)\notag\\
    &=\frac{1}{[\Var(S_{N}(H)]^{3/2}}\left(P_{N,1} + P_{N,2}\right),~\text{(say)}
\end{align}
where, $\widetilde{Y}_N(\sbb:H) = Y_N(\sbb:H) - \E(Y_N(\sbb:H))$. Note that, by similar argument as in \eqref{exp-W} and using \eqref{exp-Y},
\begin{align}\label{P1P2}
    &\E\left|\prod_{i=1}^3\widetilde{Y}_N(\sbb_i:H)\right| \leq 8\cdot\left(N^{\beta}\max_{i,j\in [K]}\pi_{N,i,j}\right)^{3T}N^{-\beta(3T - m(k_1:H) - m(k_2:H))}\notag\\
&\E\left|\prod_{i=1}^2\widetilde{Y}_N(\sbb_i:H)\right|\E|\widetilde{Y}_N(\sbb_3:H)|\leq 8\cdot\left(N^{\beta}\max_{i,j\in [K]}\pi_{N,i,j}\right)^{3T}N^{-\beta(3T - m(k_1:H) - m(k_2:H))}
\end{align}
Then from the expression in the the r.h.s. of \eqref{d1-pop} and  combining \eqref{var-pop} and \eqref{P1P2}, there exist an $M$ such that for all $N>M$, we can write the following
\begin{align*}
    &d_1\left(T_N(H),Z\right) \leq O\left(\frac{1}{N^{\frac{1}{2}(t(\beta;H) -\beta\cdot m(t(\beta;H):H))}}\right) = O\left(\frac{1}{\sqrt{N^{t(\beta:H)(1-m(H)\cdot \beta)}}}\right).
\end{align*}
where, $t(\beta;H) = \argmaxA_{t\in \{2,\ldots,R\}} (t - \beta\cdot m(t:H))$  and $m(H) = m(t(\beta;H):H)/t(\beta;H)$.
Thus, for $\beta <1/m(H)$, $T_N(H) = O_P(1)$. This completes the proof of first part of (c). To prove the second assertion in part (c) of Lemma \ref{SN-hat-tght}. Note that for any graph fixed graph $H$ and for any $\beta<1/m(H)$, from the expression \eqref{var-pop}, we can write,
\begin{align*}
    \Var\left(\frac{S_N(H)}{N^{R-T\beta}}\right) \leq U(\beta,H,\lamb,\Cb)\frac{1}{N^{2R-2T\beta}}\sum_{t=2}^RN^{2R-t-\beta(2T-m(t:H))}\to 0,\quad\text{as $N\to \infty$}.
\end{align*}
Hence, for $0\leq \beta<1/m(H)$, we can claim that, $(S_N(H) - \E(S_N(H)))/N^{R-T\beta} = o_p(1)$.
For strictly balanced graph $H$, $m(H) = T/R$ and for $\beta = R/T$, from the expression \eqref{var-pop}, we can write,
\begin{align*}
    \Var\left(S_N(H)\right) \leq U(\beta,H,\lamb,\Cb)\left(1+ O\left(\frac{1}{N}\right)\right).
\end{align*}
Thus, using \ref{assump-2}, for $N\rai$, we can claim $\E(S_N) = O(1)$ and $\Var(S_N(H))=O(1)$. In addition note that, $\E(S_N(H) - \E(S_N(H))) = 0$. Then by Chebyshev inequality, we can write,
$$
\pr(|S_N(H) - \E(S_N(H))| > M)
\le \frac{\Var(S_N(H))}{M^2} = \frac{O(1)}{M^2}.
$$
%Since, $\Var(S_N(H)) \to c < \infty$, so there exists a constant \( C < \infty \) such that, $\sup_N \Var(S_N(H)) \le C$.
Hence, for all $M>0$, we can write, $ \sup_N \pr(|S_N(H)| > M)\le \frac{O(1)}{M^2}$.
%Given $\varepsilon>0$, choose $M > \sqrt{\frac{C}{\varepsilon}}$ for some constant $C>0$.
Then we can claim that
$$
\sup_N \pr(|S_N(H)| > M) < \varepsilon. 
$$
This completes the proof.

\end{proof}

\begin{lemma}\label{var-exp-bdd}
Suppose that Assumptions \ref{assump-1}, \ref{assump-2}, \ref{assump-3} and \ref{assump-4} hold. Let $H$ be a fixed, undirected, simple, and connected graph with $V(H) = \{1,\ldots,R\}$ and the edge set $E(H) \subseteq [V(H)]^2$, where $R\geq 2$ and $|E(H)| = T$. It is given that $H$ is a strictly balanced graph and 
    $$
    \sigma^2_{N,l}(H) = \Var\left(\widehat{S}_{N,l}(H) - f_l(p_N:H)\cdot S_N(H)\right)\quad\text{for $l=1,2$}.
    $$
    Then 
    \begin{enumerate}
        \item[(i)] If $(\al,\beta) = (0,R/T)$, then 
        \begin{align*}
            \sigma^2_{N,l}(H) \to f_l(c:H)(1-f_l(c:H))\cdot \kappa_N(\Cb, \lamb:H)\quad\text{as $N\to\infty$.}
        \end{align*}
        \item[(ii)] If $(\al,\beta)\in F_l(H)$, defined in \eqref{F1F2-set} and in addition for $l=2$, the graph $H$ satisfies \ref{assump-5}, then
        \begin{align*}
            \sigma^2_{N,l}(H) \to \phi_l(c:H)\cdot \kappa(\Cb,\lamb:H)\quad\text{as $N\to\infty$.}
        \end{align*}
        where $\phi_l(c:H)$ defined in \eqref{exp-B-1-B-2} and $\kappa(\Cb,\lamb:H)$ defined in \eqref{kap-def}.
        \item[(iii)] For $\tau(H) = 1$, and $\al = 1$, in the ego-centric sub-network formation ($l=2$), 
        \begin{align*}
            &\sigma^2_{N,2}(H)\notag\\
            &= N^{2(R-1)(1-\beta)}\left\{c\sum_{\substack{\ub\in [K]^R\\(v_2,\ldots,v_R)\in [K]^{R-1}}}\prod_{j=2}^RN^{2\beta}\pi_{N,u_1,u_j}\pi_{N,u_1,v_j}\prod_{j=1}^R\la_{N,u_j}\prod_{j=2}^R\la_{N,v_j}\left(1 +O\left(\frac{1}{N}\right)\right) + O\left(\frac{1}{N}\right)\right\}.
        \end{align*}
    \end{enumerate}
\end{lemma}

\begin{proof}[Proof of Lemma \ref{var-exp-bdd}]
    For part (a), when $\al=0$ and $\beta = R/T$ and graph $H$ is strictly balanced with $R$ vertices and $T$ edges. In this case $p_N = c \in (0,1)$. Then for $l=1,2$, we can write,
    \begin{align*}
        &\sigma^2_{N,l}(H) = \Var\left(\widehat{S}_{N,l}(H) - f_l(c:H)\cdot S_N(H)\right) \notag\\
        &= \Var\left(\sum_{\sbb\in \nsr}\left(W_{N,l}(\sbb:H) - f_l(c:H)\right)Y_N(\sbb:H)\right)\notag\\
        &= \sum_{t=1}^R\sum_{(\sbb_1,\sbb_2)\in \mathcal{M}_t}\cov\left(W_{N,l}(\sbb_1:H),W_{N,l}(\sbb_2:H)\right)\cdot \E(Y_N(\sbb_1:H)\cdot Y_{N}(\sbb_2:H))\notag\\
        &= \sum_{t=1}^{R-1}\sum_{(\sbb_1,\sbb_2)\in \mathcal{M}_t}\cov\left(W_{N,l}(\sbb_1:H),W_{N,l}(\sbb_2:H)\right)\cdot \E(Y_N(\sbb_1:H)\cdot Y_{N}(\sbb_2:H)) \notag\\
        &+ \sum_{(\sbb_1,\sbb_2)\in \mathcal{M}_R}\cov\left(W_{N,l}(\sbb_1:H),W_{N,l}(\sbb_2:H)\right)\cdot \E(Y_N(\sbb_1:H)\cdot Y_{N}(\sbb_2:H))\notag\\
        &= \sum_{t=1}^{R-1}\sum_{(\sbb_1,\sbb_2)\in \mathcal{M}_t}\cov\left(W_{N,l}(\sbb_1:H),W_{N,l}(\sbb_2:H)\right)\cdot \E(Y_N(\sbb_1:H)\cdot Y_{N}(\sbb_2:H)) \notag\\
        &+ \sum_{\sbb\in \nsr}\Var\left(W_{N,l}(\sbb:H)\right)\cdot \E(Y_N(\sbb:H))\notag\\
        &= \sum_{t=1}^{R-1}\sum_{(\sbb_1,\sbb_2)\in \mathcal{M}_t}\cov\left(W_{N,l}(\sbb_1:H),W_{N,l}(\sbb_2:H)\right)\cdot \E(Y_N(\sbb_1:H)\cdot Y_{N}(\sbb_2:H)) \notag\\
        &+ N^{R-T\beta}f_l(c:H)(1-f_l(c:H))\sum_{\ub\in [K]^R}\prod_{\{i,j\}\in E(H)}N^{\beta}\pi_{N,u_i,u_j} \left(\prod_{i=1}^R\la_{N,u_i}+O\left(\frac{1}{N}\right)\right)\notag\\
        &= \sum_{t=1}^{R-1}\sum_{(\sbb_1,\sbb_2)\in \mathcal{M}_t}\cov\left(W_{N,l}(\sbb_1:H),W_{N,l}(\sbb_2:H)\right)\cdot \E(Y_N(\sbb_1:H)\cdot Y_{N}(\sbb_2:H)) \notag\\
        &+ f_l(c:H)(1-f_l(c:H))\sum_{\ub\in [K]^R}\prod_{\{i,j\}\in E(H)}N^{\beta}\pi_{N,u_i,u_j} \left(\prod_{i=1}^R\la_{N,u_i}+O\left(\frac{1}{N}\right)\right)\notag\\
        &= J_{N,l,1} + J_{N,l,2},~\text{(say)}.
    \end{align*}
Next, we will show that $J_{N,l,1}$ converges to zero as $N\rai$. Using the facts $\al=0$ and $\beta=R/T$,  we can write 
\begin{align}\label{J1}
    &\left|J_{N,l,1}\right|\leq \sum_{t=1}^{R-1}\sum_{(\sbb_1,\sbb_2)\in \mathcal{M}_t}\left|\cov\left(W_{N,l}(\sbb_1:H),W_{N,l}(\sbb_2:H)\right)\cdot \E(Y_N(\sbb_1:H)\cdot Y_{N}(\sbb_2:H))\right|\notag\\
    &\leq \sum_{t=1}^{R-1}\sum_{(\sbb_1,\sbb_2)\in \mathcal{M}_t}\left|\E\left(W_{N,l}(\sbb_1:H)\cdot W_{N,l}(\sbb_2:H)\right)\cdot \E(Y_N(\sbb_1:H)\cdot Y_{N}(\sbb_2:H))\right|\notag\\
    &\leq \sum_{t=1}^{R-1}\sum_{(\sbb_1,\sbb_2)\in \mathcal{M}_t}\E(Y_N(\sbb_1:H)\cdot Y_{N}(\sbb_2:H))\leq \sum_{t=1}^{R}\left(N^{\beta}\max_{i,j\in [K]^R} \pi_{N,i,j}\right)^{2T}\cdot N^{2R-t - \beta(2T-m(t:H))}\notag\\
    & = \left(N^{\beta}\max_{i,j\in [K]^R} \pi_{N,i,j}\right)^{2T}\sum_{t=1}^{R-1}N^{-t +(R/T)m(t:H)}.
\end{align}
In the above expression $t/m(t:H) > R/T$  for $t\in \{1,\ldots,R-1\}$ since $H$ is a strictly balanced graph. Thus, using Assumption \ref{assump-2}, we can claim that $|J_{N,l,1}|\to 0$, as $N\rai$. Also note that using Assumptions \ref{assump-2} and \ref{assump-3} we can say that
\begin{align*}
    J_{N,l,2} &\to f_l(c:H)(1-f_l(c:H))\sum_{\ub\in [K]^R}\prod_{\{i,j\}\in E(H)}c_{u_i,u_j} \cdot\prod_{i=1}^R\la_{u_i}\notag\\ 
    &=f_l(c:H)(1-f_l(c:H))\cdot \kappa(\Cb,\lamb:H),~\text{as $N\rai$ for $l=1,2$,}
\end{align*}
where $\kappa(\Cb,\lamb:H)$ defined in \eqref{kap-def}.
Hence, combining both the term $J_{N,l,1}$ and $J_{N,l,2}$ as $N\rai$, we can say that
\begin{align*}
    \sigma^2_{N,l}(H) \to f_l(c:H)(1-f_l(c:H))\cdot \kappa(\Cb,\lamb:H),~\text{for $l=1,2$.}
\end{align*}
This completes the proof of part (a). For part (b), $\al\in (0,1)$. In the induced case, it is assumed that the sparsity levels $(\al,\beta)$ are within the boundary region
$$
F_{1}(H) = \left\{(\al,\beta): 0<\al<1,~\beta = \frac{R\cdot(1-\al)}{T}\right\},\quad\text{(see \eqref{F1F2-set}).}
$$
In the ego-centric case, $(\al,\beta)$ are within the boundary region 
\begin{align*}
F_2(H) &= \left\{(\al,\beta): 0 <\al < \widetilde{\al}_H,~\beta = \left(1-\frac{\tau(H)\cdot\al}{R}\right)\cdot\frac{R}{T}\right\},
\end{align*} 
where $\widetilde{\al}_H$ is defined \eqref{al-tild-def} and the graph $H$ satisfies \ref{assump-5}. Then $\sigma^2_{N,l}(H)$ has the following expression,
\begin{align*}
    &\sigma^2_{N,l}(H) = \sum_{t=1}^R\sum_{(\sbb_1,\sbb_2)\in \mathcal{M}_t}\cov\left(W_{N,l}(\sbb_1:H),W_{N,l}(\sbb_2:H)\right)\cdot \E(Y_N(\sbb_1:H)\cdot Y_{N}(\sbb_2:H))\notag\\
    &= \sum_{t=1}^{R-1}\sum_{(\sbb_1,\sbb_2)\in \mathcal{M}_t}\cov\left(W_{N,l}(\sbb_1:H),W_{N,l}(\sbb_2:H)\right)\cdot \E(Y_N(\sbb_1:H)\cdot Y_{N}(\sbb_2:H))\notag\\
    &\qquad{} +\sum_{(\sbb_1,\sbb_2)\in \mathcal{M}_R}\cov\left(W_{N,l}(\sbb_1:H),W_{N,l}(\sbb_2:H)\right)\cdot \E(Y_N(\sbb_1:H)\cdot Y_{N}(\sbb_2:H))\notag\\
    &= \sum_{t=1}^{R-1}\sum_{(\sbb_1,\sbb_2)\in \mathcal{M}_t}\cov\left(W_{N,l}(\sbb_1:H),W_{N,l}(\sbb_2:H)\right)\cdot \E(Y_N(\sbb_1:H)\cdot Y_{N}(\sbb_2:H))\notag\\
    &\qquad{} +\sum_{\sbb\in \nsr}\Var\left(W_{N,l}(\sbb:H)\right)\cdot \E(Y_N(\sbb:H))\notag\\
    &= \sum_{t=1}^{R-1}\sum_{(\sbb_1,\sbb_2)\in \mathcal{M}_t}\cov\left(W_{N,l}(\sbb_1:H),W_{N,l}(\sbb_2:H)\right)\cdot \E(Y_N(\sbb_1:H)\cdot Y_{N}(\sbb_2:H))\notag\\
    &\qquad{}+ N^{R-T\beta}f_l(p_N:H)(1-f_l(p_N:H))\sum_{\ub\in [K]^R}\prod_{\{i,j\}\in E(H)}N^{\beta}\pi_{N,u_i,u_j} \left(\prod_{i=1}^R\la_{N,u_i}+O\left(\frac{1}{N}\right)\right)\notag\\
    &= B_{N,l,1} + B_{N,l,2},~\text{(say)}.
\end{align*}
First, we will show that $B_{N,l,1}$ converges to zero as $N\rai$. Using the fact $(\al,\beta)\in F_1(H)$, we can write
\begin{align*}
    &|B_{N,1,1}| \leq \sum_{t=1}^{R-1}\sum_{(\sbb_1,\sbb_2)\in \mathcal{M}_t}|\cov\left(W_{N,1}(\sbb_1:H),W_{N,1}(\sbb_2:H)\right)\cdot \E(Y_N(\sbb_1:H)\cdot Y_{N}(\sbb_2:H))|\notag\\
    &\leq \sum_{t=1}^{R-1}\sum_{(\sbb_1,\sbb_2)\in \mathcal{M}_t}\E\left(W_{N,1}(\sbb_1:H)\cdot W_{N,1}(\sbb_2:H)\right)\cdot \E(Y_N(\sbb_1:H)\cdot Y_{N}(\sbb_2:H))\notag\\
    &\leq a_{\max}(H)\cdot c^{2R} N^{-g_{1,\al,\beta,H}(R)}\sum_{t=1}^{R-1}N^{{}g_{1,\al,\beta,H}(t)},
\end{align*}
and in case of $l=2$, when $(\al,\beta)\in F_2(H)$ and $H$ satisfy Assumption \ref{assump-5}, we can write
\begin{align*}
    &|B_{N,2,1}| \leq \sum_{t=1}^{R-1}\sum_{(\sbb_1,\sbb_2)\in \mathcal{M}_t}|\cov\left(W_{N,2}(\sbb_1:H),W_{N,2}(\sbb_2:H)\right)\cdot \E(Y_N(\sbb_1:H)\cdot Y_{N}(\sbb_2:H))|\notag\\
    &\leq \sum_{t=1}^{R-1}\sum_{(\sbb_1,\sbb_2)\in \mathcal{M}_t}\E\left(W_{N,2}(\sbb_1:H)\cdot W_{N,2}(\sbb_2:H)\right)\cdot \E(Y_N(\sbb_1:H)\cdot Y_{N}(\sbb_2:H))\notag\\
    &\leq a_{\max}(H)\cdot c^{2\tau(H)}\left|\mathcal{D}(H)\right|^2 N^{-g_{2,\al,\beta,H}(R)}\sum_{t=1}^{R-1}N^{{}g_{2,\al,\beta,H}(t)} 
\end{align*}
where, $a_{\max}(H)$ is a finite positive constant (defined in \eqref{V-2}) and $|\mathcal{D}(H)|$ is the size of the collection of all possible vertex covers of $H$, which is a finite quantity. Next, when $(\alpha,\beta) \in F_l(H)$, we have $g_{l,\alpha,\beta,H}(R)=0$ for $l=1,2$. Moreover, for $(\alpha,\beta) \in F_l(H)$, $l=1,2$, and $t \in \{1,\ldots,R-1\}$, writing $\beta$ in terms of $\al$ we have the following.
\begin{align*}
    g_{1,\alpha,\beta,H}(t)
    &= (1-\alpha)\left(-\frac{t}{m(t:H)} + \frac{R}{T}\right)m(t:H),\quad\text{and}\\
    g_{2,\alpha,\beta,H}(t)
    &= m(t:H)\left\{\left(-\frac{t}{m(t:H)} + \frac{R}{T}\right)
    + \left(\frac{\min(\tau(H),t)}{m(t:H)} - \frac{\tau(H)}{T}\right)\alpha\right\}.
\end{align*}
Since $\alpha \in (0,1)$ in the induced case and
$\alpha \in (0,\widetilde{\alpha}_H)$ in the egocentric case, it follows that $g_{l,\alpha,\beta,H}(t) < 0$ for all $t \in \{1,\ldots,R-1\}$. Thus, $B_{N,l,1} \to 0$ as $N\rai$. Using Assumptions \ref{assump-2} and \ref{assump-3}, and the fact $(\al,\beta)\in F_l(H)$ we can write
\begin{align*}
    B_{N,l,2} &= N^{R-T\beta}f_l(p_N:H)(1-f_l(p_N:H))\sum_{\ub\in [K]^R}\prod_{\{i,j\}\in E(H)}N^{\beta}\pi_{N,u_i,u_j} \left(\prod_{i=1}^R\la_{N,u_i}+B\left(\frac{1}{N}\right)\right) \notag\\
    &\to  \phi_{l}(c:H)\cdot \kappa\left(\Cb,\lamb:H\right),~\text{for $l=1,2$,}
\end{align*}
where $\kappa(\Cb,\lamb:H)$ defined in \eqref{kap-def}.
Thus, combining the term $B_{N,l,1}$ and $B_{N,l,2}$ as $N\to\infty$, we can write 
\begin{align*}
    \sigma^2_{N,l}(H)\to \phi_l(c:H)\cdot \kappa(\Cb,\lamb:H).
\end{align*}
This completes the proof of part (b). For part (c), $\al = 1$ and $\beta\in [0,1)$. In this case $\tau(H) = 1$, so from Lemma \ref{lem-4} we can claim that $H=\kl_{1,R-1}$. Then $\sigma_{N,2}(\kl_{1,R-1})$ has the following expression 
\begin{align*}
    &\sigma^2_{N,2}(\kl_{1,R-1}) =\Var\left(\widehat{S}_{N,2}(\kl_{1,R-1}) - f_2(p_N:\kl_{1,R-1})\cdot S_N(\kl_{1,R-1})\right)\notag\\
    &= \sum_{t=2}^{R}\sum_{(\sbb_1,\sbb_2)\in \mathcal{M}_t}\cov\left(W_{N,2}(\sbb_1:\kl_{1,R-1}),W_{N,2}(\sbb_2:\kl_{1,R-1})\right)\cdot \E(Y_N(\sbb_1:\kl_{1,R-1})\cdot Y_{N}(\sbb_2:\kl_{1,R-1}))\notag\\
    &\qquad{} +\sum_{(\sbb_1,\sbb_2)\in \mathcal{M}_1}\cov\left(W_{N,2}(\sbb_1:\kl_{1,R-1}),W_{N,2}(\sbb_2:\kl_{1,R-1})\right)\cdot \E(Y_N(\sbb_1:\kl_{1,R-1})\cdot Y_{N}(\sbb_2:\kl_{1,R-1}))\notag\\
    &= \sum_{t=2}^{R}\sum_{(\sbb_1,\sbb_2)\in \mathcal{M}_t}\cov\left(W_{N,2}(\sbb_1:\kl_{1,R-1}),W_{N,2}(\sbb_2:\kl_{1,R-1})\right)\cdot \E(Y_N(\sbb_1:\kl_{1,R-1})\cdot Y_{N}(\sbb_2:\kl_{1,R-1}))\notag\\
    &\qquad{} +\sum_{(\sbb_1,\sbb_2)\in \mathcal{M}_1}\cov\left(W_{N,2}(\sbb_1:\kl_{1,R-1}),W_{N,2}(\sbb_2:\kl_{1,R-1})\right)\cdot \E(Y_N(\sbb_1:\kl_{1,R-1}))\cdot\E(Y_{N}(\sbb_2:\kl_{1,R-1}))\notag\\
    %&= \sum_{t=2}^{R}\sum_{(\sbb_1,\sbb_2)\in \mathcal{M}_t}\cov\left(W_{N,l}(\sbb_1:\kl_{1,R-1}),W_{N,l}(\sbb_2:\kl_{1,R-1})\right)\cdot \E(Y_N(\sbb_1:\kl_{1,R-1})\cdot Y_{N}(\sbb_2:\kl_{1,R-1}))\notag\\
    %&\qquad{} +\frac{c}{N}\left(1 +\left(1-\frac{c}{N}\right)\frac{c}{N}\right)\left(1- \frac{c}{N}\left(1 +\left(1-\frac{c}{N}\right)\frac{c}{N}\right)\right)\sum_{\sbb\in \nsr} \E(Y_N(\sbb:\kl_{1,R-1}))\notag\\
    &= Q_{N,1} +Q_{N,2},~\text{(say).}
\end{align*}
Using Assumptions \ref{assump-2} and \ref{assump-3}, first analyze the term $Q_{N,1}$, that is,
\begin{align*}
    &|Q_{N,1}| = \sum_{t=2}^{R}\sum_{(\sbb_1,\sbb_2)\in \mathcal{M}_t}|\cov\left(W_{N,2}(\sbb_1:\kl_{1,R-1}),W_{N,2}(\sbb_2:\kl_{1,R-1})\right)\cdot \E(Y_N(\sbb_1:\kl_{1,R-1})\cdot Y_{N}(\sbb_2:\kl_{1,R-1}))|\notag\\
    &\leq \sum_{t=2}^{R}\sum_{(\sbb_1,\sbb_2)\in \mathcal{M}_t}\E\left(W_{N,2}(\sbb_1:\kl_{1,R-1})W_{N,2}(\sbb_2:\kl_{1,R-1})\right)\cdot \E(Y_N(\sbb_1:\kl_{1,R-1})\cdot Y_{N}(\sbb_2:\kl_{1,R-1}))\notag\\
    &\leq c\left(N^{\beta}\max_{i,j\in [K]^R}\pi_{N,i,j}\right)^{2(R-1)}\sum_{t=2}^{R}N^{(2R-t-1)(1-\beta)} \leq c(R-1)\left(N^{\beta}\max_{i,j\in [K]^R}\pi_{N,i,j}\right)^{2(R-1)}N^{(2R-3)(1-\beta)}\notag\\
    &\Rightarrow Q_{N,1} = O\left(N^{(2R-3)(1-\beta)}\right).
\end{align*}
Next, using Assumptions \ref{assump-2} and \ref{assump-3} in the expression $Q_{N,2}$ we get the following expression,
\begin{align*}
   & Q_{N,2}\notag\\
   &= \sum_{(\sbb_1,\sbb_2)\in \mathcal{M}_1}\cov\left(W_{N,2}(\sbb_1:\kl_{1,R-1}),W_{N,2}(\sbb_2:\kl_{1,R-1})\right)\cdot \E(Y_N(\sbb_1:\kl_{1,R-1}))\cdot\E(Y_{N}(\sbb_2:\kl_{1,R-1}))\notag\\
   &= \sum_{(\sbb_1,\sbb_2)\in \mathcal{M}_1}\left\{ \cov\left(W_{s_{1,1}} +(1-W_{s_{1,1}})\prod_{j=2}^RW_{s_{1,j}},W_{s_{1,1}} +(1-W_{s_{1,1}})\prod_{j=2}^RW_{s_{2,j}}\right)\right.\notag\\
   &\qquad{}\qquad{}\left.\times\prod_{j=2}^R\pi_{N,\al_{s_{1,1}}, \al_{s_{1,j}}}\pi_{N,\al_{s_{1,1}}, \al_{s_{2,j}}} + \right.\notag\\
   &\left.\qquad{} + \cov\left(W_{s_{1,1}} +(1-W_{s_{1,1}})\prod_{j=2}^RW_{s_{1,j}},W_{s_{2,1}} +(1-W_{s_{2,1}})W_{s_{1,1}}\prod_{j=3}^RW_{s_{2,j}}\right)\prod_{j=2}^R\pi_{N,\al_{s_{1,1}}, \al_{s_{1,j}}}\right.\notag\\
   &\left.\qquad{}\qquad{}\times \pi_{\al_{s_{2,1}},\al_{s_{1,1}}}\prod_{j=3}^R\pi_{\al_{s_{2,1}},\al_{s_{2,j}}}\right\}\notag\\
   &= \sum_{(\sbb_1,\sbb_2)\in \mathcal{M}_1} \cov\left(W_{s_{1,1}} +(1-W_{s_{1,1}})\prod_{j=2}^RW_{s_{1,j}},W_{s_{1,1}} +(1-W_{s_{1,1}})\prod_{j=2}^RW_{s_{2,j}}\right)\notag\\
   &\qquad{}\qquad{}\times\prod_{j=2}^R\pi_{N,\al_{s_{1,1}}, \al_{s_{1,j}}}\pi_{N,\al_{s_{1,1}}, \al_{s_{2,j}}} + O\left(\frac{1}{N}\right)\notag\\
   &= c\cdot N^{2(R-1)(1-\beta)}\sum_{\substack{\ub\in [K]^R\\
   (v_2,\ldots,v_R)\in [K]^{R-1}}}\prod_{j=2}^RN^{2\beta}\pi_{u_1,u_j}\pi_{u_1,v_j}\prod_{j=1}^R\la_{N,u_j}\prod_{j=2}^R\la_{N,v_j}\left(1 +O\left(\frac{1}{N}\right)\right) + O\left(\frac{1}{N}\right).
\end{align*}
Therefore, combining the term $Q_{N,1}$ and $Q_{N,2}$ we get,
\begin{align*}
    &\sigma^2_{N,2}(\kl_{1,R-1})\notag\\
    & = N^{2(R-1)(1-\beta)}\left\{c\sum_{\substack{\ub\in [K]^R\\(v_2,\ldots,v_R)\in [K]^{R-1}}}\prod_{j=2}^RN^{2\beta}\pi_{u_1,u_j}\pi_{u_1,v_j}\prod_{j=1}^R\la_{N,u_j}\prod_{j=2}^R\la_{N,v_j}\left(1 +O\left(\frac{1}{N}\right)\right) + O\left(\frac{1}{N}\right)\right\}.
\end{align*}
This completes the proof of part (c). 
\end{proof}

\begin{lemma}\label{WL-TV}
    Suppose $\{Z_N:N\leq 1\}$ is a sequence of random variables, where $Z_N\sim \text{Poisson}(\la_N)$ and $Z$ is another random variable such that $Z \sim \text{Poisson}(\la)$. Assume that $Z_N\darw Z$, as $N\to\infty$. Then 
    $$
    d_{TV} (Z_N,Z) \to 0, ~\text{as $N\rai$}.
    $$
\end{lemma}
\begin{proof}[Proof of Lemma \ref{WL-TV}]
Fix $N\geq 1$. Let $\hat{Z}\eqd {Z}$ and $\hat{Z}_N \eqd {Z}_N$. Define two random variables $X_N^{+} = \hat{Z}_N - \hat{Z}$ if $\lambda_N>\lambda$, and $X_N^{-} = \hat{Z} - \hat{Z}_N$ if $\lambda_N<\lambda$. Here $X^{+}_N$ and $X_N^{-}$ are independent of $\hat{Z}$ and $X^{+}_N\sim \text{Poisson}(\la_N-\la)$ and $X^{-}_N\sim \text{Poisson}(\la-\la_N)$. Then $(\hat{Z},\hat{Z}_N)$ is a coupling of two random variable $Z$ and $Z_N$.  Let $A$ be an event $[\hat{Z}_N = \hat{Z}]$. Then for all $B\in \mathcal{B}(\mathbb{R})$, we can write,
\begin{align}\label{ZneqZN}
    &\pr\left(Z_N\in B\right) - \pr\left(Z\in B\right) = \pr\left(\hat{Z}_N \in B\right) - \pr(\hat{Z}\in B)= \pr\left(\hat{Z}_N \in B, A^c\right) - \pr\left(\hat{Z}\in B, A^c\right)\notag\\
    &\leq \pr(A^c)= \pr(\hat{Z}_N \neq \hat{Z}) = \pr\left(\left|\hat{Z}_N - \hat{Z}\right|>0\right). 
    %=\pr(X_N \geq 1) = 1-\pr(X_N = 0) = 1-e^{-(\la_N - \la)}.
\end{align}
Next, when $\la_N>\la$, we can write, 
\begin{align}\label{ZneqZN-1}
    \pr\left(\left|\hat{Z}_N - \hat{Z}\right|>0\right) = \pr\left(X^{+}_{N}>0\right) = 1- \pr\left(X^{+}_{N} = 0\right) = 1-e^{-(\la_N-\la)},
\end{align}
and when $\la_N<\la$, we can write, 
\begin{align}\label{ZneqZN-2}
    \pr\left(\left|\hat{Z}_N - \hat{Z}\right|>0\right) = \pr\left(X^{-}_{N}>0\right) = 1- \pr\left(X^{-}_{N} = 0\right) = 1-e^{-(\la-\la_N)}.
\end{align}
Therefore, combining \eqref{ZneqZN-1} and \eqref{ZneqZN-2}, from \eqref{ZneqZN}, for all $B\in \mathcal{B}(\mathbb{R})$, we can write,
\begin{align*}
    \left|\pr\left(Z_N\in B\right) - \pr\left(Z\in B\right)\right|\leq \pr(|\hat{Z}_N - \hat{Z}|>0) = 1-e^{-|\la_N-\la|}.
\end{align*}
%From \eqref{TV-dist}, the total variation distance between $Z_N$ and $Z$ defined as the following way,
%    \begin{align*}
%        d_{TV}(Z_N,Z) = \sup_{B\in \mathcal{B}(\mathbb{R})}\bigg|\pr\left(Z_N\in B\right) - \pr(Z\in B)\bigg|. 
%    \end{align*}
%Without loss of generality assume that $\la_N > \la$. Let $Z\eqd \hat{Z}$ where $\hat{Z}\sim \text{Poisson}(\la)$ and $Z_N \eqd \hat{Z}_N$, where $\hat{Z}_N = \hat{Z} + X_N$ where $X_N$ is an independent of $\hat{Z}$ and $X_N\sim \text{Poisson}(\la_N-\la)$. Then $(\hat{Z},\hat{Z}_N)$ is a coupling of two random variable $Z$ and $Z_N$.  Let $A$ be an event $[\hat{Z}_N = \hat{Z}]$. Then for all $B\in \mathcal{B}(\mathbb{R})$, we can write,
%\begin{align*}
%    &\pr\left(Z_N\in B\right) - \pr\left(Z\in B\right) = \pr\left(\hat{Z}_N \in B\right) - \pr(\hat{Z}\in B)= \pr\left(\hat{Z}_N \in B, A^c\right) - \pr(\hat{Z}\in B, A^c)\notag\\
%    &\leq \pr(A^c)= \pr(\hat{Z}_N \neq \hat{Z}) = \pr(X_N \geq 1) = 1-\pr(X_N = 0) = 1-e^{-(\la_N - \la)}.
%\end{align*}
%For $\la>\la_N$, and for all $B\in \mathcal{B}(\mathbb{R})$ by similar argument, we can show that ,
%\begin{align*}
%    &\pr\left(Z_N\in B\right) - \pr\left(Z\in B\right) = 1-e^{-(\la - \la_N)}.
%\end{align*}
Hence, taking supremum over $B\in \mathcal{B}(\mathbb{R})$, from \eqref{TV-dist}, we can say that for any $N\geq 1$,
\begin{align*}
    d_{TV}(Z_N,Z)= \sup_{B\in \mathcal{B}(\mathbb{R})}\bigg|\pr\left(Z_N\in B\right) - \pr(Z\in B)\bigg| \leq 1 - e^{-|\la_N - \la|}.
\end{align*}
Since, $Z_N \darw Z$, as $N\rai$, thus $\la_N = \E(Z_N) \to E(Z)=\la$. Hence, using the previous inequality, and letting $N\rai$, we can claim that $d_{TV}(Z_N,Z) \to 0$. This completes the proof.  
\end{proof}

\begin{lemma}\label{lem-4}
Let $H$ be a simple, connected and undirected graph with vertex set 
$V(H) = \{1,\dots,R\}$. Then the size of minimum vertex cover of $H$, $\tau(H) = 1$ (cf. \eqref{def-tauH}) if and only if $H=\kl_{1,R-1}$ or $R$-star graph. 
%The minimum vertex cover of $H$ has size $1$ 
%if and only if $H$ is a star graph.
\end{lemma}

\begin{proof}
Assume that $\tau(H)=1$ and $\{v\}$ is the minimum vertex cover of $H$. Then, every edge of $H$ must have $v$ as an endpoint. Thus, no edge connects two vertices in $V(H)\setminus \{v\}$. Since $H$ is connected and has at least one edge, each vertex in $V(H)\setminus\{v\}$ must be adjacent to $v$. Therefore, $H$ consists of the center vertex $v$ with all other vertices attached only to $v$, {\it i.e.}, $H$ is a star graph.

Conversely, assume that $H$ is a star graph with the center vertex $v$. Every edge of $H$ is incident to $v$, so $\{v\}$ is a vertex cover. Since the graph has at least one edge, the empty set cannot be a vertex cover. Thus, the minimum vertex cover set has size $1$. This completes the proof.
\end{proof}

%\subsubsection*{CLT for linear combination of wedge and triangle density}
\begin{prop}[CLT for linear combination of estimated wedge and triangle density]\label{prop-joint}
    Suppose that Assumptions \ref{assump-1}, \ref{assump-2}, \ref{assump-3} and \ref{assump-4} hold. Let $Z\sim N(0,1)$. Fix a $(t_1,t_2)^\primet\in\mathbb{R}^2$, define 
    \begin{align}\label{lin-comb-1}
    T_{N,l}(\kl_{1,2},\kl_3) &= \frac{t_1}{N^{3-2\beta}f_{l}(p_N:\kl_{1,2})}\left(\widehat{S}_{N,l}(\kl_{1,2}) + f_{l}(p_N,\kl_{1,2})S_N(\kl_{1,2})\right)\notag\\
    &+ \frac{t_2}{N^{3-3\beta}f_{l}(p_N:\kl_{1,2})}\left(\widehat{S}_{N,l}(\kl_{3}) + f_{l}(p_N,\kl_{3})S_N(\kl_{3})\right),\quad\text{and}\notag\\
    \sigma^2_{N,l}(\kl_{1,2}, \kl_3) &= \Var(T_{N,l}(\kl_{1,2},\kl_3)), \quad\text{for $l=1,2$.}
    \end{align}
 Then
    \begin{enumerate}
        \item[(a)] In the induced case ($l=1$), for $(\al,\beta)\in R_{1,1}\cup R_{1,2}$, (see \eqref{rsets-def})
        \begin{align*}
            d_1\left(\frac{T_{N,1}(\kl_{1,2}, \kl_3)}{\sigma_{N,1}(\kl_{1,2},\kl_3)},~Z\right)\to 0,~\text{as $N\to\infty$.}
        \end{align*}
        \item[(b)] In the ego-centric case ($l=2$), for $(\al,\beta)\in R_{2,1}\cup R_{2,2}$ (see \eqref{rsets-def}),
        \begin{align*}
            d_1\left(\frac{T_{N,2}(\kl_{1,2}, \kl_3)}{\sigma_{N,2}(\kl_{1,2},\kl_3)},~Z\right)\to 0,~\text{as $N\to\infty$.}
        \end{align*}
    \end{enumerate}
\end{prop}

\begin{proof}[Proof of Proposition \ref{prop-joint}]
Fix a $(t_1,t_2)^\primet\in \mathbb{R}^2$. 
%and consider the linear combination of wedge and triangle pivotal quantity, for $l=1,2$, that is,
%\begin{align*}
%    &T_{N,l}(\kl_{1,2},\kl_3) = \frac{t_1}{N^{3-2\beta}f_{l}(p_N:\kl_{1,2})}\left(\widehat{S}_{N,l}(\kl_{1,2}) + f_{l}(p_N,\kl_{1,2})S_N(\kl_{1,2})\right)\notag\\
%    &+ \frac{t_2}{N^{3-3\beta}f_{l}(p_N:\kl_{3})}\left(\widehat{S}_{N,l}(\kl_{3}) + f_{l}(p_N,\kl_{3})S_N(\kl_{3})\right).
%\end{align*}
For $l=1$, from \eqref{lin-comb-1}, we can write,
\begin{align*}
    &T_{N,1}(\kl_{1,2},\kl_3) = \frac{t_1}{N^{3-2\beta}p_N^3}\left(\widehat{S}_{N,l}(\kl_{1,2}) + p_N^3S_N(\kl_{1,2})\right)\notag\\
    &+ \frac{t_2}{N^{3-3\beta}p_N^3}\left(\widehat{S}_{N,l}(\kl_{3}) + p_N^3S_N(\kl_{3})\right)\notag\\
    &= \sum_{\sbb\in [N]_3}\widetilde{W}_{N,1}(\sbb:\kl_3)\left(\frac{t_1}{N^{3(1-\al)-2\beta}}\Yb_{N}(\sbb:\kl_{1,2}) + \frac{t_2}{N^{3(1-\al)-3\beta}}\Yb_N(\sbb:\kl_3)\right),
\end{align*}
where $\widetilde{W}_{N,l}(\sbb:H)$ for $l=1,2$ and $Y_N(\sbb:H)$ are defined in \eqref{piv-1}. 
In the above expression $\E(T_{N,1}(\kl_{1,2},\kl_3)) = 0$.
%and the variance has the following growth rate for any $\al\in [0,1)$ and $\beta\in [0,1]$,
%\begin{align}
%    &\left(\sigma_{N,1}(\kl_{1,2},\kl_3)\right)^2=\Var(T_{N,1}(\kl_{1,2},\kl_3))\notag\\
%    &\asymp \frac{t_1^2}{N^{6(1-\al) - 4\beta}}\left(N^{5(1-\al) -4\beta} + N^{4(1-\al) - 3\beta} + N^{3(1-\al) -2\beta}\right)+\notag\\
%    &+ \frac{t_2^2}{N^{6(1-\al) -6\beta}} \left(N^{5(1-\al) - 6\beta} + N^{4(1-\al) -5\beta} + N^{3(1-\al) -3\beta} \right)+\notag\\
%    &-\frac{2t_1t_2}{N^{6(1-\al) -5\beta}} \left(N^{5(1-\al) - 5\beta} + N^{4(1-\al)-4\beta} +N^{3(1-\al) -3\beta}\right),
%\end{align}
Now, when $(\al,\beta) \in \{(\al,\beta): \al\in [0,1),~\al+\frac{\beta}{2/3}\leq 1\} = R_{1,1}\cup R_{1,2}$ (see. \eqref{rsets-def}), then using Assumptions \ref{assump-2} and \ref{assump-3} and Lemma \ref{lem-var}, there exists an $M_{1}$, such that for all $N>M_{1}$, we can write
\begin{align}\label{var-joint-ind}
    &\Var(T_{N,1}(\kl_{1,2},\kl_3))\notag\\
    &\geq a_{1,t_1,t_2}\left( \frac{t_1^2}{N^{6(1-\al) - 4\beta}}N^{5(1-\al) -4\beta}+ \frac{t_2^2}{N^{6(1-\al) -6\beta}} N^{5(1-\al) - 6\beta}-\frac{2t_1t_2}{N^{6(1-\al) -5\beta}} N^{5(1-\al) - 5\beta}\right)\notag\\
    &= \frac{a_{1,t_1,t_2}}{N^{1-\al}}\left(t_1^2+t_2^2 -2t_1t_2\right),
\end{align}
where $a_{1,t_1,t_2}$ is a finite positive constant depending on the design and model parameters. Then applying Theorem \ref{thm-ruc} over $T_{N,1}(\kl_{1,2}, \kl_{3})/\sigma_{N,1}(\kl_{1,2},\kl_3)$ we get the following
\begin{align}\label{joint-bdd-ind}
    &d_{1}\left(\frac{T_{N,1}(\kl_{1,2}, \kl_{3})}{\sigma_{N,1}(\kl_{1,2},\kl_3)},~Z\right)\notag\\
    &\leq \frac{1}{\left(\sigma_{N,1}(\kl_{1,2},\kl_3)\right)^3}\sum_{\sbb_1\in [N]_3}\sum_{\sbb_2\in N(\sbb_1)}\sum_{\sbb_3\in N(\sbb_1)}\left(\E\left|\prod_{i=1}^3\widetilde{W}_{N,1}(\sbb_i:\kl_3)\right|\right.\notag\\
    &\left.\qquad{}\times \E\left|\prod_{i=1}^3\left\{\frac{t_1}{N^{3(1-\al)-2\beta}}\Yb_{N}(\sbb_i:\kl_{1,2}) + \frac{t_2}{N^{3(1-\al)-3\beta}}\Yb_N(\sbb_i:\kl_{3})\right\}\right| + \E\left|\prod_{i=1}^2\widetilde{W}_{N,1}(\sbb_i:\kl_3)\right|\E\left|\widetilde{W}_{N,1}(\sbb_3:\kl_3)\right|\right.\notag\\
    &\left.\times\E\left|\prod_{i=1}^2\left\{\frac{t_1}{N^{3(1-\al)-2\beta}}\Yb_{N}(\sbb_i:\kl_{1,2}) + \frac{t_2}{N^{3(1-\al)-3\beta}}\Yb_N(\sbb_i:\kl_3)\right\}\right|\right.\notag\\
    &\left.\qquad{}\qquad{}\times \E\left|\left\{\frac{t_1}{N^{3(1-\al)-2\beta}}\Yb_{N}(\sbb_3:\kl_{1,2}) + \frac{t_2}{N^{3(1-\al)-3\beta}}\Yb_N(\sbb_3:\kl_3)\right\}\right|\right)\notag\\
    &\leq \frac{1}{\left(\sigma_{N,1}(\kl_{1,2},\kl_3)\right)^3}\sum_{k_1,k_2=1}^3\sum_{(\sbb_1,\sbb_2,\sbb_3)\in \mathcal{M}_{k_1,k_2}}\left(\E\left|\prod_{i=1}^3\widetilde{W}_{N,1}(\sbb_i:\kl_3)\right|\right.\notag\\
    &\left.\qquad{}\times \E\left|\prod_{i=1}^3\left\{\frac{t_1}{N^{3(1-\al)-2\beta}}\Yb_{N}(\sbb_i:\kl_{1,2}) + \frac{t_2}{N^{3(1-\al)-3\beta}}\Yb_N(\sbb_i:\kl_{3})\right\}\right| + \E\left|\prod_{i=1}^2\widetilde{W}_{N,1}(\sbb_i:\kl_3)\right|\E\left|\widetilde{W}_{N,1}(\sbb_3:\kl_3)\right|\right.\notag\\
    &\left.\times\E\left|\prod_{i=1}^2\left\{\frac{t_1}{N^{3(1-\al)-2\beta}}\Yb_{N}(\sbb_i:\kl_{1,2}) + \frac{t_2}{N^{3(1-\al)-3\beta}}\Yb_N(\sbb_i:\kl_3)\right\}\right|\right.\notag\\
    &\left.\qquad{}\qquad{}\times \E\left|\left\{\frac{t_1}{N^{3(1-\al)-2\beta}}\Yb_{N}(\sbb_3:\kl_{1,2}) + \frac{t_2}{N^{3(1-\al)-3\beta}}\Yb_N(\sbb_3:\kl_3)\right\}\right|\right)\notag\\
    &=\frac{1}{\left(\sigma_{N,1}(\kl_{1,2},\kl_3)\right)^3}\left(J_{N,1,1}+J_{N,1,2}\right),~\text{(say)}.
\end{align}
Next, consider the first term of \eqref{joint-bdd-ind}, using \eqref{prod-W-ind}, we can write
\begin{align}\label{E-1-1}
     J_{N,1,1}&\leq c^{9}\sum_{k_1,k_2=1}^3\sum_{(\sbb_1,\sbb_3,\sbb_3)\in \mathcal{M}_{k_1,k_2}} N^{-\al(9 - k_1-k_2)}\left(\frac{r_1|t_1|^3}{N^{9(1-\al) -6\beta}}\E\left(\Yb_N(\sbb_1:\kl_{1,2})\Yb_N(\sbb_2:\kl_{1,2})\Yb_N(\sbb_3:\kl_{1,2})\right)\right.\notag\\
    &\qquad{}\left.+ \frac{r_2t_1^2|t_2|}{N^{9(1-\al) -7\beta}}\E\left(\Yb_N(\sbb_1:\kl_{1,2})\Yb_N(\sbb_2:\kl_{1,2})\Yb_N(\sbb_3:\kl_3)\right)\right.\notag\\
    &\qquad{}\qquad{}\left.+\frac{r_3t_1^2|t_2|}{N^{9(1-\al) -7\beta}}\E\left(\Yb_N(\sbb_1:\kl_3)\Yb_N(\sbb_2:\kl_{1,2})\Yb_N(\sbb_3:\kl_{1,2})\right)\right.\notag\\
    &\left.\qquad{}+ \frac{r_4t_2^2|t_1|}{N^{9(1-\al) -8\beta}}\E\left(\Yb_N(\sbb_1:\kl_3)\Yb_N(\sbb_2:\kl_3)\Yb_N(\sbb_3:\kl_{1,2})\right)\right.\notag\\
    &\left.\qquad{}\qquad{}+ \frac{r_5t_2^2|t_1|}{N^{9(1-\al) -8\beta}}\E\left(\Yb_N(\sbb_1:\kl_{1,2})\Yb_N(\sbb_2:\kl_3)\Yb_N(\sbb_3:\kl_3)\right)\right.\notag\\
    &\left.\qquad{}\qquad{}\qquad{}+ \frac{r_6|t_2|^3}{N^{9(1-\al) -9\beta}}\E\left(\Yb_N(\sbb_1:\kl_3)\Yb_N(\sbb_2:\kl_3)\Yb_N(\sbb_3:\kl_3)\right)\right)\notag\\
    &= \left(\frac{r_1|t_1|^3}{N^{9(1-\al) -6\beta}}L_{N,1,1}+ \frac{r_2t_1^2|t_2|}{N^{9(1-\al) -7\beta}}L_{N,1,2}+\frac{r_3t_1^2|t_2|}{N^{9(1-\al) -7\beta}}L_{N,1,3}+ \frac{r_4t_2^2|t_1|}{N^{9(1-\al) -8\beta}}L_{N,1,4}\right.\notag\\
    &\left.\qquad{}\qquad{}\qquad{}+ \frac{r_5t_2^2|t_1|}{N^{9(1-\al) -8\beta}}L_{N,1,5}+ \frac{r_6|t_2|^3}{N^{9(1-\al) -9\beta}}L_{N,1,6}\right),~\text{(say)}
\end{align}
where $r_1,\ldots,r_6$ are the finite positive constants. In \eqref{E-1-1}, for the terms $L_{N,1,1}$ and $L_{N,1,6}$, using \eqref{exp-Y} for $H=\kl_{1,2}$ and $\kl_3$ respectively, there exists an $M_2$ such that for all $N\geq M_2$ and for $(\al,\beta)\in R_{1,1}\cup R_{1,2}$ (cf. \eqref{rsets-def}), we can write,  
%we know that the term $L_{N,1,1}$ and  the term $L_{N,1,6}$ of the expression \eqref{E-1-1}, that for $(\al,\beta)\in R_{1,1}\cup R_{1,2}$, there exists an $M_1$ such that for all $N\geq M_1$, we can write
\begin{align}\label{p-1-6}
    &L_{N,1,1}\leq c^9\left(N^{\beta}\max_{i,j\in [K]}\pi_{N,i,j}\right)^{6}N^{\max_{t\in [3]}\{(9-2t)\al -\beta(6-2m(t:H))\}}=  c^9\left(N^{\beta}\max_{i,j\in [K]}\pi_{N,i,j}\right)^{6}N^{7(1-\al) - 6\beta}\quad\text{and,}\notag\\
    &L_{N,1,6}\leq c^9\left(N^{\beta}\max_{i,j\in [K]}\pi_{N,i,j}\right)^{9}N^{\max_{t\in [3]}\{(9-2t)\al -\beta(9-2m(t:H))\}}= c^9\left(N^{\beta}\max_{i,j\in [K]}\pi_{N,i,j}\right)^{9} N^{7(1-\al) - 9\beta}.
\end{align}
%From the symmetry of the summation in \eqref{E-1-1}, we can claim $L_{N,1,2} = L_{N,1,3}$ and $L_{N,1,4} = L_{N,1,5}$.
In order to analyze the remaining terms of \eqref{E-1-1},
%define the maps $m^\prime(k_1:\kl_{1,2},\kl_3), m^\prime(k_2:\kl_{1,2},\kl_{1,2},\kl_{3}))$ and $m^\prime(k_2:\kl_{3},\kl_{3},\kl_{1,2})$,
recall for any $k_1,k_2\in \{1,2,3\}$, definition of the set $\Mc_{k_1,k_2}$ defined in \eqref{Mck-setdef}. Consider any $(\sbb_1,\sbb_2,\sbb_3)\in \Mc_{k_1,k_2}$. Define the following class of maps,
\begin{align*}
\mathcal{F}(\sbb_i) = \left\{\phi: \text{$\phi$ is a bijection from $\{1,2,3\}$ to $A(\sbb_i)$}\right\},\quad \text{for $i=1,2,3$.}
\end{align*} 
For any $\phi_i\in\mathcal{F}(\sbb_i)$, we construct two graphs $H_i(\phi_i)$ and $G_i(\phi_i)$  with $V(H_i(\phi_i))= V(G_i(\phi_i)) = A(\sbb_i)$ for $i=1,2,3$, and \begin{align}\label{Hi-def}
     H_i(\phi_i)\simeq \kl_{1,2}\quad\text{and}\quad G(\phi_i)\simeq \kl_3,\quad \text{for i=1,2,3.}
\end{align}
%Similarly, for any $\phi_i\in\mathcal{F}(\sbb_i)$, we construct a graph $G_i(\phi_i)$, with $\kl_{3}$ isomorphic to $G_i(\phi_i),i=1,2,3$, $V(G_i(\phi_i)) = A(\sbb_i)$, and an edge set $E(G_i(\phi_i))$, for both $i=1,2,3$ .
%In order to emphasize the dependence of $H_i$ and $G_i$ on the choice of the map $\phi_i$, we write $H_i = H_i(\phi_i)$ and $G_i = G_i(\phi_i)$, for $i=1,2,3$. 
Then define the following maps,
\begin{equation}
\begin{aligned}\label{mtH-prime-def}
&m^\prime(k_1:\kl_{1,2},\kl_{3})=m^\prime(k_1:\kl_{3},\kl_{1,2})=\max\left\{|E(G_1(\phi_1))\cap E(H_2(\phi_2))|:\phi_i\in \mathcal{F}(\sbb_i),~i=1,2\right\},\\
    &m^\prime(k_2:\kl_{1,2},\kl_{1,2},\kl_{3})=\max\left\{|E(G_3(\phi_3))\cap (E(H_1(\phi_1))\cup E(H_2(\phi_2)) )|:\phi_i\in \mathcal{F}(\sbb_i),~i=1,2,3\right\},\\
    &m^\prime(k_2:\kl_{3},\kl_{1,2},\kl_{1,2})=\max\left\{|E(H_3(\phi_3))\cap (E(G_1(\phi_1))\cup E(H_2(\phi_2)) )|:\phi_i\in \mathcal{F}(\sbb_i),~i=1,2,3\right\},\\
    &m^\prime(k_2:\kl_{3},\kl_{3},\kl_{1,2})=\max\left\{|E(H_3(\phi_3))\cap (E(G_1(\phi_1))\cup E(G_2(\phi_2)) )|:\phi_i\in \mathcal{F}(\sbb_i),~i=1,2,3\right\},~\text{and}\\
    &m^\prime(k_2:\kl_{1,2},\kl_{3},\kl_{3})=\max\left\{|E(G_3(\phi_3))\cap (E(H_1(\phi_1))\cup E(G_2(\phi_2)) )|:\phi_i\in \mathcal{F}(\sbb_i),~i=1,2,3\right\}
\end{aligned}    
\end{equation}
Using \eqref{mtH-prime-def}, define a map $e_{1,i}(k_1,k_2:\al,\beta)$ for $i=1,2,3,4$, in the following way,
\begin{equation*}
\begin{aligned}
    e_{1,1}(k_1,k_2:\al,\beta) &= (9-k_1-k_2)(1-\al) - \beta(7 - m(k_1:\kl_{1,2}) - m^\prime(k_2:\kl_{1,2},\kl_{1,2},\kl_{3})),\\
    e_{1,3}(k_1,k_2:\al,\beta) &= (9-k_1-k_2)(1-\al) - \beta(7 - m^\prime(k_1:\kl_{1,2},\kl_3) - m^\prime(k_2:\kl_3,\kl_{1,2},\kl_{1,2})),\\
    e_{1,4}(k_1,k_2:\al,\beta) &= (9-k_1-k_2)(1-\al) - \beta(7 - m(k_1:\kl_{1,2}) - m^\prime(k_2:\kl_{3},\kl_{3},\kl_{1,2})),~\text{and}\\
    e_{1,5}(k_1,k_2:\al,\beta) &= (9-k_1-k_2)(1-\al) - \beta(7 - m^\prime(k_1:\kl_{3},\kl_{1,2}) - m^\prime(k_2:\kl_{1,2},\kl_{3},\kl_{3})),
\end{aligned}    
\end{equation*}
where $m(k_1:H)$ defined in \eqref{mtH-def}, if we follow the same definition as in \eqref{mtH-prime-def}, then we can write $m(k_1:H) = m^\prime(k_1:H,H)$. Now, using a similar argument as in \eqref{exp-Y}, there exists a $M_3$ such that for all $N>M_3$, we can write
\begin{align}\label{L-ind}
    &L_{N,1,2}\leq c^9\left(N^{\beta}\max_{i,j\in [K]}\pi_{N,i,j}\right)^{7}\sum_{k_1,k_2=1}^3N^{e_{1,1}(k_1,k_2:\al,\beta)},~~ L_{N,1,3}\leq c^9\left(N^{\beta}\max_{i,j\in [K]}\pi_{N,i,j}\right)^{7}\sum_{k_1,k_2=1}^3N^{e_{1,2}(k_1,k_2:\al,\beta)},\\
    &L_{N,1,4}\leq c^9\left(N^{\beta}\max_{i,j\in [K]}\pi_{N,i,j}\right)^{8}\sum_{k_1,k_2=1}^3N^{e_{1,3}(k_1,k_2:\al,\beta)},\text{and}~ L_{N,1,5}\leq c^9\left(N^{\beta}\max_{i,j\in [K]}\pi_{N,i,j}\right)^{8}\sum_{k_1,k_2=1}^3N^{e_{1,4}(k_1,k_2:\al,\beta)}.
\end{align} 
The values of $e_{1,1}(k_1,k_2:\al,\beta)$ are listed in Table \ref{tab:e-1-1}, the values of $e_{1,2}(k_1,k_2:\al,\beta)$ are listed in Table \ref{tab:e-2-2}, the values of $e_{1,3}(k_1,k_2:\al,\beta)$ are listed in Table \ref{tab:e-3-3} and the values of $e_{1,4}(k_1,k_2:\al,\beta)$ are listed in Table \ref{tab:e-4-4}, for all possible choices of $k_1,k_2\in\{1,2,3\}$. From the above mentioned four table and for $(\al,\beta)\in R_{1,1}\cup R_{1,2}$, we obtained the pair  
\begin{align*}
    (k_1,k_2) \in \argmaxA_{k_1,k_2\in [3]} \{e_{1,i}(k_1,k_2:\al,\beta)\},\quad\text{for $i=1,2,3,4$.} 
\end{align*}
After evaluating the above quantity, we get $(k_1,k_2) = (1,1)$ from all four tables. Therefore, from \eqref{L-ind}, using the value $(k_1,k_2)=(1,1)$, we get the following
\begin{equation}
\label{p-2-4}
\begin{aligned}
    &L_{N,1,2}\leq c^9\left(N^{\beta}\max_{i,j\in [K]}\pi_{N,i,j}\right)^{7}N^{7(1-\al) - 7\beta},\quad L_{N,1,3}\leq c^9\left(N^{\beta}\max_{i,j\in [K]}\pi_{N,i,j}\right)^{7}N^{7(1-\al) - 7\beta}~\text{and,}~\\
    &L_{N,1,4}\leq c^9\left(N^{\beta}\max_{i,j\in [K]}\pi_{N,i,j}\right)^{8}N^{7(1-\al) - 8\beta}~L_{N,1,5}\leq c^9\left(N^{\beta}\max_{i,j\in [K]}\pi_{N,i,j}\right)^{8}N^{7(1-\al) - 8\beta}.
\end{aligned}
\end{equation}
Therefore, using \eqref{var-joint-ind}, \eqref{p-1-6} and \eqref{p-2-4}, from \eqref{E-1-1}, for all $N>\max\{M_1,M_2,M_3\}$, we can write
\begin{align*}
    \frac{J_{N,1,1}}{\left(\sigma_{N,1}(\kl_{1,2},\kl_3)\right)^3} &\leq \frac{1}{a_{1,t_1,t_2}}N^{\frac{3}{2}(1-\al)}O\left(\frac{1}{N^{2(1-\al)}}\right)  =O\left(\frac{1}{N^{\frac{1}{2}(1-\al)}}\right).
\end{align*}
Using a similar argument, it can be shown that there exists an $M_4$ such that for all $N>M_4$, $J_{N,1,2}/\left(\sigma_{N,1}(\kl_{1,2},\kl_3)\right)^3 \leq O(N^{-\frac{1}{2}(1-\al)})$ (cf. \eqref{joint-bdd-ind}). Hence, for $(\al,\beta) \in R_{1,1}\cup R_{1,2}$, for all $N>\max\{M_1,M_2,M_3,M_4\}$, we can write
\begin{align*}
    d_{1}\left(\frac{T_{N,1}(\kl_{1,2}, \kl_{3})}{\sigma_{N,1}(\kl_{1,2},\kl_3)},~Z\right) \leq O\left(\frac{1}{N^{\frac{1}{2}(1-\al)}}\right).
\end{align*}
This completes the proof of part (a). For $l=2$, from \eqref{lin-comb-1}, we can write, 
\begin{align}\label{piv-joint-ego}
    &T_{N,2}(\kl_{1,2},\kl_3) = \frac{t_1}{N^{3-2\beta}f_{2}(p_N:\kl_{1,2})}\left(\widehat{S}_{N,2}(\kl_{1,2}) + f_{2}(p_N:\kl_{1,2})S_N(\kl_{1,2})\right)\notag\\
    &+ \frac{t_2}{N^{3-3\beta}f_2(p_N:\kl_3)}\left(\widehat{S}_{N,2}(\kl_{3}) + f_2(p_N:\kl_3)S_N(\kl_{3})\right)\notag\\
    &= \sum_{\sbb\in \nsr}\left(\frac{t_1}{N^{3-\al-2\beta}c(1+(1-p_N)p_N)}\widetilde{W}_{N,2}(\sbb:\kl_{1,2})\Yb_{N}(\sbb:\kl_{1,2})\right.\notag\\
    &\qquad{}\qquad{}\left.+ \frac{t_2}{N^{3-2\al-3\beta}c^2(3-2p_N)}\widetilde{W}_{N,2}(\sbb:\kl_3)\Yb_N(\sbb:\kl_3)\right).
\end{align}
Form the above expression $\E(T_{N,2}(\kl_{1,2},\kl_3)) = 0$. 
%and the variance has following growth rate for any $\al\in [0,1)$ and $\beta\in [0,1]$,
%\begin{align}
%    &\left(\sigma_{N,2}(\kl_{1,2},\kl_3)\right)^2 = \Var\left(T_{N,2}(\kl_{1,2},\kl_3)\right)\notag\\
%    &= \frac{t_1^2}{N^{6-2\al-4\beta}}\left(N^{5-\al-4\beta} + N^{4-\al-3\beta} + N^{3-\al-2\beta}\right)\notag\\
%    &+\frac{t_2^2}{N^{6-4\al-6\beta}}\left(N^{5-3\al-6\beta} + N^{4-2\al-5\beta} + N^{3-2\al-3\beta}\right)\notag\\
%    &+\frac{2t_1t_2}{N^{6-3\al-5\beta}}\left(N^{5-2\al-5\beta} + N^{4-2\al -4\beta} + N^{3-2\al-3\beta}\right),
%\end{align}
Next, for $(\al,\beta)\in R_{2,1}\cup R_{2,2}$ (cf. \eqref{rsets-def}), using Lemma \ref{lem-var}, there exists a $M_{5}$ such that for all $N>M_{5}$, we can write
\begin{align}\label{var-joint-ego}
    &\sigma^2_{N,2}(\kl_{1,2},\kl_3) = \Var\left(T_{N,2}(\kl_{1,2},\kl_3)\right)\notag\\
    &>a_{2,t_1,t_2}\left( \frac{t_1^2}{N^{6-2\al-4\beta}}N^{5-\al-4\beta}+\frac{t_2^2}{N^{6-4\al-6\beta}}N^{5-3\al-5\beta}+\frac{2t_1t_2}{N^{6-3\al-6\beta}}N^{5-2\al-5\beta}\right)\notag\\
    &= \frac{a_{2,t_1,t_2}}{N^{1-\al}}(t_1^2 + t_2^2 + 2t_1t_2).
\end{align}
Then applying Theorem \ref{thm-ruc} over $T_{N,2}(\kl_{1,2},\kl_3)/\sigma_{N,2}(\kl_{1,2},\kl_3)$, we get the following
\begin{align}\label{joint-bdd-ego}
     &d_{1}\left(\frac{T_{N,2}(\kl_{1,2}, \kl_{3})}{\sigma_{N,2}(\kl_{1,2},\kl_3)},~Z\right)\notag\\
     &\leq \frac{1}{\sigma^3_{N,2}(\kl_{1,2},\kl_3)}\sum_{k_1,k_2=1}^3\sum_{(\sbb_1,\sbb_2,\sbb_3)\in \Mc_{k_1,k_2}}\left\{\E\left|\prod_{i=1}^3\left(\frac{t_1\cdot \widetilde{W}_{N,2}(\sbb_i:\kl_{1,2})\Yb_{N}(\sbb_i:\kl_{1,2})}{N^{3-\al-2\beta}c(1+(1-p_N)p_N)}\right.\right.\right.\notag\\
    &\qquad{}\qquad{}\left.\left.\left.+ \frac{t_2\cdot \widetilde{W}_{N,2}(\sbb_i:\kl_3)\Yb_N(\sbb_i:\kl_3)}{N^{3-2\al-3\beta}c^2(3-2p_N)}\right)\right|+ \E\left|\prod_{i=1}^2\left(\frac{t_1\cdot \widetilde{W}_{N,2}(\sbb_i:\kl_{1,2})\Yb_{N}(\sbb_i:\kl_{1,2})}{N^{3-\al-2\beta}c(1+(1-p_N)p_N)}\right.\right.\right.\notag\\
    &\qquad{}\qquad{}\left.\left.\left.+ \frac{t_2\cdot \widetilde{W}_{N,2}(\sbb_i:\kl_3)\Yb_N(\sbb_i:\kl_3)}{N^{3-2\al-3\beta}c^2(3-2p_N)}\right)\right|\E\left|\left(\frac{t_1\cdot \widetilde{W}_{N,2}(\sbb_3:\kl_{1,2})\Yb_{N}(\sbb_3:\kl_{1,2})}{N^{3-\al-2\beta}c(1+(1-p_N)p_N)}\right.\right.\right.\notag\\
    &\qquad{}\qquad{}\left.\left.\left.+ \frac{t_2\cdot \widetilde{W}_{N,2}(\sbb_3:\kl_3)\Yb_N(\sbb_3:\kl_3)}{N^{3-2\al-3\beta}c^2(3-2p_N)}\right)\right|\right\}\notag\\
    &=\frac{1}{\left(\sigma_{N,2}(\kl_{1,2},\kl_3)\right)^3}\left(J_{N,2,1}+J_{N,2,2}\right),~\text{(say)}.
\end{align}
Next, considering the term $J_{N,2,1}$ from \eqref{joint-bdd-ego}, we can write
\begin{align}\label{E-2-2}
    J_{N,2,1} &= \left(\frac{r_1|t_1|^3}{N^{9-3\al -6\beta}}L_{N,2,1}+ \frac{r_2t_1^2|t_2|}{N^{9-4\al -7\beta}}L_{N,2,2}+\frac{r_3t_1^2|t_2|}{N^{9-4\al -7\beta}}L_{N,2,3}+ \frac{r_4t_2^2|t_1|}{N^{9-5\al-8\beta}}L_{N,2,4}\right.\notag\\
    &\left.\qquad{}\qquad{}\qquad{}+ \frac{r_5t_2^2|t_1|}{N^{9-5\al -8\beta}}L_{N,2,5}+ \frac{r_6|t_2|^3}{N^{9-6\al-9\beta}}L_{N,2,6}\right),~\text{(say)}
\end{align}
where, $L_{N,2,i},i=1,\ldots,6$ has the following expression,
\begin{align}\label{L-def-ego}
    &L_{N,2,1} =\sum_{\star} \E\left|\prod_{i=1}^3\widetilde{W}_{N,2}(\sbb_i:\kl_{1,2})\right|\cdot \E\left(\prod_{i=1}^6Y_N(\sbb_i:\kl_{1,2})\right),\notag\\
    &L_{N,2,2} = \sum_{\star}\E\left|\widetilde{W}_{N,2}(\sbb_1:\kl_{1,2})\widetilde{W}_{N,2}(\sbb_2:\kl_{1,2})\widetilde{W}_{N,2}(\sbb_1:\kl_{3})\right|\E(Y_N(\sbb_1:\kl_{1,2})Y_N(\sbb_2:\kl_{1,2})Y_N(\sbb_3:\kl_{3})),\notag\\
    &L_{N,2,3} = \sum_{\star}\E\left|\widetilde{W}_{N,2}(\sbb_1:\kl_{3})\widetilde{W}_{N,2}(\sbb_2:\kl_{1,2})\widetilde{W}_{N,2}(\sbb_1:\kl_{1,2})\right|\E(Y_N(\sbb_1:\kl_{3})Y_N(\sbb_2:\kl_{1,2})Y_N(\sbb_3:\kl_{1,2}))\notag\\
    &L_{N,2,4} = \sum_{\star}\E\left|\widetilde{W}_{N,2}(\sbb_1:\kl_{3})\widetilde{W}_{N,2}(\sbb_2:\kl_{3})\widetilde{W}_{N,2}(\sbb_1:\kl_{1,2})\right|\E(Y_N(\sbb_1:\kl_{3})Y_N(\sbb_2:\kl_{3})Y_N(\sbb_3:\kl_{1,2})),\notag\\
    &L_{N,2,5} =\sum_{\star} \E\left|\widetilde{W}_{N,2}(\sbb_1:\kl_{1,2})\widetilde{W}_{N,2}(\sbb_2:\kl_{3})\widetilde{W}_{N,2}(\sbb_1:\kl_{3})\right|\E(Y_N(\sbb_1:\kl_{1,2})Y_N(\sbb_2:\kl_{3})Y_N(\sbb_3:\kl_{3})),\notag\\
    &L_{N,2,6} =\sum_{\star} \E\left|\prod_{i=1}^3\widetilde{W}_{N,2}(\sbb_i:\kl_{3})\right|\cdot \E\left(\prod_{i=1}^6Y_N(\sbb_i:\kl_{3})\right).
\end{align}
In the above expression, we denote $$\sum_{k_1,k_2=1}^3\sum_{(\sbb_1,\sbb_2,\sbb_3)\in \Mc_{k_1,k_2}}\equiv\sum_{\star}.$$
In \eqref{L-def-ego}, the model components are the same as the induced case described in \eqref{E-1-1}. From the proof of Theorem \ref{thm-1-main} and similar \eqref{p-1-6}, there exists an $M_6$ such that for all $N>M_6$, for $(\al,\beta)\in R_{2,1}\cup R_{2,2}$, we can write,
\begin{align}\label{p-2-6}
    L_{N,2,1}\leq c\left(N^{\beta}\max_{i,j\in [K]}\pi_{N,i,j}\right)^6 N^{7-\al-6\beta},~\text{and}~L_{N,2,6}\leq c^4\left(N^{\beta}\max_{i,j\in [K]}\pi_{N,i,j}\right)^9 N^{7-4\al-6\beta}.
\end{align}
In order to analyze the remaining terms in \eqref{E-2-2}, recall, for any $k_1,k_2\in \{1,2,3\}$, definition of the set $\Mc_{k_1,k_2}$ defined in \eqref{Mck-setdef}. Consider any $(\sbb_1,\sbb_2,\sbb_3)\in \Mc_{k_1,k_2}$. First, we define the following union graphs using graphs defined in \eqref{Hi-def}, for a tuple of maps $\phib = (\phi_1,\phi_2,\phi_2)^\primet$, that is,
\begin{align}\label{uni-graphs}
    &Q(\kl_{1,2},\kl_{1,2},\kl_3:\phib) = H_1(\phi_1) \cup H_2(\phi_2) \cup G_3(\phi_3),\quad
    Q(\kl_{3},\kl_{1,2},\kl_{1,2}:\phib) = G_1(\phi_1) \cup H_2(\phi_2) \cup H_3(\phi_3),\notag\\
    &Q(\kl_{3},\kl_{3},\kl_{1,2}:\phib) = G_1(\phi_1) \cup G_2(\phi_2) \cup H_3(\phi_3)~\text{and}~
    Q(\kl_{1,2},\kl_{3},\kl_{3}:\phib) = H_1(\phi_1) \cup G_2(\phi_2) \cup G_3(\phi_3).
\end{align}
From \eqref{mtH-prime-def}, we get a tuple of maps $\phib^\star \equiv \phib^\star_{k_1,k_2} = (\phi^\star_1,\phi^\star_2,\phi^\star_2)^\primet$ for which the maximum value, $m^\prime(k_1,\cdot,\cdot)+m^\prime(k_2:\cdot,\cdot,\cdot)$ is obtained for a choice of $k_1,k_2\in \{1,2,3\}$. Then define the size of the minimum vertex cover of the union graph as follows,
\begin{align}\label{tau-uni}
    &\tau\left(\kl_{1,2},\kl_{1,2},\kl_{3}\right) = \tau\left(Q(\kl_{1,2},\kl_{1,2},\kl_3:\phib^\star)\right),\quad\tau\left(\kl_{3},\kl_{1,2},\kl_{1,2}\right) = \tau\left(Q(\kl_{3},\kl_{1,2},\kl_{1,2}:\phib^\star)\right),\notag\\
    &\tau\left(\kl_{3},\kl_{3},\kl_{1,2}\right) = \tau\left(Q(\kl_{3},\kl_{3},\kl_{1,2}:\phib^\star)\right),~\text{and}~\tau\left(\kl_{1,2},\kl_{3},\kl_{3}\right) = \tau\left(Q(\kl_{1,2},\kl_{3},\kl_{3}:\phib^\star)\right).
\end{align}
Then we define the following maps,
\begin{equation*}
\begin{aligned}
    e_{2,1}(k_1,k_2:\al,\beta) &= (9-k_1-k_2)-\al\cdot\tau\left(\kl_{1,2},\kl_{1,2},\kl_{3}\right) - \beta\cdot(7 - m(k_1:\kl_{1,2}) - m^\prime(k_2:\kl_{1,2},\kl_{1,2},\kl_{3})),\\
    e_{2,3}(k_1,k_2:\al,\beta) &= (9-k_1-k_2)-\al\cdot\tau\left(\kl_{3},\kl_{1,2},\kl_{1,2}\right) - \beta\cdot(7 - m^\prime(k_1:\kl_{1,2},\kl_3) - m^\prime(k_2:\kl_3,\kl_{1,2},\kl_{1,2})),\\
    e_{2,4}(k_1,k_2:\al,\beta) &= (9-k_1-k_2)-\al\cdot\tau\left(\kl_{3},\kl_{3},\kl_{1,2}\right) - \beta\cdot(7 - m(k_1:\kl_{1,2}) - m^\prime(k_2:\kl_{3},\kl_{3},\kl_{1,2})),~\text{and}\\
    e_{2,5}(k_1,k_2:\al,\beta) &= (9-k_1-k_2)-\al\cdot\tau\left(\kl_{1,2},\kl_{3},\kl_{3}\right) - \beta\cdot(7 - m^\prime(k_1:\kl_{3},\kl_{1,2}) - m^\prime(k_2:\kl_{1,2},\kl_{3},\kl_{3})),
\end{aligned}    
\end{equation*}
where $m(k_1:H)$ is defined in \eqref{mtH-def}. Now, there exists a $M_7$ such that for all $N>M_7$, we can write
\begin{align}\label{L-ego}
    &L_{N,2,2}\leq c^2\left(N^{\beta}\max_{i,j\in [K]}\pi_{N,i,j}\right)^{7}\sum_{k_1,k_2=1}^3N^{e_{2,1}(k_1,k_2:\al,\beta)},~~ L_{N,1,3}\leq c^2\left(N^{\beta}\max_{i,j\in [K]}\pi_{N,i,j}\right)^{7}\sum_{k_1,k_2=1}^3N^{e_{2,2}(k_1,k_2:\al,\beta)},\notag\\
    &L_{N,2,4}\leq c^3\left(N^{\beta}\max_{i,j\in [K]}\pi_{N,i,j}\right)^{8}\sum_{k_1,k_2=1}^3N^{e_{2,3}(k_1,k_2:\al,\beta)},\text{and}~ L_{N,1,5}\leq c^3\left(N^{\beta}\max_{i,j\in [K]}\pi_{N,i,j}\right)^{8}\sum_{k_1,k_2=1}^3N^{e_{2,4}(k_1,k_2:\al,\beta)}.
\end{align}
The values of $e_{2,1}(k_1,k_2:\al,\beta)$ are listed in Table \ref{tab:e-1-1}, the values of $e_{2,2}(k_1,k_2:\al,\beta)$ are listed in Table \ref{tab:e-2-2}, the values of $e_{2,3}(k_1,k_2:\al,\beta)$ are listed in Table \ref{tab:e-3-3} and the values of $e_{2,4}(k_1,k_2:\al,\beta)$ listed in Table \ref{tab:e-4-4} for all possible choices of $k_1,k_2\in\{1,2,3\}$. Using an argument similar to the one in the induced case, using the values of $e_{2,i}(k_1,k_2:\al,\beta)$ from the table mentioned above, we obtained a pair $(k_1,k_2)$ from evaluating
\begin{align*}
    (k_1,k_2) \in \argmaxA_{k_1,k_2\in [3]} \{e_{2,i}(k_1,k_2:\al,\beta)\},\quad\text{for $i=1,2,3,4$.} 
\end{align*}
From all tables, for $(\al,\beta)\in R_{2,1}\cup R_{2,2}$, we get $(k_1,k_2) = (1,1)$. Then, using the value $(k_1,k_2) = (1,1)$, from \eqref{L-ego}, we can write
\begin{equation}
\label{p-2-4-1}
\begin{aligned}
    &L_{N,2,2}\leq c^2\left(N^{\beta}\max_{i,j\in [K]}\pi_{N,i,j}\right)^{7}N^{7-2\al - 7\beta},\quad L_{N,2,3}\leq c^2\left(N^{\beta}\max_{i,j\in [K]}\pi_{N,i,j}\right)^{7}N^{7-2\al - 7\beta}~\text{and,}~\\
    &L_{N,2,4}\leq c^3\left(N^{\beta}\max_{i,j\in [K]}\pi_{N,i,j}\right)^{8}N^{7-3\al - 8\beta}~L_{N,2,5}\leq c^3\left(N^{\beta}\max_{i,j\in [K]}\pi_{N,i,j}\right)^{8}N^{7-3\al - 8\beta}.
\end{aligned}
\end{equation}
Therefore, using \eqref{var-joint-ego}, \eqref{p-2-6} and \eqref{p-2-4-1}, from \eqref{E-2-2}, for all $N>\max\{M_5,M_6,M_7\}$, from \eqref{E-2-2}, we can write
\begin{align*}
    \frac{J_{N,2,1}}{\left(\sigma_{N,1}(\kl_{1,2},\kl_3)\right)^3} &\leq \frac{N^{\frac{3}{2}(1-\al)}}{a_{2,t_1,t_2}}\cdot O\left(\frac{1}{N^{2(1-\al)}}\right) =O\left(\frac{1}{N^{\frac{1}{2}(1-\al)}}\right).
\end{align*}
Using a similar argument, it can be shown that there exists an $M_8$ such that for all $N>M_8$, $J_{N,2,2}/\left(\sigma_{N,2}(\kl_{1,2},\kl_3)\right)^3 \leq O(N^{-\frac{1}{2}(1-\al)})$ (cf. \eqref{joint-bdd-ego}). Hence, for $(\al,\beta) \in R_{2,1}\cup R_{2,2}$, for all $N>\max\{M_5,M_6,M_7,M_8\}$, from \eqref{joint-bdd-ego}, we can write
\begin{align*}
    d_{1}\left(\frac{T_{N,2}(\kl_{1,2}, \kl_{3})}{\sigma_{N,2}(\kl_{1,2},\kl_3)},~Z\right) \leq O\left(\frac{1}{N^{\frac{1}{2}(1-\al)}}\right).
\end{align*}
This completes the proof.
\end{proof}

\begin{lemma}
\label{lem-sup-eq}
Assume $F$ and $G$ are both cumulative distribution functions (cdf's) on $\rl$, with, $G(x) = \int_{-\infty}^x g(u)~du$, for each $x\in\rl$, where $g(\cdot)$ is a probability density function (pdf). Then,
\begin{align*}
\sup_{x\in\rl} |F(x)-G(x)| = \sup_{x\in\rl} |F(x^-)-G(x^{-})|,
\end{align*}
where, $F(x^-) = \lim_{h\rightarrow 0} F(x-h)$ and $G(x^{-})$ is also defined similarly.  
\end{lemma}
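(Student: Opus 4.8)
The plan is to reduce the supremum over $\rl$ to a supremum over a dense subset and then exploit the continuity of $G$. First I would observe that since $G$ is the cdf of a pdf $g$, it is continuous on all of $\rl$, so $G(x^-) = G(x)$ for every $x\in\rl$. Hence $|F(x^-)-G(x^-)| = |F(x^-)-G(x)|$, and the claimed identity becomes
\begin{align}
\sup_{x\in\rl}|F(x)-G(x)| = \sup_{x\in\rl}|F(x^-)-G(x)|.
\label{eq-lemsupeq-reduced}
\end{align}
The inequality $\geq$ here is the harder direction; $\leq$ is similar but I would handle both by the same device.

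Next I would use the fact that $F$, being a cdf, is right-continuous and has left limits everywhere, and that $F$ is continuous except on an at most countable set $D\subseteq\rl$. For any $x\in\rl$, pick a sequence $x_n\uparrow x$ with $x_n\notin D$ (possible since $D$ is countable and hence its complement is dense); then $F(x_n)\to F(x^-)$ and, by continuity of $G$, $G(x_n)\to G(x)$, so $|F(x_n)-G(x_n)|\to |F(x^-)-G(x)|$. This shows
\[
|F(x^-)-G(x)|\leq \sup_{y\in\rl}|F(y)-G(y)|\quad\text{for every }x\in\rl,
\]
and taking the supremum over $x$ gives $\sup_x|F(x^-)-G(x)|\leq \sup_x|F(x)-G(x)|$. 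Conversely, for any $x$ and any sequence $y_n\downarrow x$ with $y_n\notin D$, right-continuity of $F$ gives $F(y_n)\to F(x)$ while $F(y_n) = F(y_n^-)$, and $G(y_n)\to G(x)$; hence $|F(x)-G(x)| = \lim_n|F(y_n^-)-G(y_n)|\leq \sup_y|F(y^-)-G(y)|$, and taking the supremum over $x$ yields the reverse inequality. Combining the two gives \eqref{eq-lemsupeq-reduced}, which is the assertion.

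The only genuinely delicate point — though it is routine — is justifying that the set of discontinuities of a monotone function is at most countable (so its complement is dense, allowing the approximating sequences above), together with the standard facts that a cdf is right-continuous with left limits and that a cdf arising from a density is everywhere continuous. Once these standard facts are in place the argument is a two-line sandwich. I expect no real obstacle here; the statement is essentially a bookkeeping lemma ensuring that a Kolmogorov-type sup-distance is unchanged if one replaces $F$ by its left-continuous version, which is needed elsewhere to convert the Polya-theorem conclusion into a statement about strict inequalities $\{\widehat{S}^{[i]}_N(\kl_{1,2})<t\}$ in the proof of Theorem \ref{CC-result-4}.
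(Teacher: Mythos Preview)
Your argument is correct, but it proceeds differently from the paper. The paper first proves that $G$ is \emph{uniformly} continuous (a short DCT argument), then uses the sandwich $F(x^{-})\le F(x)\le F((x+\eps)^{-})$ to bound $\sup_x|F(x)-G(x)|$ by $\sup_x|F(x^{-})-G(x^{-})|+\sup_t|G(t+\eps)-G(t)|$ and lets $\eps\downarrow 0$; the reverse inequality is handled the same way with $F(x-\eps)\le F(x^{-})\le F(x+\eps)$. You instead observe that the discontinuity set $D$ of the monotone function $F$ is at most countable, so every $x$ can be approached from both sides through points of $\rl\setminus D$, on which $F$ and $F(\cdot^{-})$ coincide; continuity of $G$ then lets limits pass through. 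Your route is a bit slicker in that it only needs pointwise continuity of $G$ rather than uniform continuity, at the cost of invoking the countability-of-jumps fact for monotone functions; the paper's route is more self-contained but slightly longer. One minor remark: in your first direction (bounding $|F(x^{-})-G(x)|$ via $x_n\uparrow x$) you do not actually need $x_n\notin D$ --- any strictly increasing sequence works there --- though it does no harm. Your closing comment about why this lemma is used (to translate Polya's theorem into a statement about strict inequalities in the proof of Theorem~\ref{CC-result-4}) is exactly right.
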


\begin{proof}[Proof of Lemma \ref{lem-sup-eq}]
Firstly we establish uniform continuity of $G(\cdot)$. Fix $\eps>0$, we need to show that there exists a $\delta>0$, such that $|x-y|<\delta$, will imply $|G(x)-G(y)|<\eps$. Let $x<y$, with $(y-x)<\delta$. Note that, 
\begin{align*}
|G(y)-G(x)| &= \int_x^y g(u)~du
= \int_x^y g(u)\cdot \mathbf{1}\left(g(u)>M\right)~du + \int_x^y g(u)\cdot \mathbf{1}\left(g(u)\leq M\right)~du,
\end{align*}
for any $M>0$, which is to be chosen later. As $g$ is integrable, hence by DCT, there exists a $M_\eps>0$, such that 
$$
\int_\rl g(u)\cdot\mathbf{1}(g(u)>M_\eps)~du <\frac{\eps}{2},
$$
where the choice of $M_\eps$ does not depend on $x$ and $y$, and only depends on $\eps$. Thus, with this choice of $M = M_\eps$, we obtain
$$
|G(y)-G(x)| < \frac{\eps}{2} + M_\eps\cdot |y-x| \leq \frac{\eps}{2} + M_\eps \cdot \delta.
$$
Choose, $\delta = \eps/(2M_\eps)$. This leads to 
$$
G(y)-G(x) < \frac{\eps}{2} + \frac{\eps}{2} = \eps.
$$
As $x$ and $y$ are arbitrary and $\delta$ depends only on $\eps$, we have established uniform continuity of $G$. %As $G$ is a cdf, hence $\lim_{x\rightarrow -\infty} G(x) = 0$, and $\lim_{x\rightarrow \infty} G(x) = 1$. Given any $\eps>0$, there exists a $M_\eps>0$, such that, $G(x)<\eps/3$, for all $x<-M_\eps$, and $G(x)>1-\eps/3$, for all $x>M_\eps$. Also, $G$ is continuous on $[-(M_\eps+1),M_\eps+1]$, and hence it is uniformly continuous on $[-(M_\eps+1),M_\eps+1]$. Therefore, there exists some $a_\eps>0$, such that, whenever, $|x-y|<a_\eps$, and $x,y\in [-(M_\eps+1),M_\eps+1]$, it implies, $|G(x)-G(y)|<\eps/3$. We can assume, $a_\eps<2(M_\eps+2)$. 
%Now, consider any $x,y\in\rl$, with $|x-y|<a_\eps$. If $x,y\in [-(M_\eps+1),(M_\eps+1)]$, then we already know, $|G(x)-G(y)|<\eps/3$. If $x,y\in (-\infty,-(M_\eps+1))$, then as per earlier statement,
%$$
%|G(x)-G(y)|\leq |G(x)|+|G(y)|<\frac{2\eps}{3}.
%$$
%If $x\in (-\infty,-(M_\eps+1))$ and $|y|\leq M_\eps+1$, then 
%$$
%|G(x)-G(y)| \leq |G(x)-0|+|0-G(-M_\eps-1)| + |G(-M_\eps-1)-G(y)|< \frac{3\eps}{3} = \eps.
%$$
%Similarly, if $x,y\in (M_\eps+1,\infty)$, then, $
%|G(x)-G(y)| \leq |1-G(x)|+|1-G(y)|\leq {2\eps}/{3}$. If $|x|\leq (M_\eps+1)$ and $y>M_\eps+1$, then 
%$$
%|G(x)-G(y)| \leq |G(x)-G(M_\eps+1)|+|G(M_\eps+1)-1|+|1-G(y)|<\eps.
%$$
%Thus, we have shown that for any $\eps>0$, there exists an $a_\eps>0$, such that whenever $x,y\in\rl$, with $|x-y|<a_\eps$, we must have, $|G(x)-G(y)|<\eps$. Thus, $G$ is uniformly continuous on $\rl$.
Let $X$ and $Y$ denote the underlying random variables corresponding to the cdf's $F$ and $G$ respectively. Then for each $\eps>0$,
$$
F(x^-) = \pr(X<x) \leq F(x) \leq \pr(X<x+\eps) = F({(x+\eps)}^-).
$$
Uniform continuity implies continuity, {\it i.e.}, $G(x^-) = \lim_{h\rightarrow 0} G(x-h) = G(x)$. Write, $
\Delta = \sup_{x}|F(x^-)-G(x^-)|$. Then,
\begin{align*}
& F(x^-)-G(x)\leq F(x)-G(x)\leq F({(x+\eps)}^-)-G(x)\\
\Rightarrow & \ -|F(x^-)-G(x)|\leq F(x^-)-G(x)\leq F(x)-G(x)\leq F({(x+\eps)}^-)-G(x+\eps)+G(x+\eps)-G(x)\\
\Rightarrow & \ -\Delta \leq F(x)-G(x)\leq \sup_{t}|F(t^-)-G(t^-)| + \sup_t|G(t+\eps)-G(t)| = \Delta + \Delta_{1,\eps}\quad\text{(say),}\\
\Rightarrow & \ -\Delta-\Delta_{1,\eps} \leq -\Delta \leq F(x)-G(x)\leq \Delta + \Delta_{1,\eps},\quad\text{for all $x$ and all $\eps>0$,}\\
\Rightarrow & \ \sup_{x}|F(x)-G(x)|\leq \Delta + \Delta_{1,\eps}.
\end{align*}
As $G$ is uniformly continuous, letting $\eps\downarrow 0$, $\Delta_{1,\eps}\rightarrow 0$. This implies, $\sup_{x}|F(x)-G(x)|\leq \Delta$. To show the reverse inequality, we write for each $x$ and $\eps>0$,
\begin{align*}
& \pr(X\leq x-\eps)\leq \pr(X<x)\leq \pr(X\leq x)\leq \pr(X\leq x+\eps)\\
\Rightarrow &\ F(x-\eps)\leq F(x^-)\leq F(x+\eps)\\
\Rightarrow &\ F(x-\eps)-G(x)\leq F(x^-)-G(x)\leq F(x+\eps)-G(x)\\
\Rightarrow &\ F(x-\eps)-G(x-\eps)+G(x-\eps)-G(x)\leq F(x^-)-G(x)\leq F(x+\eps)-G(x+\eps)+G(x+\eps)-G(x)\\
\Rightarrow &\ -\sup_t|F(t)-G(t)|-\sup_t|G(t-\eps)-G(t)|\leq F(x^-)-G(x)\leq \sup_t|F(t)-G(t)|+\sup_t|G(t+\eps)-G(t)|\\
\Rightarrow &\ -\sup_t|F(t)-G(t)|-\Delta_{2,\eps}\leq F(x^-)-G(x^-)\leq \sup_t|F(t)-G(t)| + \Delta_{1,\eps}\quad\text{(say),}\\
\Rightarrow &\ -\sup_t|F(t)-G(t)|-\Delta_{\eps}\leq F(x^-)-G(x^-) \leq \sup_t|F(t)-G(t)| + \Delta_{\eps},\quad\text{where $\Delta_\eps = \max\{\Delta_{1,\eps},\Delta_{2,\eps}\}$}\\
\Rightarrow &\ \Delta=\sup_{x}|F(x^-)-G(x^-)|\leq \sup_t|F(t)-G(t)|+\Delta_{\eps},\\
\Rightarrow &\ \Delta\leq \sup_x|F(x)-G(x)|,\quad\text{as $\Delta_\eps\rightarrow 0$ when $\eps\downarrow 0$.}
\end{align*}
This establishes the reverse inequality, indicating the equality of both suprema. 
\end{proof}

\begin{prop}\label{samp-CLT}
    Suppose that Assumptions \ref{assump-1}, \ref{assump-2}, \ref{assump-3} and \ref{assump-4} hold. Let $Z\sim N(0,1)$. For $H = \kl_{1,2}$ (wedge), consider the estimated wedge count $\widehat{S}_{N,l}(\kl_{1,2})$ for $l=1,2$ and define $\psi^2_{N,l}(H)=\Var\left(\widehat{S}_{N,l}(\kl_{1,2})\right)$ for $l=1,2$. Then 
    \begin{align*}
        d_1\left(\frac{\widehat{S}_{N,l}(\kl_{1,2}) -\E\left(\widehat{S}_{N,l}(\kl_{1,2})\right)}{\psi_{N,l}(\kl_{1,2})},~Z\right)\to 0,~\text{as}~N\to\infty,~\text{for $l=1,2$}.
    \end{align*}
\end{prop}

\begin{proof}
    For $l=1,2$, define,
    \begin{align*}
        Z_{N,l}(\kl_{1,2}) &= \frac{\widehat{S}_{N,l}(\kl_{1,2}) - \E\left(\widehat{S}_{N,l}(\kl_{1,2})\right)}{\psi_{N,l}(\kl_{1,2})}\notag\\
        &= \frac{1}{\psi_{N,l}(\kl_{1,2})}\sum_{\sbb\in \nsr} \left(\Wb_{N,l}(\sbb:\kl_{1,2})\Yb_N(\sbb:\kl_{1,2}) - \E\left(\Wb_{N,l}(\sbb:\kl_{1,2})\Yb_N(\sbb:\kl_{1,2})\right)\right).
    \end{align*}
%We can use any graph $H$ in the case of induced network formation ($l=1$) and for the ego-centric case we use graph $H$ which satisfies Assumption \ref{assump-5}. 
Then, the variance of $\widehat{S}_{N,l}(\kl_{1,2})$ can be written as follows. 
\begin{align}\label{est-SN-var}
    &\psi^2_{N,l}(\kl_{1,2})=\Var\left(\widehat{S}_{N,l}(\kl_{1,2})\right) = \E\left(\left(\sum_{\sbb\in \nsr }\Wb_{N,l}(\sbb:H)\Yb_N(\sbb:H)\right)^2\right)\notag\\
    &\qquad{}- \left(\E\left(\sum_{\sbb\in \nsr }\Wb_{N,l}(\sbb:H)\Yb_N(\sbb:H)\right)\right)^2\notag\\
    &= \sum_{\sbb_1,\sbb_2\nsr}\E\left(\Wb_{N,l}(\sbb_1:H)\Wb_{N,l}(\sbb_2:H)\right)\E\left(\Yb_N(\sbb_1:H)\Yb_N(\sbb_2:H)\right)\notag\\
    &\qquad{}- \left(\sum_{\sbb\in \nsr }\E\left(\Wb_{N,l}(\sbb:H)\Yb_N(\sbb:H)\right)\right)^2\notag\\
&=\sum_{\sbb_1,\sbb_2\nsr}\E\left(\widetilde{\Wb}_{N,l}(\sbb_1:H)\widetilde{\Wb}_{N,l}(\sbb_2:H)\right)\E\left(\Yb_N(\sbb_1:H)\Yb_N(\sbb_2:H)\right)\notag\\
%&=\sum_{\sbb_1,\sbb_2\nsr}\E\left(\widetilde{\Wb}_N(\sbb_1:H)\widetilde{\Wb}_N(\sbb_2:H)\right)\E\left(\Yb_N(\sbb_1:H)\Yb_N(\sbb_2:H)\right)\notag\\
&+2\sum_{\sbb_1,\sbb_2\nsr}\E\left({\Wb}_{N,l}(\sbb_1:\kl_{1,2})\right)\E\left({\Wb}_{N,l}(\sbb_2:\kl_{1,2})\right)\E\left(\Yb_N(\sbb_1:\kl_{1,2})\Yb_N(\sbb_2:\kl_{1,2})\right)\notag\\
    &\qquad{}- \left(\sum_{\sbb\in \nsr }\E\left(\Wb_{N,l}(\sbb:\kl_{1,2})\right)\E\left(\Yb_N(\sbb:\kl_{1,2})\right)\right)^2\notag\\
    &= \sigma^2_{N,l}(\kl_{1,2})+2\sum_{\sbb_1,\sbb_2\nsr}\E\left({\Wb}_{N,l}(\sbb_1:\kl_{1,2})\right)\E\left({\Wb}_{N,l}(\sbb_2:\kl_{1,2})\right)\E\left(\Yb_N(\sbb_1:\kl_{1,2})\Yb_N(\sbb_2:\kl_{1,2})\right)\notag\\
    &+\left(\sum_{\sbb\in \nsr }\E\left(\Wb_{N,l}(\sbb:\kl_{1,2})\right)\E\left(\Yb_N(\sbb:\kl_{1,2})\right)\right)^2\notag\\
    &= \sigma^2_{N,l}(\kl_{1,2})+2f^2_l(p_N:\kl_{1,2})\sum_{\sbb_1,\sbb_2\nsr}\E\left(\Yb_N(\sbb_1:\kl_{1,2})\Yb_N(\sbb_2:\kl_{1,2})\right)\notag\\
    &\qquad{}\qquad{}+f^2_l(p_N:\kl_{1,2})\left(\sum_{\sbb\in \nsr }\E\left(\Yb_N(\sbb:\kl_{1,2})\right)\right)^2.
    %\asymp \Var\left(\widehat{S}_{N,l}(H) - f_l(p_N:H)S_N(H)\right) = \left[\sigma_{N,l}(H)\right]^2.
\end{align}
Therefore, using Lemma \ref{lem-1} and Lemma \eqref{lem-var} (one can use exact expression of $\sigma^2_{N,l}(\kl_{1,2})$ provided in \eqref{var-wedge-ind} and \eqref{var-wedge-ego}), for large enough $N$, we can write
\begin{align*}
    \frac{\psi^2_{N,l}(\kl_{1,2})}{\sigma^2_{N,l}(\kl_{1,2})} = 1 + O\left(\frac{1}{N}\right).
\end{align*}
Using the Theorem \ref{thm-ruc} over $Z_{N,l}(\kl_{1,2})$, we can write
\begin{align*}
    &d_1\left(Z_{N,l}(\kl_{1,2}),~N(0,1)\right) \leq \frac{1}{\psi^3_{N,l}(\kl_{1,2})}\sum_{t_1,t_2=1}^3\sum_{(\sbb_1,\sbb_2,\sbb_3)\in \Mc_{t_1,t_2}}\bigg\{\E\prod_{i=1}^3\bigg|\Wb_{N,l}(\sbb_i:\kl_{1,2})\Yb_N(\sbb_i:\kl_{1,2})\notag\\
    &\qquad{}- \E\left(\Wb_{N,l}(\sbb_i:\kl_{1,2})\Yb_N(\sbb_i:\kl_{1,2})\right)\bigg| 
    + \E\Bigg|\bigg\{\Wb_{N,l}(\sbb_1:\kl_{1,2})\Yb_N(\sbb_1:\kl_{1,2})\\
    &- \E\left(\Wb_{N,l}(\sbb_1:\kl_{1,2})\Yb_N(\sbb_1:\kl_{1,2})\right)\bigg\}
    \bigg\{\Wb_{N,l}(\sbb_2:\kl_{1,2})\Yb_N(\sbb_2:\kl_{1,2}) 
    - \E\left(\Wb_{N,l}(\sbb_2:\kl_{1,2})\Yb_N(\sbb_2:\kl_{1,2})\right)\bigg\}\Bigg|\notag\\
    &\times\E\left|\Wb_{N,l}(\sbb_3:\kl_{1,2})\Yb_N(\sbb_3) 
    - \E\left(\Wb_{N,l}(\sbb_3)\Yb_N(\sbb_3)\right)\right|\bigg\}\notag\\
    &\leq \frac{\sigma^3_{N,l}(\kl_{1,2})}{\psi^3_{N,l}(\kl_{1,2})}\frac{1}{\sigma^3_{N,l}(\kl_{1,2})}
    \sum_{t_1,t_2=1}^3\sum_{(\sbb_1,\sbb_2,\sbb_3)\in \Mc_{t_1,t_2}}\Bigg(
    \E\left(\Wb_{N,l}(\sbb_1)\Wb_{N,l}(\sbb_2)\Wb_{N,l}(\sbb_3)\right)\\
    &\qquad\qquad\qquad\qquad\cdot \E\left(\Yb_N(\sbb_1)\Yb_N(\sbb_2)\Yb_N(\sbb_3)\right)\notag\\
    &\qquad +\E\left(\Wb_{N,l}(\sbb_1)\Wb_{N,l}(\sbb_2)\right)
    \E\left(\Yb_N(\sbb_1)\Yb_N(\sbb_2)\right)
    \E\left(\Wb_{N,l}(\sbb_3)\right)
    \E\left(\Yb_{N}(\sbb_3)\right)\Bigg).
\end{align*}
The above bound is similar to the bound obtained in the proof of Theorem \ref{thm-1-main} and the upper bound goes to zero under similar assumption required in Theorem \ref{thm-1-main} and Corollary \ref{cor-clt-all}. In addition, note that, since $\psi^2_{N,l}(\kl_{1,2})/\sigma^2_{N,l}(\kl_{1,2})\to 1$ as $N\rai$, then by Lemma \ref{lem-var}, as $N\to\infty$,
\begin{align*}
    \frac{\psi^2_{N,l}(\kl_{1,2})}{N^{-g_{l,\al,\beta,H}(R) + g_{l,\al,\beta,H}(t_l(\al,\beta;H))}} \to \psi_l^2(\kl_{1,2})\in (0,\infty),~\text{for $l=1,2$.}
\end{align*}
This completes the proof.
    
\end{proof}

\setappendix{E}
\refstepcounter{subsection}
\subsection*{Appendix E: Variance expressions for some common target subgraphs}
\label{sec:app-E}

We provide expression of $\sigma^2_{N,l}(H)$ for some common target subgraphs, such as the edge, wedge, and triangle at any choice of $(\al,\beta)\in C_l(H)$ (cf. \eqref{c-set-ind} and \eqref{c-set-ego}). Similar expressions for any complete and star graph are also provided. The expressions are provided in terms of the SBM edge probabilities $((\pi_{N,u,v}:1\leq u,v\leq K))$ and recall $N^{\beta}\cdot \PiB_N\asymp \Cb$, the class size proportions $\la_{N,1},\ldots,\la_{N,K}$ and the node sampling probability $p_N = c/N^{\al}$. These variance expressions are exact and valid for all choices of $N$. 
\begin{enumerate}
\item {\bf Edge:} $H=\kl_2$. In this case,
    \begin{equation}
    \begin{aligned}\label{var-edge}
    \sigma^2_{N,1}(\kl_2) &= 4\cdot N^3\cdot p_N^3(1-p_N)\cdot \sum_{u,v,w=1}^K\pi_{N,u,v}\pi_{N,u,w}\cdot \la_{N,u}\la_{N,v}\la_{N,w}\\
    &\qquad{}+ 2\cdot N^2\cdot p_N^2(1-p_N^2)\sum_{u,v=1}^K \pi_{N,u,v}\cdot \la_{N,u}\la_{N,v},\quad\text{for $(\al,\beta)\in C_1(\kl_2)$ and}\\
    \sigma_{N,2}^2(\kl_2) &= 4\cdot N^3\cdot p_N(1-p_N^3)\sum_{u,v,w=1}^K\pi_{N,u,v}\pi_{N,u,w}\cdot\la_{N,u}\la_{N,v}\la_{N,w}\\
    &\qquad{}+ 2\cdot N^2\cdot p_N(2-p_N)(1-p_N)^2\sum_{u,v=1}^K \pi_{N,u,v}\cdot \la_{N,u}\la_{N,v},\quad\text{for $(\al,\beta)\in C_2(\kl_2)$.} 
\end{aligned}
\end{equation}
\item {\bf Wedge:} $H=\kl_{1,2}$. In this case,
\begin{equation}
\begin{aligned}\label{var-wedge-ind}
    \sigma_{N,1}^2(\kl_{1,2}) &= N^5\cdot p_N^5(1-p_N) \sum_{u,v,w,x,y=1}^K\pi_{N,u,v}\pi_{N,u,w}\cdot (\pi_{N,u,x}\pi_{N,u,y} + 4\cdot \pi_{N,u,x}\pi_{N,x,y}\\
    &\qquad{}\qquad{}+4\cdot \pi_{N,v,x} \pi_{N,x,y})\cdot \la_{N,u} \la_{N,v} \la_{N,w}\la_{N,x}\la_{N,y}+\\
    &+ N^4\cdot p_N^4(1 - p_N^2)\sum_{u,v,w,x=1}^K\pi_{N,u,v}\pi_{N,u,w}\cdot (4\cdot\pi_{N,u,x} + 4\cdot\pi_{N,v,x}\\
    &\qquad{}\qquad{}+2\cdot\pi_{N,v,x}\pi_{N,u,x} +8\cdot\pi_{N,v,x} \pi_{N,w,x})\cdot \la_{N,u} \la_{N,v}\la_{N,w}\la_{N,x}+\\
    &+N^3\cdot p_N^3(1 - p_N^3)\sum_{u,v,w=1}^K \pi_{N,u,v}\pi_{N,u,w}\cdot (2  + 4\cdot\pi_{N,v,w}) \cdot\la_{N,u}\la_{N,v}\la_{N,w},\quad\text{for $(\al,\beta)\in C_1(\kl_{1,2})$,}
\end{aligned}
\end{equation}
and
\begin{align}
    \sigma_{N,2}^2(\kl_{1,2}) &= N^5\cdot \sum_{u,v,w,x,y=1}^K\pi_{N,u,v}\pi_{N,u,w}\cdot\{p_N(1 - p_N)^3\pi_{N,u,x}\pi_{N,u,y}\notag\\
        &\qquad{}\qquad{}+4\cdot p_N^2(1 - p_N)^3(1 + p_N)\pi_{N,u,x}\pi_{N,x,y}\notag\\
        &\qquad{}\qquad{}\qquad{}+4\cdot p_N^3(1 - p_N)^3\pi_{N,v,x}\pi_{N,x,y}\}\cdot \la_{N,u}\la_{N,v}\la_{N,w}\la_{N,x}\la_{N,y}\notag\\
        &+N^4\sum_{u,v,w,x=1}^N\pi_{N,u,v}\pi_{N,u,w}\cdot\{4\cdot p_N(1-p_N)^2(1 + p_N - p_N^3)\pi_{N,u,x}\notag\\
        &\qquad{}\qquad{}\qquad{}+ 4\cdot p_N^2(1 - p_N)(2 - p_N^2)\cdot \pi_{N,v,x}\notag\\
        &+2\cdot p_N^2(1 - p_N)^3\pi_{v,x}\pi_{N,u,x} +8\cdot p_N^2(1 - p_N)^3(1 + p_N)\cdot \pi_{N,v,x}\pi_{N,w,x}\}\cdot\la_{N,u}\la_{N,v}\la_{N,w}\la_{N,x}\notag\\
        &\qquad{}\qquad{}+ N^3\sum_{u,v,w=1}^K\pi_{N,u,v}\pi_{N,u,w}\cdot \{2\cdot(p_N + (1-p_N)p_N^2)\cdot(1 - (p_N + (1-p_N)p_N^2))\notag\\
        &\qquad{}\qquad{}+ 4\cdot p_N^2(1-p_N)^2(2-p_N^2)\cdot\pi_{N,v,w}\}\cdot\la_{N,u}\la_{N,v}\la_{N,w}, \quad\text{for $(\al,\beta)\in C_2(\kl_{1,2})$.}
       \label{var-wedge-ego} 
\end{align}

\item {\bf Triangle:} $H=\kl_3$. In this case,
\begin{align}
\label{var-tri-ind}
    \sigma_{N,1}^2(\kl_3) &= 9\cdot N^5\cdot p_N^5(1-p_N)\notag\\
&\qquad{}\qquad{}\times\sum_{u,v,w,x,y=1}^K\pi_{N,u,v}\pi_{N,v,w}\pi_{N,w,u}\pi_{N,u,x}\pi_{N,x,y}\pi_{N,y,u}\cdot \la_{N,u}\la_{N,v}\la_{N,w}\la_{N,x}\la_{N,y} +\notag\\
    &+18\cdot N^4\cdot p_N^4(1-p_N^2)\sum_{u,v,w,x=1}^K\pi_{N,u,v}\pi_{N,v,w}\pi_{N,w,u}\pi_{N,v,x}\pi_{N,x,u}\cdot\la_{N,u}\la_{N,v}\la_{N,w}\la_{N,x}+\notag\\
    &+6\cdot N^3\cdot p_N^3(1-p_N^3)\sum_{u,v,w=1}^K \pi_{N,u,v}\pi_{N,v,w}\pi_{N,w,u}\cdot \la_{N,u}\la_{N,v}\la_{N,w},\quad\text{for $(\al,\beta)\in C_1(\kl_3)$}
 \end{align}
 and
 \begin{align}
 \label{var-tri-ego}
    \sigma^2_{N,2}(\kl_3) &= 36\cdot N^5\cdot p_N^3(1-p_N)^3\notag\\
    &\qquad{}\qquad{}\times\sum_{u,v,w,x=1}^K \pi_{N,u,v}\pi_{N,v,w}\pi_{N,w,u}\pi_{N,u,x}\pi_{N,x,y}\pi_{N,y,u}\cdot \la_{N,u}\la_{N,v}\la_{N,w}\la_{N,x}\la_{N,y}+\notag\\
        &+18\cdot N^4\cdot (p_N^2+2p_N^3-2p_N^4-9p_N^4 +12p_N^5-4p^6)\notag\\
        &\qquad{}\qquad{}\sum_{u,v,w,x=1}^K \pi_{N,u,v}\pi_{N,v,w}\pi_{N,w,u}\pi_{N,v,x}\pi_{N,x,u}\cdot\la_{N,u}\la_{N,v}\la_{N,w}\la_{N,x}\notag\\
        &+6\cdot N^3\cdot (3\cdot p_N^2 - 2\cdot p_N^3)(1-(3\cdot p_N^2 - 2\cdot p_N^3))\notag\\
        &\qquad{}\qquad{}\qquad{}\times\sum_{u,v,w=1}^K \pi_{N,u,v}\pi_{N,v,w}\pi_{N,w,u}\cdot \la_{N,u}\cdot \la_{N,v}\la_{N,w},\quad\text{for $(\al,\beta)\in C_2(\kl_3)$.}
\end{align}
%\end{enumerate}
%In the induced network formation, we can also write the asymptotic variance expression for the star graph ($\kl_{1,R-1}$), complete graph ($\kl_R$) and Line graph ($\mathbb{L}_R$) and for ego-centric network formation, it is possible to write the asymptotic variance of star graph ($H=\kl_{1,R-1}$), variance expressions are as follows.
%\begin{enumerate}

\item {\bf Star graph:} $H=\kl_{1,R-1}$, for any $R\geq 2$. Recall the definition of the sets $C_{l,j}(H)$, $l=1,2$, $j=1,\ldots,R$ (cf. \eqref{Clj-def}). In this case, we obtain asymptotic expressions,
\begin{align}
\label{var-star-1}
\sigma^2_{N,1}(\kl_{1,R-1})
&\asymp
\begin{cases}
N^{2R-1} p_N^{2R-1}(1-p_N)
\displaystyle\sum_{a,b=1}^R
\sum_{\ub,\vb\in [K]^R}
\mathbf{1}(u_a = v_b) \\[0.6em]
\qquad\cdot
\left(
\prod_{j=2}^R \pi_{u_1,u_j}
\prod_{j=2}^R \pi_{v_1,v_j}
\right)
\prod_{j=1}^R \la_{u_j}
\prod_{\substack{k=1\\ k\neq b}}^R \la_{v_k},
\\[1.2em]
\hfill\text{when}\quad (\al,\beta)\in C_{1,1}(\kl_{1,R-1}),
\\[2ex]
N^{R} p_N^{R}(1-p_N)^{R} \, R
\displaystyle\sum_{\ub\in [K]^R}
\left(
\prod_{j=2}^R \pi_{u_1,u_j}
\right)
\prod_{j=1}^R \la_{u_j},
\\[1.2em]
\hfill \text{when}\quad (\al,\beta)\in C_{1,R}(\kl_{1,R-1}).
\end{cases}\\
\sigma^2_{N,2}(\kl_{1,R-1})
&\asymp
\begin{cases}
N^{2R-1} p_N
\displaystyle\sum_{a,b=1}^R
\sum_{\ub,\vb\in [K]^R}
\mathbf{1}(u_a = v_b) \\[0.6em]
\qquad\cdot
\left(
\prod_{j=2}^R \pi_{u_1,u_j}
\prod_{j=2}^R \pi_{v_1,v_j}
\right)
\prod_{j=1}^R \la_{u_j}
\prod_{\substack{k=1\\ k\neq b}}^R \la_{v_k},\quad\text{when}\quad (\al,\beta)\in C_{2,1}(\kl_{1,R-1}),
\\[2ex]
N^{R} (p_N+ (1-p_N)p_N^{R-1})(1-(p_N+ (1-p_N)p_N^{R-1})) \, (R-1)!\\
\qquad{}\qquad{}\displaystyle\sum_{\ub\in [K]^R}
\left(
\prod_{j=2}^R \pi_{u_1,u_j}
\right)
\prod_{j=1}^R \la_{u_j},\qquad\text{when}\quad (\al,\beta)\in C_{2,R}(\kl_{1,R-1}).
\end{cases}
\label{var-star-2}
\end{align}

\item {\bf Complete graph:} $H=\kl_{R}$, for any $R\geq 3$. Here,
\begin{align}
\sigma^2_{N,1}(\kl_{R})
&\asymp
\begin{cases}
N^{2R-1} p_N^{2R-1}(1-p_N)
\displaystyle\sum_{a,b=1}^R
\sum_{\ub,\vb\in [K]^R}
\mathbf{1}(u_a=v_b) \\[0.6em]
\qquad\cdot
\left(
\prod_{1\leq i<j\leq R} \pi_{u_i,u_j}
\prod_{1\leq i<j\leq R} \pi_{v_i,v_j}
\right)
\prod_{j=1}^R \la_{u_j}
\prod_{\substack{k=1\\ k\neq b}}^R \la_{v_k},
\\[1.2em]
\hfill \text{when}\quad(\al,\beta)\in C_{1,1}(\kl_{R}),
\\[2ex]
N^{R} p_N^{R}(1-p_N)^{R} \, R
\displaystyle\sum_{\ub\in [K]^R}
\left(
\prod_{1\leq i<j\leq R} \pi_{u_1,u_j}
\right)
\prod_{j=1}^R \la_{u_j},
\\[1.2em]
\hfill \text{when}\quad (\al,\beta)\in C_{1,R}(\kl_{R}).
\end{cases}
\label{var-comp-1}
\end{align}

\begin{align}
\sigma^2_{N,2}(\kl_{R})
&\asymp
\begin{cases}
N^{2R-1} p_N^{2R-3}
\displaystyle\sum_{a,b=1}^R
\sum_{\ub,\vb\in [K]^R}
\mathbf{1}(u_a=v_b) \\[0.6em]
\qquad\cdot
\left(
\prod_{1\leq i<j\leq R} \pi_{u_i,u_j}
\prod_{1\leq i<j\leq R} \pi_{v_i,v_j}
\right)
\prod_{j=1}^R \la_{u_j}
\prod_{\substack{k=1\\ k\neq b}}^R \la_{v_k},
\\[1.2em]
\hfill \text{when}\quad (\al,\beta)\in C_{2,1}(\kl_{R}),
\\[2ex]
N^{2R-1} p_N^{R-1}
\displaystyle\sum_{(\jbb,\kbb)\in J_t\cap M_t}\sum_{\xi\in\mathcal{S}_t}
\sum_{\ub,\vb\in [K]^R}
\prod_{i=1}^{R-1}\mathbf{1}(u_{j_i}=v_{k_{\xi(i)}}) \\[0.6em]
\qquad\cdot
\left(
\prod_{\{i,j\}\in E(\kl_R)} \pi_{u_i,u_j}
\prod_{\{i,j\}\in E(\kl_R)\setminus B(\kbb,R-1)} \pi_{v_i,v_j}
\right)
\prod_{j=1}^R \la_{u_j}
\prod_{k\in [R]\setminus A(\kbb)}^R \la_{v_k},
\\[1.2em]
\hfill \text{when}\quad (\al,\beta)\in C_{2,R-1}(\kl_{R}),
\\[2ex]
N^{R}  \left(R\cdot p_N^{R-1}(1-p_N) +p_N^R\right)\left(1-\left(R\cdot p_N^{R-1}(1-p_N) +p_N^R\right)\right) R!\\
\qquad{}\qquad{}\times\displaystyle\sum_{\ub\in [K]^R}
\left(
\prod_{1\leq i<j\leq R} \pi_{u_1,u_j}
\right)
\prod_{j=1}^R \la_{u_j},
\\[1.2em]
\hfill \text{when}\quad (\al,\beta)\in C_{2,R}(\kl_{R}).
\end{cases}
\label{var-comp-2}
\end{align}

\item {\bf Line graph:} $H=\mathbb{L}_{R}$. The variance expression is provided only for the induced case, since the line graph fails to satisfy Assumption \ref{assump-5}.
\begin{align}
\sigma^2_{N,1}(\mathbb{L}_R)
&\asymp
\begin{cases}
N^{2R-1} p_N^{2R-1}(1-p_N)
\displaystyle\sum_{a,b=1}^R
\sum_{\ub,\vb\in [K]^R}
\mathbf{1}(u_a = v_b) \\[0.6em]
\qquad\cdot
\left(
\prod_{j=1}^{R-1} \pi_{u_{j},u_{j+1}}
\prod_{j=1}^{R-1} \pi_{v_j,v_{j+1}}
\right)
\prod_{j=1}^R \la_{u_j}
\prod_{\substack{k=1\\ k\neq b}}^R \la_{v_k},
\\[1.2em]
\hfill \text{when}\quad (\al,\beta)\in C_{1,1}(\mathbb{L}_R),
\\[2ex]
N^{R} p_N^{R}(1-p_N)^{R} \, R
\displaystyle\sum_{\ub\in [K]^R}
\left(
\prod_{j=1}^{R-1} \pi_{u_{j},u_{j+1}}
\right)
\prod_{j=1}^R \la_{u_j},
\\[1.2em]
\hfill \text{when}\quad (\al,\beta)\in C_{1,R}(\mathbb{L}_R).
\end{cases}
\label{var-path-1}
\end{align}
\end{enumerate}

\newpage

\begin{center}
\rotatebox{90}{%
  \begin{minipage}{\textheight}
  \centering
  \begin{tabular}{cccccccc}
    \toprule
    $k_1$ &
    $k_2$ &
    $\left(\twostar{s_{1,1}}{s_{1,2}}{s_{1,3}},
            \twostar{s_{1,1}}{s_{1,2}}{s_{1,3}},
            \trianglegraph{s_{3,1}}{s_{3,2}}{s_{3,3}}\right)$ &
    $\tau\left(\kl_{1,2},\kl_{1,2},\kl_{3}\right)$ &
    $m(k_1:\kl_{1,2})$ &
    $m'(k_2:\kl_{1,2},\kl_{1,2},\kl_{3})$ &
    $e_{1,1}(k_1,k_2:\al,\beta)$ &
    $e_{2,1}(k_1,k_2:\al,\beta)$ \\
    \midrule

    1 & 1 &
    \twostar{1}{2}{3},\twostar{1}{4}{5},\trianglegraph{1}{6}{7} &
    2 & 0 & 0 &
    $7(1-\al)-7\beta$ &
    $7-2\al-7\beta$ \\

    1 & 2 &
    \twostar{1}{2}{3},\twostar{1}{4}{5},\trianglegraph{1}{2}{6} &
    2 & 0 & 1 &
    $6(1-\al)-6\beta$ &
    $6-2\al-6\beta$ \\

    1 & 3 &
    \twostar{1}{2}{3},\twostar{1}{4}{5},\trianglegraph{1}{2}{3} &
    2 & 0 & 2 &
    $5(1-\al)-5\beta$ &
    $5-2\al-5\beta$ \\

    2 & 1 &
    \twostar{1}{2}{3},\twostar{1}{2}{4},\trianglegraph{1}{5}{6} &
    2 & 1 & 0 &
    $6(1-\al)-6\beta$ &
    $6-2\al-6\beta$ \\

    2 & 2 &
    \twostar{1}{2}{3},\twostar{1}{2}{4},\trianglegraph{1}{2}{5} &
    2 & 1 & 1 &
    $5(1-\al)-5\beta$ &
    $5-2\al-5\beta$ \\

    2 & 3 &
    \twostar{1}{2}{3},\twostar{1}{2}{4},\trianglegraph{1}{2}{3} &
    2 & 1 & 2 &
    $4(1-\al)-4\beta$ &
    $4-2\al-4\beta$ \\

    3 & 1 &
    \twostar{1}{2}{3},\twostar{1}{2}{3},\trianglegraph{1}{4}{5} &
    2 & 2 & 0 &
    $5(1-\al) -5\beta$ &
    $5-2\al-5\beta$ \\

    3 & 2 &
    \twostar{1}{2}{3},\twostar{1}{2}{3},\trianglegraph{1}{2}{4} &
    2 & 2 & 1 &
    $4(1-\al)-4\beta$ &
    $4-2\al-4\beta$ \\

    3 & 3 &
    \twostar{1}{2}{3},\twostar{1}{2}{3},\trianglegraph{1}{2}{3} &
    2 & 2 & 2 &
    $3(1-\al)-3\beta$ &
    $3-2\al-3\beta$ \\

    \bottomrule
  \end{tabular}

  \captionof{table}{%
  Values of $e_{1,1}(k_1,k_2:\al,\beta)$ and
  $e_{2,1}(k_1,k_2:\al,\beta)$ for different choices of $k_1,k_2$. See Proposition \ref{prop-joint}. Quantities in column (4) and (6) are defined in \eqref{tau-uni} and \eqref{mtH-prime-def} respectively.}
  \label{tab:e-1-1}
  \end{minipage}
}
\end{center}

\begin{center}
\rotatebox{90}{%
  \begin{minipage}{\textheight}
  \centering
  \begin{tabular}{cccccccc}
    \toprule
    $k_1$ &
    $k_2$ &
    $\left(\trianglegraph{s_{1,1}}{s_{1,2}}{s_{1,3}},
            \twostar{s_{1,1}}{s_{1,2}}{s_{1,3}},
            \twostar{s_{3,1}}{s_{3,2}}{s_{3,3}}\right)$ &
    $\tau\left(\kl_{3},\kl_{1,2},\kl_{1,2}\right)$ &
    $m^\prime(k_1:\kl_{3},\kl_{1,2})$ &
    $m'(k_2:\kl_3,\kl_{1,2},\kl_{1,2})$ &
    $e_{1,2}(k_1,k_2:\al,\beta)$ &
    $e_{2,2}(k_1,k_2:\al,\beta)$ \\
    \midrule

    1 & 1 &
    \trianglegraph{1}{2}{3},\twostar{1}{4}{5},\twostar{1}{6}{7} &
    2 & 0 & 0 &
    $7(1-\al)-7\beta$ &
    $7-2\al-7\beta$ \\

    1 & 2 &
    \trianglegraph{1}{2}{3},\twostar{1}{4}{5},\twostar{1}{2}{6} &
    2 & 0 & 1 &
    $6(1-\al)-6\beta$ &
    $6-2\al-6\beta$ \\

    1 & 3 &
    \trianglegraph{1}{2}{3},\twostar{1}{4}{5},\twostar{1}{2}{3} &
    2 & 0 & 2 &
    $5(1-\al)-5\beta$ &
    $5-2\al-5\beta$ \\

    2 & 1 &
    \trianglegraph{1}{2}{3},\twostar{1}{2}{4},\twostar{1}{5}{6} &
    2 & 1 & 0 &
    $6(1-\al)-6\beta$ &
    $6-2\al-6\beta$ \\

    2 & 2 &
    \trianglegraph{1}{2}{3},\twostar{1}{2}{4},\twostar{1}{2}{5} &
    2 & 1 & 1 &
    $5(1-\al)-5\beta$ &
    $5-2\al-5\beta$ \\

    2 & 3 &
    \trianglegraph{1}{2}{3},\twostar{1}{2}{4},\twostar{1}{2}{3} &
    2 & 1 & 2 &
    $4(1-\al)-4\beta$ &
    $4-2\al-4\beta$ \\

    3 & 1 &
    \trianglegraph{1}{2}{3},\twostar{1}{2}{3},\twostar{1}{4}{5} &
    2 & 2 & 0 &
    $5(1-\al) -5\beta$ &
    $5-2\al-5\beta$ \\

    3 & 2 &
    \trianglegraph{1}{2}{3},\twostar{1}{2}{3},\twostar{1}{2}{4} &
    2 & 2 & 1 &
    $4(1-\al)-4\beta$ &
    $4-2\al-4\beta$ \\

    3 & 3 &
    \trianglegraph{1}{2}{3},\twostar{1}{2}{3},\twostar{1}{2}{3} &
    2 & 2 & 2 &
    $3(1-\al)-3\beta$ &
    $3-2\al-3\beta$ \\

    \bottomrule
  \end{tabular}

  \captionof{table}{%
  Values of $e_{1,1}(k_1,k_2:\al,\beta)$ and
  $e_{2,1}(k_1,k_2:\al,\beta)$ for different choices of $k_1,k_2$. See Proposition \ref{prop-joint}. Quantities in column (4) and (6) are defined in \eqref{tau-uni} and \eqref{mtH-prime-def} respectively.}
  \label{tab:e-2-2}
  \end{minipage}
}
\end{center}

\begin{center}
\rotatebox{90}{%
  \begin{minipage}{\textheight}
  \centering
  \begin{tabular}{cccccccc}
    \toprule
    $k_1$ &
    $k_2$ &
    $\left(\trianglegraph{s_{1,1}}{s_{1,2}}{s_{1,3}},
            \trianglegraph{s_{1,1}}{s_{1,2}}{s_{1,3}},
            \twostar{s_{3,1}}{s_{3,2}}{s_{3,3}}\right)$ &
    $\tau\left(\kl_{3},\kl_{3},\kl_{1,2}\right)$ &
    $m(k_1:\kl_3)$ &
    $m'(k_2:\kl_3,\kl_{3},\kl_{1,2})$ &
    $e_{1,2}(k_1,k_2:\al,\beta)$ &
    $e_{2,2}(k_1,k_2:\al,\beta)$ \\
    \midrule

    1 & 1 &
    \trianglegraph{1}{2}{3},\trianglegraph{1}{4}{5},\twostar{1}{6}{7} &
    3 & 0 & 0 &
    $7(1-\al)-8\beta$ &
    $7-3\al-8\beta$ \\

    1 & 2 &
    \trianglegraph{1}{2}{3},\trianglegraph{1}{4}{5},\twostar{1}{2}{6} &
    3 & 0 & 1 &
    $6(1-\al)-7\beta$ &
    $6-3\al-7\beta$ \\

    1 & 3 &
    \trianglegraph{1}{2}{3},\trianglegraph{1}{4}{5},\twostar{1}{2}{3} &
    3 & 0 & 2 &
    $5(1-\al)-6\beta$ &
    $5-3\al-6\beta$ \\

    2 & 1 &
    \trianglegraph{1}{2}{3},\trianglegraph{1}{2}{4},\twostar{1}{5}{6} &
    2 & 1 & 0 &
    $6(1-\al)-7\beta$ &
    $6-2\al-7\beta$ \\

    2 & 2 &
    \trianglegraph{1}{2}{3},\trianglegraph{1}{2}{4},\twostar{1}{2}{5} &
    2 & 1 & 1 &
    $5(1-\al)-6\beta$ &
    $5-2\al-6\beta$ \\

    2 & 3 &
    \trianglegraph{1}{2}{3},\trianglegraph{1}{2}{4},\twostar{1}{2}{3} &
    2 & 1 & 2 &
    $4(1-\al)-5\beta$ &
    $4-2\al-5\beta$ \\

    3 & 1 &
    \trianglegraph{1}{2}{3},\trianglegraph{1}{2}{3},\twostar{1}{4}{5} &
    2 & 3 & 0 &
    $5(1-\al) -5\beta$ &
    $5-2\al-5\beta$ \\

    3 & 2 &
    \trianglegraph{1}{2}{3},\trianglegraph{1}{2}{3},\twostar{1}{2}{4} &
    2 & 3 & 1 &
    $4(1-\al)-4\beta$ &
    $4-2\al-4\beta$ \\

    3 & 3 &
    \trianglegraph{1}{2}{3},\trianglegraph{1}{2}{3},\twostar{1}{2}{3} &
    2 & 3 & 2 &
    $3(1-\al)-3\beta$ &
    $3-2\al-3\beta$ \\

    \bottomrule
  \end{tabular}

  \captionof{table}{%
  Values of $e_{1,1}(k_1,k_2:\al,\beta)$ and
  $e_{2,1}(k_1,k_2:\al,\beta)$ for different choices of $k_1,k_2$. See Proposition \ref{prop-joint}. Quantities in column (4) and (6) are defined in \eqref{tau-uni} and \eqref{mtH-prime-def} respectively.}
  \label{tab:e-3-3}
  \end{minipage}
}
\end{center}

\begin{center}
\rotatebox{90}{%
  \begin{minipage}{\textheight}
  \centering
  \begin{tabular}{cccccccc}
    \toprule
    $k_1$ &
    $k_2$ &
    $\left(\twostar{s_{1,1}}{s_{1,2}}{s_{1,3}},
            \trianglegraph{s_{1,1}}{s_{1,2}}{s_{1,3}},
            \trianglegraph{s_{3,1}}{s_{3,2}}{s_{3,3}}\right)$ &
    $\tau\left(\kl_{1,2},\kl_{3},\kl_{3}\right)$ &
    $m^\prime(k_1:\kl_{1,2},\kl_3)$ &
    $m'(k_2:\kl_{1,2},\kl_{3},\kl_{3})$ &
    $e_{1,2}(k_1,k_2:\al,\beta)$ &
    $e_{2,2}(k_1,k_2:\al,\beta)$ \\
    \midrule

    1 & 1 &
    \twostar{1}{2}{3},\trianglegraph{1}{4}{5},\trianglegraph{1}{6}{7} &
    3 & 0 & 0 &
    $7(1-\al)-8\beta$ &
    $7-3\al-8\beta$ \\

    1 & 2 &
    \twostar{1}{2}{3},\trianglegraph{1}{4}{5},\trianglegraph{1}{2}{6} &
    3 & 0 & 1 &
    $6(1-\al)-7\beta$ &
    $6-3\al-7\beta$ \\

    1 & 3 &
    \twostar{1}{2}{3},\trianglegraph{1}{4}{5},\trianglegraph{1}{2}{3} &
    3 & 0 & 2 &
    $5(1-\al)-6\beta$ &
    $5-3\al-6\beta$ \\

    2 & 1 &
    \twostar{1}{2}{3},\trianglegraph{1}{2}{4},\trianglegraph{1}{5}{6} &
    3 & 1 & 0 &
    $6(1-\al)-7\beta$ &
    $6-3\al-7\beta$ \\

    2 & 2 &
    \twostar{1}{2}{3},\trianglegraph{1}{2}{4},\trianglegraph{1}{2}{5} &
    2 & 1 & 1 &
    $5(1-\al)-6\beta$ &
    $5-2\al-6\beta$ \\

    2 & 3 &
    \twostar{1}{2}{3},\trianglegraph{1}{2}{4},\trianglegraph{1}{2}{3} &
    2 & 1 & 2 &
    $4(1-\al)-5\beta$ &
    $4-2\al-5\beta$ \\

    3 & 1 &
    \twostar{1}{2}{3},\trianglegraph{1}{2}{3},\trianglegraph{1}{4}{5} &
    3 & 2 & 0 &
    $5(1-\al) -6\beta$ &
    $5-3\al-6\beta$ \\

    3 & 2 &
    \twostar{1}{2}{3},\trianglegraph{1}{2}{3},\trianglegraph{1}{2}{4} &
    2 & 2 & 1 &
    $4(1-\al)-5\beta$ &
    $4-2\al-5\beta$ \\

    3 & 3 &
    \twostar{1}{2}{3},\trianglegraph{1}{2}{3},\trianglegraph{1}{2}{3} &
    2 & 2 & 3 &
    $3(1-\al)-3\beta$ &
    $3-2\al-3\beta$ \\

    \bottomrule
  \end{tabular}

  \captionof{table}{%
  Values of $e_{1,1}(k_1,k_2:\al,\beta)$ and
  $e_{2,1}(k_1,k_2:\al,\beta)$ for different choices of $k_1,k_2$. See Proposition \ref{prop-joint}. Quantities in column (4) and (6) are defined in \eqref{tau-uni} and \eqref{mtH-prime-def} respectively.}
  \label{tab:e-4-4}
  \end{minipage}
}
\end{center}

\end{document}